\theoremstyle{definition}
\newtheorem{definition}{Definition}
\theoremstyle{theorem}
\newtheorem{proposition}[definition]{Proposition}
\newtheorem{lemma}[definition]{Lemma}
\newtheorem{theorem}[definition]{Theorem}
\newtheorem{corollary}[definition]{Corollary}
\numberwithin{equation}{section}
\numberwithin{definition}{section}
\theoremstyle{remark}
\newtheorem{remark}[definition]{Remark}
\newtheorem{question}[definition]{Question}
\newtheorem{example}[definition]{Example}
\DeclareMathOperator*{\esssup}{ess\,sup}
\DeclareMathOperator*{\essinf}{ess\,inf}
\def\PP{\mathbb{P}}
\def\AA{\mathcal{A}}
\def\AAA{\mathsf{A}}
\def\HH{\mathcal{H}}
\def\LL{\mathcal{L}}
\def\Cl{\mathrm{Cl}}
\def\L2{\mathrm{L}^2}
\def\pr{\mathrm{pr}}
\def\QQ{\mathbb{Q}}
\def\BB{\mathcal{B}}
\def\AA{\mathcal{A}}
\def\II{\mathcal{I}}
\def\EE{\mathcal{E}}
\def\id{\mathrm{id}}
\def\GG{\mathcal{G}}
\def\FF{\mathcal{F}}
\def\TT{\mathcal{T}}
\def\UU{\mathcal{U}}
\def\Ber{\mathrm{Ber}}
\def\Aut{\mathrm{Aut}}
\def\dd{\mathrm{d}}
\def\KK{\mathcal{K}}
\def\Bin{\mathrm{Bin}}
\def\PPP{\mathbb{P}}
\def\at{\mathrm{at}}
 \def\NNN{\mathbf{N}}
 \def\stable{{\FF_{\mathrm{stb}}}}
 \def\L2{\mathrm{L}^2}
 \def\Lattice{\hat{\PP}}
\def\id{\mathrm{id}}
\def\dd{\mathrm{d}}
\def\rr{\tilde{r}}
\def\rr{\mathsf{r}}
\def\KK{\mathsf{K}}
\def\inff{\mathbf{I}}
\def\var{\mathrm{var}}
\newcounter{count1}
\def\@tocline#1#2#3#4#5#6#7{\relax
  \ifnum #1>\c@tocdepth 
  \else
    \par \addpenalty\@secpenalty\addvspace{#2}%
    \begingroup \hyphenpenalty\@M
    \@ifempty{#4}{%
      \@tempdima\csname r@tocindent\number#1\endcsname\relax
    }{%
      \@tempdima#4\relax
    }%
    \parindent\z@ \leftskip#3\relax \advance\leftskip\@tempdima\relax
    \rightskip\@pnumwidth plus4em \parfillskip-\@pnumwidth
    #5\leavevmode\hskip-\@tempdima
      \ifcase #1
       \or\or \hskip 1em \or \hskip 2em \else \hskip 3em \fi%
      #6\nobreak\relax
    \hfill\hbox to\@pnumwidth{\@tocpagenum{#7}}\par
    \nobreak
    \endgroup
  \fi}
\begin{document}
\title{A potpourri of results in the general theory of stochastic noises}
\author{Matija Vidmar}
\address{Department of Mathematics, Faculty of Mathematics and Physics, University of Ljubljana}
\email{matija.vidmar@fmf.uni-lj.si}

\begin{abstract}
The objects under inspection, on a given probability space, are noise(-type) Boolean algebras -- distributive non-empty sublattices of the lattice of all complete sub-$\sigma$-fields, whose every element admits an independent complement. Special attention is given to the spectral decompositions of the algebras of operators   generated by the conditional expectations of their members (acting on $\L2$).

 Atoms of the spectra are identified in  explicit terms. For a reverse filtration admitting an innovating sequence  of equiprobable random signs, a discreteness property of the  spectral measure of the associated noise Boolean algebra is shown to imply product-typeness. 
 
 Noise projections on the spectral space are introduced, which correspond to restricting a noise to a part of its domain space. They appear to play a natural (albeit technical) role in the general analysis. In particular through them manifests itself the tensor structure of the spectral decomposition. 
 
 The spectrum is precisely delineated in the classical case (i.e. when the noise Boolean algebra is complete), a kind-of standard ``symmetric Fock space'' form thereof is procured. The latter result leads  to a new characterization of classicality and blackness involving ``spectral independence''. 
\end{abstract}

\keywords{Stochastic noise; Boolean algebra; spectral decomposition; direct integral of Hilbert spaces; stability and sensitivity; classical, nonclassical, black noise; reverse filtration and innovation process; independence; atom; subnoise; projection; Bonferroni-type inequality; strong operator convergence}

\thanks{Financial support from the Slovenian Research Agency is acknowledged (programme No. P1-0402). Parts of this text were completed while the author was in receipt of a Fernandes Fellowship at the University of Warwick; he is grateful for the former as well as the hospitality of the latter.} 

\subjclass[2020]{Primary: 60A99. Secondary: 60A10, 60G48, 60G51, 46N30} 
\date{\today}

\maketitle
\vspace{-1cm}
\tableofcontents 
\normalsize
\section{Introduction}

In  \cite{tsirelson}  Tsirelson distilled stochastic noises into a definitive fundamental class of stochastic objects: ``a noise is a kind of homomorphism from a Boolean algebra of domains to the lattice of $\sigma$-fields'' \cite[Abstract]{tsirelson}; the appropriate abstraction of the image of such a homomorphism was christened a noise-type Boolean algebra, and appears already to capture many of the salient features thereof (e.g. classicality, blackness, chaos expansion). Noise-type Boolean algebras become the basic object of study.

At this level of abstraction, the hallmark property of a stochastic noise $N$, namely that $N(A_1\cup A_2)$ is generated by the independent $\sigma$-fields $N(A_1)$ and $N(A_2)$ whenever $A_1$ and $A_2$ are disjoint domains, is reflected in the two defining properties of a noise-type Boolean algebra $B$: first, that each member $x$ of $B$ admits a unique $x'\in B$ that is independent of $x$ and generates the whole together with $x$ (an independent complement); second, that  $y=(y\cap x)\lor (y\cap x')$ for $\{x,y\}\subset B$ (which is equivalent to the $\lor$-$\cap$ distributivity of $B$, given the first). What is in some sense lost in this  framework is the ``translational invariance'' of a noise, or at least one does not a priori insist on it (but see Item~\ref{noise-boolean:factorization} of Subsection~\ref{subsection:noise-boolean}). 

In this text we will be interested in questions concerning the general theory of stochastic noises only,  by which  we mean roughly those which concern any noise Boolean algebra without reference to an ex ante given indexation by (factorization over) domains of some particular ambient (perhaps topological, or measurable) space. Though, factorizations will make an appearence -- but only in examples, or else a posteriori, as derivative objects of the noise Boolean algebra (and its spectrum), not as given a priori. 

Contributions fall into two categories. --- In the first -- (relatively: some care is needed) straightforward extensions to the general setting of  results known in more or less special cases of noises (usually one-dimensional, continuous factorizations over the intervals of $\mathbb{R}$). Though only formally new it seems worthwhile putting them on record in their (apparent) full scale of relevance. Under this category we list  Subsection~\ref{subsection:subnoises-parts-of-spectrum} and Section~\ref{section:separate-classsical}. --- In the second -- results which may be considered (as far as the author knows) new or at at any rate non-trivial modifications of more particular settings. We would put in this basket the remainder of the text with the exception of the preliminary Section~\ref{sec:setting}. The highlights of this category have been put on record in the abstract. 

Apart from what has been noted in the preceding, at this point the table of contents is as good a guide to what this paper delivers (aims to deliver) as any; we would avoid repetition here, deferring further ``introductory'' comments instead to the beginnings of the individual sections. The reader is invited to cast a glance at these should he/she desire a more detailed ``landscape'' view before reading, or in deciding whether or not to read the remainder of this article [in (any) detail].

It appears that \cite{tsirelson} is to date the only (other)  paper that studies noise-type Boolean algebras without ``dragging along'' an indexation by (factorization over) an algebra of domains. The preprints \cite{tsirelson-arxiv-1,tsirelson-arxiv-2,tsirelson2011noise} should be mentioned alongside \cite{tsirelson} as they contain some results which the published version \cite{tsirelson} does not.   For a general historical account and a description of earlier/other perspectives in the area of stochastic noises it seems difficult to better \cite[Introduction, Section~1.6]{tsirelson}; see also the older survey \cite{tsirelson-nonclassical}. In any event, a wide selection  of related literature  will appear along the way. 

In some sense what follows simply contains answers to a myriad of questions which the author found himself puzzling over while reading \cite{tsirelson} and the older pieces of literature on stochastic noises. Understanding some of them opened still new ones. Many of these are noted explicitly, they are scattered throughout the text. 

\section{Basic notions, setting, notation, preliminaries}\label{sec:setting}
The groundwork is laid; dry, perhaps, but necessary to ensure ``smooth sailing'' thereafter. All assertions made are meant to be well-known, evident (after a ``minute's thought''), or are suitably referenced/their validity is argued. Besides fixing the setting and notation, the purpose of this section is to summarize the relevant  state of the art -- to the extent that will be needed in the continuation of the text. Though not strictly speaking necessary, the reader had probably better be familiar with \cite{tsirelson}  before proceding further (not in the sense that the present article is ``deeper'' --- which it is not --- but in the sense that \cite{tsirelson} is its ``immediate predecessor''). 

\subsection{Generalities}
\begin{enumerate}[wide, labelwidth=!, labelindent=0pt,label=\textbf{(\alph*)}]
\item\label{generalities:a} $\sigma(\cdots)$ will  mean the sigma-field generated by whatever stands in lieu of $\cdots$ \emph{and} by the negligible sets of the relevant measure (the measure itself may or may not  be complete, the measurable negligible sets of the domain of the measure are meant), according to the context (the measure is not made explicit in this piece of notation, which should not cause confusion). 
For $\sigma$-fields $\GG$ and $\HH$ we use $\GG/\HH$ to denote the collection of all $\GG/\HH$-measurable maps. The Borel $\sigma$-field of a topological space $X$ is denoted $\mathcal{B}_X$ (the topology being understood from context). $b\FF$ are the real-valued bounded $\FF$-measurable maps.

\item Except for one place, where it will be explicit (Remark~\ref{rmk:complex-to-real}), all L-spaces and all Hilbert spaces considered in this text are real. Their complex versions could be used instead, but the intervention of complex numbers in what follows is at best tangential, therefore the real spaces seem altogether more appropriate to the context.

\item For a set $A$, ${A\choose n}$ is the set of the $n$-element subsets of $A$, $n\in \mathbb{N}_0$, and then $(2^A)_{\mathrm{fin}}:=\cup_{n\in \mathbb{N}_0}{A\choose n}$ (resp. $(2^A)_{\mathrm{inf}}$) is the collection of its finite (resp. infinite) subsets (while $2^A$ will be the whole power set of $A$, as usual).  We routinely identify ${T\choose 1}$ with $T$ without special mention. By convention $0^0=1$. $c_A$ denotes the counting measure on $(A,2^A)$. Sometimes we write $A_{<b}:=\{a\in A:a<b\}$ for $A\subset [-\infty,\infty]$ and $b\in [-\infty,\infty]$, similarly $A_{\geq b}$ etc. are defined.

\item\label{generalities:lebesgue-rohlin} A standard measurable space is one that is endowed with the Borel $\sigma$-field of a complete separable metric. As such it is always Borel isomorphic (isomorphic as a measurable space) to one of $\mathbb{R}$, $\mathbb{N}$ or $[n]:=\{1,\ldots,n\}$ ($[0]=\emptyset$; $[n]_0:=[n]\cup \{0\}$) for some $n\in \mathbb{N}_0$ with their standard $\sigma$-fields \cite[Corollary~6.8.8]{bogachev}. Often useful is  the Blackwell property \cite[Exercise~14.16]{kechris}: if $(A_n)_{n\in \mathbb{N}}$ is a sequence of Borel sets in a standard measurable space, then a Borel set is $\sigma(A_n:n\in \mathbb{N})$-measurable iff it is saturated for the equivalence relation $\sim$ given by $(x\sim y)$ $\Leftrightarrow$ ($\mathbbm{1}_{A_n}(x)=\mathbbm{1}_{A_n}(y)$ for all $n\in \mathbb{N}$).

By a  standard (Lebesgue–Rokhlin)  measure space $(S,\Sigma,\mu)$ we will undertand one that results from the completion of a $\sigma$-finite measure on a standard measurable space, possibly adjoined with an extra arbitrary null set (and completed again). Any standard measure space is mod-$0$ isomorphic to an interval of $\mathbb{R}$ with Lebesgue measure, a countable family of point atoms, or a combination --- disjoint union ---  of both \cite[Theorem~9.4.7]{bogachev} \cite[Theorem~1.4.6]{petersen1983ergodic}. Here, a mod-$0$ isomorphism between two measure spaces is a map which carries bimeasurably, measure-preservingly (i.e. pushes forward the measure on the first space to the measure on the second space) and bijectively a conegligible set onto a conegligible set; if only the last property holds we speak of a mod-$0$ bijection.  

A measurable and measure-preserving map between two standard measure spaces that is injective off a null set is automatically a mod-$0$ isomorphism \cite[p.~22, Section~2.5, Theorem on isomorphisms]{rohlin}. 
The product (completion implicit) of two standard measure spaces is a standard measure space \cite[Section~3.4]{rohlin}. The Blackwell property is translated as follows  into the setting of a standard measure space: if $(A_n)_{n\in \mathbb{N}}$ is a sequence of measurable sets in a standard measure space, then a set is $\sigma(A_n:n\in \mathbb{N})$-measurable iff it differs by a null set from a measurable set that is saturated for the equivalence relation $\sim$ given by $(x\sim y)$ iff ($\mathbbm{1}_{A_n}(x)=\mathbbm{1}_{A_n}(y)$ for all $n\in \mathbb{N}$).

In case $(S,\Sigma,\mu)$  is a standard measure space and $\mu$ is a probability we speak of a standard probability (space) \cite[Section~2.4]{ito-standard}. If one has a measure-preserving map of a standard probability space into a countably separated complete probability space, then the latter is also standard \cite[Theorems~3-2~\&~4-3]{de-la-Rue}. 

\item Throughout $(\Omega,\FF,\PP)$ is an \emph{essentially separable} probability space, maybe complete, maybe not, it does not matter.  Essential separability means, by definition, existence of a countable $\AA\in 2^\FF$ such that $\sigma(\AA)=\FF$. It is equivalent to the  Hilbert space $\L2(\PP)$ being separable, and  it is also the same as demanding that the topology of the probability-of-symmetric-difference pseudometric $d_\PP:\FF\times\FF\to [0,1]$ associated to $\PP$ is separable.  Note that every standard probability space is complete, by definition, but also essentially separable. The converse fails in general.  Indeed, under our assumptions, $(\Omega,\FF,\PP)$ need not even be essentially countably separated (which, together with essential separability still would not be equivalent to standardness -- e.g. due to the ``image-measure catastrophe'' \cite[p.~30]{schwartz}), i.e. there need not be any co-negligible set, whose points are separated by a countable family of measurable sets. Simple (counter)examples also show that one can have a complete $\PP$ be essentially separable without there being a denumerable $\AA$ such that $\overline{\PP\vert_\AA}=\PP$, while the converse is clearly true.

\item An isomorphism between two  probability spaces $(Q,\mathcal{Q},\QQ)$ and $(O,\mathcal{O},\mathbb{O})$ (automorphism when $\QQ=\mathbb{O}$) is a bijective map between $\mathcal{Q}/_\QQ$ and $\mathcal{O}/_\mathbb{O}$ that preserves complements, countable unions and probabilities (that is to say, we have a $\sigma$-isomorphism of measure algebras). Such a map has \cite{dellacherie} a natural unique extension to a bijective map $\mathrm{L}^0(\QQ)\to \mathrm{L}^0(\mathbb{O})$, which preserves the laws  and real-valued Borelian applications on finite sub-collections of $\mathrm{L}^0(\QQ)$ \cite[Definition~5.1]{laurent-main} (more generally this is true not only of random variables, but random elements valued in any standard measurable space \cite[Lemma~5.5]{laurent-main}). In turn, a bijective map $\mathrm{L}^0(\QQ)\to \mathrm{L}^0(\mathbb{O})$, which preserves the laws  and real-valued Borelian applications, results in this way from a unique isomorphism between $\QQ$ and $\mathbb{O}$; one is not necessarily careful about distinguising between the two points of view. If in the definition of an isomorphism we forego the bijective property we get an embedding of $\QQ$ into $\mathbb{O}$ \cite[Definition~1.1]{watanabe}; it is automatically injective and therefore an isomorphism onto its image. 

Of course a bimeasurable measure-preserving bijection, and more generally every mod-$0$ isomorphism, induces an isomorphism of probabilities, but not every isomorphism of probabilities is got in this way  (not even when they are complete and essentially separable). However, if one has a $\sigma$-isomorphism of the measure algebras attached to two standard measure spaces (not necessarily probability), then there is a mod-$0$ isomorphism between the two \cite[Theorem~1.4.7]{petersen1983ergodic}.

Up to isomorphism $\PP$ is determined uniquely by specifying the nonincreasing sequence of the sizes of its atoms and $\PP$ is isomorphic to a standard probability (just because essential separability is also equivalent to the existence of an essentially generating random variable, whose diffuse part we may assume is uniform). (Of course it does not mean  that $\PP$ is itself Lebesgue-Rokhlin. Rather, a complete $\PP$ is standard iff there is a negligible set off of which there is a random variable $X$ that is injective, generates $\FF$ mod-$0$ (the $\PP\vert_{X^{-1}(\mathcal{B}_\mathbb{R})}$-completion is meant) and such that the image of $X$ has inner measure one w.r.t. $X_\star\PP$.  We will not need this  fact.)

The group of all automorphisms  of $\PP$ under composition is denoted $\Aut(\PP)$. It is naturally embedded as a closed subspace into the Polish group of all unitary operators on $\L2(\PP)$ under the strong operator topology \cite{uribe2014}, so is itself a Polish group. 


\item Expectations are written as $\PP[f]:=\mathbb{E}_\PP[f]$, $f\cdot \PP:=(\FF\ni F \mapsto \PP[f;F])$ is the indefinite integral and $f_\star\PP:=\PP\circ f^{-1}$ is the law of $f$ under $\PP$ (more generally this notation is used for integrals against, and push-forwards of measures); conditional expectations as $\PP[f\vert x]$ (where $x$ is the conditioning $\sigma$-field). $\langle\cdot,\cdot\rangle$ and $\Vert\cdot\Vert$ are the usual inner product and norm on $\L2(\PP)$. $\L2(\PP)_0:=\{f\in \L2(\PP):\PP[f]=0\}$ is the zero-mean subspace of $\L2(\PP)$.



\item\label{generalities:lattice}  $\Lattice$ denotes the  bounded lattice of complete sub-$\sigma$-fields of $\FF$: bottom element $0_\PP:=\PP^{-1}(\{0,1\})$, top element $1_\PP:=\FF$, meet $\land:=\cap$ and join $\lor:=\text{complete generated $\sigma$-field}$. We allow $0_\PP=1_\PP$. In general $ \Lattice$ is not distributive \cite[Example~1.1]{vidmar_2019}.  For $x\in \Lattice$ and $A\subset \Lattice$ we put $A_x:=A\cap 2^x$, ${}_xA:=\{a\in A:x\subset a\}$; for further $y\in \Lattice$, $A_{x,y}:=\{u\lor v:(u,v)\in A_x\times A_y\}$.  We say $y\in\Lattice$ is an independent complement of $x\in \Lattice$ if $x$ is independent of $y$ ($\therefore$ $x\land y=0_\PP$) and $x\lor y=1_\PP$. Concerning the general arithmetic of (independent) $\sigma$-fields the reader may consult \cite{vidmar_2019}; the distributivity \cite[Proposition~3.4]{vidmar_2019} and the ``tensorisation'' of independent conditioning \cite[Lemma~2.2]{vidmar_2019} are especially useful.

\item For $x\in \Lattice$, $\PP_x:\L2(\PP)\to \L2(\PP\vert_x)$ is the $\L2$-conditional expectation operator w.r.t. $x$; $\PP_x(X)=\PP[X\vert x]$ for $X\in \L2(\PP)$. By Jensen, for $\{x,y\}\subset \Lattice$, $\PP_x\leq \PP_y$ iff $x\subset y$. Closed linear subspace of $\L2(\PP)$ of the form $\L2(\PP\vert_\GG)$ for  a $\GG\in \Lattice$ are called $\L2$-subspaces; they are precisely the ones which are closed under $\vert\cdot\vert$ and which contain the one-dimensional space of constants, see \cite[Theorem~3]{Sid57}, where they are called ``measurable subspaces'' instead (our terminology is rather from \cite[Fact~2.1]{tsirelson}, which uses the term ``type $\L2$ (sub)spaces'', but  the ``type'' seems superfluous).

\item For independent $x$ and $y$ from $\Lattice$, $\L2(\PP\vert_{x\lor y})=\L2(\PP\vert_x)\otimes \L2(\PP\vert_y)$ up to the natural unitary isomorphism that uniquely extends $(\L2(\PP\vert_x)\otimes \L2(\PP\vert_y)\ni f_x\otimes f_y\mapsto f_xf_y\in \L2(\PP\vert_{x\lor y}))$. We will sometimes avoid the qualification ``up to the natural unitary isomorphism'' (or an alias thereof) in the preceding, its presence being understood implicitly. In general, if $C\subset \L2(\PP_x)$ and $D\subset\L2(\PP\vert_{x'})$ for some $x\in B$, then by $C\otimes D$ we mean the closure of the linear span (in $\L2(\PP)$) of $\{cd:(c,d)\in C\times D\}$.
 
 \item\label{generalities:topology} $\Lattice$ is endowed with the strong operator topology of the associated conditional expectations, i.e. the topology generated by the evaluation maps $(\Lattice\ni x\mapsto \PP_x(f)\in \L2(\PP))$, $f\in \L2(\PP)$. No other topology on $\Lattice$ will be used (though, generally speaking, many others are of interest/have attracted attention \cite[Figure~1]{VIDMAR-conv}).  We proceed to make explicit a series of relevant facts concerning this topology of $\Lattice$.

\begin{enumerate}[(i)]
\item\label{metric} Fix a total countable family $(\rr_k)_{k\in \KK}$ in $\L2(\PP)$, where $\KK$ is an initial segment of $\mathbb{N}$ or $\KK=\mathbb{N}$.  The topology of $\Lattice$ is metrizable; a metric $d_\PP$ compatible with it is given by $$d_{\Lattice}(x,y):=\sum_{k\in \KK}2^{-k}\frac{\Vert \PP[\rr_k\vert x]-\PP[\rr_k\vert y]\Vert}{1+\Vert \PP[\rr_k\vert x]-\PP[\rr_k\vert y]\Vert},\quad (x,y)\in \Lattice\times \Lattice.$$ 
\begin{proof} Linearity and density argument. \end{proof}
\item\label{rmk:general:ii} Suppose $z=(z_n)_{n\in \mathbb{N}}$ is a sequence in $\Lattice$ and $z_\infty\in \Lattice$. Then $\lim_{n\to\infty}z_n=z_\infty$ 
 iff $\lim_{n\to\infty}\PP_{z_n}(\rr_k)=\PP_{z_\infty}(\rr_k)$ in $\L2(\PP)$ for all $k\in \KK$ iff  $\lim_{n\to\infty}\PP[\PP[f\vert z_n]^2]=\PP[\PP[f\vert z_\infty]^2]$ for all $f\in \L2(\PP)$. 
 \begin{proof}
 The first equivalence follows by linearity and density. For the second, the condition (being clearly necessary) implies by polarization $\lim_{n\to\infty}\PP[\PP[f\vert z_n]g]=\PP[\PP[f\vert z_\infty]g]$  for $\{f,g\}\subset \L2(\PP)$, therefore $\PP[(\PP[f\vert z_n]-\PP[f\vert z_\infty])^2]=\PP[\PP[f\vert z_n]^2]+\PP[\PP[f\vert z_\infty]^2]-2\PP[\PP[f\vert z_n]\PP[f\vert z_\infty]]\to \PP[(\PP[f\vert z_\infty])^2]+\PP[(\PP[f\vert z_\infty])^2]-2\PP[\PP[f\vert z_\infty]^2]=0$ as $n\to\infty$ for all $f\in\L2(\PP)$.
 \end{proof}
\item Still the family $(\rr_k)_{k\in \KK}$ is as in Item~\ref{metric}. Let $(z_n)_{n\in \mathbb{N}}$ be Cauchy in the metric $d_\PP$. For each $k\in \KK$, the sequence $(\PP[\rr_k\vert z_n])_{n\in \mathbb{N}}$ is Cauchy in $\L2(\PP)$, hence convergent, denote the limit by $\rr_{k}^\infty$. Put $z_\infty:=\sigma(\rr_k^\infty:k\in \KK)$. Then $\lim_{n\to\infty}z_n=z_\infty$. 
\begin{proof}
It suffices to check that, for each $k\in \KK$, $\PP[\rr_k\vert z_\infty]=\rr_k^\infty$ a.s.-$\PP$. Let $N\in \KK$ and $\{g_1,\ldots,g_N\}\subset C_b(\mathbb{R})$. Then $\PP[\rr_kg_1(\rr_1^\infty)\cdots g_N(\rr_N^\infty)]=\lim_{n\to \infty}\PP[\rr_kg_1(\PP[\rr_1\vert z_n])\cdots g_N(\PP[\rr_N\vert z_n])]=\lim_{n\to \infty}\PP[\PP[\rr_k\vert z_n]g_1(\PP[\rr_1\vert z_n])\cdots g_N(\PP[\rr_N\vert z_n])]=\PP[\rr_k^\infty g_1(\rr_1^\infty)\cdots g_N(\rr_N^\infty)]$. A routine monotone class argument allows to conclude. 
\end{proof}
Thus $(\Lattice,d_{\Lattice})$ is a complete metric space.
\item\label{rmk:general:Polish} All the elements of $\Lattice$ are essentially separable (just because separability is inherited by the subspaces $\L2(\PP\vert_x)$, $x\in \Lattice$). $\Lattice$ itself ($\therefore$ by metrizability \ref{metric} every subset of $\Lattice$) is also separable. Indeed, because $\FF$ is essentially separable, it admits a countable essentially generating algebra $\AA$; then the family of the $\sigma(\BB)$, $\BB\in (2^\AA)_{\mathrm{fin}}$, is dense in $\Lattice$. 
\begin{proof}
Let $x\in \Lattice$; we show that a sequence from the family converges to it. The $\sigma$-field $x$ admits an  essentially generating countable set. By increasing martingale convergence we may hence assume that $x$ is essentially finitely generated. Approximating elements of the atoms of $x$ by elements of the algebra $\AA$ we get that $x$ is as close to a $\sigma(\BB)$, $\BB\in (2^\AA)_{\mathrm{fin}}$, as we like, even in the Hausdorff metric \cite[p.~87]{VIDMAR-conv}. 
\end{proof}
In conjunction with the previous item it means that $\Lattice$ is Polish. 
\item\label{rmk:general:vii} If $x$ and $y$ are independent members of $\Lattice$, then the maps $$\Lattice_x\times \Lattice_y\ni (u,v)\mapsto u\lor v\in \Lattice_{x,y}$$ $$\Lattice_{x,y}\ni z\mapsto (z\land x)\lor (z\land y)\in \Lattice_x\times \Lattice_y$$ are mutually inverse topological homeomorphisms and lattice isomorphisms \cite[Theorem~3.8]{tsirelson}.
\item\label{inclusion-closed} The set $\subset_{\Lattice}:=\{(x,y)\in \Lattice\times \Lattice:x\subset y\}$ is closed in $\Lattice\times \Lattice$. 
\begin{proof}Let $(x_n)_{n\in \mathbb{N}}$ and $(y_n)_{n\in \mathbb{N}}$ in $\Lattice$ be convergent sequences in $\Lattice$ with limits $x_\infty$ and $y_\infty$ respectively, satisfying $x_n\subset  y_n$ for all $n\in \mathbb{N}$. For $f\in \L2(\PP)$ we have $\PP[\PP[f\vert x_n]^2]\leq  \PP[\PP[f\vert y_n]^2]$ for each $n\in \mathbb{N}$ (because $\PP_{x_n}\leq \PP_{y_n}$), therefore $\PP[\PP[f\vert x_\infty]^2]\leq  \PP[\PP[f\vert y_\infty]^2]$, which means $\PP_{x_\infty}\leq \PP_{y_\infty}$, i.e. $x_\infty\subset y_\infty$. 
\end{proof}
In particular, if  $\lim_{n\to\infty}z_n=z_\infty$, then $\liminf_{n\to\infty}z_n\subset z_\infty\subset\limsup_{n\to\infty} z_n$. Conversely, if $\liminf_{n\to\infty}z_n=\limsup_{n\to\infty}z_n$ (``set-theoretic'' convergence), then $\lim_{n\to\infty}z_n$ exists (it is well-known, see e.g. \cite{VIDMAR-conv} for  a reference).  
\item\label{lemma:sandwich}  If two sequences $(x_n)_{n\in \mathbb{N}}$ and $(y_n)_{n\in \mathbb{N}}$ in $\Lattice$ have the same limit $l\in \Lattice$ and if $(z_n)_{n\in \mathbb{N}}$ is another sequence in $\Lattice$ with $x_n\subset z_n\subset y_n$ for all $n\in \mathbb{N}$, then $\lim_{n\to\infty}z_n=l$. 
\begin{proof}
Let $f\in \L2(\PP)$. Then $\PP[\PP[f\vert x_n]^2]\leq \PP[\PP[f\vert z_n]^2]\leq \PP[\PP[f\vert y_n]^2]$ for each $n\in \mathbb{N}$. Passing to the limit one gets $\lim_{n\to \infty}\PP[\PP[f\vert z_n]^2]=\PP[\PP[f\vert l]^2]$. Apply Item~\ref{rmk:general:ii}.
\end{proof}
\end{enumerate}

\setcounter{count1}{\value{enumi}}

\end{enumerate}

\subsection{Noise Boolean algebras}\label{subsection:noise-boolean}

\begin{enumerate}[wide, labelwidth=!, labelindent=0pt,label=\textbf{(\alph*)}]
\setcounter{enumi}{\value{count1}}

\item Throughout, either implicitly by assumption in absence of assertion, or by explicit assertion (in examples), $B$ is a noise Boolean algebra on $(\Omega,\FF,\PP)$, i.e. a non-empty distributive sublattice of $\Lattice$, whose every element admits an independent complement \cite[Definition~1.1]{tsirelson} (in \cite{tsirelson} it is called a ``noise type Boolean algebra'', but the ``type'' seems superfluous). 

In $B$ independence is the same as pairwise independence, and for $\{x,y\}\subset B$, $x$ is independent of $y$ iff $x\land y=0_\PP$ (which is certainly not the case in general in $\Lattice$).  The (it is automatically unique) independent complement of an $x\in B$ in $B$ is denoted $x'$. With this operation of complementation and the meet $\land$, join $\lor$, least element $0_\PP$ and greatest element $1_\PP$ of Item~\ref{generalities:lattice} $B$ is indeed a Boolean algebra; the degenerate Boolean algebra having $0_\PP=1_\PP$ is allowed but is not interesting.  For $A\subset B$, $A':=\{a':a\in A\}$. Filters, ideals, ultrafilters and such like of $B$ refer to, respectively, filters, ideals, ultrafilters and such like of the Boolean algebra $B$.

The intersection of an arbitrary non-empty family of noise Boolean algebras under $\PP$ is again a noise Boolean algebra under $\PP$. The corresponding statement for unions of linearly ordered (w.r.t. inclusion) families is also true.

\item\label{noise-boolean:factorization} The noise Boolean algebra $B$ may, but need not be introduced as the range of a noise factorization $N$, that is to say of a homomorphism of bounded lattices $N:\mathcal{B}\to\Lattice$ such that $N(b)$ is independent of $N(b')$ for  all $b$ belonging to a Boolean algebra $\mathcal{B}$. $B$ can always be enhanced artificially to a noise factorization by indexing it with itself.
\begin{example}[Discrete classical noise Boolean algebra]\label{example:classical}
 Suppose  that $A\subset \Lattice$ is an independency [$\therefore$ countable],  $\lor A=1_\PP$ and $0_\PP\notin A$ (the latter just for ``non-redundancy''); we would say that $A$ is a generating independency. Then $B:=\{\lor C:C\in 2^A \}$ is a noise Booean algebra -- the  classical discrete noise Boolean algebra associated to $A$. One can start with indexed independences (of non-degenerate random elements) in lieu of $A$ in the obvious way.
\end{example}
Any finite noise Boolean algebra is of the preceding form for a finite independency $A$.

\begin{example}[Simplest nonclassical noise Boolean algebra]\label{example:simplest-nonclassical}
Let $\mathcal{B}$ be the finite-cofinite algebra on $\mathbb{N}$ and suppose $(\xi_n)_{n\in \mathbb{N}}$ is a sequence of independent equiprobable random signs, which generate $1_\PP$. For $b\in \mathcal{B}$ finite let $N_b:=\sigma(\xi_i\xi_{i+1}:i\in b)$; for $b\in \mathcal{B}$ cofinite let $N_b:=\sigma(\xi_i\xi_{i+1}:i\in b_{<\max(\mathbb{N}\backslash b)})\lor \sigma(\xi_i:i\in b_{> \max(\mathbb{N}\backslash b)})$ ($\max\emptyset:=0$). Then $N:=(\BB\ni b\mapsto N_b)$ is a noise factorization and $B:=\{N_b:b\in \BB\}$ is the simplest nonclassical noise Boolean algebra of  \cite[Section~1.2]{tsirelson}. $B$ can be written as the $\uparrow$ (it means, nondecreasing) union of classical finite noise Boolean algebras $B_n$,  $n\in \mathbb{N}_0$, where, for  $n\in \mathbb{N}_0$, $B_n$ is attached to the independency $\xi_1\xi_2,\ldots,\xi_n\xi_{n+1},\xi\vert_{\mathbb{N}_{>n}}$. 
\end{example}

To incorporate ``translational invariance'' into the above framework, a system of homogeneities of a noise factorization $N:\BB\to \Lattice$ may be introduced as a triple $(H,T,\Theta)$, where $H$ is a group and where $T:H\to \mathrm{Aut}(\BB)$ and $\Theta:H\to \mathrm{Aut}(\PP)$ are group homomorphisms, such that for all $b\in \BB$ and $h\in H$, $\Theta_h$ sends $N(b)$ to $N(T_h(b))$ (thus making the noise factorization $N$ into a ``(homogeneous) noise'').  

\begin{example}\label{example:cts-1d-noise}
A one-dimensional noise is  a noise factorization  $N:\AAA_\mathbb{R}\to \Lattice$, where $\AAA_\mathbb{R}$ is the Boolean algebra of finite unions of intervals of $\mathbb{R}$, and for which, in the context of the preceding paragraph, $H$ is $\mathbb{R}$ under addition, $T_h$ is translation by $h$ (to the right) and the automorphism $\Theta_h$ sends $N(b)$ to $N(b+h)$ for $b\in \AAA_\mathbb{R}$, $h\in \mathbb{R}$. In various parts of the literature on noises sometimes continuity of the group action $\Theta$ is also assumed, e.g. in \cite[Definition 3.27]{picard2004lectures}\footnote{In general, if $H$ is a topological group then a homomorphism $\Psi:H\to \mathrm{Aut}(\PP)$ is continuous iff it is continuous at the neutral element $0$ of $H$, i.e. iff $\Theta_h(f)\to f$ [in $\L2(\PP)$] as $h\to 0$, which in turn is equivalent to $\lim_{h\to 0}\PP(\Theta_h(A)\triangle A)=0$ for all $A\in \FF$; if $H$ is a Polish group, then  continuity of $\Psi$ is even equivalent to Borel measurability \cite[Theorem~2.2]{rosendal}.}; another continuity condition that usually appears is that of the sequential continuity of $N$ itself (wherein convergence of a sequence in $\AAA_\mathbb{R}$ means convergence of the endpoints of the constituent intervals).
\end{example}
 
We will not really be concerned with ``homogeneities'' of noise factorizations at all, we mention them only to indicate the connection to what has more traditionally been considered a stochastic noise. To avoid sending the wrong message, though, let us note at once that one can clearly work with homogeneities also without reference to a factorization. In fact the collection $\mathrm{Hmg}(B):=\{\Theta\in \mathrm{Aut}(\PP):\Theta(B)=B\}$ is a subgroup of $\mathrm{Aut}(\PP)$, which might be called the group of homogeneities of $B$; each member thereof automatically carries $B$ onto $B$ as a homeomorphism and as a Boolean algebra isomorphism (because isomorphisms of probabilities ``preserve everything in sight'').

\item  $B$ is said to be complete (without further qualification) if it is closed under arbitrary joins (= generated $\sigma$-fields) and meets (= intersections). The collection $\mathrm{Cl}(B):=\{\liminf x:x\text{ sequence in }B\}$ is the   sequential monotone \cite[Theorem~1.6]{tsirelson} (or topological, it is the same \cite[Proposition~3.3]{tsirelson}) closure of $B$. $B$ is said to be noise complete if it is equal to  $\overline{B}=\{x\in \Cl(B):x\text{ has an independent complement that belongs to }\Cl(B)\}$, its noise completion, i.e.  the largest noise Boolean algebra contained in $\Cl(B)$ and containing $B$ \cite[Theorem~1.7]{tsirelson}. The projections associated to the members of $\Cl(B)$ commute \cite[Eq.~(4.7)]{tsirelson} (we say that $\Cl(B)$ is commutative) and (hence) $\PP_x\PP_y=\PP_y\PP_x=\PP_{x\land y}$ for $\{x,y\}\subset \Cl(B)$. We recall further  that $(\Cl(B)\times\Cl(B)\ni (x,y)\mapsto x\land y\in  \Lattice)$ is continuous  \cite[Eq.~(4.11)]{tsirelson} ($\therefore$ $\Cl(B)$ is closed for $\land$), this being true even of any commutative subset of $\Lattice$ in lieu of $\Cl(B)$ \cite[Proposition~3.4]{tsirelson}. The parallel statement for $\lor$ in lieu of $\land$ fails in general, even just on $B$. Nevertheless, $(\Cl(B)\ni y\mapsto x\lor y\in  \Lattice)$ is continuous for each $x\in \overline{B}$ \cite[Lemma~4.4]{tsirelson} ($\therefore$ $x\lor y\in\Cl(B)$ for $x\in \overline{B}$, $y\in \Cl(B)$), i.e. one has some degree of separate continuity for $\lor$. Whether or not $\Cl(B)$ is closed for $\lor$ in general is not known \cite[Question~1.11]{tsirelson} (and this text will not provide an answer). 

\begin{example}\label{example:closure-for-simplest-nonclassical}
The noise Boolean algebra of Example~\ref{example:classical} is complete, that of Example~\ref{example:simplest-nonclassical} merely noise complete, indeed in the latter case $\Cl(B)\backslash B=\{\sigma(\xi_i\xi_{i+1}:i\in I):I\in (2^\mathbb{N})_{\mathrm{inf}}\}$. 
Note that here $\Cl(B)$ is closed for $\lor$ even though $B$ is not classical, which answers the first query of \cite[Question~1.11]{tsirelson}. 
\end{example}

\item The collection of all finite noise Boolean subalgebras  (under $\PP$) of $B$ is denoted $\mathfrak{F}_B$. $\mathfrak{F}_B$ is naturally partially ordered by inclusion and we intend this order when speaking of nets indexed by $\mathfrak{F}_B$, limits along $\mathfrak{F}_B$ etc. A partition of $B$ is a finite independency $P\subset B\backslash \{0_\PP\}$ such that $\lor P=1_\PP$ or an enumeration $(x_1,\ldots,x_n)$, $n=\vert P\vert\in \mathbb{N}_0$, of such a $P$, whichever is the more convenient; in the trivial case when $0_\PP=1_\PP$, hence $B=\{1_\PP\}$ there is only one partition of unity, namely $\emptyset$, while if $0_\PP\ne 1_\PP$ then necessarily $n\in \mathbb{N}$. Each element of $b\in \mathfrak{F}_B$ is generated (as a Boolean algebra) by a unique partition of unity of $B$, denoted $\at(b)$, whose elements are called the atoms of $b$. Conversely, each finite subset, in particular each partition of unity, $P$ of $B$ generates an element of $\mathfrak{F}_B$, the smallest noise Boolean algebra that contains $P$, and denoted $b(P)$; especially, for each $x\in B$, $b_x:=b({\{x\}})=b(\{x,x'\})=\{0_\PP,x,x',1_\PP\}\in \mathfrak{F}_B$. One often approximates $B$ with a $\uparrow$  sequence $(b_n)_{n\in \mathbb{N}}$ in  $\mathfrak{F}_B$, whose union $\cup_{n\in \mathbb{N}}b_n$ --- a noise Boolean algebra in turn ---  is dense in $B$; by \ref{generalities:topology}\ref{rmk:general:Polish} such a sequence always exists because each finite subset $A$ of $B$ is contained in some element of $\mathfrak{F}_B$, namely $b(A)$.

\item\label{generalitites:subalgebra} For $x\in B$: $B_x$ is a noise Boolean algebra under $\PP\vert_x$;  ${}_xB$ is a Boolean algebra under the obvious operations that it inherits from $B$ (or from $B_{x'}$ to which it is isomorphic via the map $(B_{x'}\ni u\mapsto x\lor u\in {}_xB)$). 

Further, it is checked easily that for $\GG\in \Lattice$, $B\vert_\GG:=\{x\land \GG:x\in B\}$ is a noise Boolean algebra under $\PP\vert_\GG$ if $\GG$ distributes over $B$, i.e. if $(x\land \GG)\lor (y\land \GG)=(x\lor y)\land \GG$ for $\{x,y\}\subset B$ (the  condition is in general not necessary: for instance it can happen that $\GG\ne 0_\PP$ and $x\land \GG=0_\PP$ for all $x\in B\backslash \{1_\PP,0_\PP\}\ne \emptyset$). Distributivity of $\GG$ over $B$ is equivalent to $(x\lor y)\land \GG\subset (x\land \GG)\lor (y\land \GG)$ for $\{x,y\}\subset B$ (because the inclusion $\supset$ is automatic); further one may assume that $x$ and $y$ are independent (for the reduction use distributivity in $B$ to write $x\lor y=x\lor (y\land x')$) and then that $y=x'$ (this time around use general distributivity over independent elements of $\Lattice$ for the reduction), in which case it is actually enough to find that $\GG=\GG_x\lor \GG_{x'}$ for some $\GG_x\in \Lattice_x$ and $\GG_{x'}\in \Lattice_{x'}$, because then automatically $\GG\land x=\GG_x$ and $\GG\land x'=\GG_{x'}$. Of course if a $\GG\in \Lattice$ distributes over $B$, then  $\GG=\lor_{p\in P}(\GG\land p)$ for any partition of unity $P$ of $B$. 

If $y\in \Cl(B)$ then $y$ distributes over $B$ because one can pass to the limit in $x_n\land (y_1\lor y_2)=(x_n\land y_1)\lor (x_n\land y_2)$ for $B\ni x_n\to y$ as $n\to\infty$, $\{y_1,y_2\}\subset B$, $y_1\land y_2=0_\PP$. On the other hand, if a $\GG\in \Lattice$ distributes over $B$, then it commutes with $B$ as well: for $x\in B$ and $f\in \FF/\mathcal{B}_{[0,\infty]}$ we may write $\PP[f\vert\GG]=\PP[f\vert (x\land \GG)\lor (x'\land \GG)]=F(X,X')$ a.s.-$\PP$ for  suitable $X\in (x\land \GG)/\mathcal{B}_\mathbb{R}$, $X'\in (x'\land \GG)/\mathcal{B}_\mathbb{R}$, $F\in \mathcal{B}_{\mathbb{R}^2}/\mathcal{B}_{[0,\infty]}$, whence $\PP[\PP[f\vert \GG]\vert x]=\PP[F(X,X')\vert x]=\PP[F(X,X')\vert X]=\PP[\PP[f\vert \GG]\GG\land x]=\PP[f\vert \GG\land x]$ a.s.-$\PP$, and thus $\PP_x\PP_\GG=\PP_{\GG\land x}$; similarly $\PP[\PP[f\vert x]\vert\GG]=\PP[\PP[f\vert x]\vert (x\land \GG)\lor (x'\land \GG)]]=\PP[\PP[f\vert x]\vert x\land \GG]=\PP[f\vert x\land \GG]$ a.s.-$\PP$, thus also $\PP_\GG\PP_x=\PP_{\GG\land x}$.

Incidentally, continuing in the theme of the preceding paragraph, one can also check that for independent $x$ and $\tilde{x}$ in $\Lattice$ there exists a noise Boolean algebra $C$ (under $\PP$) containing $B\cup \{x,\tilde{x}\}$ and  having $\tilde{x}=x'$, if and only if $x$ and $\tilde{x}$ distribute over $B$ and  $y=(y\land x)\lor (y\land \tilde{x})$ for all $y\in B$, in which case one may take $C=\{(x\land y)\lor (\tilde{x}\land \tilde{y}):(y,\tilde{y})\in B^2\}$. But we will not need this fact.

\item\label{generalities:o} An $f\in \mathrm{L}^1(\PP)$ is an additive integral of $B$ if $\PP[f]=0$ and $f=\PP[f\vert x]+\PP[f\vert x']$ for all $x\in B$. Automatically  $\PP[f\vert x]$ is also an additive integral for every $x\in B$  and (hence) $f=\sum_{x\in P}\PP[f\vert x]$ for every partition of unity $P$ of $B$; $(\PP[f\vert x])_{x\in B}$ is a kind-of process with $B$-independent additive increments  -- an additively decomposable process.  If an $f\in \mathrm{L}^1(\PP)$ decomposes into a sum of an element of $\mathrm{L}^1(\PP\vert_x)$ and of an element of $\mathrm{L}^1(\PP\vert_{x'})$ for each $x\in B$, then $f-\PP[f]$ is an additive integral of $B$. 

Likewise, an $f\in \mathrm{L}^1(\PP)$ with $\PP[f]=1$ and $f=\PP[f\vert x]\PP[f\vert x']$ for all $x\in B$ is called a multiplicative integral of $B$. For such an $f$, $\PP[f\vert x]$ is also a multiplicative integral for all $x\in B$ and $f=\prod_{x\in P}\PP[f\vert x]$ for all partitions of unity $P$ of $B$;  $(\PP[f\vert x])_{x\in B}$ is a kind-of process with $B$-independent multiplicative increments -- a multiplicatively decomposable process. If an $f\in \mathrm{L}^1(\PP)$ of non-zero mean decomposes into a product of an element of  $\mathrm{L}^1(\PP\vert_x)$ and of an element of $\mathrm{L}^1(\PP\vert_{x'})$ for each $x\in B$, then $f/\PP[f]$ is a multiplicative integral of $B$.

\setcounter{count1}{\value{enumi}}

\end{enumerate}

\subsection{Spectrum and chaos decomposition}\label{subsection:spectrum-and-chaos}

\begin{enumerate}[wide, labelwidth=!, labelindent=0pt,label=\textbf{(\alph*)}]
\setcounter{enumi}{\value{count1}}

\item\label{generalities:spectrum-intro} $B$ admits a (non-unique) spectral resolution/spectrum $((S,\Sigma,\mu);\Psi)$ in the following sense: $(S,\Sigma,\mu)$ is a standard measure space (the spectral space); $\Psi$  is a unitary isomorphism from $\L2(\PP)$ onto  the direct integral \cite[p.~169, Definition~II.4.5.3]{dixmier1981neumann} $\int_S^\oplus H_s\mu(\dd s)$ of a $\mu$-measurable field of a.e.-$\mu$ non-zero, separable Hilbert spaces $(H_s)_{s\in S}$, which sends the algebra $\AA:=\AA_B$ of self-adjoint operators on $\L2(\PP)$ generated by $\{\PP_x:x\in B\}$, i.e. the strong (or weak, what is the same by the von Neumann bicommutant theorem) closure of the real linear span of the projections $\PP_x$, $x\in B$, onto the algebra of diagonalizable operators \cite[A.84]{dixmier-c-star}.  We denote by $\alpha:\AA\to \mathrm{L}^\infty(\mu)$ the corresponding bijective map which satisfies $\Psi(Af)=\alpha(A)\Psi(f)$ for $f\in \L2(\PP)$, $A\in \AA$.  Strong operator convergence in $\AA$ of a norm-bounded  sequence (resp. that is $\uparrow$, that is  $\downarrow$) corresponds to convergence locally in (i.e. on every measurable set of finite) $\mu$-measure in $\mathrm{L}^\infty(\mu)$ (resp. that is $\uparrow$ a.e.-$\mu$, $\downarrow$ a.e.-$\mu$).

Up to unitary equivalence (modifying (if necessary) $\Psi$, but not $\alpha$ and $(S,\Sigma)$) we may replace $\mu$ by any equivalent $\sigma$-finite measure \cite[A.75]{dixmier-c-star}, and we may assume if we like (but we do not) that $(H_s)_{s\in S}$ is a ``standardized'' field in the sense that there is a $\Sigma$-measurable partition $(S_n)_{n\in \mathbb{N}\cup \{\infty\}}$ of $S$ (possibly some of the elements of the partition have $\mu$-measure zero) such that  $H_s=\mathbb{R}^n$ for $s\in S_n$, $n\in \mathbb{N}$, while $H_s=l^2(\mathbb{R})$ for $s\in S_\infty$, with the elements of $\int_S^\oplus H_s\mu(\dd s)$ being the square-integrable $\mu$-equivalence classes of the measurable cross-sections of $(H_s)_{s\in S}$ (meaning that they are measurable on restriction to each $S_n$, $n\in \mathbb{N}\cup \{\infty\}$) \cite[p.~166, Proposition~II.1.4.1]{dixmier1981neumann}. It is worth emphasizing that, despite the failure of the notation to make this explicit, $\int_S^\oplus H_s\mu(\dd s)$  actually depends on the choice of the underlying $\mu$-measurable vector fields \cite[p.~164, Definition~II.1.3.1]{dixmier1981neumann}, indeed it consists of the (equivalence classes of) $\mu$-measurable vector fields $(f_s)_{s\in S}\in (H_s)_{s\in S}$ for which $\int \Vert f(s)\Vert^2_s\mu(\dd s)<\infty$ ($\Vert \cdot \Vert_s$ being the norm on $H_s$, $s\in S$, but we will omit the $s$ in what follows); in case of a  ``standardized representation'' the ``standard'' measurability choice is made. 

Strictly speaking, we have cheated a little in the preceding, because we have worked with the real instead of the complex number field while applying the results concerning the spectral resolution of $\AA$ (these results are provided for the complex terrain). However, if pressed, this can be remedied by simply starting with the complex incarnations of the conditional expectation operators to get a commutative von Neumann algebra in the usual sense, and then returning to the real world. We make this precise in the following (fundamental) remark.

\begin{remark}\label{rmk:complex-to-real}
 Let $\AA^\mathbb{C}$ be the (complex) commutative von Neumann algebra generated by $\{\PP_x^\mathbb{C}:x\in B\}$, where $\PP_x^\mathbb{C}$ is the conditional expectation w.r.t. $x$ when viewed as acting on $\L2(\PP;\mathbb{C})$. Thus $\AA^\mathbb{C}$ is  the strong (equivalently, weak) closure of the complex linear span of the projections $\PP_x^\mathbb{C}$, $x\in B$. We get (this time, no cheating) a unitary isomorphism $\Psi^\mathbb{C}$ from $\L2(\PP;\mathbb{C})$ onto  the direct integral $\int_S^\oplus H_s^\mathbb{C}\mu(\dd s)$ of a $\mu$-measurable field of a.e.-$\mu$ non-zero separable complex Hilbert spaces $(H_s^\mathbb{C})_{s\in S}$, which sends the algebra $\AA^\mathbb{C}$ onto the algebra of diagonalizable operators: there is a unique bijection $\alpha^\mathbb{C}:\AA^\mathbb{C}\to \mathrm{L}^\infty(\mu;\mathbb{C})$ such that $\Psi^\mathbb{C}(Af)=\alpha^\mathbb{C}(A)\Psi^\mathbb{C}(f)$ for $f\in \L2(\PP;\mathbb{C})$, $A\in \AA^\mathbb{C}$. 

\noindent The subspace of $\AA^\mathbb{C}$ consisting of Hermitian (self-adjoint) elements is precisely the strong (equivalently, weak) closure of the real  linear span of the projections $\PP_x^\mathbb{C}$, $x\in B$ (by the Kaplansky density and the double commutant theorems); the map $\alpha^\mathbb{C}$ maps it onto $\mathrm{L}^\infty(\mu)$. Let $(e_i)_{i\in I}$ be an orthonormal basis of $\L2(\PP)$; it is also an orthonormal basis of $\L2(\PP;\mathbb{C})$. Fix versions of $\Psi^\mathbb{C}(e_i)$, $i\in I$, and let, for $s\in S$, $H_s$ be the real closed vector subspace generated by $\Psi^\mathbb{C}(e_i)(s)$, $i\in I$. Then for $A\in \Sigma$, multiplication by $\mathbbm{1}_A$ in  $\int_S^\oplus H_s^\mathbb{C}\mu(\dd s)$ corresponds to the action of the self-adjoint operator $L:=(\alpha^\mathbb{C})^{-1}(\mathbbm{1}_A)$ on $\L2(\PP;\mathbb{C})$, which means that, for $\{i,j\}\subset I$, the quantity $\langle e_i,Le_j\rangle=\int_A\langle \Psi(e_i)(s),\Psi(e_j)(s)\rangle_s^\mathbb{C}\mu(\dd s)$ is real ($\langle\cdot,\cdot\rangle^\mathbb{C}_s$ being the scalar product in $H_s^\mathbb{C}$, $s\in S$), i.e. $\langle \Psi(e_i)(s),\Psi(e_j)(s)\rangle_s^\mathbb{C}$ is real for $\mu$-a.e. $s$. Since $I$ is countable, discarding a $\mu$-negligible set, we may assume that $H_s$ is a totally real subspace of $H^\mathbb{C}_s$ for each $s\in S$ and we may and do view it as a real Hilbert space. Then we take for the $\mu$-measurable vector fields of $(H_s)_{s\in S}$ those that are $\mu$-measurable when viewed as elements of $(H^\mathbb{C}_s)_{s\in S}$. Clearly $\Psi:=\Psi\vert_{\L2(\PP)}$ is a unitary isomorphism from $\L2(\PP)$ onto $\int_S^\oplus H_s\mu(\dd s)$. 
It sends $\AA$ onto  the algebra of diagonalizable operators of $\int_S^\oplus H_s\mu(\dd s)$. 

\noindent When (by an abuse) we speak of the (abelian) von Neumann algebra $\AA$ we intend actually $\AA^\mathbb{C}$ togeher with the above circumvention.
\end{remark}
\vspace{-0.25cm}
If $x\in \Cl(B)$, then $\PP_x\in \AA$; whenever $\PP_x\in \AA$ for an $x\in \Lattice$, then the a.e.-$\mu$ uniquely determined element of $\Sigma$ that corresponds via the spectral resolution to the projection $\PP_x$ is denoted $S_x$ and called the spectral set of $x$ (if it has been made explicit that $S\subset\Lattice$ /which will happen on occasion/, then we agree that this meaning of $S_x$, not $S\cap \Lattice_x$ of \ref{generalities:lattice} prevails; when it is not given that $S\subset \Lattice$, then no confusion is possible as to which of the two meanings is intended). Thus $\Psi(\PP_x(f))=\mathbbm{1}_{S_x}\Psi(f)$ for $f\in \L2(\PP)$, i.e. $\mathbbm{1}_{S_x}=\alpha(\PP_x)$ a.e.-$\mu$. Of course $S_{x\land y}=S_x\cap S_y$ a.e.-$\mu$ for $\{x,y\}\subset \Lattice$ for which $\{\PP_x,\PP_y\}\subset \AA$. $S_{0_\PP}$ is associated to the minimal projection $\PP_{0_\PP}$ of $\AA$ onto the one-dimensional space of constants (just the expectation operator); because of standardness there is a unique $\emptyset_S\in S$ such that $S_{0_\PP}=\{\emptyset_S\}$ a.e.-$\mu$, and $H_{\emptyset_S}$ is one-dimensional (and might be taken to be $\mathbb{R}$ without changing the spectral space and the spectral sets $S_x$, $x\in B$). 

Convergence of sequences in $\Cl(B)$ corresponds to local convergence in  $\mu$-measure of the associated spectral sets: $x_n\to x$ as $n\to\infty$  iff $S_{x_n}\to S_x$ as $n\to\infty$ locally in  $\mu$-measure, also $x_n\uparrow x$ (resp. $x_n\downarrow x$) as $n\to\infty$  iff $S_{x_n}\uparrow S_x$ (resp. $S_{x_n}\downarrow S_x$) as $n\to\infty$ a.e.-$\mu$, this for any sequence $(x_n)_{n\in \mathbb{N}}$ in $\Cl(B)$ and any $x\in \Cl(B)$. If a noise Boolean algebra $B_0\subset B$ is countable and dense, then versions of $S_x$, $x\in B_0$, can be chosen such that $\{S_x:x\in B_0\}$ is a $\pi$-system, and automatically $\sigma(S_x:x\in B_0)= \Sigma$ ($\therefore$ the $S_x$, $x\in B_0$, separate a $\mu$-conegligible subset of $S$); we would say that $\{S_x:x\in B\}$ is an essentially generating ($\therefore$ essentially separating) $\pi$-system, and we can anyway even without choosing versions write without ambiguity that $\sigma(S_x:x\in B)=\Sigma$, because $\sigma$ also ``throws in'' the $\mu$-negligible sets as per our convention \ref{generalities:a}.

For $f\in \L2(\PP)$, the indefinite integral $\mu_f:=\Vert \Psi(f)\Vert^2\cdot \mu$ is  the $\mu$-spectral measure of $f$; $\mu_f(S_x)=\PP[\PP[f\vert x]^2]$ for $x\in \Cl(B)$, in particular for an $f\in\L2(\PP\vert_x)$ we have that $\mu_f$ is carried by $S_x$. Here $\Vert \Psi(f)\Vert^2$ is the map $(S\ni s\mapsto \Vert \Psi(f)_s\Vert^2)$, not the squared norm of $\Psi(f)$ in $\int^\oplus_S H_s\mu(\dd s)$, which is anyway just $\Vert f\Vert^2$; we trust such notational shenanigans can be gathered from context. All the $\mu_f$, $f\in \L2(\PP)$, are absolutely continuous w.r.t. $\mu$; there is always an $f\in \L2(\PP)$ for which $\mu_f\sim \mu$. If a finite measure $\nu$ is absolutely continuous w.r.t. $\mu$, then there is a (generically non-unique) $\hat{f}\in \int_S^\oplus H_s\mu(\dd s)$ such that $\Vert \hat f\Vert^2=\frac{\dd\nu}{\dd\mu}$ a.e.-$\mu$, and hence $\mu_{\Psi^{-1}(\hat{f})}=\nu$. For $f\in\L2(\PP)$ satisfying $\PP[f^2]=1$ the measure $\mu_f$ has mass one and it is helpful to think of $\mu_f$ as the probabilty law of a point in the spectral space; often the spectral space consists of subsets of a domain space in which case $\mu_f$ is then the law of a random set -- the random spectral set corresponding to $f$. 
 
 For $x\in B$ we may restrict $(S,\Sigma,\mu)$ to $(S_x,\Sigma\vert_{S_x},\mu\vert_{S_x})$ and $\Psi$ to (by slight abuse of notation) $\Psi\vert_{\L2(\PP\vert_x)}\vert_{S_x}$ in parallel, thus obtaining a spectrum for the noise Boolean subalgebra $B_x$. $B$ and $\overline{B}$ generate the same commutative von Neumann algebra: $\AA_B=\AA_{\overline{B}}$. Therefore $((S,\Sigma,\mu);\Psi)$ is also a spectrum for $\overline{B}$.

 To each $E\in \Sigma$ is associated canonically by the spectral decomposition the closed linear subspace $H(E):=\Psi^{-1}(\int_E^\oplus H_s\mu(\dd s))$ of $\L2(\PP)$; $\L2(\PP\vert_x)=H(S_x)$ for $x\in \Cl(B)$, of course.   We have $H(A\cap B)=H(A)\cap H(B)$ and $H(A\cup B)=H(A)+H(B)$ (orthogonal sum, if $A\cap B=\emptyset$ a.e.-$\mu$) for $\{A,B\}\subset \Sigma$ \cite[Eq.~(2.14)]{tsirelson}; moreover, for a sequence $(E_n)_{n\in \mathbb{N}}$ in $\Sigma$, $H(\cap_{n\in \mathbb{N}}E_n)=\cap_{n\in \mathbb{N}}H(E_n)$, while $H(\cup_{n\in \mathbb{N}}E_n)$ is the smallest closed linear subspace of $\L2(\PP)$ containing each $H(E_n)$, $n\in \mathbb{N}$ \cite[Eq.~(2.15)]{tsirelson}. The map $(\Sigma\ni A\mapsto \pr_{H(A)})$ is a projection-valued (a.k.a. spectral) measure acting on the Hilbert space $\L2(\PP)$.

More generally, for $x\in B$, let $\AA_x$ be the weak/strong closure of the real linear manifold generated by $\{\PP_{u\lor x'}:u\in B_x\}$ 
and put $\Sigma_x:=\sigma(S_{u\lor x'}:u\in B_x)$ (so $\AA_{1_\PP}=\AA$ and $\Sigma_{1_\PP}=\Sigma$); note that $\Sigma=\Sigma_x$ on $S_x$, that $S_{x'}$ is an atom of $\Sigma_x$ (it corresponds to them minimal projection $\PP_{x'}$ of $\AA_x$), and that $\Sigma_x\lor\Sigma_{x'}=\Sigma$ \cite[Eq. (7.18)]{tsirelson}. If a noise Boolean algebra $B_0\subset B_{x}$ (under $\PP\vert_x$) is countable and dense, then versions of $S_{u\lor x'}$, $u\in B_0$, may be chosen such that $\{S_{u\lor x'}:u\in B_0\}$ becomes a $\pi$-system and automatically $\sigma(S_{u\lor x'}:u\in B_0)=\Sigma_x$. There is a probability $\nu\sim\mu$ such that $\Sigma_x$ and $\Sigma_{x'}$ are $\nu$-independent \cite[Eq.~(7.17)]{tsirelson}; in fact, if $f\in \L2(\PP\vert_x)$ with $\PP[f^2]=1$ is such  that $\mu_f\sim \mu$ on $S_x$ and $f'\in \L2(\PP\vert_{x'})$ with $\PP[{f'}^2]=1$ is  such that $\mu_{f'}\sim \mu$ on $S_{x'}$ (they exist), then $\mu_{ff'}$ is a probability $\sim \mu$ under which $\Sigma_x$ and $\Sigma_{x'}$ are independent (it follows by following the hint for the proof of \cite[Fact~2.28]{tsirelson} because $\mu$ as constructed there from $\mu_f$ and $\mu_{f'}$ turns out to be precisely $\mu_{ff'}$). The obvious generalization of the preceding to a partition of unity of  $B$ (in lieu of $(x,x')$) is straightforward.

\begin{example}[Spectrum of finite noise Boolean algebra]\label{example:spectral-finite}
The simplest case is when $B$ is finite. For then we may take $S=B$, $\Sigma=2^S$, $\mu=$ counting measure, for $x\in B$, $H_x=\L2(\PP\vert_x)^\circ:=$  the orthogonal complement of (the closure of the linear span of) $\cup_{y\in B_x\backslash \{x\}}\L2(\PP\vert_y)$ in $\L2(\PP\vert_x)$, and  $\Psi=$ the  canonical isomorphism between $\L2(\PP)$ and $\oplus_{x\in B}H_x;$ in particular $\Psi^{-1}(e(g))=\prod_{a\in \at(B)}(1+g(a))$, where $e(g)(x):=\prod_{a\in \at(B)\cap 2^x}g(a)$, $x\in B$, is the ``exponential'' vector associated to $g=(g_a)_{a\in \at(B)}\in (H_a)_{a\in \at(B)}$. It is  the joint diagonalization (joint representation by multiplication operators) of the (of which there are finitely many) commuting  projections $\PP_x$, $x\in B$. For $x\in B$, $S_x=B_x$, of course. For $a\in \at(B)$, let $\pr_a:=(B\ni x\mapsto a\land x\in \{0_\PP,a\})$. Then, for $x\in B$, $\Sigma_x=\sigma(\pr_a:a\in \at(B)\cap B_x)$. 

\noindent In preparation of Section~\ref{subsection:spectral-independence} let us observe the following. \textbf{($\dagger$)} A probability $\sim \mu$ under which $\Sigma_x$ and $\Sigma_{x'}$ are independent for each $x\in B$ is given by $\mu_f$ for any $f$ of the form $\prod_{a\in \at(B)}g_a$ with $g_a\in \L2(\PP\vert_a)$ non-trivial and satisfying $\PP[g_a]\ne 0$, $\PP[g_a^2]=1$ for all $a\in \at(B)$ (random variables with these qualities for sure exist because each $a \in \at(B)$ contains an event $A_a\in \FF$ with $p_a:=\PP(A_a)\notin\{0,1\}$, namely one can take $g_a=(\sqrt{1-p}-\sqrt{p(1-p_a)/p_a})\mathbbm{1}_{A_a}+(\sqrt{1-p}+\sqrt{pp_a/(1-p_a)})\mathbbm{1}_{\Omega\backslash A_a}$ for a $p\in  (0,1)$ and then $1-\PP[g_a]^2=p$). Such probability $\mu_f$ can equivalently be described as the law of the random element of $B$ which one gets by including each atom of $B$ independently of the others with probability $1-\PP[g_a]^2$, cf. also \cite[Section~1]{tsirelson-arxiv-5} \cite[Eq.~(7.3)]{tsirelson}.  \textbf{($\dagger$)} (We will make observations connected to the content of Section~\ref{subsection:spectral-independence} in a couple more examples and will indicate their relevance to said subsection by placing them in-between  \textbf{($\dagger$)} symbols. These remarks concerning ``spectral independence'' could be deferred to Section~\ref{subsection:spectral-independence} but it seems actually more forgiving to state them en passant, as and when particular instances of spectra are considered.)

\noindent Here is a fun little fact that we will not need in the sequel, but may be illuminating. Suppose $\tilde{B}$ is another finite noise Boolean algebra that is finer than $B$ in the sense that $B\subset \tilde{B}$. Associate to $\tilde{B}$ the spectral resolution as above, in particular the spectral measure $\tilde{\mu}$. $\mu$ and $\tilde{\mu}$ are not directly comparable, however for a given $f\in\L2(\PP)$, $\mu_f$ and $\tilde{\mu}_f$ are, indeed $\tilde{\mu}_f\leq \mu_f$ on $2^B$. For, if $B$ has the $n+1$ atoms $a_1,\ldots,a_n,a$ and $\tilde{B}$ has the $n+2$ atoms $a_1,\ldots,a_n,b,c$ (so in particular $a=b\lor c$), $n\in \mathbb{N}_0$, then by inclusion-exclusion  $\mu_f(\{1_\PP\})-\tilde{\mu}_f(\{1_\PP\})=\tilde{\mu}_f(\{a_1\lor \cdots \lor a_n\lor b\})+\tilde{\mu}_f(\{a_1\lor \cdots \lor a_n\lor c\})\geq 0$ (this establishes $\tilde{\mu}_f\leq \mu_f$ only for a special constellation of $B$ vis-\`a-vis $\tilde{B}$ and even then only on $\{1_\PP\}$, but the general case follows at once from this). It corresponds to the fact that in \ref{generalities:chaoses} below the sets $P_b(y)$ are nonincreasing in  $b\in \mathfrak{F}_B$ , $ b\ni y$.
\end{example}

 \begin{example}[Spectrum of Wiener noise]\label{example:wiener}
 Suppose $B$ is the noise Boolean algebra associated to the increments of a Wiener process $W$ on $\mathbb{R}$: each element of $B$ is the $\sigma$-field $N_A$ of the increments of $W$ on some disjoint union of intervals $A$ of $\mathbb{R}$, and all such $N_A$ belong to $B$. Of course the two-sided Brownian motion $W$ (vanishing at zero) must generate the whole of $1_\PP$. Due to the Wiener chaos expansion one may take  $S=(2^\mathbb{R})_{\mathrm{fin}}$, for $\mu$ the ``symmetrized'' Lebesgue measure  on each ``fiber'' corresponding to sets of a given size, for $s\in S$ each $H_s$ is just $\mathbb{R}$, and the map $\Psi$ sends an element of $\L2(\PP)$ to the components of its Wiener chaos expansion: more precisely, for $f\in \L2(\mu)$, we have $$\Psi^{-1}(f)=\sum_{k\in \mathbb{N}_0}\int_{{\mathbb{R}\choose k}} f(t)\dd^k W_t=f(\emptyset)+\int_{-\infty}^\infty f(\{t\}) \dd W_t+\int_{-\infty}^\infty\int_{-\infty}^t f(\{t,u\})\dd W_u\dd W_t+\cdots, $$ where the stochastic integrals are in the sense of Wiener-It\^o. In particular, $\Psi^{-1}(e(g))=\exp\left(\int_{-\infty}^\infty g(t)\dd W_t-\frac{1}{2}\int_ {-\infty}^\infty g^2(t)\dd t\right)$, where $e(g)(S):=\prod_{t\in S}g(t)$, $S\in (2^\mathbb{R})_{\mathrm{fin}}$, is the exponential vector corresponding to  $g\in \L2(\mathbb{R})$, which is the well-known connection between Brownian motion and the symmetric (boson) Fock space of $\L2(\mathbb{R})$ \cite[Example~19.9]{parthasarathy}.  For a disjoint union of intervals $A$ of $\mathbb{R}$ the corresponding $\sigma$-field $N_A\in B$ has $\Sigma_{N_A}$ generated by the counting maps associated to subintervals of $A$; the spectral set $S_{N_A}$ is equal to $\{s\in S:s\subset A\}$ a.e.-$\mu$. 

 \noindent \textbf{($\dagger$)} A probability equivalent to $\mu$ under which $\Sigma_x$ and $\Sigma_{x'}$ are independent for each $x\in B$ is given by the law of a Poisson random measure with a finite intensity measure equivalent to Lebesgue measure. Indeed this probability is $\mu_f$ for $f=\frac{\Psi^{-1}(e(r/2))}{\Vert \Psi^{-1}(e(r/2))\Vert}=\sqrt{\exp\left(\int_{-\infty}^\infty r(s)\dd W_s-\frac{1}{2}\int_{-\infty}^\infty r^2(s)\dd s\right)}$ with $(r/2)^2$ being the density of the intensity measure. \textbf{($\dagger$)} 
 
 \noindent To turn the noise factorization $N$  canonically into a  one-dimensional noise (cf.  Example~\ref{example:cts-1d-noise}) one would take for $\Theta_h$ the unique isomorphism which sends each random variable $W_t-W_s$ to $W_{t+h}-W_{s+h}$, $s\leq t$ being real numbers, $h\in \mathbb{R}$.
 \end{example}

\item\label{generalities:chaoses} Let for each $b\in \mathfrak{F}_B$ and then, for $\mu$-a.e. (if versions of  $S_x$, $x\in b$, are fixed, satisfiying $S_{x\land y}=S_x\cap S_y$ for $\{x,y\}\subset B$ and $S_{1_\PP}=S$, then the a.e. qualifiers can be dropped) $s\in S$, 
\begin{itemize}
\item[--] 
$\underline{b}(s)$ be the least element $x\in b$ whose spectral set $S_x$ contains $s$ (so $\underline{b}(s)\subset x$ iff $s\in S_x$, this for all $x\in b$), in other words $\underline{b}=y$ a.e.-$\mu$ on $ P_b(y):=S_{y}\backslash \cup_{z\in b_y\backslash \{y\}}S_{z}$ for all  $y\in b$;

\item[and]
\item[--]  
 $K_b(s)$ be the number of atoms of $b$ making up $\underline{b}(s)$, so that $K_b=\sum_{x\in \at(b)}\mathbbm{1}_{S\backslash S_{x'}}$ a.e.-$\mu$, in other words $\{K_b=k\}=\cup_{I\in {\at(b)\choose k} }P_b(\lor I)$ a.e.-$\mu$, $k\in \{0,\ldots,\vert \at(b)\vert\}$.
 \end{itemize}
The sets $P_b(y)$, $y\in b$, are pairwise $\mu$-disjoint and their union is $S$ a.e.-$\mu$ (they are indeed the partition generated by the  sets $S_y$, $y\in b$). Put $K:=K_B:=\mu\text{-}\esssup_{b\in \mathfrak{F}_B} K_b$ \cite[Eq.~(7.21)]{tsirelson}; of course this notation is consistent when $B$ is finite with the $K_b$, $b\in \mathfrak{F}_B$, already introduced. We call $K$ the counting map for $B$.

\item\label{generalites:chaos-spaces} The chaos spaces of $B$ are introduced as $H^{(n)}:=H^{(n)}(B):=H(\{K=n\})$, and we also put $H^{\leq n}:=H^{\leq n}(B):=\oplus_{k=0}^nH^{(k)}$ for $n\in \mathbb{N}_0$, $H_{\mathrm{stb}}:=H_{\mathrm{stb}}(B):=\oplus_{n\in \mathbb{N}_0}H^{(n)}=H(\{K<\infty\})$. Of course $\{K=0\}=S_{0_\PP}$ a.e.-$\mu$ and $H^{(0)}$ is the one-dimensional space of constants. $\stable:=\stable(B):=\sigma(H^{(1)})=\sigma(H_\mathrm{stb})$ \cite[Proposition~7.9]{tsirelson} is the stable $\sigma$-field. The first chaos, $H^{(1)}=\{f\in \L2(\PP):f=\PP[f\vert x]+\PP[f\vert x']\text{ for all }x\in B\}$ \cite[Proposition~7.8]{tsirelson}, is especially important; its elements are the square-integrable  additive integrals of $B$, and $B$ is said to be classical/linearizable/white (resp. black) when these generate the whole of $1_\PP$, i.e. when $\FF_{\mathrm{stb}}=\FF$ (resp. when  $H^{(1)}=\{0\}$, i.e. $\FF_{\mathrm{stb}}=0_\PP$, but $0_\PP\ne 1_\PP$). $H^{(1)}$ is closed for the action of all the $\PP_x$, $x\in B$. If $B$ is classical, then $x=\sigma(\L2(\PP\vert_x)\cap {H^{(1)}})$ for $x\in B$ \cite[Lemma~6.2]{tsirelson}. We put $H_{\mathrm{sens}}:=H_{\mathrm{sens}}(B):=H(\{K=\infty\})$. Thus $\L2(\PP)=H_{\mathrm{stb}}\oplus H_{\mathrm{sens}}$.
 
 A major result of \cite{tsirelson} is that $B$ is classical iff there exists a complete noise Boolean algebra $\hat{B}$ containing $B$ iff $(\lor_{n\in \mathbb{N}}x_n)\lor (\land_{n\in \mathbb{N}}x_n')=1_\PP$ for all $\uparrow$ sequences $(x_n)_{n\in \mathbb{N}}$ in $B$ \cite[Theorem~1.5]{tsirelson} iff $K<\infty$ a.e.-$\mu$ \cite[Theorem~7.7]{tsirelson} iff $\overline{B}=\mathrm{Cl}(B)$ \cite[Corollary~4.7]{tsirelson}. In particular, $B$ is classical iff $H_{\mathrm{sens}}=\{0\}$, in which case $\L2(\PP)=H_{\mathrm{stb}}=\oplus_{n\in \mathbb{N}_0}H^{(n)}$; it is black iff $H_{\mathrm{sens}}=\L2(\PP)_0\ne \{0\}$. 
 
 Existence of atoms precludes blackness: if $a\in B$ is an atom of the Boolean algebra $B$ and $0_\PP\ne 1_\PP$, then $\{0\}\ne \L2(\PP\vert_a)_0\subset H^{(1)}$.  If $B_0$ is a noise Boolean algebra with the same closure as $B$ (in particular if $B_0$ is dense in $B$), then $B_0$ is black (resp. classical) iff  $B$ is black (resp. classical), indeed the $B$ and $B_0$ have the same first chaos \cite[Proposition~1.10]{tsirelson}.  

For $k\in \mathbb{N}_0$, $H^{(k)}$ is generated (as a closed linear subspace of $\L2(\PP)$) by the union, over all  independent $(x_1,\ldots,x_k)\in B^k$, of the subspaces $\otimes_{i\in [k]}(\L2(\PP\vert_{x_i})\cap H^{(1)})$ (see discussion at the end of \cite[p. 349]{tsirelson} (penultimate display) and the line of argument of \cite[proof of Proposition~7.9]{tsirelson}); it is of course the same as saying that $H^{(k)}$ is generated by $\cup_{x\in B}(\L2(\PP\vert_{x})\cap H^{(1)})\otimes (\L2(\PP\vert_{x'})\cap H^{(k-1)})$ for all $k\in \mathbb{N}_0$ ($H^{(-1)}:=H^{\leq -1}:=\{0\}$). From this it follows that, for all $k\in \mathbb{N}_0$, $H^{(k)}$ is closed under $\PP_x$ for all $x\in B$ (hence under all the elements of $\AA$); therefore the same is true for $H_{\mathrm{stb}}$ and $H_{\mathrm{sens}}$. Notice also that the first chaos space $H^{(1)}$ determines already all the chaos spaces $H^{(k)}$, $k\in \mathbb{N}_0$, hence $H_{\mathrm{stb}}$ and $H_{\mathrm{sens}}$; these spaces were introduced via the spectrum, but they do not depend on it. 

For all $x\in B$: $K_{B_x}=K$ a.e.-$\mu$ on ${S_x}$ and each chaos $H^{(k)}$ is ``local'' in the sense that $H^{(k)}(B_x)=H^{(k)}\cap \L2(\PP\vert_x)$, $k\in \mathbb{N}_0$. 
 
 \begin{example}\label{ex:finite-noise-H-spaces}
 Return to Example~\ref{example:spectral-finite}, the noise Boolean algebra of which is classical, of course. For $k\in \mathbb{N}_0$ we have $H^{(k)}$ equal to closure of the linear span of products of the form $f_1\cdots f_k$, where $f_i\in \L2(\PP\vert_{a_i})_0$ for $i\in [k]$ and where $a_1,\ldots,a_k$ are pairwise distinct atoms of $B$. Naturally $H^{(k)}=\{0\}$ for $k\in \mathbb{N}_{>\vert \at(B)\vert}$. $K$ counts the number of atoms that make up an element of $B$.
 \end{example}
 
 \begin{example}
 Return to Example~\ref{example:wiener}, the noise Boolean algebra of it too is classical. For $k\in \mathbb{N}_0$, $H^{(k)}$ is the $k$-th Wiener chaos, i.e. $H({\mathbb{R}\choose k})$, and $K$ is the counting map.
 \end{example}
 
 Two nonclassical, but not black, examples of a one-dimensional noise are Warren's  noises of splitting \cite{warren}, and of stickiness (``made by a Poisson snake'') \cite{warren-sticky}; for further examples of non-classical/black one-dimensional noises see \cite{spectra-harris,watanabe,Jan08thenoise}. The Brownian web gives rise to a two-dimensional black noise factorization over the Boolean algebra of rectangles of the plane \cite{ellis}; its one-dimensional projection onto the time-axis is a one-dimensional noise, which is also black (the noise of coalescence \cite[Section~7]{picard2004lectures}). Another example of a two-dimensional black noise (in the obvious meaning of this qualification) is given by the scaling limit of critical planar percolation (the noise of percolation) \cite{schramm}. None of these are completely straightforward to describe and it is an outstanding problem to provide examples of higher-dimensional black noises. 
 
 However, a ``zero-dimensional'' class of examples of  black $B$ can be given in relatively succinct terms.
 
 \begin{example}[Hierarhical voter models]\label{example:voter-model}
 The following is an abridged version of  \cite[Appendix A]{vershik-tsilevich} (originally it is from \cite[Section~4.a]{vershik-tsirelson}). 
 
 \noindent  Let $\{m,r\}\subset \mathbb{N}_{\geq 2}$, let $X$ be a set of size $r$ and let $\phi:X^m\to X$ be a symmetric map, such that $\vert\phi^{-1}(\{x\})\vert$ is constant in $x\in X$. The latter ensures that the push-forward of the uniform law on $X^m$ by the map $\phi$ is the uniform law on $X$. It is helpful to think of $X$ as a collection of candidates and of $\phi$ as an election rule, assigning to the votes of $m$ voters a ``winner''. We assume further that the following ``anti-additivity'' property holds (it would fail automatically for $m=1$, hence the exclusion of $m=1$).
 \begin{quote}
 Whenever $f$ and $g$ are two real functions on $X$ satisfying $f(\phi(a_1,\ldots,a_m))=g(a_1)+\cdots +g(a_m)$ for all $a_1,\ldots,a_m$ from $X$, then $f$ is constant. 
 \end{quote} 
 It is important to, and we maintain  at once that this property implies the seemingly stronger assertion:
  \begin{quote}
 Whenever $f$ and $g_1,\ldots,g_m$ are real functions on $X$ satisfying $f(\phi(a_1,\ldots,a_m))=g_1(a_1)+\cdots +g_m(a_m)$ for all $a_1,\ldots,a_m$ from $X$, then $f$ is constant. 
 \begin{proof}
By symmetry of $\phi$ we have $f(\phi(a_1,\ldots,a_m))=\frac{1}{n}(f(\phi(a_1,a_2,\ldots,a_n))+f(\phi(a_2,\ldots a_n,a_1))+\cdots+ f(\phi(a_n,a_1,\ldots,a_{n-1})))=g(a_1)+\cdots+g(a_n)$, where $g(a):=\frac{1}{n}\sum_{i=1}^ng_i(a)$, $a\in X$.
 \end{proof}
  \end{quote} 
 It is clear that the property is met by the majority vote rule when $m$ is odd and $r=2$, so such maps certainly do exist. Another case when it is met is for (*) $m=r=2$ and $\phi$ the parity function on $X=\{-1,1\}$ (so $\{\phi=1\}=\{(1,1),(-1,-1)\}$).  Ultimately it will be seen that this property will ensure the absence of non-zero additive integrals and therefore the blackness of the noise Boolean algebra that we are about to define.
 
\noindent To specify the noise Boolean algebra in question we let $T_m$ be the rooted infinite $m$-ary tree (for notational simplicity to be thought of as the set of vertices, with the graph structure implicit),  $\Omega=\{\omega\in X^{T_m}:\omega(v)=\phi(\omega\vert_{\text{ sons of $v$}})\text{ for all }v\in T_m\}$. $\PP$ is the completion of the unique probability on the $\sigma$-field generated by the coordinate map $(\mathsf{X}_v)_{v\in T_m}$ on $\Omega$ under which for each $n\in \mathbb{N}_0$ the $\mathsf{X}_v$, $v\in T_m[n]:=$ vertices from level $n$ of $T_m$, are independent and uniformly distributed. Such measure exists thanks to the push-forward property noted above (technically we get it as the inverse limit \cite[Theorem~3.2]{parthasarathy1967probability} of the uniform measures on the spaces $X^{T_m[n]}$, $n\in \mathbb{N}_0$, under the projection maps induced by $\phi$). Because $r\geq 2$, $0_\PP\ne 1_\PP$, i.e. the probability $\PP$ is not trivial.

\noindent Next, for $n\in \mathbb{N}_0$ and for $v\in T_m[n]$ let $N_v$ be the $\sigma$-field generated by the $\mathsf{X}_w$, $w\in \{\text{the descendants of $v$}\}$; then the $N_v$, $v\in T_m[n]$, are atoms of a finite classical noise Boolean algebra $B_n$. We take $B:=\cup_{n\in \mathbb{N}_0}B_n=\{N_v:v\in T_m\}$, which is  also a noise Boolean algebra. In case (*) we shall speak of the simplest black noise Boolean algebra.

\noindent For $k\in \mathbb{N}_0$ we denote $\Sigma_k:=\sigma(\mathsf{X}_v:v\in T_m[k])$, the sigma-field generated by the voters at level $k$ of the tree. Clearly $\Sigma_k\uparrow 1_\PP$ as $k\to\infty$ and hence $\cup_{k\in \mathbb{N}_0}\L2(\PP\vert_{\Sigma_k})_0$ is dense in $\L2(\PP)_0$.

\noindent To show $B$ is black it will suffice to demonstrate that for all $k\in \mathbb{N}_0$ and then for all $f\in \L2(\PP\vert_{\Sigma_k})_0$ the projection of $f$ onto $H^{(1)}(B)$ vanishes. But for $n\in\mathbb{N}_{\geq k}$ the projection of $f$ onto the first chaos of $B_n$ is equal to $\sum_{v\in T_m[n]}\PP[f\vert N_v]$. If for $v\in T_m[n]$ we denote then by $v_n=v,\ldots,v_{k+1},v_k$ the lineage of $v$ between the levels $n$ and $k$ (in this order), then $\PP[f\vert N_v]=\PP[f\vert N_{v_k}]\vert N_{v_{k+1}}\vert \cdots\vert N_{v_n}]=\PP[f\vert \mathsf{X}_{v_k}\vert \mathsf{X}_{v_{k+1}}\vert\cdots \vert \mathsf{X}_{v_n}]$. Consequently $\mathrm{pr}_{H^{(1)}(B_n)}(f)=(\pr_n\cdots \pr_{k+1})(\PP[f\vert \mathsf{X}_{v_k}])$, where for $l\in \mathbb{N}_0$, $\mathrm{pr}_{l+1}$ is the projection from $H_l:=\oplus_{v\in T_n[l]}\L2(\PP\vert_{\sigma(\mathsf{X}_v)})_0$ onto $H_{l+1}$. By the antiadditivity condition $H_0$ has a trivial intersection with $H_1$, therefore (because we are dealing with finite-dimensional spaces) $\Vert \pr_1\Vert<1$. By the tree structure $\Vert \pr_l\Vert=\Vert \pr_1\Vert$ for all $l\in \mathbb{N}_0$. The fact that $\pr_{H^{(1)}(B)}(f)=0$ now follows on taking the limit as $n\to\infty$ in $\Vert\pr_{H^{(1)}(B)}(f)\Vert\leq \Vert \pr_{H^{(1)}(B_n)}(f)\Vert\leq \Vert \pr_1\Vert^n\Vert f\Vert$. (For the simplest black noise Boolean algebra the proof of blackness is easier: all finite non-empty products of the random signs of $\mathsf{X}$ of  a given level $T_2[n]$ are total in $\L2(\PP)_0$ as $n$ ranges over $\mathbb{N}_0$ and have zero projection onto the first chaos.) 

 \end{example}
 \end{enumerate}

\section{Some Bonferroni-type inequalities}
This section contains an interesting family of inequalities \eqref{bonferroni}-\eqref{bonferroni-1} involving the conditional expectation operators of members of a noise Boolean algebra. These inequalities may fail when the $\sigma$-fields belong to no noise Boolean algebra (as we will see in Example~\ref{example:counter-for-bonferroni}).

\begin{proposition}\label{corollary:inequalities}
Let $n\in \mathbb{N}_0$ and $x=(x_1,\ldots,x_n)\in B^n$. Then (as operators on $\L2(\PP)$)
\begin{equation}\label{bonferroni}
\PP_{\lor_{[n]} x}+\sum_{k=1}^n(-1)^{k}\sum_{J\in {[n]\choose k}}\PP_{(\lor_{[n]\backslash J}x)\lor (\land_{j\in J}\lor_{J\backslash \{j\}}x)}\geq 0
\end{equation}
with equality (on the whole of $\L2(\PP)$) iff one of the $x_i$, $i\in [n]$, is contained in the join of the others, and also with equality on restriction to $H^{\leq (n-1)}$. Furthermore, for all $m\in [n]_0$, 
\begin{equation}\label{bonferroni-1}
(-1)^m\sum_{k=m+1}^n(-1)^{k}\sum_{J\in {[n]\choose k}}\PP_{(\lor_{[n]\backslash J}x)\lor (\land_{j\in J}\lor_{J\backslash \{j\}}x)}\leq 0,
\end{equation}
so that, for even $m\in [n]_0$, \eqref{bonferroni} also obtains if $\sum_{k=1}^n$ is replaced therein with $\sum_{k=1}^m$.
\end{proposition}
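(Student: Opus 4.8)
The plan is to diagonalize everything simultaneously in the (fully explicit) spectrum of the \emph{finite} noise Boolean algebra $b:=b(\{x_1,\ldots,x_n\})\in\mathfrak{F}_B$ furnished by Example~\ref{example:spectral-finite}. All the $\sigma$-fields appearing in \eqref{bonferroni} and \eqref{bonferroni-1} --- namely $\lor_{[n]}x$ and the $Y_J:=(\lor_{[n]\backslash J}x)\lor (\land_{j\in J}\lor_{J\backslash \{j\}}x)$, $J\in\binom{[n]}{k}$ --- lie in $b$, so the corresponding operators lie in $\AA_b$, and since $\alpha$ is a (real, unital) homomorphism carrying $\PP_z$ to $\mathbbm{1}_{S_z}$, the operator inequalities are equivalent to pointwise inequalities between the multipliers. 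In the $b$-spectrum $S=b$ with $S_z=\{y\in b:y\subseteq z\}$, so a multiplier evaluated at $y\in b$ is simply the indicator of $\{y\subseteq z\}$; thus the whole matter reduces to verifying, for each fixed $y\in b$, a numerical (in)equality. A positive operator being zero is detected on the support $\{y:H_y\ne\{0\}\}$, while the restriction to $H^{\leq(n-1)}$ is detected on $\{K_b\leq n-1\}=\{y:\vert\at(y)\vert\leq n-1\}$ (using $K_b\leq K$, so that $\{K\leq n-1\}\subseteq\{K_b\leq n-1\}$).

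Next I would do the atom bookkeeping. Writing $\at(b)=A$ and, for $a\in A$, $I(a):=\{i\in[n]:a\subseteq x_i\}$, a short case analysis of the atom-sets (meet $=$ intersection, join $=$ union inside the finite Boolean algebra $b$) shows that $a\in\at(Y_J)$ if and only if $I(a)\not\subseteq J$ or $\vert I(a)\cap J\vert\geq 2$; equivalently $a\notin\at(Y_J)$ exactly when $I(a)\subseteq J$ and $\vert I(a)\vert\leq 1$. Consequently, for $y\in b$ with atoms $T:=\at(y)$: if some $a\in T$ has $I(a)=\emptyset$ then $y\not\subseteq\lor_{[n]}x$ and $y\not\subseteq Y_J$ for every $J$, so every multiplier vanishes at $y$ and all the claimed (in)equalities read $0\geq 0$ or $0\leq 0$; otherwise $\mathbbm{1}[y\subseteq\lor_{[n]}x]=1$ and, setting $L:=\{i\in[n]:\{i\}=I(a)\text{ for some }a\in T\}$ (the ``private indices'' of the atoms of $y$), one gets $\mathbbm{1}[y\subseteq Y_J]=\mathbbm{1}[J\cap L=\emptyset]$, whence $\sum_{J\in\binom{[n]}{k}}\mathbbm{1}[y\subseteq Y_J]=\binom{n-\vert L\vert}{k}$.

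With this in hand the left-hand side of \eqref{bonferroni} evaluated at $y$ becomes $\sum_{k=0}^n(-1)^k\binom{n-\vert L\vert}{k}=0^{\,n-\vert L\vert}=\mathbbm{1}[L=[n]]\in\{0,1\}$, which is $\geq 0$; this is \eqref{bonferroni}. Since $L=[n]$ forces $\vert T\vert\geq n$, the multiplier vanishes on $\{K_b\leq n-1\}$, giving the asserted equality on $H^{\leq(n-1)}$. For the global equality statement: if some $x_{i_0}\subseteq\lor_{i\ne i_0}x_i$ then no atom has $I(a)=\{i_0\}$, so $L\ne[n]$ always and the operator is $0$; conversely, if no $x_{i_0}$ lies in the join of the others, pick for each $i$ a ``private'' atom $a_i$ with $I(a_i)=\{i\}$ and set $y^\ast:=\lor_i a_i$, a join of $n$ distinct nontrivial independent atoms, whence $H_{y^\ast}=\L2(\PP\vert_{y^\ast})^\circ\supseteq\otimes_i\L2(\PP\vert_{a_i})_0\ne\{0\}$ and $L(y^\ast)=[n]$, so the multiplier is $1$ at the nonnegligible point $y^\ast$ and the operator is nonzero.

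Finally, the same reduction turns \eqref{bonferroni-1} into $(-1)^m\sum_{k=m+1}^n(-1)^k\binom{m'}{k}\leq 0$ with $m':=n-\vert L\vert$; evaluating the tail via $\sum_{k=0}^m(-1)^k\binom{N}{k}=(-1)^m\binom{N-1}{m}$ together with $\sum_{k=0}^N(-1)^k\binom{N}{k}=0^N$ gives the value $-\binom{m'-1}{m}\leq 0$ (and $0$ when $m'=0$), establishing \eqref{bonferroni-1}. Subtracting \eqref{bonferroni-1} from \eqref{bonferroni} for even $m$ (so that $(-1)^m=1$) then yields the truncated form of \eqref{bonferroni}. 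The main obstacle is purely combinatorial --- getting the membership rule for $\at(Y_J)$ right and recognizing the resulting alternating binomial sums --- while the only analytic point requiring care is the positivity $H_{y^\ast}\ne\{0\}$ in the converse half of the equality claim.
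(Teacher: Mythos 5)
Your proof is correct and follows essentially the same route as the paper's: reduce to the finite noise Boolean algebra $b(\{x_1,\ldots,x_n\})$, diagonalize everything in its explicit spectrum (Example~\ref{example:spectral-finite}), and observe that the asserted operator inequalities are exactly inclusion--exclusion/Bonferroni for the sets $B_{\lor_{[n]\backslash\{i\}}x}$, $i\in[n]$, inside $B_{\lor_{[n]}x}$, with the equality cases read off from the spectral points containing $n$ ``private'' atoms. The only difference is presentational: the paper integrates the indicator identity against the spectral measures $\mu_f$ and cites inclusion--exclusion for measures, whereas you evaluate the multipliers pointwise via atom bookkeeping and recognize the alternating sums $\sum_k(-1)^k{n-\vert L\vert\choose k}$ directly --- the same computation in a different dress.
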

\begin{remark}\label{rmk:inequalities}\leavevmode
\begin{enumerate}[(i)]
\item  Let us focus on \eqref{bonferroni}. For $n=0$ it is trivial: $\PP_{0_\PP}\geq 0$; there is not equality, and there is of course equality on $\{0\}$. For $n=1$ it means $\PP_{x_1}\geq \PP_{0_\PP}$, which is (basically) just Jensen for the square; there is equality iff $x_1=0_\PP$, and there is also equality on $H^{(0)}=\text{ the constants}$. For $n=2$ it reads $\PP_{x_1\lor x_2}+\PP_{x_1\land x_2}\geq \PP_{x_1}+\PP_{x_2}$, which is \cite[Lemma~5.4]{tsirelson}; there is equality iff $x_1\subset x_2$ or $x_2\subset x_1$, and there is also equality on $ H^{\leq 1}$ \cite[Lemma~5.5(c)]{tsirelson}. For $n=3$ we get $$\PP_{x_1\lor x_2\lor x_3}-\PP_{x_1\lor x_2}-\PP_{x_2\lor x_3}-\PP_{x_3\lor x_1}+\PP_{x_1\lor (x_2\land x_3)}+\PP_{x_2\lor (x_3\land x_1)}+\PP_{x_3\lor (x_1\land x_2)}-\PP_{(x_1\lor x_2)\land (x_2\lor x_3)\land (x_3\lor x_1)}\geq 0;$$ there is equality iff $x_1\subset x_2\lor x_3$ or $x_2\subset x_3\lor x_1$ or $x_3\subset x_1\lor x_2$, and there is also equality on $H^{\leq 2}$. And so on. 
\item\label{rmk:inequalities:ii}  A simpler form of the statement of the preceding proposition follows when one assumes that $(x_1,\ldots,x_n)$ is an independency (and any  independency generates a (finite) noise Boolean algebra to which it belongs, so the condition $x\in B^n$ is then superfluous), for in such case $\land_{j\in J}\lor_{J\backslash \{j\}}x=0_\PP$ for all $J\in 2^{[n]}\backslash\{\emptyset\}$ so that \eqref{bonferroni} reads 
\begin{equation}\label{bonferroni-independent}
(-1)^n\sum_{k=0}^n(-1)^k\sum_{J\in {[n]\choose k}}\PP_{\lor_Jx}\geq 0
\end{equation} with equality iff one of the $x_i$, $i\in [n]$, is equal to $0_\PP$, and also with equality on $H^{\leq (n-1)}$. Similarly \eqref{bonferroni-1} simplifies. As a special case we get $\PP_x+\PP_y\leq \PP_{x\lor y}+\PP_{0_\PP}$ for independent $x$ and $y$ from $B$ with equality iff $x=0_\PP$ or $y=0_\PP$ (and also with equality on $H^{\leq 1}$). In particular, for $x\in B$,  $\PP_x+\PP_{x'}\leq \mathrm{id}_{\L2(\PP)}$ on $\L2(\PP)_0$. 
\item Concerning the final statement of the proposition, what of the case when $m$ is odd? It will be clear from the proof that one cannot expect the inequality in \eqref{bonferroni} to then hold in either direction, and indeed it is seen easily that it can fail in both directions even for $n=2$, $x_2={x_1}'$ and $m=1$. 
\end{enumerate}
\end{remark}
\begin{proof}
First, a reduction. Plainly $ H^{\leq (n-1)}(B)\subset H^{\leq (n-1)}(b(\{x_1,\ldots,x_n\}))$. 
Therefore we may, and shall assume that $B=b(\{x_1,\ldots,x_n\})$, in particular that $B$ is finite, taking $(S,\Sigma)=(B,2^B)$ with $\mu=c_B$ counting measure (Example~\ref{example:spectral-finite}). Then $\mu_f(B_z)=\PP[\PP[f\vert z]^2]$ for all $z\in B$ and $f\in \L2(\PP)$. Second,  by inclusion-exclusion (resp. Bonferroni), and still for $f\in \L2(\PP)$,  $$\mu_f(\cup_{i\in [n]}B_{ \lor_{[n]\backslash \{i\}}x})=\sum_{k=1}^n(-1)^{k+1}\sum_{J\in {[n]\choose k}}\mu_f(B_{ (\lor_{[n]\backslash J}x)\lor (\land_{j\in J}\lor_{J\backslash \{j\}}x)})$$ (resp. $\geq \sum_{k=1}^m\cdots$ for even $m\in [n]_0$ and $\leq \sum_{k=1}^m\cdots$ for odd $m\in [n]_0$), for all $f\in \L2(\PP)$ (resp.  and \eqref{bonferroni-1} follows at once). Since $B_{ \lor_{[n]} x}\supset \cup_{i\in [n]}B_{ \lor_{[n]\backslash \{i\}}x}$ \eqref{bonferroni} now follows by monotonicity of $\mu_f$.

If one of the $x_i$, $i\in [n]$, is contained in the join of the others, then in fact $B_{ \lor_{[n]} x}=\cup_{i\in [n]}B_{ \lor_{[n]\backslash \{i\}}x}$ and there is equality in \eqref{bonferroni}. Assume not one of the $x_i$, $i\in [n]$, is contained in the join of the others. Because $\mu_f(\{\lor_{[n]}x\})>0$ for at least one $f$ and $B_{\lor_{[n]} x}\supset \{\lor_{[n]}x\}\sqcup\left(\cup_{i\in [n]}B_{ \lor_{[n]\backslash \{i\}}x}\right)$, it follows that  equality in \eqref{bonferroni} cannot prevail (on the whole of $\L2(\PP)$). Finally, if $f\in H^{\leq (n-1)}$, then $\mu_f\left(B_{\lor_{[n]} x}\backslash \left(\cup_{i\in [n]}B_{ \lor_{[n]\backslash \{i\}}x}\right)\right)=0$, just because each member of $B_{\lor_{[n]} x}\backslash \left(\cup_{i\in [n]}B_{ \lor_{[n]\backslash \{i\}}x}\right)$ contains at least $n$ atoms of $B$, while $\mu_f$ is carried by the $\sigma$-fields generated by at most $n-1$ atoms of $B$.  Therefore by additivity of $\mu_f$ it follows that $\mu_f(B_{ \lor_{[n]} x})=\mu_f( \cup_{i\in [n]}B_{ \lor_{[n]\backslash \{i\}}x})$ and there is equality in \eqref{bonferroni} on $H^{\leq (n-1)}(B)$. 
\end{proof}
\begin{remark}
It may seem curious that one should have ``required'' the intervention of the spectrum of a noise Boolean algebra  in order to establish, say, \eqref{bonferroni-independent}, which concerns only independent $\sigma$-fields. For sure we can give also a ``direct'' proof of \eqref{bonferroni-independent}, that may be interesting for the reader.
\begin{quote}
\begin{proof}[Proof of \eqref{bonferroni-independent} (direct)]
For the case of the inequality, we are to verify that $\langle f, \sum_{k\in [n]_0}\sum_{K\in {[n]\choose k}}(-1)^{n-k}\PP_{\lor_Kx}f\rangle \geq 0$ for all $f\in \L2(\PP)$, equivalently $\sum_{k=0}^n\sum_{K\in {[n]\choose k}}(-1)^{n-k}\PP [\PP[ f\vert \lor_Kx]^2]\geq 0\text{ for all $f\in \L2(\PP)$}$, or merely for all $f\in \L2(\PP\vert_{\lor x})$, it is just as well. By a monotone class argument it suffices to verify the latter for the algebra of functions $f$ that are of the form $\sum_{\gamma\in \Gamma}X_1^\gamma\cdots X_n^\gamma$, where $X_i^\gamma\in \L2(\PP\vert_{x_i})$ for $i\in [n]$, $\gamma\in \Gamma$, are all bounded, and where $\Gamma$ is some finite set. Now by Gram-Schmidt (starting with constants)  we may insist that given any $\{\gamma,\gamma'\}\subset\Gamma$, $\gamma\ne\gamma'$, one has that $\PP[X_i^\gamma X_i^{\gamma'}]=0=\PP[X_i^\gamma]\PP[X_i^{\gamma'}]$ for some $i\in [n]$. Feeding such an $f$ into the left-hand side of the displayed inequality, using the additivity of $\PP_{\lor_Kx}$ and effecting the square we see that the mixed terms (involving $\gamma\ne\gamma'$ from $\Gamma$) vanish (perhaps only once the outer $\PP$ is applied, but it is enough) and that therefore it will in fact suffice to verify the displayed inequality for $f$ of the form as indicated but with $\Gamma$ a singleton, say $\{\gamma\}$. We drop the superscript $\gamma$ and reduce further by homogeneity to the case when $\PP[X_i^2]=1$ for all $i\in [n]$. Let $l_i:=\PP[X_i]^2$; by Jensen $l_i\in [0,1]$ for all $i\in [n]$. Thus we are left to verify that $\sum_{k=0}^n\sum_{K\in {[n]\choose k}}(-1)^{n-k}\prod_{i\in [n]\backslash K}l_i\geq 0$. But this follows at once from the inclusion-exclusion formula for (under some probability $\QQ$) the probability of the union of   independent events $A_1,\ldots,A_n$ with respective probabilities $l_1,\ldots,l_n$. 
The case for equality follows by a similar train of thought, noting in the final instance that the union of independent events is almost certain iff at least one of these events is itself almost certain. 
\end{proof}
\end{quote}
Though, it seems the path via the induced noise Boolean algebra and its spectrum is decidedly preferrable.
\end{remark}
\begin{example}\label{example:counter-for-bonferroni}
Let $\Omega=\{-1,1\}\times \{-1,1\}$, $\FF=2^\Omega$, $\PP$ the uniform measure, $\xi_1$ and $\xi_2$ the canonical projections, $x_2$ generated by $\xi_2$ and $x_1$ generated by the partition $\{\{(1,1),(1,-1)\},\{(-1,1)\},\{(-1,-1)\}\}$. Then $x_1\lor x_2=1_\PP$ and $x_1\land x_2=0_\PP$. It is not the case that $\PP[\PP[\xi_2\vert x_1\lor x_2]^2]+\PP[\PP[\xi_2\vert x_1\land x_2]^2]\geq \PP[\PP[\xi_2\vert x_1]^2]+\PP[\PP[\xi_2\vert x_2]^2]$ as is readily verified. Therefore, in order for \eqref{bonferroni} to prevail, the condition of Proposition~\ref{corollary:inequalities} that the $\sigma$-fields $x_1,\ldots,x_n$ belong to a noise Boolean algebra cannot be dispensed with (and of course we may infer without thinking about it that the $x_1$ and $x_2$ of this example cannot both belong to a noise Boolean algebra).
\end{example}
\begin{remark}
The proof of Proposition~\ref{corollary:inequalities} is perhaps just as informative as the statement itself. One has a ``cooking recipe'' for establishing inequalities \`a la \eqref{bonferroni}, obtaining one for every finite subset $Z$ of $2^B$, by noting that  for all $f\in \L2(\PP)$, (some weakening of) $\mu_f(B_{\lor (\cup Z)})\geq \mu_f(\cup_{z\in Z}B_{\lor z})$ holds true, and applying inclusion-exclusion (Bonferroni) to the latter. In specific situations it may be faster to redo this proof rather than specializing  \eqref{bonferroni}.
\end{remark}
For the remainder of this section we leave \eqref{bonferroni-1} aside and focus on \eqref{bonferroni}. 
\begin{corollary}
Let $n\in \mathbb{N}_0$ and $\{x_1,\ldots,x_n\}\subset B$. Then 
$$\PP_{(\land_{[n]} x)'}+\sum_{k=1}^n(-1)^{k}\sum_{J\in {[n]\choose k}}\PP_{[(\land_{[n]\backslash J}x)\land (\lor_{j\in J}\land_{J\backslash \{j\}}x)]'}\geq 0$$
with equality iff one of the $x_i$, $i\in [n]$, contains the meet of the others, and also with equality on restriction to $H^{\leq (n-1)}(B)$.
\end{corollary}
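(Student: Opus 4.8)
The plan is to obtain this corollary directly from Proposition~\ref{corollary:inequalities} by exploiting the Boolean complementation $(\cdot)'$ on $B$. Recall from Subsection~\ref{subsection:noise-boolean} that $(\cdot)':B\to B$ is the complement of the Boolean algebra $B$, hence an order-reversing involution obeying De Morgan's laws $(u\lor v)'=u'\land v'$ and $(u\land v)'=u'\lor v'$ (extended to arbitrary finite meets and joins). First I would set $y_i:=x_i'$ for $i\in[n]$; since each $x_i\in B$ we have $\{y_1,\ldots,y_n\}\subset B$, so Proposition~\ref{corollary:inequalities} applies to the tuple $(y_1,\ldots,y_n)$ and yields \eqref{bonferroni} with $y$ in place of $x$.

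The crux is then to re-express every $\sigma$-field in that instance of \eqref{bonferroni} through the $x_i$ via De Morgan. The leading term becomes $\lor_{[n]}y=(\land_{[n]}x)'$. For a fixed $J\in\binom{[n]}{k}$, working from the inside out one has $\lor_{J\backslash\{j\}}y=(\land_{J\backslash\{j\}}x)'$, whence $\land_{j\in J}\lor_{J\backslash\{j\}}y=(\lor_{j\in J}\land_{J\backslash\{j\}}x)'$, while $\lor_{[n]\backslash J}y=(\land_{[n]\backslash J}x)'$; combining these with $a'\lor b'=(a\land b)'$ gives
\[(\lor_{[n]\backslash J}y)\lor(\land_{j\in J}\lor_{J\backslash\{j\}}y)=\bigl[(\land_{[n]\backslash J}x)\land(\lor_{j\in J}\land_{J\backslash\{j\}}x)\bigr]',\]
which is exactly the $\sigma$-field figuring in the corollary. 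Substituting these identities term by term turns the $y$-version of \eqref{bonferroni} into the desired inequality.

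It remains to translate the equality clauses. In Proposition~\ref{corollary:inequalities} equality on all of $\L2(\PP)$ holds iff some $y_i$ lies in the join of the others, i.e. $x_i'\subset\lor_{j\neq i}x_j'=(\land_{j\neq i}x_j)'$; applying the order-reversing complement to both sides this is equivalent to $\land_{j\neq i}x_j\subset x_i$, i.e. to one of the $x_i$ containing the meet of the others, as asserted. The clause on $H^{\leq(n-1)}(B)$ carries over verbatim, since those chaos spaces are intrinsic to $B$ (Item~\ref{generalites:chaos-spaces}) and the proposition is invoked for the tuple $(x_1',\ldots,x_n')$ drawn from this very $B$. The one place demanding care---though it is the only genuine bookkeeping and no real obstacle---is the nested computation of $\land_{j\in J}\lor_{J\backslash\{j\}}y$, where the order-reversing De Morgan step must be applied in the correct order; the anti-isomorphism property of $(\cdot)'$ renders the entire reduction mechanical.
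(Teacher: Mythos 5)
Your proposal is correct and is precisely the paper's argument: the paper's entire proof reads ``Just apply Proposition~\ref{corollary:inequalities} to the complements,'' and your De Morgan bookkeeping (including the order-reversal in the equality clause and the observation that the chaos spaces are intrinsic to $B$) is exactly the elaboration of that one line.
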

 \begin{proof}
Just apply Proposition~\ref{corollary:inequalities} to the complements.
 \end{proof}
 
 The content of Proposition~\ref{corollary:inequalities} can also  be rewritten in terms of the so-called influences. The latter terminology is borrowed from the analysis of the sensitivity of Boolean functions \cite[Definition~8.22]{odonnell}, however it is not the only reasonable quantity that is deserving of such a name (cf. Definition~\ref{definition:H-J}). 

\begin{definition}
For $x\in B$ and $f\in \L2(\PP)$ we put $\inff_x(f):=\PP[\var(f\vert x')]=\PP[(f-\PP[f\vert x'])^2]= \PP[f^2]-\PP[\PP[f\vert x']^2]=\mu_f(S\backslash S_{x'})$ for the influence of $x$ on $f$.
\end{definition}

\begin{corollary}
Let $n\in \mathbb{N}$ and $\{x_1,\ldots,x_n\}\subset B$. Then $$\sum_{k=1}^n(-1)^{k+1}\sum_{J\in {[n]\choose k}}\inff_{[(\lor_{[n]\backslash J}x)\lor (\land_{j\in J}\lor_{J\backslash \{j\}}x)]'}\geq \inff_{(\lor_{[n]}x)'}.$$ In particular if $(x_1,\ldots,x_n)$ is an independency, then with $y:=\lor_{[n]}x$,
 $$\sum_{k=0}^{n-1}(-1)^{k}\sum_{J\in {[n]\choose k}}\inff_{y'\lor( \lor_Jx)}\leq (-1)^{n+1}\var;$$  if $(x_1,\ldots,x_n)$ is even a partition of unity, then  $$\sum_{k=1}^{n-1}(-1)^{k}\sum_{J\in {[n]\choose k}}\inff_{ \lor_Jx}\leq (-1)^{n+1}\var.\qed$$
\end{corollary}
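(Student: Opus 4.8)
The plan is to read every influence as a quadratic form on $\L2(\PP)$ through the identity $\inff_w=\id-\PP_{w'}$. Indeed, by definition $\inff_w(f)=\PP[f^2]-\PP[\PP[f\vert w']^2]=\langle(\id-\PP_{w'})f,f\rangle$, since $\PP_{w'}$ is a self-adjoint idempotent; correspondingly $\var=\id-\PP_{0_\PP}=\inff_{1_\PP}$. All three displays in the corollary are then Loewner (positive-semidefiniteness) assertions, and I would reduce each of them to Proposition~\ref{corollary:inequalities}. Throughout one uses that $B$, being a noise Boolean algebra, is closed under $\lor$, $\land$ and complementation, so every $\sigma$-field appearing as a subscript of $\inff$ lies in $B$ and has a well-defined complement.

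First I would dispatch the general statement. Writing $z_J:=(\lor_{[n]\backslash J}x)\lor(\land_{j\in J}\lor_{J\backslash\{j\}}x)$ for the $\sigma$-field occurring in \eqref{bonferroni}, the subscripts in the corollary are exactly the complements $[z_J]'$, so that $\inff_{[z_J]'}=\id-\PP_{z_J}$ with no further work (the double complement simply returns $z_J$). Expanding the left-hand side and separating the $\id$-part from the $\PP_{z_J}$-part, the identity part carries the scalar $\sum_{k=1}^n(-1)^{k+1}\binom{n}{k}=-\sum_{k=1}^n(-1)^k\binom{n}{k}=1$ (the inner sum being $-1$ by the binomial theorem), whence the left-hand side equals $\id+\sum_{k=1}^n(-1)^k\sum_{J\in{[n]\choose k}}\PP_{z_J}$. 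By \eqref{bonferroni} the tail $\sum_{k=1}^n(-1)^k\sum_J\PP_{z_J}$ dominates $-\PP_{\lor_{[n]}x}$, so the left-hand side is $\geq\id-\PP_{\lor_{[n]}x}=\inff_{(\lor_{[n]}x)'}$, which is precisely the first inequality.

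Next, for the independency refinement I would invoke $\land_{j\in J}\lor_{J\backslash\{j\}}x=0_\PP$ for nonempty $J$ (as in Remark~\ref{rmk:inequalities}\ref{rmk:inequalities:ii}), so $z_J=\lor_{[n]\backslash J}x$, together with the complement identity $(\lor_I x)'=y'\lor(\lor_{[n]\backslash I}x)$ for $y:=\lor_{[n]}x$ and $I\subset[n]$. The latter holds because $(x_1,\dots,x_n,y')$ is an independency: $y'$ is independent of $y=\lor_{[n]}x$ hence pairwise independent of every $x_i$, and pairwise independence equals independence in $B$; thus $\lor_I x$ and $y'\lor(\lor_{[n]\backslash I}x)$ are joins over disjoint blocks of an independency whose total join is $1_\PP$. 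This converts $\inff_{[z_J]'}$ into $\inff_{y'\lor(\lor_J x)}$. Abbreviating $T_k:=\sum_{J\in{[n]\choose k}}\inff_{y'\lor(\lor_J x)}$ and noting $T_0=\inff_{y'}$ and $T_n=\inff_{1_\PP}=\var$, the first inequality just established reads $\sum_{k=0}^n(-1)^k T_k\leq0$, which is exactly the claimed $\sum_{k=0}^{n-1}(-1)^k T_k\leq(-1)^{n+1}\var$ after transposing the $k=n$ term. Specializing once more to a partition of unity puts $y=1_\PP$, $y'=0_\PP$, annihilates the $k=0$ term (because $\inff_{0_\PP}=0$) and replaces $y'\lor(\lor_J x)$ by $\lor_J x$, giving the final display.

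The bookkeeping of signs, binomial sums and summation ranges is the only place that demands attention, and one must keep the Loewner-order inequalities correctly oriented when multiplying through by a negative sign. Conceptually the single substantive input beyond Proposition~\ref{corollary:inequalities} is the complement identity $(\lor_I x)'=y'\lor(\lor_{[n]\backslash I}x)$, whose proof rests entirely on the fact, recalled in the preliminaries, that independence coincides with pairwise independence inside a noise Boolean algebra; I therefore expect no genuine obstacle, merely careful index-and-sign accounting.
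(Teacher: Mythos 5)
Your proposal is correct and is exactly the argument the paper intends: the corollary is stated with an immediate \qed because it is just Proposition~\ref{corollary:inequalities} rewritten through the identity $\inff_w=\id-\PP_{w'}$ (plus, for the refinements, the complement identity $(\lor_Ix)'=y'\lor(\lor_{[n]\backslash I}x)$ and the vanishing of the meets for an independency, as in Remark~\ref{rmk:inequalities}\ref{rmk:inequalities:ii}). Your sign and binomial bookkeeping checks out, so there is nothing to add.
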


\begin{example}
Suppose $(x_1,x_2,x_3)$ is a partition of unity. Then $3\min\{\inff_{x_1\lor x_2},\inff_{x_2\lor x_3},\inff_{x_1\lor x_3}\}\leq \inff_{x_1\lor x_2}+\inff_{x_2\lor x_3}+\inff_{x_1\lor x_3}\leq \var+\inff_{x_1}+\inff_{x_2}+\inff_{x_3}\leq \var+3\max\{\inff_{x_1},\inff_{x_2},\inff_{x_3}\}$; the influence of at least one pair cannot be greater than the largest single influence plus one third the variance.
\end{example}

\section{Topology of noises}
Questions of a predominantly topological character are examined. Notably, we find that $\Cl(B)$ is closed for arbitrary meets and upwards directed joins (Proposition~\ref{proposition:ctb-vs-arbitrary-joins-meets}\ref{proposition:ctb-vs-arbitrary-joins-meets:ii}) and that the join and complementation operation are continuous in a classical noise Boolean algebra (Proposition~\ref{proposition:ct-classical}).
\subsection{(Non)compactness and existence of limits}
%
We might have wished for the topology of $B$ to be compact, but such is not the case. Since $\Lattice$ is Hausdorff it can only happen when $B$ is closed (therefore noise complete and classical). But in general it fails even then. (In particular, $\Lattice$ is not compact.) 
\begin{example}\label{example:non-compact}
Let $B$ be the closure of the (classical) noise Boolean algebra  generated by (the increments of) a one-dimensional standard Wiener process $(W_t)_{t\in [0,1]}$ on subintervals of $[0,1]$.  We may take $S=(2^{[0,1]})_{\text{fin}}$, and $\mu=$ the symmetrized Lebesgue measure on each $\{s\in S:\vert s\vert=m\}={[0,1]\choose m}$, $m\in \mathbb{N}_0$ (it means an atom of mass one at $\emptyset$). For a union of intervals $A$ in $[0,1]$, the associated $\sigma$-field $N_A$ has spectral set $S_{N_A}=\{s\in S:s\subset A\}$ (cf. Example~\ref{example:wiener}; the restriction to the temporal interval $[0,1]$ requires only trivial modifications). Identifying (in the sense of a mod-$0$ isomorphism)  $[0,1]$ under Lebesgue measure with $2^\mathbb{N}\equiv \{0,1\}^\mathbb{N}$ under the product $\Ber(1/2)$ measure in the usual way, denote by $\xi_n$, $n\in \mathbb{N}$, the canonical projections on $\{0,1\}^\mathbb{N}$. Consider the sequence of $\sigma$-fields in $B$, corresponding to the sets (unions of intervals) $A_n:=\{\xi_n=1\}\subset [0,1]$, $n\in \mathbb{N}$. The associated spectral sets $S_{N_{A_n}}$ satisfy $S_{N_{A_n}}\cap \{s\in S:\vert s\vert=1\}=A_n$, and these are a distance of $1/2$ apart in the  product-Bernoulli-measure-of-symmetric-difference pseudometric. Therefore no subsequence of $(N_{A_n})_{n\in \mathbb{N}}$ converges in $B$. Or we can just see it directly. Indeed, the sequence $(A_n)_{n\in\mathbb{N}}$ has the following property: the symmetric difference of any two distinct members of the sequence has length $1/2$. For this reason, for $m\ne n$ from $\mathbb{N}$, one has $\Vert \PP_{N_{A_n}}W_1-\PP_{N_{A_m}}W_1\Vert=1/2$. 
\end{example}

\begin{remark}
In \cite{compact} it is claimed that $\Lattice$ is compact. An error  in the argument of \cite{compact} appears apparently in the proof of Claim 3 of the proof of Proposition 4.7 thereof: it is used that for conditional expectations $\PP[fg\vert x]=\PP[f\vert x]\PP[g\vert x]$ without there being a reason for $f$ and $g$ to be conditionally uncorrelated (for instance, independent) given $x$ (the notation is different in \cite{compact}, but this is the gist of the flaw). In any event, the observation that $\Lattice$ is not compact is not new \cite{compact-not}.
\end{remark}
Nevertheless, $B$ can be compact in not entirely trivial settings. 
\begin{example}\label{example:compact}
Let $B$ be the noise Boolean algebra attached to a (necessarily countable) independency $\Lambda\subset \PP\backslash\{0_\PP\}$ with $\lor\Lambda=1_\PP$. We may take $S=\{\lor L:L\in (2^\Lambda)_{\mathrm{fin}}\}$ and for $\mu$ the counting measure on $2^S$; for $x\in B$ the associated spectral set is $S_x=S\cap B_x$ (cf. Example~\ref{example:classical-spectrum} to follow).  Identifying $B$ (as a set) with $2^\Lambda\equiv \{0,1\}^\Lambda$ in the obvious way, we see that convergence of sequences in $B$ corresponds to pointwise convergence in $2^\Lambda$. But $2^\Lambda$ is a compact first-countable and hence sequentially compact topological space. It follows that any sequence in $B$ has a convergent subsequence, i.e. (the metrizable)  $B$ is compact.
\end{example}

Recall next (part of) \cite[Lemma~3.2]{tsirelson}: if $\lim z=z_\infty$ in $B$  then there exists a $\uparrow\uparrow$ (it means, strictly increasing) sequence $N$ in $\mathbb{N}$ such that $\liminf z_{N}=z_\infty$; therefore   for each $\uparrow\uparrow$ sequence $N$ in $\mathbb{N}$ there exists a $\uparrow\uparrow$ sequence $M$ in $\mathbb{N}$ such that $\liminf z_{N_M}=z_\infty$. One wonders whether some sort of converse to this is true (as happens for sequences of real numbers). The property fails, but some weaker statements do hold true.

\begin{proposition}\label{proposition:convergence-along-liminf}
Suppose $z=(z_n)_{n\in \mathbb{N}}$ is a sequence in $B$ and $z_\infty\in B$. If for each $\uparrow\uparrow$ sequence $N$ in $\mathbb{N}$ there exists a $\uparrow\uparrow$ sequence $M$ in $\mathbb{N}$ such that $\liminf z_{N_M}=z_\infty$, then $\lim (z\land z_\infty)=z_\infty$ and $\lim(z'\land z_\infty)=0_\PP$. If further (for every subsequence there is a subsubsequence along which) $\lim z$ exists, then $\lim z=z_\infty$. 
\end{proposition}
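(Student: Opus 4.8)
The plan is to prove the three assertions in turn: the first by a subsequence-plus-sandwich argument, the second by a short Hilbert-space computation resting on the first, and the last by combining \cite[Lemma~3.2]{tsirelson} with a monotonicity property of the set-theoretic $\liminf$ under passage to subsequences. Throughout I use that $\Lattice$ is metrizable (Item~\ref{metric}), so that convergence $a_n\to a$ is equivalent to: every subsequence of $(a_n)_n$ admits a further subsequence converging to $a$.

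For $\lim(z\land z_\infty)=z_\infty$, fix a $\uparrow\uparrow$ sequence $N$; the hypothesis furnishes a $\uparrow\uparrow$ sequence $M$ with $\liminf z_{N_M}=z_\infty$. Write $w:=z_{N_M}$ and $u_k:=\land_{j\geq k}w_j\in\Lattice$. Then $(u_k)_k$ is $\uparrow$ with $\lor_k u_k=\liminf w=z_\infty$, so $\liminf u_k=\limsup u_k=z_\infty$ and hence $u_k\to z_\infty$ by Item~\ref{inclusion-closed}. Moreover $u_k\subset\lor_j u_j=z_\infty$ and $u_k\subset w_k$, whence $u_k\subset w_k\land z_\infty\subset z_\infty$; applying the sandwich Item~\ref{lemma:sandwich} to $u_k\to z_\infty$ and the constant sequence $z_\infty$ yields $w_k\land z_\infty\to z_\infty$. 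As $(w_j\land z_\infty)_j$ is a subsequence of the chosen subsequence $(z_{N(i)}\land z_\infty)_i$ of $(z_n\land z_\infty)_n$, the metrizability criterion gives the claim.

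For $\lim(z'\land z_\infty)=0_\PP$, set $a_n:=z_n\land z_\infty$ and $c_n:=z_n'\land z_\infty$. Since $z_n\land z_n'=0_\PP$ and $\Cl(B)$ is commutative, $\PP_{a_n}\PP_{c_n}=\PP_{a_n\land c_n}=\PP_{0_\PP}$. Because $c_n\subset z_\infty$ we have $\PP_{c_n}=\PP_{c_n}\PP_{z_\infty}$, so by Item~\ref{rmk:general:ii} it suffices to prove $\PP_{c_n}f\to 0$ for mean-zero $f\in\L2(\PP\vert_{z_\infty})$; for such $f$, the first part gives $\PP_{a_n}f\to\PP_{z_\infty}f=f$. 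Using that $\PP_{a_n}$ and $\PP_{c_n}$ are self-adjoint projections with $\PP_{a_n}\PP_{c_n}=\PP_{0_\PP}$ one computes, for mean-zero $f$, that $\Vert\PP_{c_n}f\Vert^2=\langle\PP_{c_n}f,f\rangle=\langle\PP_{c_n}f,f-\PP_{a_n}f\rangle\leq\Vert\PP_{c_n}f\Vert\,\Vert f-\PP_{a_n}f\Vert$, so $\Vert\PP_{c_n}f\Vert\leq\Vert f-\PP_{a_n}f\Vert\to 0$. I expect this passage—from $z\land z_\infty\to z_\infty$ to $z'\land z_\infty\to 0_\PP$—to be the main obstacle, precisely because complementation is \emph{not} continuous; the operator identity $\PP_{a_n}\PP_{c_n}=\PP_{0_\PP}$ is what substitutes for the missing continuity of $'$.

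For the final assertion, suppose first $\lim z=\zeta$ exists. By \cite[Lemma~3.2]{tsirelson} there is a $\uparrow\uparrow$ sequence $N$ with $\liminf z_N=\zeta$, and the hypothesis gives a $\uparrow\uparrow$ sequence $M$ with $\liminf z_{N_M}=z_\infty$. On one hand $z_{N_M}\to\zeta$, so $\liminf z_{N_M}\subset\zeta$ by Item~\ref{inclusion-closed}, i.e. $z_\infty\subset\zeta$. On the other hand, passing to a subsequence can only enlarge the set-theoretic $\liminf$: since $\{M(j):j\geq k\}\subset\{i:i\geq M(k)\}$ one has $\land_{j\geq k}z_{N(M(j))}\supset\land_{i\geq M(k)}z_{N(i)}$, and joining over $k$ (the indices $M(k)$ being cofinal) gives $\liminf z_{N_M}\supset\liminf z_N$, i.e. $z_\infty\supset\zeta$. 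Hence $\zeta=z_\infty$. For the parenthetical weakening, invoke metrizability once more: given any subsequence, extract by assumption a sub-subsequence along which $\lim z$ exists, apply the previous argument to identify that limit as $z_\infty$, and conclude $\lim z=z_\infty$.
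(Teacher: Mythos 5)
Your proof is correct and follows essentially the same route as the paper's: a subsequence-plus-metrizability argument for $\lim(z\land z_\infty)=z_\infty$ (the paper invokes continuity of $\land$ on $\Cl(B)$ where you use the sandwich lemma), a reduction of the second claim to the first via the identity $\PP_{z_n\land z_\infty}\PP_{z_n'\land z_\infty}=\PP_{0_\PP}$ (the paper phrases the same estimate as conditional Jensen, you as Cauchy--Schwarz), and \cite[Lemma~3.2]{tsirelson} together with monotonicity of the set-theoretic $\liminf$ under passage to subsequences for the last claim. No gaps.
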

\begin{remark}
$\lim z$ certainly exists along a subsubsequence of every subsequence if $B$ is compact, which may happen in not entirely trivial situations, as we have noted (Example~\ref{example:compact}).
\end{remark}
\begin{proof}
Suppose that for each $\uparrow\uparrow$ sequence $N$ in $\mathbb{N}$ there exists a $\uparrow\uparrow$ sequence $M$ in $\mathbb{N}$ such that $\liminf z_{N_M}=z_\infty$. Because $\land$ is continuous on $\Cl(B)$   we see that $\liminf (z_{N_M}\land z_\infty)=z_\infty$, hence $\liminf (z_{N_M}\land z_\infty)=\limsup (z_{N_M}\land z_\infty)=z_\infty$,  and so $\lim (z_{N_M}\land z_\infty)=z_\infty$. Because the topology of $\Lattice$ is metrizable we conclude that $\lim (z\land z_\infty)=z_\infty$. Let now $f\in \L2(\PP)$. Then by conditional Jensen $\PP[(\PP[f\vert z_n'\land z_\infty]-\PP[f\vert 0_\PP])^2]=\PP[(\PP[\PP[f\vert z_\infty]-\PP[f\vert z_n\land z_\infty]\vert z_n'\land z_\infty])^2]\leq \PP[(\PP[f\vert z_\infty]-\PP[f\vert z_n\land z_\infty])^2]\to 0$ as $n\to\infty$. Thus $\lim(z'\land z_\infty)=0_\PP$.
If $\lim z$ exists, this means that: by the continuity of $\land$ on $\Cl(B)$, $(\lim z)\land z_\infty=z_\infty$, i.e. $z_\infty\subset \lim z$; on the other hand there is a  $\uparrow\uparrow$ sequence $N$ in $\mathbb{N}$ such that $\lim z=\liminf z_N$ and by assumption there is then a $\uparrow\uparrow$ sequence $M$ in $\mathbb{N}$ such that $\liminf z_{N_M}=z_\infty$, however $\liminf z_{N_M}\supset \liminf z_N$, so  $z_\infty\supset \lim z$. If $\lim z$ exists only along a subsubsequence of every subsequence, we obtain by what we have just proven that $\lim z=z_\infty$ along a subsubsequence of every subsequence, and therefore also without this latter qualification (because the topology of $\Lattice$ is metrizable).
\end{proof}

\begin{example}
Return to Example~\ref{example:non-compact}. Put $x_n:=N_{A_n}$ for $n\in \mathbb{N}$. Clearly for any $\uparrow\uparrow$ sequence $K$ in $\mathbb{N}$ we have that for each $m\in \mathbb{N}$, $\liminf_{n\to\infty} S_{x_{K_n}}\cap \{s\in S:\vert s\vert=m\}=\emptyset$ a.e.-$\mu$; so in fact $\liminf_{n\to\infty} S_{x_{K_n}}=\{\emptyset\}$ a.e.-$\mu$. By the argument of the proof of \cite[Lemma~3.2]{tsirelson} it follows that $\liminf x_K=0_\PP$. But, as we have already argued, no subsequence of $(x_n)_{n\in \mathbb{N}}$ converges in $B$.
\end{example}

In general complementation is not continuous in $B$ (see Example~\ref{example:nonclassical-not-cts-complementation} to follow). Still, the convergence of a sequence in $B$ has some implications for limits involving the complements of its members. 

\begin{proposition}
Let $(x_n)_{n\in \mathbb{N}}$ be a sequence in $B$ converging to $x\in B$. Then $\lim_{n\to\infty} (x_n'\land x)=0_\PP$, $\lim_{n\to\infty}(x_n'\lor x')=x'$, $\limsup_{n\to\infty}\PP[\PP[f\vert x_n']^2]\leq \PP[\PP[f\vert x']^2]$ for all $f\in \L2(\PP)$, finally $\liminf_{n\to\infty}x_n'\subset x'$ and one can replace the $\liminf$ with $\lim$  provided the limit exists.
\end{proposition}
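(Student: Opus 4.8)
The plan is to prove the four assertions in the order written, the first being the engine from which the other three follow by softer arguments.

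\emph{Step 1: $\lim_{n}(x_n'\land x)=0_\PP$.} I would first record that $x_n\land x$ and $x_n'\land x$ are mutually independent complements inside the noise Boolean algebra $B_x$ over $\PP\vert_x$: their join is $x$, their meet is $x_n\land x_n'\land x=0_\PP$, and they are independent because they sit inside the independent $x_n$ and $x_n'$. Since $\land$ is continuous on $\Cl(B)$ and $x_n\to x$, we have $x_n\land x\to x$. The engine is then the identity, valid for each $f\in\L2(\PP)$,
\[
\PP[f\vert x_n'\land x]-\PP[f]=\PP[\PP[f\vert x]-\PP[f\vert x_n\land x]\,\vert\,x_n'\land x],
\]
which holds because $x_n'\land x\subset x$ (so that conditioning $\PP[f\vert x]$ on $x_n'\land x$ reproduces $\PP[f\vert x_n'\land x]$) and because the $(x_n\land x)$-measurable term $\PP[f\vert x_n\land x]$ conditions down to its mean $\PP[f]$ across the independent $x_n'\land x$. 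Conditional Jensen then yields $\Vert\PP[f\vert x_n'\land x]-\PP[f]\Vert^2\le\Vert\PP[f\vert x]-\PP[f\vert x_n\land x]\Vert^2\to 0$, whence $x_n'\land x\to 0_\PP$ by \ref{generalities:topology}\ref{rmk:general:ii}. This is precisely the computation already carried out in the proof of Proposition~\ref{proposition:convergence-along-liminf}.

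\emph{Step 2: $\lim_n(x_n'\lor x')=x'$.} The observation that unlocks this step is the Boolean identity $x_n'\lor x'=x'\lor(x_n'\land x)$, obtained from distributivity and $x\lor x'=1_\PP$ via $x'\lor(x_n'\land x)=(x'\lor x_n')\land(x'\lor x)=(x'\lor x_n')\land 1_\PP$. Because $x'\in B\subset\overline{B}$, the map $(\Cl(B)\ni y\mapsto x'\lor y)$ is continuous \cite[Lemma~4.4]{tsirelson}; feeding in $x_n'\land x\to 0_\PP$ from Step 1 gives $x_n'\lor x'=x'\lor(x_n'\land x)\to x'\lor 0_\PP=x'$.

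\emph{Steps 3 and 4.} The third claim is then immediate by monotonicity: from $x_n'\subset x_n'\lor x'$ and Jensen, $\PP[\PP[f\vert x_n']^2]\le\PP[\PP[f\vert x_n'\lor x']^2]$, and the right-hand side tends to $\PP[\PP[f\vert x']^2]$ by Step 2, so $\limsup_n\PP[\PP[f\vert x_n']^2]\le\PP[\PP[f\vert x']^2]$. For the last claim, put $y_m:=\land_{n\ge m}x_n'$, so that $y_m\uparrow\liminf_n x_n'$; since $y_m\subset x_n'$ for every $n\ge m$ we get $\PP[\PP[f\vert y_m]^2]\le\PP[\PP[f\vert x_n']^2]$, and letting $n\to\infty$ and using Step 3 gives $\PP[\PP[f\vert y_m]^2]\le\PP[\PP[f\vert x']^2]$; letting $m\to\infty$ (increasing martingale convergence) yields $\PP[\PP[f\vert\liminf_n x_n']^2]\le\PP[\PP[f\vert x']^2]$ for all $f$, i.e.\ $\liminf_n x_n'\subset x'$. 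If moreover $\lim_n x_n'=:L$ exists, then $\PP[\PP[f\vert L]^2]=\lim_n\PP[\PP[f\vert x_n']^2]\le\PP[\PP[f\vert x']^2]$ directly from Step 3, so $L\subset x'$.

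I expect Step 2 to be the only genuinely non-routine point, precisely because $\lor$ and complementation are discontinuous in general; the whole argument hinges on spotting the reduction $x_n'\lor x'=x'\lor(x_n'\land x)$, which replaces an unwieldy join of complements by a join of the fixed element $x'\in\overline{B}$ against a sequence converging to $0_\PP$, the one regime where the available separate continuity of $\lor$ applies.
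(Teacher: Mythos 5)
Your proof is correct and follows essentially the same route as the paper's: the limit $x_n'\land x\to 0_\PP$ (which the paper obtains by citing Proposition~\ref{proposition:convergence-along-liminf}, whose proof is exactly your Step 1 computation), the identity $x_n'\lor x'=(x_n'\land x)\lor x'$ combined with separate continuity of $\lor$ at $x'\in\overline{B}$, and then monotonicity for the remaining claims. The only cosmetic difference is in the final $\liminf$ assertion, where the paper uses $\liminf_n x_n'\subset\liminf_n(x_n'\lor x')\subset\lim_n(x_n'\lor x')=x'$ together with \ref{generalities:topology}\ref{inclusion-closed}, while you argue via $y_m=\land_{n\ge m}x_n'$ and increasing martingale convergence; both are fine.
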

\begin{proof}
According to Proposition~\ref{proposition:convergence-along-liminf} we have $\lim_{n\to\infty}(x_n'\land x)=0_\PP$. By separate continuity of $\lor$ on $B$ we infer that $\lim_{n\to\infty}(x_n'\lor x')=(\lim_{n\to\infty}(x_n'\land x))\lor x'=x'$. The third claim is now immediate. For the last one just note that $\liminf_{n\to\infty}x_n'\subset \liminf_{n\to\infty}x_n'\lor x'\subset \lim_{n\to\infty}(x_n'\lor x')=x'$ and one can replace the inferior limits with limits if $\lim_{n\to\infty}x_n'$ exists (due to \ref{generalities:topology}\ref{inclusion-closed}).
\end{proof}

\subsection{Various notions of completeness}
There is a plethora of ways in which a noise Boolean algebra may be deemed ``complete''/``without gaps''/``continuous'' -- we shed  some light on (the inter-relations between) them.

\begin{proposition}\label{proposition:ctb-vs-arbitrary-joins-meets}
Let $A\subset \Lattice$. Then there is a countable $A_0\subset A$ such that $\lor A_0=\lor A$; if $A$ is commutative or directed downwards, then there is a countable $A_0\subset A$ such that $\land A_0=\land A$ (we take $\land \emptyset:=1_\PP$, naturally). In particular we have the following two assertions.
\begin{enumerate}[(i)]
\item\label{proposition:ctb-vs-arbitrary-joins-meets:i} $B$ is closed under countable joins (resp. meets) iff it is closed under arbitrary joins (resp. meets). 
\item\label{proposition:ctb-vs-arbitrary-joins-meets:ii} $\mathrm{Cl}(B)$ contains the  meets (resp. joins) of arbitrary (resp. upwards directed) families in $\Cl(B)$. $B$ is complete iff $B=\Cl(B)$. A classical $B$ is complete iff it is noise complete.
\end{enumerate}
\end{proposition}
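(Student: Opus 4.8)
The heart of the matter is a countable-reduction lemma for joins and for (suitably restricted) meets; the three ``in particular'' claims then follow by soft arguments. \emph{Countable joins.} Since $\PP$ is essentially separable, $\L2(\PP\vert_{\lor A})$ is a closed, hence separable, subspace of $\L2(\PP)$, and it is the closure of the increasing union $\bigcup\{\L2(\PP\vert_{\lor F}):F\subset A\text{ finite}\}$ (the finite joins $\lor F$ generate $\lor A$, and increasing unions of $\sigma$-fields pass to closures of the associated $\L2$-spaces by increasing-martingale convergence). Fixing a countable dense sequence $(d_k)$ in $\L2(\PP\vert_{\lor A})$ and, for each $k$ and $m\in\mathbb{N}$, a finite $F_{k,m}\subset A$ together with an element of $\L2(\PP\vert_{\lor F_{k,m}})$ within $1/m$ of $d_k$, the countable set $A_0:=\bigcup_{k,m}F_{k,m}\subset A$ has $\L2(\PP\vert_{\lor A_0})$ dense and closed in $\L2(\PP\vert_{\lor A})$; by the correspondence between $\L2$-subspaces and complete $\sigma$-fields this gives $\lor A_0=\lor A$.

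\emph{Countable meets.} The first, perhaps underappreciated, point is that completeness of the members of $\Lattice$ forces $\L2(\PP\vert_{\land A})=\bigcap_{a\in A}\L2(\PP\vert_a)$ for \emph{every} family $A$: if $f$ agrees a.s.\ with an $a$-measurable function for each $a$, then $\{f\le t\}\in a$ for all $a$ (each $a$ being complete), hence $\{f\le t\}\in\bigcap_a a=\land A$ for every $t$, so $f$ is $\land A$-measurable. Thus $\PP_{\land A}$ is the orthogonal projection onto $\bigcap_a\L2(\PP\vert_a)$. I would then feed in the net of finite meets $(\PP_{\land F})_{F\subset A\text{ finite}}$: it is a decreasing net of projections with nested ranges whose ranges intersect in $\bigcap_a\L2(\PP\vert_a)$, the range of $\PP_{\land A}$, so it converges strongly to $\PP_{\land A}$ --- the estimate $\|\PP_{\land F}f-\PP_{\land F'}f\|^2=\|\PP_{\land F}f\|^2-\|\PP_{\land F'}f\|^2$ for $F\subset F'$ shows the net is Cauchy at each $f$, and \emph{no commutativity is used}. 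Extraction now mimics the join case: with $(f_k)$ countable dense, pick finite $F_{k,m}\subset A$ with $\|\PP_{\land F_{k,m}}f_k\|^2<\|\PP_{\land A}f_k\|^2+1/m$ and set $A_0:=\bigcup_{k,m}F_{k,m}$; from $\PP_{\land A}\le\PP_{\land A_0}\le\PP_{\land F_{k,m}}$ one reads off $\|\PP_{\land A_0}f_k\|=\|\PP_{\land A}f_k\|$ for all $k$, whence $\PP_{\land A_0}=\PP_{\land A}$ and $\land A_0=\land A$. This argument needs no hypothesis; the stated alternatives are exactly what make the meet reduction \emph{applicable} below, since every subfamily of $B$ is commutative (as $B\subset\Cl(B)$ and $\Cl(B)$ is commutative), while a downwards directed $A$ admits the cleaner decreasing net $(\PP_a)_{a\in A}$ itself (here $\PP_a\PP_b=\PP_b$ whenever $b\subset a$).

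\emph{The consequences.} For \ref{proposition:ctb-vs-arbitrary-joins-meets:i}, arbitrary-closedness trivially implies countable-closedness; conversely, closure under countable joins (resp.\ meets) plus the lemma applied to an arbitrary $A\subset B$ (commutative, as noted) yields a countable $A_0$ with $\lor A_0=\lor A$ (resp.\ $\land A_0=\land A$) in $B$. For \ref{proposition:ctb-vs-arbitrary-joins-meets:ii}: given an arbitrary $A\subset\Cl(B)$, the lemma gives a countable $A_0$; its finite meets lie in $\Cl(B)$ (which is $\land$-closed) and decrease to $\land A_0=\land A$, which is therefore in the topologically closed $\Cl(B)$. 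For joins one must assume $A$ upwards directed: after extracting a countable $A_0$ with $\lor A_0=\lor A$, directedness lets one replace it by an increasing sequence $(c_n)$ in $A\subset\Cl(B)$ with $\lor_n c_n=\lor A$, and $c_n\uparrow\lor A\in\Cl(B)$. Then $B$ is complete iff $B=\Cl(B)$: if $B=\Cl(B)$, the two halves of \ref{proposition:ctb-vs-arbitrary-joins-meets:ii} give arbitrary meets and directed joins, and applying the directed-join part to the upwards directed family of finite joins of an arbitrary $A\subset B$ (these lie in $B$ since $B$ is a sublattice) yields arbitrary joins, i.e.\ completeness; conversely, if $B$ is complete then every $\liminf$ of a sequence $(x_n)$ in $B$ equals $\lor_n\bigl(\bigcap_{m\ge n}x_m\bigr)$, a countable join of countable meets, hence lies in $B$, so $\Cl(B)\subset B$. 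Finally, for classical $B$ we have $\overline{B}=\Cl(B)$, so noise completeness $B=\overline{B}$ is the same as $B=\Cl(B)$, i.e.\ the same as completeness.

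\emph{Main obstacle.} The substantive work is the meet reduction --- securing strong convergence of the finite-meet projection net without invoking commutativity, and the norm-extraction to a countable subfamily --- together with the asymmetry in \ref{proposition:ctb-vs-arbitrary-joins-meets:ii}: meets are allowed for arbitrary families precisely because $\Cl(B)$ is closed under $\land$, whereas joins are admitted only for upwards directed families because $\lor$-closedness of $\Cl(B)$ is unavailable, so finite joins of a countable subfamily of $\Cl(B)$ need not return to $\Cl(B)$.
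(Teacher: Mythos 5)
Your proof is correct, and the join reduction together with the derivation of \ref{proposition:ctb-vs-arbitrary-joins-meets:i}--\ref{proposition:ctb-vs-arbitrary-joins-meets:ii} follows essentially the paper's route (the paper extracts a countable essentially generating subset of $\lor A$ and covers each generator by a countable subfamily of $A$; you do the same thing in $\L2$ language). The meet reduction, however, is genuinely different and in fact stronger. The paper takes a countable $A_0$ that is \emph{topologically dense} in $A$ and must then argue $\land A_0=\land A$; this forces it through the continuity of $\land$ (available only on commutative subsets of $\Lattice$) so as to reduce to the downwards directed case, where decreasing martingale convergence for nets finishes the job --- whence the ``commutative or directed downwards'' hypothesis. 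You instead observe that completeness of the members of $\Lattice$ gives $\L2(\PP\vert_{\land A})=\bigcap_{a\in A}\L2(\PP\vert_a)$ for an \emph{arbitrary} family, so that $(\PP_{\land F})_{F\subset A\ \mathrm{finite}}$ is a decreasing net of projections with nested ranges intersecting in $\L2(\PP\vert_{\land A})$; such a net always converges strongly to the projection onto the intersection of its ranges, with no commutativity anywhere, and the countable extraction then runs on the norms $\Vert\PP_{\land F}f_k\Vert$ exactly as you describe. If this is right --- and I see no gap: both ingredients (the $\L2$-identification via completeness of the $\sigma$-fields, and strong convergence of decreasing nets of projections) are standard --- then your argument dispenses with the hypothesis altogether and answers affirmatively the Question the paper poses immediately after the proposition; that is worth stating explicitly rather than burying. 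One cosmetic remark: your sentence ``the stated alternatives are exactly what make the meet reduction applicable below'' undersells your own argument --- the alternatives are not needed for applicability, only for matching the proposition as stated (and in the applications every relevant family is a subfamily of the commutative $\Cl(B)$, so either version suffices there).
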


\begin{question}
Does the claim concerning meets hold true without  the ``commutative or directed downwards'' qualification?
\end{question}
\begin{remark}
Compare with \cite[Eq.~(4.2)]{tsirelson} where the assertion is that $B$ is $\sigma$-complete iff it is complete (both as a Boolean algebra). It is not quite the same statement because a noise Boolean algebra can be complete as a Boolean algebra but not complete (see next example). In fact, in the proof of \cite[Corollary~4.7]{tsirelson} the assertion of \cite[Eq.~(4.2)]{tsirelson} is used to prove completenes  (rather than completeness as a Boolean algebra), leaving the argument, in a manner of speaking, incomplete (note that \cite{tsirelson} does use the qualification ``complete'' for a noise Boolean algebra in the same way as we do \cite[display immediately preceding Section~1.2]{tsirelson}). It is however easily corrected, because in this proof one has in fact closure under denumerable joins and meets (which is used to conclude that $B$ is $\sigma$-complete as a Boolean algebra, but this is incidental), hence completeness by the very proposition above.
\end{remark}

\begin{example}
Let $\mathfrak{F}:=\{A\in 2^\mathbb{N}:A\text{ cofinite}\}$. $\mathfrak{F}$ is a proper filter of $\mathbb{N}$.  Like any proper filter it is contained in some ultrafilter $\mathfrak{U}$ of $\mathbb{N}$. Suppose $\xi=(\xi_i)_{i\in \mathbb{N}\cup \{\infty\}}$ is an independency in $\Lattice\backslash \{0_\PP\}$ that generates $1_\PP$. Define $N:2^\mathbb{N}\to \Lattice$ by setting $N(A):=\sigma(\xi\vert_A)$ for $A\in 2^\mathbb{N}\backslash \mathfrak{U}$ and $N(A):=\sigma(\xi\vert_A)\lor \sigma(\xi_\infty)$ for $A\in \mathfrak{U}$. Then the range of $N$ is a noise Boolean algebra $B$, and $N$ is an injective noise factorization, an isomorphism of Boolean algebras. $2^\mathbb{N}$ is complete as a  Boolean algebra; therefore $B$ is also complete as a Boolean algebra. However, $B$ is not complete (as a noise Boolean algebra), for $\xi\vert_{\mathbb{N}}$ is a sequence in  $B$, whose join $\lor_{i\in \mathbb{N}}\xi_i$ does not belong to $B$. $B$ is classical since it is contained in the complete noise Boolean algebra that is attached to the independency $\xi$. 
\end{example}
\begin{question}\label{question:complete-as-Bolean}
The $B$ of the preceding example is not  noise complete (since it is classical, but not complete). Can $B$ be noise complete, complete as a Boolean algebra, but not complete ($\therefore$ nonclassical)?
\end{question}
\begin{proof}
The join $\lor A$ is essentially separable. Therefore it admits an essentially generating countable subset $\tilde{A}$. Each member $\tilde{a}$ of $\tilde{A}$ belongs to $\lor A_{\tilde{a}}$ for some countable $A_{\tilde{a}}\subset A$. Then $A_0:=\cup_{\tilde{a}\in \tilde{A}}A_{\tilde{a}}$ is a countable subset of $A$ with $\lor A_0=\lor A$. 

Assume now $A$ is commutative (resp. downwards directed) and let $A_0$ be a dense countable subset of $A$ (it exists due to \ref{generalities:topology}\ref{rmk:general:Polish}). Because of the continuity of the meet operation on $A$ we have that   $\{\land \tilde A_0:\tilde A_0\in (2^{A_0})_{\mathrm{fin}}\}$ [whose meet is the same as that of $A_0$] is dense in $\{\land \tilde{A}:\tilde{A}\in (2^A)_{\mathrm{fin}}\}$ [whose meet is the same as that of $A$] also (resp. $\{a\in A:a\subset a_0\text{ for some }a_0\in A_0\}$ is downwards directed, contained in $ A$, contains $A_0$, and its meet is the same as that of $A_0$). Dropping the preceding assumption, we show that if $A_0\subset A\subset \Lattice$ are downwards directed and $A_0$ is dense in $A$, then $\land A_0=\land A$, which will be enough.

Now, for each $f\in b\FF$, by decreasing martingale convergence for nets \cite[Lemma~2.3]{vidmar_2019} one has $\PP[\PP[f\vert \land A]^2]=\inf_{a\in  A}\PP[\PP[f\vert a]^2]$ and likewise for $A_0$ in place of $A$. Because $A_0$ is dense in $A$ we also have $\inf_{a\in  A}\PP[\PP[f\vert a]^2]=\inf_{a\in  A_0}\PP[\PP[f\vert a]^2]$. It follows that $\PP[\PP[f\vert \land A]^2]=\PP[\PP[f\vert \land A_0]^2]$. Polarizing it entails $\PP[g\PP[f\vert \land A]]=\PP[\PP[g\vert \land A]\PP[f\vert \land A]]=\PP[\PP[g\vert \land A_0]\PP[f\vert \land A_0]]=\PP[g\PP[f\vert \land A_0]]$ for all $g\in b\FF$, and so $\PP[f\vert \land A]=\PP[f\vert \land A_0]$. Since $f\in b\FF$ was arbitrary we deduce that $\land A=\land A_0$. 

The statements of \ref{proposition:ctb-vs-arbitrary-joins-meets:i} and  \ref{proposition:ctb-vs-arbitrary-joins-meets:ii} are now essentially immediate; for the last part of the latter recall that for a classical $B$, $\overline{B}=\Cl(B)$. 
\end{proof}

\begin{remark}
The proof shows (modulo trivial interventions) that if an $A\subset \Lattice$ is downwards directed then $\land A_0=\land A$ for any $A_0\subset A$ that is dense in $A$. The obvious analogous statement for an upwards directed $A$ holds also (with the same proof, mutatis mutandis, in particular using increasing martingale convergence for nets \cite[Proposition~V-1-2]{neveu}). On account of \ref{generalities:topology}\ref{rmk:general:Polish} countable such $A$ always exist.
\end{remark}

\begin{definition}
$B$ is minimally downwards (resp. upwards) continuous if $\land U\in B$ for every ultrafilter $U$ in $B$ (resp. $\lor M\in B$ for every maximal ideal $M$ in $B$).
\end{definition}

\begin{remark}
This is a slight (apparently natural) generalization of these concepts that appear (with the same names)   in \cite[Definition~1.6]{vershik-tsirelson}. Trivially, completeness implies minimal up/down continuity.
\end{remark}

\begin{example}
The (noise-complete)  simplest nonclassical noise Boolean algebra of Example~\ref{example:simplest-nonclassical} is minimally downwards continuous, but it is not minimally upwards continuous. 
\end{example}

\begin{proposition}\label{proposition:ultrafilters}
 If $F$ is a filter in $B$ such that $\lor (F')\in B$, then  $\land F=(\lor (F'))'\in B$; correspondingly, if $I$ is an ideal in $B$ such that $\lor I\in B$, then $\land (I')=(\lor I)'\in B$. In particular, if $B$ is minimally upwards continuous, then it is minimally downwards continuous. 
\end{proposition}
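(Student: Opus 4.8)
The plan is to prove the first assertion directly at the level of $\Lattice$, then obtain the ideal version by complementation, and finally read off the implication between the two continuity notions. Throughout I use that $B$ is a Boolean algebra, so $'$ is its (involutive, order-reversing) complement, and that $\Lattice$ is closed under arbitrary meets.

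First I would do the Boolean-algebra bookkeeping. Since $F$ is a filter, $F'=\{a':a\in F\}$ is an ideal, and the hypothesis is $y:=\lor(F')\in B$; the goal is $\land F=y'$. For every $a\in F$ we have $a'\in F'$, so $a'\subseteq y$, and order-reversal of $'$ gives $y'\subseteq(a')'=a$; intersecting over $a\in F$ yields $y'\subseteq\land F$. In particular $\land F\lor y\supseteq y'\lor y=1_\PP$, so $\land F\lor y=1_\PP$.

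Next I would establish the independence $\land F\perp y$. For each $a\in F$ one has $\land F\subseteq a$ and $a\perp a'$, whence $\land F\perp a'$; as $a$ ranges over $F$ this says $\land F$ is independent of every member of the upward-directed family $F'$, hence of the field $\bigcup F'$, and therefore of $y=\sigma(\bigcup F')$. Thus $\land F$ is an independent complement of $y$ in $\Lattice$ that contains the complement $y'\in B$. The crux — and the step I expect to be the main obstacle — is to upgrade ``$\land F$ is an independent complement of $y$ containing $y'$'' to the equality $\land F=y'$: in finite dimensions this is a dimension count, but in general it requires the tensor structure. Using the natural factorization $\L2(\PP\vert_{u\lor v})=\L2(\PP\vert_u)\otimes\L2(\PP\vert_v)$ for independent $u,v\in\Lattice$, both $\L2(\PP\vert_{\land F})\otimes\L2(\PP\vert_y)$ and $\L2(\PP\vert_{y'})\otimes\L2(\PP\vert_y)$ coincide with $\L2(\PP)$, while $\L2(\PP\vert_{y'})\subseteq\L2(\PP\vert_{\land F})$. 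Decomposing $\L2(\PP\vert_{\land F})=\L2(\PP\vert_{y'})\oplus R$ orthogonally and tensoring with $\L2(\PP\vert_y)$ — an operation that respects orthogonal sums, since $\langle w_1u_1,w_2u_2\rangle=\langle w_1,w_2\rangle\langle u_1,u_2\rangle$ by the independence of $\land F$ and $y$ — exhibits $R\otimes\L2(\PP\vert_y)$ as the orthogonal complement of $\L2(\PP)$ inside $\L2(\PP)$, hence as $\{0\}$; since $\L2(\PP\vert_y)$ contains the constants, $R=\{0\}$. Therefore $\L2(\PP\vert_{\land F})=\L2(\PP\vert_{y'})$, i.e. $\land F=y'=(\lor(F'))'\in B$.

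For the ideal version I would apply the first assertion to the filter $F:=I'$ (the complement of an ideal is a filter): by involutivity $F'=(I')'=I$, so the hypothesis becomes $\lor(F')=\lor I\in B$ and the conclusion reads $\land(I')=(\lor I)'\in B$. Finally, for the implication ``minimally upwards continuous $\Rightarrow$ minimally downwards continuous'', let $U$ be an ultrafilter in $B$; then $U'$ is a maximal ideal, minimal upward continuity gives $\lor(U')\in B$, and the first assertion with $F=U$ yields $\land U=(\lor(U'))'\in B$, which is exactly minimal downward continuity.
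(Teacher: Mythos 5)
Your proof is correct, and it reaches the key inclusion $\land F\subset(\lor(F'))'$ by a genuinely different route from the paper. The paper, after the same easy inclusion $(\lor(F'))'\subset\land F$, takes a zero-mean $f\in\L2(\PP\vert_{\lor(F')})$ and combines the Bonferroni-type estimate $\PP_x+\PP_{x'}\leq\mathrm{id}_{\L2(\PP)}$ on $\L2(\PP)_0$ (Remark~\ref{rmk:inequalities}\ref{rmk:inequalities:ii}) with increasing martingale convergence for nets to force $\PP_{\land F}f=0$, and then invokes distributivity of $\land F$ over $B$ --- which in turn rests on $\land F\in\Cl(B)$ via Proposition~\ref{proposition:ctb-vs-arbitrary-joins-meets}\ref{proposition:ctb-vs-arbitrary-joins-meets:ii}. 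You instead prove outright that $\land F$ is independent of $y:=\lor(F')$ (each $a'$, $a\in F$, is independent of $\land F\subset a$, and $\bigcup F'$ is a $\pi$-system generating $y$ because $F'$ is upward directed), and then use the tensor factorization $\L2(\PP\vert_{u\lor v})=\L2(\PP\vert_u)\otimes\L2(\PP\vert_v)$ to show that an independent complement of $y$ in $\Lattice$ containing $y'$ must equal $y'$; the orthogonal-sum/tensor computation killing the residual space $R$ is sound. What your approach buys is self-containedness: it needs neither the Bonferroni inequalities nor the closure and distributivity results of Section~4, only the elementary arithmetic of independent $\sigma$-fields and a Dynkin-class argument. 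What the paper's approach buys is brevity given the machinery it has already set up, and it exhibits the statement as a direct application of the inequality $\PP_x+\PP_{x'}\leq\mathrm{id}$ on $\L2(\PP)_0$, which is the quantitative shadow of the same independence fact. Your reductions for the ideal version (apply the filter case to $F=I'$) and for the implication from minimal upward to minimal downward continuity coincide with the paper's.
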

\begin{example}
If $(x_n)_{n\in\mathbb{N}}$ is a sequence in $B$ nondecreasing to $1_\PP$, then $({x_n}')_{n\in \mathbb{N}}$ is nonincreasing to $0_\PP$ (take $I=\cup_{n\in \mathbb{N}}B_{x_n}$) \cite[p.~233, final paragraph]{tsirelson}. More generally, if $(x_n)_{n\in\mathbb{N}}$ is a sequence in $B$ nondecreasing to an $x\in B$, then $({x_n}')_{n\in \mathbb{N}}$ is nonincreasing to $x'$.
\end{example}
\begin{proof}
Only the equality $\land F=(\lor (F'))'$ is not immediately clear. But trivially, for each $x\in F$, $x'\subset \lor (F')$, hence $(\lor(F'))'\subset x''=x$; therefore $(\lor (F'))'\subset \land F$. For the converse inclusion take an arbitrary $f\in \L2(\PP\vert_{\lor (F')})$ with $\PP[f]=0$. Then for $x\in F$ we have by  Remark~\ref{rmk:inequalities}\ref{rmk:inequalities:ii} that $\PP[\PP[f\vert \land F]^2]\leq \PP[\PP[f\vert x]^2]\leq \PP[f^2]-\PP[\PP[f\vert x']^2]$, which is $\to \PP[f^2]-\PP[\PP[f\vert \lor (F')]^2]=0$ along $x\in F$ by increasing martingale convergence for nets \cite[Proposition~V-1-2]{neveu}. It means that $\PP_{\land F}\leq \PP_{(\lor (F'))'}$ on the zero-mean subspace of $\L2(\PP\vert_{\lor (F')})$ and hence of course on the whole of $\L2(\PP\vert_{\lor (F')})$. In conjunction with the inclusion that we have already established we get that $(\land F)\land (\lor (F'))=(\lor (F'))'\land (\lor (F'))$ and $(\land F)\land (\lor (F'))'=(\lor (F'))'\land (\lor (F'))'$, whence by distributivity $\land F=(\lor (F'))'$ ($\land F$ does indeed distribute over $B$, since it belongs to $\Cl(B)$ by Proposition~\ref{proposition:ctb-vs-arbitrary-joins-meets}\ref{proposition:ctb-vs-arbitrary-joins-meets:ii} [recall \ref{generalitites:subalgebra}]).
\end{proof}

\begin{proposition}
We have the following assertions.
\begin{enumerate}[(i)]
\item\label{ct-1} If $U$ is an ultrafilter in $B$ and $\land U\in B$ then $\land U$ is either $0_\PP$ or an atom of the Boolean algebra $B$. Similarly, if $M$ is a maximal ideal in $B$ and $\lor M\in B$, then $\lor M=1_\PP$ or $\lor M$ is the complement of an atom of the Boolean algebra $B$. 
\item\label{ct-2} If $B$ is minimally upwards continuous and $M$ is a maximal ideal in $B_x$ for some $x\in B$, then $\lor M\in B_x$ (and $\lor M$ is $x$ or the complement in $B_x$ of an atom of the Boolean algebra $B_x$). Almost symmetrically: if $B$ is minimally downwards continuous  and $U$ is an ultrafilter of ${}_xB$ for some $x\in B$, then $\land U\in {}_xB$ (and $\land U$ is $x$ or an atom of the Boolean algebra ${}_xB$). 
\end{enumerate}
\end{proposition}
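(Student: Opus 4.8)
The plan is to dispatch (i) first, with no continuity hypothesis in play, and then to bootstrap (ii) out of it by \emph{extending} the given ideal (resp.\ filter) of $B_x$ (resp.\ ${}_xB$) to a maximal ideal (resp.\ ultrafilter) of all of $B$, at which point the relevant minimal continuity assumption applies.

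For the first assertion of (i), I would take an ultrafilter $U$ with $a:=\land U\in B$, assume $a\neq 0_\PP$, and show $a$ is an atom by exploiting the ultrafilter dichotomy on an arbitrary $b\in B_a$: either $b\in U$, whence $a=\land U\subset b$ and hence $b=a$ (as $b\subset a$ already); or $b'\in U$, whence $a\subset b'$, so $b\subset a\subset b'$, forcing $b=b\land b'=0_\PP$ because $b'$ is the independent complement of $b$. Thus $B_a=\{0_\PP,a\}$, i.e.\ $a$ is an atom. The second assertion of (i) then follows by the complementation anti-automorphism of the Boolean algebra $B$: $M':=\{m':m\in M\}$ is an ultrafilter, and since $'$ reverses order and sends the join $\lor M\in B$ to the meet of $M'$, one has $\land M'=(\lor M)'\in B$; applying the first assertion to $M'$ gives $(\lor M)'\in\{0_\PP\}\cup\{\text{atoms}\}$, i.e.\ $\lor M$ is $1_\PP$ or the complement of an atom.

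For the first statement of (ii) I would extend the maximal ideal $M$ of $B_x$ to the ideal $\hat M$ of $B$ generated by $M\cup\{x'\}$, explicitly $\hat M=\{a\in B:a\subset m\lor x'\text{ for some }m\in M\}$. Using the decomposition $a=(a\land x)\lor(a\land x')$ from \ref{generalitites:subalgebra}, one checks that $\hat M$ is a proper ideal and that for each $a\in B$ either $a\in\hat M$ (when $a\land x\in M$) or $a'\in\hat M$ (when $a\land x\notin M$, using maximality of $M$ in $B_x$ on the $x$-component), so $\hat M$ is maximal. Minimal upward continuity then yields $\lor\hat M=(\lor M)\lor x'\in B$, and the lattice isomorphism $\Lattice_{x,x'}\cong\Lattice_x\times\Lattice_{x'}$ of \ref{generalities:topology}\ref{rmk:general:vii}, applied to the independent pair $x,x'$ and noting $\lor M\subset x$, identifies $((\lor M)\lor x')\land x$ with $\lor M$. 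Hence $\lor M=((\lor M)\lor x')\land x\in B_x$, and the parenthetical characterization is precisely the second assertion of (i) applied to the noise Boolean algebra $B_x$ (whose top is $x$).

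The second statement of (ii) is the exact dual, proved the same way with joins and meets, ideals and filters interchanged: extend the ultrafilter $U$ of ${}_xB$ to the filter $\hat U$ of $B$ generated by $U\cup\{x'\}$, i.e.\ $\hat U=\{a\in B:a\supset u\land x'\text{ for some }u\in U\}$, verify it is an ultrafilter (propriety uses that $x$, the bottom of ${}_xB$, is not in $U$), invoke minimal downward continuity to get $(\land U)\land x'=\land\hat U\in B$, and use \ref{generalities:topology}\ref{rmk:general:vii} once more to recover $\land U=x\lor((\land U)\land x')\in{}_xB$; the atom statement is then the first assertion of (i) transported along the isomorphism ${}_xB\cong B_{x'}$ of \ref{generalitites:subalgebra}. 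The one point demanding genuine care -- the main obstacle -- is confirming that these extensions $\hat M$, $\hat U$ are again a maximal ideal and an ultrafilter of $B$, so that the continuity hypotheses actually bite; this rests on the product structure $B\cong B_x\times B_{x'}$ (maximal ideals and ultrafilters of a binary product of Boolean algebras being supported on a single factor), together with the $\land x$/$\land x'$ distributivity bookkeeping that makes the descent from $B$ back into $B_x$ and ${}_xB$ go through.
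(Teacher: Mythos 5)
Your argument is correct and, for the substantive item (ii), follows essentially the same route as the paper: the extensions $\hat M$ and $\hat U$ you construct coincide with the paper's $\{m\lor a:(m,a)\in M\times B_{x'}\}$ and $\{u\land a':(u,a)\in U\times B_x\}$, and the descent $\lor M=((\lor M)\lor x')\land x$, resp. $\land U=x\lor((\land U)\land x')$, via distributivity over independent members of $\Lattice$ is exactly the paper's step. Your direct dichotomy argument for the first half of (i) is equivalent to the paper's contradiction via a strictly larger filter (resp. ideal).

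One point needs repair. For the second half of (i) you assert that complementation ``sends the join $\lor M$ to the meet of $M'$'', i.e. $\land(M')=(\lor M)'$. This is not a formal consequence of order-reversal: $\lor M$ and $\land (M')$ are computed in $\Lattice$, where only the inclusion $(\lor M)'\subset\land (M')$ is automatic; the reverse inclusion is precisely the non-trivial content of Proposition~\ref{proposition:ultrafilters}, whose proof uses the Bonferroni-type inequality and martingale convergence. Since that proposition precedes the present one you may invoke it, but it should be cited rather than treated as obvious --- or avoided altogether by arguing directly that $\lor M$ is a coatom: for $c\in B$ with $c\supset\lor M$, either $c\in M$, forcing $c\subset\lor M$ and hence $c=\lor M$, or $c'\in M$, forcing $c'\subset\lor M\subset c$ and hence $c=1_\PP$.
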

\begin{remark}
\ref{ct-1} means that the notion of atomless in \cite[Definition~1.12]{tsirelson} is equivalent to the notion of minimal down continuity, as we have insisted on, coupled with $B$ being atomless as a noise Boolean algebra. An important result  \cite[Theorem~1.13]{tsirelson} is that if $B_0\subset B$ is a noise Boolean algebra that is atomless and minimally downwards continuous then it is sufficient, meaning (by definition \cite[Section~1.5, first paragraph]{tsirelson}) that $H^{(1)}(B_0)=H^{(1)}(B)$.
\end{remark}
\begin{proof}
\ref{ct-1} is clear because if $\land U$ is neither $0_\PP$ nor an atom of $B$, then there is an $x\in B$ with $0_\PP\ne x\subsetneq \land U$ and ${}_xB$ is a filter in $B$ strictly larger than $U$; similarly for $M$. \ref{ct-2}. Consider $\tilde M:=\{m\lor a:(m,a)\in M\times B_{x'}\}$, which is a maximal ideal in $B$. We get $(\lor M)\lor x'=\lor \tilde M\in B$ and therefore, by general distributivity over independent members of $\Lattice$, $\lor M=(\lor M)\land x=((\lor M)\lor x')\land x\in B$. Similarly $\tilde U:=\{u\land a':(u,a)\in U\times B_x\}$ is an ultrafilter in $B$. We get $(\land U)\land x'\in B$ and therefore, again by general distributivity over independent members of $\Lattice$, $\land U=\land_{u\in U}((u\land x')\lor x)=(\land_{u\in U}(u\land x'))\lor x=((\land U)\land x')\lor x\in B$. 
\end{proof}
On a couple of examples we show there are in general no implications between noise completeness on the one hand and minimal up/down continuity on the other.

\begin{example}\label{example:simplest-nonclassical-tweaked}
Let $\mathcal{B}$ be the finite-cofinite algebra on $\mathbb{N}$ and suppose $(\xi_n)_{n\in \mathbb{N}\cup \{\infty\}}$ is a sequence of independent equiprobable random signs, which generate $1_\PP$. For $b\in \mathcal{B}$ finite let $N_b:=\sigma(\xi_i\xi_{i+1}:i\in b)$; for $b\in \mathcal{B}$ cofinite  let $N_b:=\sigma(\xi_i\xi_{i+1}:i\in b_{<\max(\mathbb{N}\backslash b)})\lor \sigma(\xi_i:i\in b_{>\max(\mathbb{N}\backslash b)})\lor \sigma(\xi_\infty)$ ($\max\emptyset:=0$). Then $B=\{N_b:b\in \BB\}$ is a small tweak on the simplest nonclassical noise Boolean algebra of Example~\ref{example:simplest-nonclassical}; namely the ``cofinite'' $\sigma$-fields have an extra $\xi_\infty$. Still $B$ can be written as a $\uparrow$ union of classical finite noise Boolean algebras, specifically of the $B_n$,  $n\in \mathbb{N}_0$, where, for  $n\in \mathbb{N}_0$, $B_n$ is attached to the independency $\xi_1\xi_2,\ldots,\xi_n\xi_{n+1},(\xi_{n+1},\ldots,\xi_\infty)$. The closure of $B$ is easily (for instance via Fourier-Walsh expansion) seen to be equal to  $\mathrm{Cl}(B)=B\cup \{\sigma(\xi_i\xi_{i+1}:i\in b):b\in (2^\mathbb{N})_{\text{inf}}\}\cup \{\sigma(\xi_\infty,\xi_i\xi_{i+1}:i\in b):b\in 2^\mathbb{N}\}$ and $\overline{B}=B$. We see that $B$ is noise complete, however it is not minimally downwards continuous (and therefore not minimally upwards continuous). Indeed the ``cofinite'' $\sigma$-fields form an ultrafilter, whose meet is $\sigma(\xi_\infty)$, which does not belong to $B$. Incidentally, this $B$ is not complete as a Boolean algebra (cf. Question~\ref{question:complete-as-Bolean}) and its closure is closed for $\lor$ (like it was in Example~\ref{example:simplest-nonclassical}).
\end{example}

\begin{example}
Let $\mathcal{B}$ be the finite-cofinite algebra on $\mathbb{N}$ and suppose $(\xi_n)_{n\in \mathbb{N}}$ is an independency from $\Lattice\backslash \{0_\PP\}$, which generates $1_\PP$. Then $B=\{\sigma(\xi\vert_b):b\in \BB\}$ is minimally upwards continuous, however it is not noise complete.
\end{example}



\subsection{Convergence and continuity in a classical noise Boolean algebra}\label{subsection:convergence-classical}
Recall \ref{generalitites:subalgebra} for the distributivity in Item~\ref{conv-classical-B} of the next proposition; also the fact that the action of (the elements of) $B$ leaves $H^{(1)}$ invariant.

\begin{proposition}[Convergence in a classical noise Boolean algebra]\label{proposition:convergence-in-classical}
Let $B$ be classical, $(x_n)_{n\in \mathbb{N}}$  a sequence in $B$, and $y\in \Lattice$ be such that:
\begin{enumerate}[(a)]
\item\label{conv-classical-A} $\lim_{n\to\infty}\PP_{x_n}=\PP_y$ strongly on the first chaos $H^{(1)}$;
\item\label{conv-classical-B}  $y$ distributes over $B$. 
\end{enumerate}
Then $\lim_{n\to\infty}x_n=y$ (and hence $y\in \overline{B}=\Cl(B)$).
\end{proposition}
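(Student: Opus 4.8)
The plan is to show that hypothesis \ref{conv-classical-A}, which only controls $\PP_{x_n}$ on the first chaos, propagates to all of $\L2(\PP)$, i.e. that $\PP_{x_n}\to\PP_y$ strongly on $\L2(\PP)$; by the criterion of Item~\ref{generalities:topology}\ref{rmk:general:ii} this is exactly $\lim_n x_n=y$. Since all operators in sight are contractions, it suffices to prove convergence on a total set. Classicality gives $\L2(\PP)=\oplus_{k\in\mathbb{N}_0}H^{(k)}$, and each $H^{(k)}$ is generated by the products $f_1\cdots f_k$ with $f_i\in H^{(1)}\cap\L2(\PP\vert_{z_i})$ and $(z_1,\ldots,z_k)$ an independency in $B$; the linear span $D$ of all such products (over all $k$) is therefore dense. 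So I reduce to proving $\PP_{x_n}(f_1\cdots f_k)\to\PP_y(f_1\cdots f_k)$ for each such product. Everything hinges on two ``multiplicativity'' identities: $\PP_{x_n}(f_1\cdots f_k)=\prod_i\PP_{x_n}f_i$ and $\PP_y(f_1\cdots f_k)=\prod_i\PP_y f_i$.

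Both are instances of a single lemma: if $z\in\Lattice$ distributes over $B$ and $(z_1,\ldots,z_k)$ is an independency in $B$, then $\PP_z(f_1\cdots f_k)=\prod_i\PP_z f_i$ for $f_i\in\L2(\PP\vert_{z_i})$, and moreover $\PP_z f_i=\PP_{z\land z_i}f_i\in\L2(\PP\vert_{z_i})$. To see it, pass to the finite algebra $b=b(\{z_1,\ldots,z_k\})$, whose partition of unity is $P=\at(b)=\{z_1,\ldots,z_k,(\lor_i z_i)'\}$, so that $\L2(\PP)=\bigotimes_{p\in P}\L2(\PP\vert_p)$. Distributivity of $z$ over $B$ (hence over $b$) gives $z=\lor_{p\in P}(z\land p)$, and since $\{z\land p\}_{p\in P}$ is an independency (sub-$\sigma$-fields of the independency $P$), the tensorisation of independent conditioning \cite[Lemma~2.2]{vidmar_2019} yields $\PP_z=\bigotimes_{p\in P}\PP_{z\land p}$ on elementary tensors; evaluating on $f_1\cdots f_k$ (with the constant $1$ in the $(\lor_i z_i)'$-slot) delivers both assertions at once. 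This lemma applies to $z=x_n$, because every element of $B$ distributes over $B$ by distributivity of the lattice $B$, and to $z=y$ by hypothesis~\ref{conv-classical-B}.

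It remains to pass to the limit. By the lemma $\PP_{x_n}f_i\in\L2(\PP\vert_{z_i})$ and, by hypothesis~\ref{conv-classical-A} (each $f_i\in H^{(1)}$), $\PP_{x_n}f_i\to\PP_y f_i=:c_i$ in $\L2(\PP)$, with $c_i\in\L2(\PP\vert_{z_i})$ since that subspace is closed. Because the $z_i$ are independent, a telescoping estimate together with the factorisation of the norm over independent factors,
\[
\Bigl\Vert\prod_i\PP_{x_n}f_i-\prod_i c_i\Bigr\Vert\le\sum_{j}\Bigl(\prod_{i<j}\Vert c_i\Vert\Bigr)\,\Vert\PP_{x_n}f_j-c_j\Vert\,\Bigl(\prod_{i>j}\Vert\PP_{x_n}f_i\Vert\Bigr),
\]
shows $\prod_i\PP_{x_n}f_i\to\prod_i c_i$, using $\Vert\PP_{x_n}f_i\Vert\le\Vert f_i\Vert$ and $\Vert c_i\Vert\le\Vert f_i\Vert$. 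Combined with the two multiplicativity identities this gives $\PP_{x_n}(f_1\cdots f_k)\to\prod_i c_i=\PP_y(f_1\cdots f_k)$. By linearity, density of $D$ and the uniform bound $\Vert\PP_{x_n}\Vert\le 1$, we conclude $\PP_{x_n}\to\PP_y$ strongly on $\L2(\PP)$, i.e. $\lim_n x_n=y$; as $\Cl(B)$ is the closure of $B$ and $\overline{B}=\Cl(B)$ for classical $B$, necessarily $y\in\overline{B}=\Cl(B)$. The main obstacle is the multiplicativity for $\PP_y$: since $y$ is only assumed to lie in $\Lattice$, one cannot invoke commutativity of $\Cl(B)$ (that would be circular), and it is precisely here that distributivity of $y$ over $B$ enters, feeding the tensorisation lemma.
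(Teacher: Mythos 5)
Your proof is correct and follows essentially the same route as the paper's: reduce by classicality to products of first-chaos elements over an independency, establish multiplicativity of both $\PP_{x_n}$ and $\PP_y$ via distributivity and the tensorisation of independent conditioning, and pass to the limit by telescoping over independent factors. The only cosmetic difference is that the paper first records the commutation of $\PP_y$ with the $\PP_x$ on $H^{(1)}$ to get $\PP_y f_i=\PP_{y\land u_i}f_i$, whereas you extract the same identity directly from the tensorisation lemma — an equally valid bookkeeping choice.
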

\begin{proof}
Observe first that $\PP_y$ commutes with $\PP_x$, $x\in B$, on the first chaos because of \ref{conv-classical-A}. In consequence $\PP_x\PP_y=\PP_y\PP_xf=\PP_{x\land y}f$ for $f\in H^{(1)}$, $x\in B$.

Now, it suffices to prove that $\PP[f\vert x_m]\to \PP[f\vert y]$ as $m\to\infty$ in $\L2(\PP)$ for a set of $f$ that is total in $\L2(\PP)$. Therefore, by the classicality of $B$ (which implies that $\L2(\PP)=\oplus_{n=0}^\infty H^{(n)}$), we may assume such $f$ is of the form $\prod_{k=1}^n f_k$, where $f_1,\ldots,f_n$ are elements of the first chaos measurable respectively relative to the entries of a partition $(u_1,\ldots,u_n)$ of unity in $B$, $n\in \mathbb{N}$ (recall from  \ref{generalities:chaoses} the structure of the higher chaoses in terms of the first chaos; the convergence is trivial on the constants, i.e. on $H^{ (0)}$). Then we compute via the ``tensorisation of conditioning'' that, in $\L2(\PP)$,
 \begin{align*}
 \PP[f\vert x_m]&=\PP[ \prod_{k=1}^nf_k\vert \lor_{k=1}^n (x_m\land u_k)]=\prod_{k=1}^m\PP[f_k\vert x_m\land u_k]= \prod_{k=1}^m\PP[f_k\vert x_m]\to\\
 &=  \prod_{k=1}^m\PP[f_k\vert y]=\prod_{k=1}^m\PP[f_k\vert y\land u_k]=\PP[\prod_{k=1}^nf_k\vert \lor_{k=1}^n (y\land u_k)]=\PP[f\vert y]\text{ as $m\to\infty$},
 \end{align*}
where the last equality used assumption \ref{conv-classical-B}, while the convergence on the elements of the first chaos is assumption \ref{conv-classical-A} (and is ``preserved under independent products'' /one can see it by telescoping, for instance/).
%

 the second term on the right-hand side of \eqref{eq:class-of-nonclassical-hilbert} is of course just $\oplus_{I\in (2^{\mathbb{N}})_{\mathrm{fin}}}(\otimes_IG)$. Unfortunately it does not seem to help in ascertaining when \eqref{eq:class-of-nonclassical} prevails (one thing is of course clear: only the $G_i$, $i\in \mathbb{N}$, and the tail of the $H_i$, $i\in \mathbb{N}$, matter).
 
 \noindent 
\end{example}

\begin{question}
In the context of the preceding example, restrict to $y_0=0_\PPP$ for simplicity. Can one say in some explicit terms, when (precisely) $\Cl(B)\backslash B= \{\lor_{i\in I}y_i:I\in (2^\mathbb{N})_{\mathrm{inf}}\}$? 
\end{question}

We can give a more clear impression of the structure of the spectrum of $B$ of the preceding example when the innovations are equiprobable random signs. It may be helpful to throw another glance on Example~\ref{example:spectral-space-for-nonclassical}, which actually appears below out of  ``abstract nonsense''.
\begin{theorem}\label{proposition:innovations-reverse-equiprobable}
Let $B$ be attached to $x$ and $y$ as in Example~\ref{example:canonical-discrete}, whose notation we retain, and suppose that $y_n=\sigma(\eta_n)$ for an equiprobable random sign $\eta_n$ for all $n\in \mathbb{N}$.  Assume further that $\land_{n\in \mathbb{N}}x_n=0_\PP$ (tail triviality -- one also says that $x$ is Kolmogorovian).
\begin{enumerate}[(i)]
\item\label{proposition:innovations-reverse-equiprobable:i} Either for all $n\in \mathbb{N}$, $H_n=\{0\}$, or for all $n\in \mathbb{N}$ there is a up to prefactor of $\pm 1$ a.s.-$\PP$ unique equiprobable random sign $\xi_n$ such that $H_n=\mathrm{lin}(\xi_n)$, and the prefactors $\pm 1$ may be chosen so that $\xi_n=\eta_n\xi_{n+1}$ for all $n\in \mathbb{N}$.
\item\label{proposition:innovations-reverse-equiprobable:ii} Suppose the second option of \ref{proposition:innovations-reverse-equiprobable:i} prevails. Then $B^\circ=B$ and $\oplus_{x\in B}\L2(\PP\vert_x)^\circ=\L2(\PP\vert_{\sigma(\xi_n:n\in \mathbb{N})})$. $\sigma(\xi_n:n\in \mathbb{N})=1_\PP$ iff \eqref{eq:class-of-nonclassical} holds true, in which case $x_n=\sigma(\xi_k:k\in \mathbb{N}_{\geq n})$ for all $n\in \mathbb{N}$.  
\item\label{proposition:innovations-reverse-equiprobable:iii} 
If \eqref{eq:class-of-nonclassical} obtains, then $x$ is of product type (i.e.  there is an independency $(z_n)_{n\in \mathbb{N}}$ in $\Lattice$ such that $x_n=\lor_{k\in \mathbb{N}_\geq n}z_k$ for all $n\in \mathbb{N}$). 
\end{enumerate}
\end{theorem}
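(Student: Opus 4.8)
The plan is to exhibit the required independency $(z_n)_{n\in\mathbb N}$ by hand, splitting along the dichotomy of \ref{proposition:innovations-reverse-equiprobable:i}; assume \eqref{eq:class-of-nonclassical} throughout. In the degenerate alternative, where $H_n=\{0\}$ for all $n$, I would first show that $B$ is forced to be classical. By the tensor structure (Theorem~\ref{prop:dicrete}\ref{prop:dicrete:iii-}) each finite $N_b=\lor_{i\in b}y_i$ lies in $B^\circ$ with $\L2(\PP\vert_{N_b})^\circ=\mathrm{lin}(\prod_{i\in b}\eta_i)$, while for cofinite $b$, writing $N_b=u\lor x_{m+1}$ with $u$ a finite join of $y_i$'s and $x_{m+1}$ having trivial $\circ$-space, one checks directly that $\L2(\PP\vert_{N_b})$ is the closed span of the $\L2(\PP\vert_{u\lor z})$ over proper $z\subsetneq x_{m+1}$ in $B$, so $\L2(\PP\vert_{N_b})^\circ=\{0\}$. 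Hence $\oplus_{x\in B^\circ}\L2(\PP\vert_x)^\circ=\L2(\PP\vert_{\sigma(\eta_n:n\in\mathbb N)})$, and by Proposition~\ref{propsition:discrete-spectrum}\ref{discrete-spectrum:a} condition \eqref{eq:class-of-nonclassical} forces this to be all of $\L2(\PP)$, i.e. $\lor_n y_n=1_\PP$. Then (Example~\ref{example:canonical-discrete}) $x_n=\lor_{k\geq n}y_k$ with $(y_n)_n$ an independency, so $z_n:=y_n$ does the job.

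The substantive case is the second alternative of \ref{proposition:innovations-reverse-equiprobable:i}. There part~\ref{proposition:innovations-reverse-equiprobable:ii} applies and, since \eqref{eq:class-of-nonclassical} holds, yields $x_n=\sigma(\xi_k:k\in\mathbb N_{\geq n})$ for every $n$. I would simply take $z_n:=\sigma(\xi_n)$, so that $\lor_{k\geq n}z_k=\sigma(\xi_k:k\geq n)=x_n$ is automatic; the whole statement then reduces to verifying that $(\sigma(\xi_n))_{n\in\mathbb N}$ is an independency.

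This independence is the step I expect to demand the most care. Fixing $N\in\mathbb N$, recall (Example~\ref{example:canonical-discrete}) that $\{\sigma(\eta_1),\dots,\sigma(\eta_N),x_{N+1}\}$ is an independency and that $\xi_{N+1}$, generating $H_{N+1}=\L2(\PP\vert_{x_{N+1}})^\circ\subset\L2(\PP\vert_{x_{N+1}})$, is $x_{N+1}$-measurable; hence $(\eta_1,\dots,\eta_N,\xi_{N+1})$ are $N+1$ jointly independent equiprobable signs, uniform on $\{-1,1\}^{N+1}$. Iterating the relation $\xi_n=\eta_n\xi_{n+1}$ of \ref{proposition:innovations-reverse-equiprobable:i} gives $\xi_n=(\prod_{k=n}^{N}\eta_k)\,\xi_{N+1}$ for $1\leq n\leq N$, so the map $(\eta_1,\dots,\eta_N,\xi_{N+1})\mapsto(\xi_1,\dots,\xi_N,\xi_{N+1})$ is a bijection of $\{-1,1\}^{N+1}$ (its inverse recovers $\eta_n=\xi_n\xi_{n+1}$), and a bijection carries the uniform law to the uniform law. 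Thus $(\xi_1,\dots,\xi_N,\xi_{N+1})$ is uniform, whence $\xi_1,\dots,\xi_N$ are independent; as $N$ was arbitrary, $(\sigma(\xi_n))_{n}$ is an independency, completing the proof. The only genuinely load-bearing inputs are the $x_{N+1}$-measurability of $\xi_{N+1}$---which delivers its independence from $\eta_1,\dots,\eta_N$ through the independency $\{y_1,\dots,y_N,x_{N+1}\}$---and, in the degenerate alternative, the bookkeeping identifying $B^\circ$ with the finite joins; the rest is finite algebra, with no limiting or spectral subtlety beyond what Theorem~\ref{prop:dicrete} and part~\ref{proposition:innovations-reverse-equiprobable:ii} already supply.
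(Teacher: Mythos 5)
Your proposal establishes only Item~\ref{proposition:innovations-reverse-equiprobable:iii}: Items~\ref{proposition:innovations-reverse-equiprobable:i} and~\ref{proposition:innovations-reverse-equiprobable:ii} are cited as available facts (``the dichotomy of (i)'', ``the relation $\xi_n=\eta_n\xi_{n+1}$ of (i)'', ``part (ii) applies and \ldots yields $x_n=\sigma(\xi_k:k\geq n)$''), yet they are part of the statement being proved and they carry essentially all of the mathematical content. Item~\ref{proposition:innovations-reverse-equiprobable:i} is where the hypothesis $\land_{n\in \mathbb{N}}x_n=0_\PP$ actually enters: one writes an arbitrary $\xi_1\in H_1\backslash\{0\}$ as $\xi_1=\eta_1\cdots\eta_n\xi_{n+1}$ with $\xi_{n+1}\in H_{n+1}$ (using $H_1=G_1\otimes\cdots\otimes G_n\otimes H_{n+1}$) and deduces from tail triviality that $\limsup_{n\to\infty}\xi_n$ and $\liminf_{n\to\infty}\xi_n$ are deterministic, which after normalization forces any nonzero element of $H_1$ to be proportional to an equiprobable random sign; nothing in your text supplies this. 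Item~\ref{proposition:innovations-reverse-equiprobable:ii} likewise requires identifying $B^\circ=B$ and $\oplus_{x\in B}\L2(\PP\vert_x)^\circ=\L2(\PP\vert_{\sigma(\xi_n:n\in \mathbb{N})})$ through the structure of Example~\ref{example:spectral-space-for-nonclassical} and then invoking Proposition~\ref{propsition:discrete-spectrum} to get the equivalence with \eqref{eq:class-of-nonclassical} and the formula $x_n=\sigma(\xi_k:k\in\mathbb{N}_{\geq n})$. As it stands the proposal is therefore not a proof of the theorem.

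What you do prove --- Item~\ref{proposition:innovations-reverse-equiprobable:iii} granting the first two items --- is correct, and is organized somewhat differently from (and more explicitly than) the paper. The paper splits into ``$B$ classical'' versus ``second alternative of \ref{proposition:innovations-reverse-equiprobable:i}'' and in the latter case simply asserts that $x$ is of product type, generated by the $\xi_k$; you split along the dichotomy of \ref{proposition:innovations-reverse-equiprobable:i} itself, check that the degenerate alternative combined with \eqref{eq:class-of-nonclassical} forces $\lor_{n\in\mathbb{N}} y_n=1_\PP$ (so $z_n=y_n$ works), and in the substantive alternative you actually verify that $(\sigma(\xi_n))_{n\in \mathbb{N}}$ is an independency via the bijection $(\eta_1,\ldots,\eta_N,\xi_{N+1})\mapsto(\xi_1,\ldots,\xi_{N+1})$ of $\{-1,1\}^{N+1}$ preserving the uniform law. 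That verification is a worthwhile detail the paper leaves implicit (pairwise independence of the $\xi_n$ is immediate, but full joint independence needs exactly your argument). It does not, however, compensate for the missing Items~\ref{proposition:innovations-reverse-equiprobable:i} and~\ref{proposition:innovations-reverse-equiprobable:ii}.
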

\begin{remark}
When it exists $\xi_n$ is actually independent of (not just orthogonal to) each member of $B_{x_n}\backslash \{x_n\}$ for all $n\in \mathbb{N}$ (because it is an equiprobable random sign). 
\end{remark}

\begin{proof}
 Notice that  $G_n=\mathrm{lin}(\eta_n)$ for all $n\in \mathbb{N}$.

\ref{proposition:innovations-reverse-equiprobable:i}. 
 If $n\in \mathbb{N}_0$ and $g\in H_{n+1}\backslash \{0\}$, then $g\eta_1\cdots \eta_n\in H_1\backslash \{0\}$. We now focus on an arbitrary $\xi_1\in H_1\backslash \{0\}$. For all $n\in \mathbb{N}_0$, since $H_1=G_1\otimes\cdots \otimes G_n\otimes H_{n+1}$, it follows that $\xi_1=\xi_{n+1}\eta_1\cdots \eta_n$ for some $\xi_{n+1}\in H_{n+1}$. Let $A:=\limsup_{n\to\infty}\xi_n$ and $B:=\liminf_{n\to\infty}\xi_n$. By tail triviality it follows that $A$ and $B$ are $\PP$-a.s. constant. Therefore $\PP$-a.s. $\xi_1$ must be arbitrarily close to $A$ or $-A$ and also to $B$ or $-B$. It is only possible if $B=-A$ and we cannot have $A=0$ because $\xi_1\ne 0$. Dividing $\xi_1$ by $A$ if necessary we may assume that $\limsup_{n\to\infty}\xi_n=-\liminf_{n\to\infty}\xi_n=1$ a.s.-$\PP$ and $\xi_1$ can take on only the values $1$ and $-1$ (with $\PP$-probability one), which means that it is a random sign  --- perhaps only off a $\PP$-negligible set, but we change it (and the $\xi_{n+1}$, $n\in \mathbb{N}$, in parallel) on a negligible set and make it so with certainty. Then each $\xi_n$, $n\in \mathbb{N}$, is a random sign, and by the independences and since the $\eta_k$s are equiprobable random signs, they are all also equiprobable. Now we have established that any non-zero element  of $H_1$ is proportional $\PP$-a.s. to an equiprobable random sign. Such random sign is therefore unique up to a prefactor of $\pm 1$. All the properties follow. 
 
\ref{proposition:innovations-reverse-equiprobable:ii}.  It is clear that $B^\circ=B$ (since $B$ is  closed for $\lor$). We see from Example~\ref{example:spectral-space-for-nonclassical} (we have the same structure due to the relation $\xi_n=\eta_n\xi_{n+1}$, $n\in \mathbb{N}$) that indeed also $\oplus_{x\in B}\L2(\PP\vert_x)^\circ=\L2(\PP\vert_{\sigma(\xi_n:n\in \mathbb{N})})$. Therefore $\sigma(\xi_n:n\in \mathbb{N})=1_\PP$ iff \eqref{eq:class-of-nonclassical} holds true and by Theorem~\ref{propsition:discrete-spectrum} in that case $\L2(\PP\vert_{x_n})=\oplus_{x\in B_{x_n}}\L2(\PP\vert_x)^\circ$, which again by Example~\ref{example:spectral-space-for-nonclassical} cannot be anything other than $\sigma(\xi_k:k\in \mathbb{N}_{\geq n})$. 

\ref{proposition:innovations-reverse-equiprobable:iii}.  Suppose \eqref{eq:class-of-nonclassical} prevails. Then either $B$ is classical, and we may take $z=y$ (to see that $x$ is of product type), or else we must be  in the second situation described by Item~\ref{proposition:innovations-reverse-equiprobable:i}. Then, by Item~\ref{proposition:innovations-reverse-equiprobable:ii} we get again that $x$ is of product type (generated by the $\xi_k$, $k\in \mathbb{N}$).
 \end{proof}
\begin{question}
Does Item~\ref{proposition:innovations-reverse-equiprobable:iii} hold true without the assumption that the innovations are equiprobable random signs? Is the converse true?
\end{question}
\begin{example}
 We are given a sequence  $X=(X_n)_{n\in \mathbb{N}}$,  generating $1_\PP$, with each $X_n=(X_n(i))_{i\in \mathbb{N}}$ consisting of independent equiprobable random signs, and a sequence $(\sigma_n)_{n\in \mathbb{N}}$ of equiprobable random signs such that  for each $n\in \mathbb{N}$, $\sigma_n$ is independent of $(X_k)_{k\in \mathbb{N}_{> n}}$ and such that $X_n(k)=X_{n+1}(2k-1)$ for all $k\in \mathbb{N}$ or $X_{n}(k)=X_{n+1}(2k)$ for all $k\in \mathbb{N}$ according as to whether $\sigma_n=-1$ or $\sigma_n=1$. We put $y_n:=\sigma(\sigma_n)$  and $x_n:=\sigma(X_k:k\in \mathbb{N}_{\geq n})$ for $n\in \mathbb{N}$ and attach $B$ to $x$ and $y$ as in Example~\ref{example:canonical-discrete}, whose notation we retain. It is an example originally due to Vershik \cite{vershik-dissertation}, see also \cite[p.~885]{dfst} \cite[p.~328]{smorodinsky}. 

\noindent It is clear that in either case  $X$ is a Markov process (in reverse time). Besides, for each $n\in \mathbb{N}$, the conditional law of $X_n$ given $X_m$ tends $\PP$-a.s. to that of independent equiprobable signs as $m\to\infty$ \cite[proof of Theorem~1]{smorodinsky}. In particular  $\land_{n\in \mathbb{N}} x_n =0_\PP$. It follows from \cite[Theorem~2]{smorodinsky} that $B$ is not classical.

\noindent By  Theorem~\ref{proposition:innovations-reverse-equiprobable}\ref{proposition:innovations-reverse-equiprobable:iii},  \eqref{eq:class-of-nonclassical} cannot prevail, since $x$ is not of product type, which latter fact is well-known ($x$ is not even standard \cite[Theorem~2]{smorodinsky}). 

\noindent One sees easily that for finite $b\in \BB$, $\PP[\PP[X_1(1)\vert N_b]^2]=0$ and that $\PP[\PP[X_1(1)\vert N_b]^2]=2^{-\vert\mathbb{N}\backslash b\vert}$ for cofinite $b\in\mathcal{B}$, which means that the spectral measure $\mu_{X_1(1)}$ on $2^\mathbb{N}$ is that of a random subset of $\mathbb{N}$ corresponding to including each $i\in \mathbb{N}$ independently of the others with probability $1/2$, i.e. $\mu_{X_1(1)}=\mathrm{Ber(\frac{1}{2}})^{\times \mathbb{N}}$ (up to identifying $2^\mathbb{N}$ with $ \{0,1\}^\mathbb{N}$). In particular $\mu_{X_1(1)}$ is diffuse and $X_1(1)\in H_{\mathrm{sens}}$.\footnote{I thank J. Warren for suggesting the explicit computation of $\mu_{X_1(1)}$.}
\end{example}

\begin{question}
In the preceding example is $H_n$ actually the null space for all (equivalently, some) $n\in \mathbb{N}$?
\end{question}
Similar examples of noises (all falling under the umbrella of Example~\ref{example:canonical-discrete}) could be constructed from any so-called split-words process \cite{Ceillier2014}.

\section{Interactions between restrictions/projections of the spectrum and subnoises}
On the one hand, we look at how  points of the spectral space can be, as it were, projected onto an element of $B$ (Theorem~\ref{thm:noise-projections}), which allows to describe efficiently the tensor structure of the spectrum (Corollary~\ref{prop:noise-proj-bis}\ref{proj:iv}). We collect many other dividends from the introduction of these projection, especially in terms of economy of notation. On the other hand, some elements of $\Sigma$ may be used to restrict $\L2(\PP)$ to an $\L2$-subspace (Corollary~\ref{corollary:L2-space}) and then to give rise to a ``subnoise'' of $B$ (Corollary~\ref{corollary:sigma*-subnoises}).
\subsection{Noise projections and the tensor structure of the spectrum}
When a noise Boolean algebra $B$ is the range of a noise factorization $N:\mathcal{B}\to\Lattice$, with $\mathcal{B}$ being a subset of the power set of base set $T$, and when further the spectral space $S$ consists of certain subsets of $T$ in such a way that $S_{N(b)}=\{t\in S:t\subset b\}$ a.e.-$\mu$ for all $b\in \mathcal{B}$ (as often happens: such is the case e.g. for all one-dimensional noises \cite[Theorem~2.3]{Tsirelson1998UnitaryBM}; we have also seen it in the context of Example~\ref{example:canonical-discrete}), then one has the luxury of having on $S$ defined for each $b\in \mathcal{B}$ the ``projection'' $q_b:=(S\ni s\mapsto s\cap b)$, which restricts the points of the spectral space to $b$.  In order to gain access to the analogs of such projections in the abstract setting some work is needed. The key is to notice that the $q_b$ in the preceding satisfies $q_b^{-1}(S_{N(a)})=S_{N((T\backslash b)\cup a)}=S_{N(T\backslash b)\lor N(a)}=S_{N(b)'\lor N(a)}$ a.e.-$\mu$ for all $a\in \mathcal{B}$. At the same time this yields a path into a better understanding of the spectral resolution. 

\begin{theorem}[Noise projections]\label{thm:noise-projections}
Let $x\in B$. 
\begin{enumerate}[(i)]
\item\label{proj:i} There exists a up to $\mu$-a.e. equality unique map $\pr_x\in \Sigma/\Sigma$ such that for all $E\in \Sigma/_\mu$, $\pr_x^{-1}(E)$ is uniquely determined up to a.e.-$\mu$ equality (vis-\`a-vis the choice of  a representative of the equivalence class of $E$) and such that for all $y\in B_x$, a.e.-$\mu$, $\pr_x^{-1}(S_y)=S_{y\lor x'}$.
\item \label{proj:ii} $\pr_x$ maps into $S_x$ a.e.-$\mu$. $\pr_x^{-1}(S_y)=S_{y\lor x'}$ a.e.-$\mu$ for all $y\in B$.  $\sigma(\pr_x)=\Sigma_x$. $\pr_x=\id_{S}$ a.e.-$\mu$ on $S_x$ and $\pr_{x}=\emptyset_S$ a.e.-$\mu$ on $S_{x'}$. The map $(\pr_x,\pr_{x'}):S\to S^2$ is mod-$0$ bijective relative to $\mu$ and $\mu(\cdot\cap S_x)\times \mu(\cdot \cap S_{x'})$.
\item\label{proj:iii}  If $\mu$ is a probability under which  $\pr_x$ and $\pr_{x'}$ are independent, then $((\pr_x,\pr_{x'})_\star\mu)\mu(S_{0_\PP})=\mu( \cdot \cap S_x)\times \mu(\cdot \cap S_{x'})$ and $(\pr_x,\pr_{x'})$ is a mod-$0$ isomorphism of $\mu$ onto $\mu(S_{0_\PP})^{-1}\mu( \cdot \cap S_x)\times \mu(\cdot \cap S_{x'})$.
\item\label{proj:vi}  $B$ admits a spectral resolution having the same spectral space $(S,\Sigma,\mu)$, the same spectral sets $S_x$, $x\in B$, the same subspaces $H(E)$, $E\in \Sigma$ ($\therefore$ also the same maps $(S\ni s\mapsto \langle\Psi(f)_s,\Psi(g)_s\rangle)$, $\{f,g\}\subset \L2(\PP)$), the same Hilbert spaces $(H_s)_{s\in S_x\cup S_{x'}}$, and the same $\Psi\vert_{\L2(\PP\vert_x)\cup \L2(\PP\vert_{x'})}$ as the given spectral resolution, but satisfying further: 
\begin{enumerate}[(I)]
\item\label{proj:vi:I}  $H_s=H_{\pr_x(s)}\otimes H_{\pr_{x'}(s)}$ for $\mu$-a.e. $s$ [$G$, $G\otimes H_{\emptyset_S}$, $H_{\emptyset_S}\otimes G$ are implicitly  identified by identifying $g$, $\sqrt{\mu(\{\emptyset_S\})}g\otimes \Psi(1)_{\emptyset_S}$, $\sqrt{\mu(\{\emptyset_S\})}\Psi(1)_{\emptyset_S}\otimes g$ for $g\in G$], the measurable structure of $(H_s)_{s\in S}$ being consistent with that of $(H_s)_{s\in S_x}$ and $(H_s)_{s\in S_{x'}}$ in the sense that  the measurable vector fields of $(H_{\pr_x(s)})_{s\in S}$ are precisely of the form $f(\pr_x)$ for $f$ a measurable vector field of $(H_s)_{s\in S_x}$, likewise for $(H_{\pr_{x'}(s)})_{s\in S}$, and then the measurable structure on $(H_s)_{s\in S}=(H_{\pr_x(s)}\otimes H_{\pr_{x'}(s)})_{s\in S}$  is the canonical tensor measurable structure inherited from the measurable Hilbert space fields $(H_{\pr_x(s)})_{s\in S}$ and $(H_{\pr_{x'}(s)})_{s\in S}$ \cite[p.~174, Proposition~II.1.8.10]{dixmier1981neumann};   
\item\label{proj:vi:II} if $\mu$ is a probability under which $\pr_x$ and $\pr_{x'}$ are independent, then also $(\Psi(f)\circ \pr_x)\otimes (\Psi(f')\circ \pr_{x'})\sqrt{\mu(\{\emptyset_S\})}=\Psi(ff')$, hence $\Vert \Psi(ff')\Vert=\sqrt{\mu(\{\emptyset_S\})}(\Vert \Psi(f)\Vert\circ \pr_x)(\Vert \Psi(f')\Vert\circ \pr_{x'})$ a.e.-$\mu$ for $f\in \L2(\PP\vert_x)$ and $f'\in \L2(\PP\vert_{x'})$
(here $g(\pr_x)\otimes g'(\pr_{x'}):=(S\ni s\mapsto g(\pr_x(s))\otimes g'(\pr_{x'}(s)))$ for $(g,g')\in \int_{S_x}^\oplus H_s\mu(\dd s)\times \int_{S_{x'}}^\oplus H_{s'}\mu(\dd s')$).
\end{enumerate}
\item\label{proj:vii} For $A\in \Sigma\vert_{S_x}$ and $B\in \Sigma\vert_{S_{x'}}$, up to the natural unitary equivalence, $H(A)\otimes H(B)=H(\pr_x^{-1}(A)\cap \pr_{x'}^{-1}(B))$. In particular, for $E\in \Sigma$, $H(\pr_x^{-1}(E))=H(E\cap S_x)\otimes \L2(\PP\vert_{x'})$.
\end{enumerate}
\end{theorem}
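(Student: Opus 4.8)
The plan is to deduce \ref{proj:vii} from the tensor structure already recorded in \ref{proj:vi}, after first reducing to a convenient measure and resolution. Both sides of the asserted identity depend only on the measure class of $\mu$ and on $A$, $B$ as elements of the measure algebra: $H(E)$ is unchanged when $\mu$ is replaced by an equivalent $\sigma$-finite measure (as noted in \ref{generalities:spectrum-intro}), while $H(A)\otimes H(B)$ is the intrinsic closed span in $\L2(\PP)$ of the products $fg$, $f\in H(A)\subset \L2(\PP\vert_x)$, $g\in H(B)\subset \L2(\PP\vert_{x'})$. Accordingly I would first replace $\mu$ by the probability $\nu\sim\mu$ under which $\Sigma_x$ and $\Sigma_{x'}$ are independent (it exists by the discussion at the close of \ref{generalities:spectrum-intro}); since $\sigma(\pr_x)=\Sigma_x$ and $\sigma(\pr_{x'})=\Sigma_{x'}$ by \ref{proj:ii}, this is exactly the hypothesis making $\pr_x$ and $\pr_{x'}$ independent. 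Simultaneously I would pass to the spectral resolution furnished by \ref{proj:vi}, which has the same subspaces $H(E)$, so that \ref{proj:vi:I}, \ref{proj:vi:II} and \ref{proj:iii} all become available.

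The forward inclusion is then immediate. For $f\in H(A)$ and $g\in H(B)$, formula \ref{proj:vi:II} gives $\Psi(ff')$ as $\sqrt{\mu(\{\emptyset_S\})}\,(\Psi(f)\circ\pr_x)\otimes(\Psi(g)\circ\pr_{x'})$. Because $f\in H(A)$ forces $\Psi(f)$ to vanish $\mu$-a.e.\ off $A$ and $g\in H(B)$ forces $\Psi(g)$ to vanish off $B$, the field $\Psi(fg)$ vanishes a.e.\ off $\pr_x^{-1}(A)\cap\pr_{x'}^{-1}(B)$; thus $fg\in H(\pr_x^{-1}(A)\cap\pr_{x'}^{-1}(B))$, and taking closed spans yields $H(A)\otimes H(B)\subset H(\pr_x^{-1}(A)\cap\pr_{x'}^{-1}(B))$.

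For equality I would invoke \ref{proj:iii}: $(\pr_x,\pr_{x'})$ is a mod-$0$ isomorphism of $\mu$ onto $\mu(S_{0_\PP})^{-1}\,\mu(\cdot\cap S_x)\times\mu(\cdot\cap S_{x'})$, carrying $\pr_x^{-1}(A)\cap\pr_{x'}^{-1}(B)=(\pr_x,\pr_{x'})^{-1}(A\times B)$ onto $A\times B$. Combined with the field description in \ref{proj:vi:I} (namely $H_s=H_{\pr_x(s)}\otimes H_{\pr_{x'}(s)}$ with the pullback tensor measurable structure), $\Psi$ identifies $\int_{\pr_x^{-1}(A)\cap\pr_{x'}^{-1}(B)}^\oplus H_s\,\mu(\dd s)$ with $\int_{A\times B}^\oplus (H_{s_1}\otimes H_{s_2})\,(\mu_x\times\mu_{x'})(\dd(s_1,s_2))$ (up to the harmless constant), writing $\mu_x:=\mu(\cdot\cap S_x)$, $\mu_{x'}:=\mu(\cdot\cap S_{x'})$. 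The standard tensor decomposition of a direct integral over a product measure space \cite[p.~174, Proposition~II.1.8.10]{dixmier1981neumann} then identifies this with $\big(\int_A^\oplus H_{s_1}\,\mu_x(\dd s_1)\big)\otimes\big(\int_B^\oplus H_{s_2}\,\mu_{x'}(\dd s_2)\big)$, i.e.\ with $H(A)\otimes H(B)$; concretely the elementary tensors supported on $A\times B$ are exactly the images under \ref{proj:vi:II} of the products $fg$, so these products have dense span and the forward inclusion is promoted to the equality realized by the multiplication unitary. The ``in particular'' is then the case $A=E\cap S_x$, $B=S_{x'}$, since $H(S_{x'})=\L2(\PP\vert_{x'})$ and, a.e.-$\mu$, $\pr_x^{-1}(E\cap S_x)\cap\pr_{x'}^{-1}(S_{x'})=\pr_x^{-1}(E)\cap S=\pr_x^{-1}(E)$ (using that $\pr_x$, $\pr_{x'}$ map into $S_x$, $S_{x'}$, so $\pr_x^{-1}(S_x)=\pr_{x'}^{-1}(S_{x'})=S$).

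The main obstacle I anticipate is precisely this last identification: making the product-space tensor decomposition of direct integrals rigorous and checking that the measurable Hilbert-field structures genuinely match — that the ``canonical tensor measurable structure'' of \ref{proj:vi:I} is the one transported by the mod-$0$ isomorphism of \ref{proj:iii} — together with the bookkeeping of the normalizing factor $\sqrt{\mu(\{\emptyset_S\})}$. The forward inclusion is a one-line consequence of \ref{proj:vi:II}; all the genuine content lies in upgrading it to an equality through the product decomposition.
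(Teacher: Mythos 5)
Your argument for item \ref{proj:vii} is sound and is essentially the route the paper takes: both deduce \ref{proj:vii} from the tensor-structured resolution of \ref{proj:vi} together with the mod-$0$ product identification of \ref{proj:iii}, the only cosmetic difference being that the paper tracks the identification through explicitly named unitaries $\Theta,\Gamma,\tilde\Psi$ while you phrase it via the push-forward of the direct integral along $(\pr_x,\pr_{x'})$. Your forward inclusion via \ref{proj:vi}\ref{proj:vi:II} and the upgrade to equality via \cite[p.~174, Proposition~II.1.8.10]{dixmier1981neumann} are exactly what is needed there, and the normalizing factor $\sqrt{\mu(\{\emptyset_S\})}$ is handled correctly.

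The genuine gap is that the statement to be proved is the whole theorem, and you prove only its last item while taking \ref{proj:i}--\ref{proj:vi} as ``already recorded''. Those items are not background facts from Section~2; they are the content of the theorem, and \ref{proj:i} in particular is where the object $\pr_x$ is \emph{constructed}. The paper's proof does this by fixing a probability $\nu\sim\mu$ under which $\Sigma_x$ and $\Sigma_{x'}$ are independent, choosing countable dense subalgebras $B_x^0\subset B_x$ and $B_{x'}^0\subset B_{x'}$ with fixed representatives of the spectral sets, passing to the quotient standard probability spaces $(P_x,\Gamma_x,\rho_x)$ and $(P_{x'},\Gamma_{x'},\rho_{x'})$ induced by the resulting separating classes, and observing that $\phi=(\phi_x,\phi_{x'})$ is measure-preserving and injective off a null set, hence (by the theorem on isomorphisms for standard spaces) a mod-$0$ isomorphism of $(S,\Sigma,\nu)$ onto the product; $\pr_x$ is then $(p,p')\mapsto(p,\emptyset_{x'})$ read back through $\phi$, uniqueness follows from the countable essentially separating family $Q_y$, $y\in B_x^0$, and items \ref{proj:ii}, \ref{proj:iii} and the construction of the tensor resolution in \ref{proj:vi} are all extracted from this product picture. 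None of that construction appears in your proposal, and without it the objects $\pr_x$, $\pr_{x'}$ you manipulate in \ref{proj:vii} have not been shown to exist. You should either supply the construction or state explicitly that you are proving only \ref{proj:vii} conditionally on the earlier items.
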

\begin{definition}
In the following we retain the notation $\pr_x$ and call it the noise projection onto $x$.
\end{definition}
\begin{remark}
 In \ref{proj:vi} $\sqrt{\mu(\{\emptyset_S\})}\Psi(1)_{\emptyset_S}$ serves as the natural ``unit''  in $H_{\emptyset_S}$ (and is indeed a unit vector). It may very well happen (but need not be the case) that $H_{\emptyset_S}=\mathbb{R}$, $\mu(\{\emptyset_S\})=1$ and $\Psi(1)_{\emptyset_S}=1$, in which case it is (even) more suggestive.
\end{remark}
\begin{example}
In the context of a finite $B$, Example~\ref{example:spectral-finite}, the map $\pr_x$ is given simply by $\pr_x(s)=s\land x$ for $s\in B$, which we may write succinctly as $\pr_x=(\cdot \land x)\vert_B$. The same is true for the simplest nonclassical case of  Example~\ref{example:simplest-nonclassical}, cont'd in~\ref{example:spectral-space-for-nonclassical}. More generally, $\pr_x=(\cdot\land x)\vert_{B^\circ}$ whenever we are in the situation described in Theorem~\ref{propsition:discrete-spectrum}\ref{discrete-spectrum:a} (in particular for any classical discrete noise of Example~\ref{example:classical-spectrum}), for in such case $(\cdot \land x\vert_{B^\circ})^{-1}(S_y)=(\cdot \land x\vert_{B^\circ})^{-1}(B_y\cap B^\circ)=\{u\in B^\circ:u\land x\subset y\}=B_{y\lor x'}\cap B^\circ =S_{y\lor x'}$, $y\in B$, where we have taken into account that $B_u\subset B^\circ$ for  $u\in B^\circ$ (Theorem~\ref{prop:dicrete}\ref{prop:dicrete:iv}).
\end{example}

\begin{remark}\label{remark:exceptional-sets-projections}
For each $x\in B$, $\pr_x$ is only defined up to $\mu$-a.e. equality precisely and for $y\in B$ the defining relation $\pr_x^{-1}(S_y)=S_{x'\lor y}$ holds only a.e.-$\mu$. Nevertheless, throughout any given countable noise Boolean subalgebra  $B_0$ of $B$ (a dense one exists), upon a choice of versions of the projections and spectral sets,  the latter relation holds everywhere on a $\mu$-conegligible set in the following precise sense. Fix any versions of $\pr_x$ and $S_x$,  $x\in B_0$. For $\{x,y\}\subset B_0$ let $\tilde{S}_{x,y}$ be a $\mu$-conegligible set on which $\pr_x^{-1}(S_y)=S_{x'\lor y}$. Put $\tilde{S}:=\cap_{(x,y)\in (B_0)^2}\cap_{n\in \mathbb{N}_0}\cap_{(x_1,\ldots,x_n)\in (B_0)^n}\pr_{x_1}^{-1}\cdots \pr_{x_n}^{-1}\tilde{S}_{x,y}$ (for $n=0$ we interpret the void preimage as $\tilde{S}_{x,y}$).  It is trivial that $\tilde{S}$ is $\mu$-conegligible, contained in $\cap_{(x,y)\in B_0^2}\tilde{S}_{x,y}$, and that $\tilde{S}\subset \pr_u^{ -1}(\tilde{S})$, i.e. $\pr_u^{-1}(\tilde{S})\cap \tilde{S}=\tilde{S}$, for all $u\in B_0$. Therefore $(\pr_x\vert_{\tilde{S}})^{-1}(S_y\cap\tilde{S})=\pr_x^{-1}(S_y\cap \tilde{S})\cap \tilde{S}=\pr_x^{-1}(S_y)\cap \pr_x^{-1}(\tilde{S})\cap \tilde{S}=\pr_x^{-1}(S_y)\cap \tilde S=S_{y\lor x'}\cap \tilde{S}$ for $\{x,y\}\subset B_0$, which is the ``precise sense'' of which we spoke. In particular if $B$ is countable to begin with, choosing versions of spectral sets and projections, discarding a $\mu$-negligible set (which we can insist can contain any other ex ante  given $\mu$-negligible set), the relation $\pr_x^{-1}(S_y)=S_{x'\lor y}$ prevails everywhere for $\{x,y\}\subset B$. This observation may be useful in keeping track of exceptional sets.  The non-trivial part of the above was ensuring that $\tilde{S}$ is stable under the projections from $B_0$ (which enables one to indeed properly discard the complement of $\tilde{S}$, if one wants to). 
\end{remark}

\begin{proof}
Let $\nu$ be a probability equivalent to $\mu$ under which the $\sigma$-fields $\Sigma_x$ and $\Sigma_{x'}$ are independent.  Choose dense countable subsets $B^0_x$ and $B^0_{x'}$ of $B_x$ and $B_{x'}$, respectively, and fix representatives $S_{u\lor x'}$, $u\in B_x^0$, and $S_{x\lor v}$, $v\in B^0_{x'}$. Then $A_x:=\{S_{u\lor x'}:u\in B^0_{x}\}$ (resp. $A_{x'}:=\{S_{x\lor v}:v\in B^0_{x'}\}$) is a countable $\mu$-essentially generating class for $\Sigma_x$ (resp. $\Sigma_{x'}$). Introduce the equivalence relation $\sim_x$ (resp. $\sim_{x'}$) via  $s_1\sim_xs_2$ iff $\mathbbm{1}_{F}(s_1)=\mathbbm{1}_{F}(s_2)$ for all $F\in A_x$  (resp. $F\in A_{x'}$), $\{s_1,s_2\}\subset S$. Note that $A_{x'}\cup A_x$ essentially separates the points of $S$.

Then we see that the standard probability space $(S,\Sigma,\nu)$ passes measure-preservingly through the quotient map $\phi_x$ (resp. $\phi_{x'}$) induced by the equivalence relation $\sim_x$ (resp. $\sim_{x'}$) to the quotient (countably separated and complete, $\therefore$ standard) probability space $(P_x,\Gamma_x,\rho_x)$ (resp. $(P_{x'},\Gamma_{x'},\rho_{x'})$). Further, the map $\phi=(\phi_x,\phi_{x'})$ is injective off a null set, measure-preserving and hence a 
mod-$0$ isomorphism of $(S,\Sigma,\nu)$ onto the product standard probability space $(P_x,\Gamma_x,\rho_x)\times (P_{x'},\Gamma_{x'},\rho_{x'})$ (completion of the latter is implicit), which, by Blackwell's property, carries $\Sigma_x$ (resp. $\Sigma_{x'}$) onto the $\sigma$-field generated (modulo null sets) by the first (resp. second) coordinate. 

 Let $\emptyset_{x}$ (resp. $\emptyset_{x'}$) be the point of $P_x$ (resp. $P_{x'}$) corresponding to the $\mu\vert_{\Sigma_x}$-atom $S_{x'}$ (resp. $\mu\vert_{\Sigma_{x'}}$-atom $S_x$): $S_{x'}=\phi_x^{-1}(\{\emptyset_x\})=\phi^{-1}(\{\emptyset_{x}\}\times P_{x'})$ a.e.-$\mu$ and $S_{x}=\phi_{x'}^{-1}(\{\emptyset_{x'}\})=\phi^{-1}( P_x\times \{\emptyset_{x'}\})$ a.e.-$\mu$. 
In particular $S_{0_\PP}=S_{x\land x'}=S_x\cap S_{x'}=\phi^{-1}(P_x\times \{\emptyset_{x'}\})\cap \phi^{-1}(\{\emptyset_{x}\}\times P_{x'})=\phi^{-1}((P_x\times \{\emptyset_{x'}\})\cap (\{\emptyset_{x}\}\times P_{x'}))=\phi^{-1}(\{\emptyset_x\}\times \{\emptyset_{x'}\})=\phi^{-1}(\{(\emptyset_x,\emptyset_{x'})\})$ a.e.-$\mu$. Moreover, for $y\in B_x$, $S_{y\lor x'}=\phi^{-1}(Q_y\times P_{x'})$ a.e.-$\mu$ for a $\rho_x$-a.e. unique $Q_y$, while for $z\in B_{x'}$, $S_{x\lor z}=\phi^{-1}(P_x\times Q_z')$ a.e.-$\mu$ for a $\rho_{x'}$-a.e. unique $Q_z'$; taking intersections  we get $S_{y\lor z}=\phi^{-1}(Q_y\times Q_z')$ a.e.-$\mu$ (with $y$ and $z$ being as in the preceding). In particular $S_y=\phi^{-1}(Q_y\times \{\emptyset_{x'}\})$ a.e.-$\mu$ for $y\in B_x$ and $S_z=\phi^{-1}(\{\emptyset_{x}\}\times Q_z')$ a.e.-$\mu$ for $z\in B_{x'}$.

In proving the items of this theorem we may and do assume $\mu=\nu$ and $\phi$ is the identity map (up to canonical identifications: strictly speaking the passage from the product space $P_x\times P_{x'}$ through the equivalence classes does not return back the product space --- one still has to identify $(p_x,p_{x'})\in P_x\times P_{x'}$ with $(\{p_x\}\times P_{x'},P_x\times \{p_{x'}\})$, but this is trivial). 

\ref{proj:i}. We see at once that the map $(P_x\times P_{x'}\ni (p,p')\mapsto (p,\emptyset_{x'}))$ has the properties that we want of $\pr_x$. 
Uniqueness follows from the fact that the countable family $Q_y$, $y\in B_x^\circ$, $\rho_x$-essentially separates the points of $P_x$, as follows: 
if also $\tilde\pr_x$ satisfies the properties of $\pr_x$, then (since both $\pr_x$ and $\tilde\pr_x$ map into $S_x$ a.e.-$\mu$, see proof of \ref{proj:ii} immediately to follow) $\{\pr_x\ne \tilde\pr_x\}\subset \cup_{y\in B_x^\circ }\{\pr_x\in S_y\}\triangle \{\tilde\pr_x\in S_y\}$ a.e.-$\mu$; but the latter is a countable union of $\mu$-null sets, therefore so is $\{\pr_x\ne \tilde\pr_x\}$ negligible for $\mu$.

\ref{proj:ii}.  $\pr_{x}^{-1}(S_x)=S_{x\lor x'}=S_{1_\PP}=S$ a.e.-$\mu$. For $y\in B$, $\pr_x^{-1}(S_y)=\pr_x^{-1}(S_y\cap S_x)=\pr_x^{-1}(S_{y\land x})=S_{(y\land x)\lor x'}=S_{y\lor x'}$ a.e.-$\mu$. By its defining property $\sigma(\pr_x)$ contains an essentially generating class for $\Sigma_x$, therefore $\Sigma_x$; on the other hand it suffices to check the measurability of $\pr_x$ in $\Sigma_x$ on a generating system, which gives $\sigma(\pr_x)\subset \Sigma_x$. The penultimate and last statements are immediate from the construction, noting that $\emptyset_S=(\emptyset_x,\emptyset_{x'})$ (up to canonical identifications).

\ref{proj:iii}. We have that $\rho_{x'}(\{\emptyset_{x'}\})(\pr_x)_\star \mu=\mu(\cdot \cap S_x)$; indeed $\rho_{x'}(\{\emptyset_{x'}\})(\pr_x)_\star \mu(S_y)=\rho_{x'}(\{\emptyset_{x'}\})\mu(\pr_x^{-1}(S_y))=\rho_{x'}(\{\emptyset_{x'}\})\mu(S_{y\lor x'})=\rho_{x'}(\{\emptyset_{x'}\})\mu(S_{(y\land x)\lor x'})=\rho_{x'}(\{\emptyset_{x'}\})\mu(Q_{y\land x}\times P_{x'})=\rho_{x'}(\{\emptyset_{x'}\})\rho_x(Q_{y\land x})=\mu(Q_{y\land x}\times \{\emptyset_{x'}\})=\mu(S_{y\land x})=\mu(S_y\cap S_x)$ for all $y\in B$, and a straightforward  Dynkin's lemma argument allows to conclude equality of measures on the whole of $\Sigma$. Naturally also $\rho_{x}(\{\emptyset_{x}\})(\pr_{x'})_\star \mu=\mu(\cdot \cap S_{x'})$. Therefore 
\begin{align*}
((\pr_x,\pr_{x'})_\star\mu)\mu(S_{0_\PP})&=((\pr_x,\pr_{x'})_\star\mu)\rho_x(\{\emptyset_x\})\rho_{x'}(\{\emptyset_{x'}\})\\
&=(\rho_{x'}(\{\emptyset_{x'}\})(\pr_x)_\star \mu)\times (\rho_{x}(\{\emptyset_{x}\})(\pr_{x'})_\star \mu)=\mu( \cdot \cap S_x)\times \mu(\cdot \cap S_{x'}).
\end{align*}
Combining this with the previous item we get the claim.

\ref{proj:vi}.  $\Psi\vert_{\L2(\PP\vert_x)}$ carries $\L2(\PP\vert_x)$ as a unitary isomorphism onto $\int_{S_x}^\oplus H_s\mu(\dd s)$ and $\Psi\vert_{\L2(\PP\vert_{x'})}$ carries $\L2(\PP\vert_{x'})$ as a unitary isomorphism onto $\int_{S_{x'}}^\oplus H_{s'}\mu(\dd {s'})$. Therefore the map which sends $fg$ to $\Psi(f)\otimes \Psi(g)$
for $(f,g)\in \L2(\PP\vert_x)\times \L2(\PP\vert_{x'})$, extends uniquely to the unitary isomorphism $\tilde \Psi=\Psi\vert_{\L2(\PP\vert_x)}\otimes \Psi\vert_{\L2(\PP\vert_{x'})}$ of $\L2(\PP)$ onto $(\int_{S_x}^\oplus H_s\mu(\dd s))\otimes (\int_{S_{x'}}^\oplus H_{s'}\mu(\dd {s'}))$. Let $\Gamma$ be the canonical unitary isomorphism between $(\int_{S_x}^\oplus H_s\mu(\dd s))\otimes (\int_{S_{x'}}^\oplus H_{s'}\mu(\dd s'))$ and  $\int_{S_x\times S_{x'}}^\oplus H_s\otimes H_{s'}\mu^2(\dd (s,s'))$ (sending, for  $(g,g')\in \int_{S_x}^\oplus H_s\mu(\dd s)\times \int_{S_{x'}}^\oplus H_{s'}\mu(\dd s')$, $g\otimes g'$ to  $(S\times S\ni(s,s') \mapsto g(s)\otimes g'(s'))$), and let $\Theta$ be the composition of the canonical unitary isomorphisms 
\begin{align*}
&\int_{S_x\times S_{x'}}^\oplus H_s\otimes H_{s'}\mu^2(\dd (s,s'))\to \int_{S_x\times S_{x'}}^\oplus H_s\otimes H_{s'}\left[\frac{1}{\mu(S_{0_\PP})}\mu^2\right](\dd (s,s'))\\
&= \int_{S\times S}^\oplus H_s\otimes H_{s'}\left[(\pr_x,\pr_{x'})_\star\mu\right](\dd (s,s'))\to \int_S^\oplus H_{\pr_x(s)}\otimes H_{\pr_{x'}(s)}\mu(\dd s)
\end{align*}
 (see \cite[A.75]{dixmier-c-star} for the first arrow [just a ``rescaling'']; the second arrow  sends $f$ to $f(\pr_x,\pr_{x'})$ [just a ``push-forward'']). Then $\Theta\circ \Gamma\circ\tilde\Psi\circ \Psi^{-1}$ is a unitary  isomorphism of $\int^\oplus_SH_s\mu(\dd s)$ 
onto $\int_S^\oplus H_{\pr_x(s)}\otimes H_{\pr_{x'}(s)}\mu(\dd s)$, which sends $\int^\oplus_{S_z}H_s\mu(\dd s)$ onto $\int_{S_z}^\oplus H_{\pr_x(s)}\otimes H_{\pr_{x'}(s)}\mu(\dd s)$ for each $z\in B$. Further, selecting a dense countable noise Boolean algebra $B_0\subset B$ and fixing representatives $S_x$, $x\in B_0$, forming a $\pi$-system, $S_{1_\PP}=S$, we see that this unitary isomorphism sends $\int^\oplus_{E}H_s\mu(\dd s)$ onto $\int_{E}^\oplus H_{\pr_x(s)}\otimes H_{\pr_{x'}(s)}\mu(\dd s)$ for each $E$ from the algebra $\Sigma_a$ generated by $\{S_x:x\in B_0\}$, hence by approximation for each $E\in \sigma(B_x:x\in B_0)=\Sigma$ (indeed each such $E$ is a.e.-$\mu$ equal to $\liminf_{n\to\infty}E_n$ for some sequence $(E_n)_{n\in \mathbb{N}}$ in $\Sigma_a$).

By the preceding we get at once \ref{proj:vi:I} and \ref{proj:vi:II}. 
(The invariance of the scalar-product maps $\langle \Psi(f),\Psi(g)\rangle$, $\{f,g\}\subset \L2(\PP)$, follows from the identity $\int_E\langle \Psi(f),\Psi(g)\rangle\dd\mu=\PP[\pr_{H(E)}(f)\pr_{H(E)}(g)]$, $E\in \Sigma$.)

\ref{proj:vii}. Retain the notation of the previous item. We may assume that we have the spectral decomposition of \ref{proj:vi}\ref{proj:vi:I} so that  $\Theta\circ \Gamma\circ\tilde\Psi\circ \Psi^{-1}$  is just the identity (up to natural identifications). Then $\Psi^{-1}\Theta\Gamma=\tilde{\Psi}^{-1}$ and $H_{s}=H_{\pr_x(s)}\otimes H_{\pr_{x'}(s)}$ for $\mu$-a.e. $s$. We compute 
\begin{align*}
H(\pr_x^{-1}(A)\cap \pr_{x'}^{-1}(B))&=\Psi^{-1}\left(\int^{\oplus}_{\pr_x^{-1}(A)\cap \pr_{x'}^{-1}(B)}H_s\mu(\dd s)\right)\\
&=\Psi^{-1}\left(\int^{\oplus}_{\pr_x^{-1}(A)\cap \pr_{x'}^{-1}(B)}H_{\pr_x(s)}\otimes H_{\pr_{x'}(s)}\mu(\dd s)\right)\\
&=
\Psi^{-1}\Theta\left(\int^{\oplus}_{A\times B}H_{s}\otimes H_{s'}\mu^2(\dd (s,s'))\right)\\
&=\Psi^{-1}\Theta\Gamma\left(\int^{\oplus}_AH_{s}\mu(\dd s)\otimes \int^{\oplus}_B H_{s'}\mu(\dd s')\right)\\
&=\tilde{\Psi}^{-1}\left(\int^{\oplus}_AH_{s}\mu(\dd s)\otimes \int^{\oplus}_B H_{s'}\mu(\dd s')\right)\\
ž&=\Psi^{-1}\left(\int^{\oplus}_AH_{s}\mu(\dd s)\right)\otimes \Psi^{-1}\left(\int^{\oplus}_BH_{s}\mu(\dd s)\right)=H(A)\otimes H(B).
\end{align*} The second statement follows on taking  $A=E\cap S_x$ and  $B=S_{x'}$ in the first.
\end{proof}


\begin{corollary}[Noise projections (cont'd)]\label{prop:noise-proj-bis}
We have the following assertions.
\begin{enumerate}[(i)]
\item\label{proj:ii-} Let $x\in B$. If $\phi\in \mathrm{L}^0(\mu;(E,\EE))$ for a standard Borel space $(E,\EE)$, then $\phi(\pr_x)$ is determined up to a.e.-$\mu$ equality (vis-\`a-vis the choice of a version for $\pr_x$ and of a representative of the equivalence class of $\phi$).
\item\label{proj:ii--} For $y\in B$, $\pr_{x}\circ \pr_{y}=\pr_{x\land y}$ a.e.-$\mu$, it being implicit that there is no ambiguity in writing $\pr_x\circ \pr_y$ (up to a.e.-$\mu$ equality).
\item\label{proj:iv} Let $n\in \mathbb{N}$ and let $(x_1,\ldots,x_n)$ be an independency in $B$.
\begin{enumerate}[(I)]
\item\label{proj:iv-a}  $(\pr_{x_1},\ldots,\pr_{x_n}):S\to S^n$ is mod-$0$ bijective  relative to $\mu(\cdot\cap {S_{x_1\lor\cdots\lor x_n}})$ and $\mu(\cdot \cap S_{x_1})\times\cdots\times \mu(\cdot \cap S_{x_n})$. Furthermore, if $(f_1,\ldots,f_n)\in \L2(\PP\vert_{x_1})\times\cdots\times \L2(\PP\vert_{x_n})$, $\PP[(f_1)^2]=\cdots=\PP[(f_n)^2]=1$, then  $$(\pr_{x_1},\cdots,\pr_{x_n})_\star \mu_{f_1\cdots f_n}=\mu_{f_1}\times \cdots\times\mu_{f_n}$$ (i.e. $\pr_{x_1},\ldots,\pr_{x_n}$ are independent under $\mu_{f_1\cdots f_n}$ with the law of $\pr_{x_i}$ under $\mu_{f_1\cdots f_n}$ being $\mu_{f_i}=\mu_{f_1\cdots f_n}(\cdot \cap S_{x_i})$ for $i\in [n]$) and $(\pr_{x_1},\ldots,\pr_{x_n})$ is a mod-$0$ isomorphism of $\mu_{f_1\cdots f_n}$ onto $\mu_{f_1}\times\cdots \times \mu_{f_n}$. 

 \item\label{proj:iv-b}  $B$ admits a spectral resolution having the same spectral space $(S,\Sigma,\mu)$, the same spectral sets $S_x$, $x\in B$, the same subspaces $H(E)$, $E\in \Sigma$ ($\therefore$ also the same maps $(S\ni s\mapsto \langle\Psi(f)_s,\Psi(g)_s\rangle)$, $\{f,g\}\subset \L2(\PP)$), the same Hilbert spaces $(H_s)_{s\in \cup_{i\in [n]}S_{x_i}}$, and the same $\Psi\vert_{\cup_{i\in [n]}\L2(\PP\vert_{x_i})}$ as the given spectral resolution, but satisfying further: 
\begin{enumerate}[(1)]
\item\label{proj:iv-b:i}  $H_s=H_{\pr_{x_1}(s)}\otimes\cdots\otimes  H_{\pr_{x_n}(s)}$ for $\mu$-a.e. $s\in S_{x_1\lor\cdots\lor x_n}$  (under the obvious identifications: $\sqrt{\mu(\{\emptyset_S\})}\Psi(1)_{\emptyset_S}$ serving as the unit ``$1$'' in the one-dimensional space $H_{\emptyset_S}$), the measurable structure of $(H_s)_{s\in S_{x_1\lor\cdots\lor x_n}}$ being consistent (see Theorem~\ref{thm:noise-projections}\ref{proj:vi}\ref{proj:vi:I}) with that of the $(H_s)_{s\in S_{x_i}}$, $i\in [n]$; 
\item\label{proj:iv-b:ii} if $\mu$ is such that $\mu(S_{x_1\lor\cdots\lor x_n})=1$ and such that the $\pr_{x_i}$, $i\in [n]$, are independent under $\mu(\cdot\cap S_{x_1\lor\cdots \lor x_n})$, then also $$\Psi(f_1\cdots f_n)=\mu(\{\emptyset_S\})^{\frac{n-1}{2}}(\Psi(f_1)\circ\pr_{x_1})\otimes\cdots \otimes (\Psi(f_n)\circ\pr_{x_n})\mathbbm{1}_{S_{x_1\lor\cdots\lor x_n}}$$ and (hence) $$\Vert \Psi(f_1\cdots f_n)\Vert=\mu(\{\emptyset_S\})^{\frac{n-1}{2}} (\Vert\Psi(f_1)\Vert\circ\pr_{x_1})\cdots (\Vert\Psi(f_n)\Vert\circ\pr_{x_n})\mathbbm{1}_{S_{x_1\lor\cdots\lor x_n}}$$ a.e.-$\mu$ for $(f_1,\ldots,f_n)\in \L2(\PP\vert_{x_1})\times\cdots \times \L2(\PP\vert_{x_n})$.
\end{enumerate}
\item\label{proj:iv-d}   If $\mu$ is such that $\mu(S_{x_1\lor\cdots\lor x_n})=1$ and such that the $\pr_{x_i}$, $i\in [n]$, are independent under $\mu(\cdot\vert S_{x_1\lor\cdots \lor x_n})$, then $(\pr_{x_1},\ldots, \pr_{x_n})$ is a mod-$0$ isomorphism between $\mu(\cdot\cap S_{x_1\lor\cdots \lor x_n})$ and $\mu( \cdot \vert S_{x_1})\times\cdots \times\mu(\cdot \vert S_{x_n})$. 
 \item\label{proj:iv-c} For $(A_1,\ldots,A_n)\in \Sigma\vert_{S_{x_1}}\times\cdots \times \Sigma\vert_{S_{x_n}}$, up to the natural unitary equivalence, $$H(A_1)\otimes \cdots\otimes H(A_n)=H(\pr_{x_1}^{-1}(A_1)\cap \cdots\cap\pr_{x_n}^{-1}(A_n))\cap \L2(\PP\vert_{x_1\lor\cdots\lor x_n}).$$
\end{enumerate}
\end{enumerate}
\end{corollary}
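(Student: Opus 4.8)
The plan is to deduce everything from Theorem~\ref{thm:noise-projections}, treating \ref{proj:ii-} and \ref{proj:ii--} as preparatory and then obtaining \ref{proj:iv} by induction on $n$ after reducing to the case of a partition of unity.

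For \ref{proj:ii-} the only thing to verify is that altering $\pr_x$, or a representative of $\phi$, on a $\mu$-negligible set changes $\phi(\pr_x)$ only on a $\mu$-negligible set. Replacing $\pr_x$ on a null set $N$ affects $\phi(\pr_x)$ only on $N$; replacing $\phi$ on a null set $M$ affects $\phi(\pr_x)$ only on $\pr_x^{-1}(M)$. Hence it suffices that $(\pr_x)_\star\mu\ll\mu$, which is exactly what the proof of Theorem~\ref{thm:noise-projections}\ref{proj:iii} yields (there $(\pr_x)_\star\mu$ is identified as a positive multiple of $\mu(\cdot\cap S_x)$). Since $(S,\Sigma)$ is standard, \ref{proj:ii-} applied with $\phi=\pr_y$ also legitimises the composition $\pr_x\circ\pr_y$ in \ref{proj:ii--}. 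To prove \ref{proj:ii--} I would check that $\pr_x\circ\pr_y$ satisfies the property characterising $\pr_{x\land y}$ in Theorem~\ref{thm:noise-projections}\ref{proj:i} and invoke uniqueness: using Theorem~\ref{thm:noise-projections}\ref{proj:ii} twice, $(\pr_x\circ\pr_y)^{-1}(S_z)=\pr_y^{-1}(S_{z\lor x'})=S_{z\lor x'\lor y'}=S_{z\lor(x\land y)'}$ a.e.-$\mu$ for every $z\in B$ (the last step being De Morgan, $x'\lor y'=(x\land y)'$, in $B$), and taking $z=x\land y$ shows the composite maps into $S_{x\land y}$. To chain these a.e.-$\mu$ identities cleanly under $\pr_y^{-1}$ I would fix a countable dense $B_0\supset\{x,y,x\land y\}$ and pass to the projection-stable conegligible set of Remark~\ref{remark:exceptional-sets-projections}, on which the defining relations hold pointwise.

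The heart of the matter is \ref{proj:iv}. Writing $y:=x_1\lor\cdots\lor x_n$, part \ref{proj:ii--} gives $\pr_{x_i}=\pr_{x_i}\circ\pr_y$, so each $\pr_{x_i}$ factors through $\pr_y$ and is determined by its restriction to $S_y$. Restricting the spectral resolution to $(S_y,\Sigma\vert_{S_y},\mu\vert_{S_y})$ --- a spectrum for $B_y$ by \ref{generalities:spectrum-intro} --- I would identify $\pr_{x_i}\vert_{S_y}$ with the noise projection of $x_i$ computed inside $B_y$ via uniqueness: for $z\in B_{x_i}$ one has $\pr_{x_i}^{-1}(S_z)\cap S_y=S_{(z\lor x_i')\land y}=S_{z\lor(\lor_{j\ne i}x_j)}$, using that the complement of $x_i$ inside $B_y$ is $\lor_{j\ne i}x_j$, and this matches the defining relation of the in-$B_y$ projection. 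This lets me assume $y=1_\PP$ and $S_y=S$, and then induct on $n$ by applying the two-fold statements of Theorem~\ref{thm:noise-projections} to the independent complements $x_1$ and $x_2\lor\cdots\lor x_n$. Thus \ref{proj:iv-b:i} follows by inserting $H_s=H_{\pr_{x_1}(s)}\otimes H_{\pr_{x_2\lor\cdots\lor x_n}(s)}$ (Theorem~\ref{thm:noise-projections}\ref{proj:vi}\ref{proj:vi:I}) into the inductive decomposition of the second factor, with the compatibility of measurable structures propagating likewise; \ref{proj:iv-b:ii} iterates \ref{proj:vi:II} $(n-1)$ times, each step supplying one factor $\sqrt{\mu(\{\emptyset_S\})}$ so that $\mu(\{\emptyset_S\})^{(n-1)/2}$ accumulates; \ref{proj:iv-c} iterates Theorem~\ref{thm:noise-projections}\ref{proj:vii}; and the mod-$0$ bijectivity parts of \ref{proj:iv-a} and \ref{proj:iv-d} iterate Theorem~\ref{thm:noise-projections}\ref{proj:ii} and \ref{proj:iii} respectively. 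For the push-forward identity in \ref{proj:iv-a} I would argue directly: under $\mu_{f_1\cdots f_n}$ the fields $\Sigma_{x_i}=\sigma(\pr_{x_i})$ are independent (the independency generalisation recalled in \ref{generalities:spectrum-intro}), while the marginal of $\pr_{x_i}$ is pinned down on the generating $\pi$-system $\{S_z:z\in B_{x_i}\}$ by the tensorisation of conditioning --- since $x_j\subset x_i'$ for $j\ne i$ (because $x_j=x_j\land x_i'$ as $x_j\land x_i=0_\PP$), one gets $(\pr_{x_i})_\star\mu_{f_1\cdots f_n}(S_z)=\PP[\PP[f_1\cdots f_n\vert z\lor x_i']^2]=\PP[\PP[f_i\vert z]^2]\prod_{j\ne i}\PP[f_j^2]=\mu_{f_i}(S_z)$ --- whence the joint law is $\mu_{f_1}\times\cdots\times\mu_{f_n}$; a measure-preserving map between standard spaces injective off a null set is automatically a mod-$0$ isomorphism (\ref{generalities:lebesgue-rohlin}), giving the final clause.

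I expect the main obstacle to be bookkeeping rather than any isolated hard idea: the identification of $\pr_{x_i}\vert_{S_y}$ with the in-$B_y$ projection, and the disciplined management of $\mu$-negligible exceptional sets as projections are composed and iterated, for which Remark~\ref{remark:exceptional-sets-projections} is the intended device. A secondary delicate point, in \ref{proj:iv-a} and \ref{proj:iv-d}, is to ensure that the push-forward and conditional measures in play are standard and that the relevant maps are injective off genuinely $\mu_{f_1\cdots f_n}$- (respectively $\mu(\cdot\cap S_y)$-) null sets, which is where the absolute continuity $\mu_{f_1\cdots f_n}\ll\mu$ enters.
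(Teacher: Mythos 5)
Your proposal is correct and follows essentially the same route as the paper's proof: everything is deduced from the two-fold statements of Theorem~\ref{thm:noise-projections} via the uniqueness characterisation of the projections (identification of $\pr_{x_i}$ restricted to $S_{x_1\lor\cdots\lor x_n}$ with the in-$B_{x_1\lor\cdots\lor x_n}$ projection, then induction on $n$), with the push-forward law in \ref{proj:iv}\ref{proj:iv-a} pinned down on the generating $\pi$-system of product spectral sets exactly as the paper does. The one spot where your iteration needs the paper's extra device is \ref{proj:iv}\ref{proj:iv-b:ii}: after the first split the restricted measure on $S_{x_2\lor\cdots\lor x_n}$ is in general no longer a probability, so the hypothesis of Theorem~\ref{thm:noise-projections}\ref{proj:vi}\ref{proj:vi:II} fails at the next stage; the paper circumvents this by passing to an equivalent probability $\mu_{fg}$ (with $g$ carried by $S_{(x_1\lor\cdots\lor x_n)'}$) agreeing with $\mu(\cdot\cap S_{x_1\lor\cdots\lor x_n})$ on $\Sigma_{x_1}\lor\cdots\lor\Sigma_{x_n}$ and under which all of $\pr_{x_1},\ldots,\pr_{x_n},\pr_{(x_1\lor\cdots\lor x_n)'}$ are simultaneously independent.
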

\begin{remark}
With reference to Item~\ref{proj:iv}\ref{proj:iv-a} it may be natural to  note in passing the ``additive'' version: $\mu_{f_1+\cdots +f_n}=\mu_{f_1}+\cdots+\mu_{f_n}$ provided all the $f_i$, $i\in [n]$, except maybe one, are of zero mean (just check it on the essentially generating $\pi$-system $\{S_u:u\in B\}$). In fact, for $\{g_1,g_2\}\subset\L2(\PP)$, the additive decomposition $\mu_{g_1+g_2}=\mu_{g_1}+\mu_{g_2}$ is  evidently equivalent to  $g_1$ and $g_2$ being orthogonal conditionally on each member of $B$ (which is certainly true if for some independent $x_1$ and $x_2$ from $\Lattice$ that commute with members of $B$, $g_1$ is $x_1$-measurable, $g_2$ is $x_2$-measurable, with one of $g_1$, $g_2$ being of zero mean). 
\end{remark}
 The reader may better appreciate the preceding remark on a (counter)example.
 \begin{example}
 Return to Example~\ref{example:wiener}. We put  $f_1:=((W_2-W_1)+(W_1-W_0))((W_{-1}-W_{-2})+(W_{0}-W_{-1}))\in H^{(2)}$ and $f_2:=((W_2-W_1)-(W_1-W_0))((W_{-1}-W_{-2})-(W_{0}-W_{-1}))\in H^{(2)}$. The random variables $f_1$ and $f_2$ are orthogonal of zero mean; $\mu_{f_1+f_2}$ and $\mu_{f_1}+\mu_{f_2}$ have the same mass. Nevertheless, $\mu_{f_1+f_2}\ne \mu_{f_1}+\mu_{f_2}$; $f_1$ and $f_2$ are not orthogonal given $N_{(1,2)\cup (-2,-1)}$. 
 \end{example}
\begin{remark}\label{remark:choose-equivalent}
Still with reference to Item~\ref{proj:iv}\ref{proj:iv-a}: the $f_i$, $i\in [n]$, may be chosen so that $\mu_{f_i}\sim \mu$ on $S_{x_i}$, $i\in [n]$, in which case $\mu_{f_1\cdots f_n}\sim\mu$ on $S_{x_1\lor\cdots\lor x_n}$.
\end{remark}
\begin{proof}
\ref{proj:ii-}. By standardness we may assume that $(E,\mathcal{E})$ is $\mathbb{R}$ with its Borel $\sigma$-field. Start with $\phi$ taking on only countably many values with positive $\mu$-measure, in that case it follows at once from Theorem~\ref{thm:noise-projections}\ref{proj:i}. Then suppose that $\phi_1=\phi_2$ a.e.-$\mu$ with $\phi_1$ and $\phi_2$ both being $\Sigma/\mathcal{E}$-measurable (now the actual versions, not equivalence classes). Let also $\pr_x^{(1)}$ and $\pr_x^{(2)}$ be any two versions of $\pr_x$. Then $(2^{-n}\lfloor 2^n \phi_1\rfloor)\circ \pr_x^{(1)}=(2^{-n}\lfloor 2^n \phi_2\rfloor)\circ \pr_x^{(2)}$ a.e.-$\mu$ for each $n\in \mathbb{N}$; passing to the limit $n\to \infty$ we get the desired conclusion.

\ref{proj:ii--}. First, any version of $\pr_x$ belongs to $\Sigma/\Sigma$, a fortiori to $\Sigma/\Sigma^0$, where $\Sigma^0$ is a standard $\sigma$-field whose $\overline{\mu\vert_{\Sigma^0}}$-completion is $\Sigma$; therefore $\pr_x\circ \pr_y$ is without ambiguity up to a.e.-$\mu$ equality by Item~\ref{proj:ii-}. Besides,  $\pr_x\circ \pr_y$ is such that $(\pr_x\circ \pr_y)^{-1}(E)$ is determined uniquely up to a.e.-$\mu$ equality for $E\in \Sigma/_\mu$ just because such a property holds for both $\pr_x$ and $\pr_y$ (and because $\pr_x\in \Sigma/\Sigma$). Further, $(\pr_x\circ \pr_y)^{-1}(S_z)=\pr_y^{-1}(\pr_x^{-1}(S_z))=\pr_y^{-1}(S_{z\lor x'})=S_{z\lor x'\lor y'}=S_{z\lor (x\land y)'}$ a.e.-$\mu$ for all $z\in B$, which entails in particular that $\pr_x\circ \pr_y\in \Sigma/\Sigma$, and hence, by Theorem~\ref{thm:noise-projections}\ref{proj:i}, that $\pr_{x\land y}=\pr_x\circ \pr_y$ a.e.-$\mu$.

\ref{proj:iv}\ref{proj:iv-a}. Initially we assume that $n=2$ and that  $x_2=x'$, where $x:=x_1$. We let  $(f,f')\in \L2(\PP\vert_x)\times \L2(\PP\vert_{x'})$, $\PP[f^2]=\PP[(f')^2]=1$, and show that  $(\pr_x,\pr_{x'})_\star \mu_{ff'}= \mu_f\times \mu_{f'}$. Remark that the probability measures $\mu_f$, $\mu_{f'}$ and $\mu_{ff'}$ are absolutely continuous w.r.t. $\mu(\cdot \cap S_x)$, $\mu(\cdot \cap S_{x'})$ and $\mu$, respectively. The remainder of the argument for the validity of  $(\pr_x,\pr_{x'})_\star \mu_{ff'}= \mu_f\times \mu_{f'}$ is a direct computation on the essentially generating $\pi$-system $\{S_y\times S_z:(y,z)\in B\times B\}$:
\begin{align*}
&\mu_{ff'}((\pr_x,\pr_{x'})\in S_y\times S_z)=\mu_{ff'}(\pr_x^{-1}(S_y)\cap\pr_{x'}^{-1}(S_z))=\mu_{ff'}(S_{y\lor x'}\cap S_{x\lor z})=\mu_{ff'}(S_{(y\lor x')\land(x\lor z)})\\
&=\mu_{ff'}(S_{(y\land x)\lor (z\land x')})=\PP[\PP[ff'\vert (y\land x)\lor (z\land x')]^2]=\PP[\PP[f\vert y\land x]^2]\PP[\PP[{f'}\vert z\land x']^2]\\
&=\mu_f(S_{y\land x})\mu_{f'}( S_{z\land x'})=\mu_f(S_{y}\cap S_x)\mu_{f'}( S_{z}\cap S_{x'})=\mu_f(S_{y})\mu_{f'}( S_{z})=(\mu_f\times \mu_{f'})(S_y\times S_z)
\end{align*}
 for all $(y,z)\in B\times B$. Further, according to Theorem~\ref{thm:noise-projections}\ref{proj:ii} $(\pr_x,\pr_{x'}):S\to S^2$ is mod-$0$ bijective between $S$ and $S_x\times S_{x'}$ relative to $\mu$ and $\mu(\cdot\cap S_x)\times \mu(\cdot \cap S_{x'})$, respectively. Together with the preceding it means that $(\pr_x,\pr_{x'})$ is a mod-$0$ isomorphism of $\mu_{ff'}$ onto $\mu_f\times \mu_{f'}$, and a special case of \ref{proj:iv}\ref{proj:iv-a} has been established. 

For the general case when $n=2$ we reduce to the case $x_1\lor x_2=1_\PP$. Indeed  $\tilde{B}:=B_{x_1\lor x_2}$ is a noise Boolean algebra under $\tilde{\PP}:=\PP\vert_{x_1\lor x_2}$ with spectral measure $\tilde{\mu}:=\mu(\cdot \cap S_{x_1\lor x_2})$ etc. to which the claim when $x_1\lor x_2=1_\PP$ can be applied: one gets $\tilde \pr_{y}$ associated to $y\in \tilde{B}$, but because of uniqueness $\tilde\pr_y=\pr_y$ a.e.-$\tilde{\mu}$. The reduction follows.

 Now we apply an inductive argument in ``$n$''; ``$n=1$'' is trivial and ``$n=2$'' is handled above. Suppose it is true for some $n\geq 2$ and we prove it for ``$n+1$''. By  the inductive hypothesis  $(\pr_{x_1},\ldots,\pr_{x_{n-1}},\pr_{x_n\lor x_{n+1}}):S\to S^n$ is mod-$0$ bijective between $S$ and $S^{n-1}\times S$ relative to $\mu(\cdot\cap {S_{x_1\lor\cdots\lor x_n\lor x_{n+1}}})$ and $\mu(\cdot \cap S_{x_1})\times\cdots\times \mu(\cdot \cap S_{x_{n-1}})\times \mu(\cdot \cap S_{x_{n}\lor x_{n+1}})$, respectively. But also $(\pr_{[n-1]},\pr_{x_n}\circ \pr_n,\pr_{x_{n+1}}\circ \pr_n):S^n\to S^{n+1}$ is mod-$0$ bijective between  $S^{n-1}\times S$ and $S^{n-1}\times S^2$ relative to $\mu(\cdot \cap S_{x_1})\times\cdots\times \mu(\cdot \cap S_{x_{n-1}})\times\mu(\cdot \cap S_{x_n\lor x_{n+1}})$ and $\mu(\cdot \cap S_{x_1})\times\cdots\times \mu(\cdot \cap S_{x_{n-1}})\times \mu(\cdot \cap S_{x_{n}})\times  \mu(\cdot \cap S_{x_{n+1}})$. Taking compositions and applying Item~\ref{proj:ii--} we get that $(\pr_{x_1},\ldots,\pr_{x_{n+1}}):S\to S^{n+1}$ is mod-$0$ bijective between $S$  and $S^{n+1}$ relative to  $\mu(\cdot\cap {S_{x_1\lor\cdots\lor x_n\lor x_{n+1}}})$ and $\mu(\cdot \cap S_{x_1})\times\cdots\times\mu(\cdot \cap S_{x_{n+1}})$. Furthermore, by a similar token, 
\begin{align*}
&(\pr_{x_1},\ldots,\pr_{x_{n+1}})_\star\mu_{f_1\cdots f_{n+1}}\\
&=(\pr_{[n-1]},\pr_{x_n}\circ \pr_n,\pr_{x_{n+1}}\circ \pr_n)_\star((\pr_{x_1},\ldots,\pr_{x_{n-1}},\pr_{x_n\lor x_{n+1}})_\star \mu_{f_1\cdots f_{n-1}(f_nf_{n+1})})\\
&=(\pr_{[n-1]},\pr_{x_n}\circ \pr_n,\pr_{x_{n+1}}\circ \pr_n)_\star (\mu_{f_1}\times \cdots \times \mu_{f_{n-1}}\times \mu_{f_nf_{n+1}})=\mu_{f_1}\times \cdots \times \mu_{f_{n-1}}\times (\pr_{x_n},\pr_{x_{n+1}})_\star\mu_{f_nf_{n+1}}\\
&=\mu_{f_1}\times \cdots \times \mu_{f_{n-1}}\times \mu_{f_n}\times \mu_{f_{n+1}}
\end{align*}
 (up to natural identifications). Altogether the case ``$n+1$'' is established.

\ref{proj:iv}\ref{proj:iv-b}.  Inductively by Theorem~\ref{thm:noise-projections}\ref{proj:vi}\ref{proj:vi:I} and by Item~\ref{proj:ii--} $B$ admits a spectral resolution leaving invariant the stipulated objects and having $H_s=(\cdots(H_{\pr_{x_1}(s)}\otimes\cdots)\otimes  H_{\pr_{x_n}(s)})\otimes H_{\pr_{(x_1\lor\cdots \lor x_n)'}(s)}$ for $\mu$-a.e. $s\in S$, in particular $H_s=H_{\pr_{x_1}(s)}\otimes\cdots\otimes  H_{\pr_{x_n}(s)}\otimes H_{\emptyset_S}$ for $\mu$-a.e. $s\in S_{x_1\lor\cdots \lor x_n}$ and hence also $H_s=H_{\pr_{x_1}(s)}\otimes\cdots\otimes  H_{\pr_{x_n}(s)}$ for $\mu$-a.e. $s\in S_{x_1\lor\cdots \lor x_n}$. This gives \ref{proj:iv-b:i}. For \ref{proj:iv-b:ii} let $f\in \L2(\PP\vert_{x_1\lor\cdots x_n})$ be such that $\mu(\cdot\cap S_{x_1\lor\cdots \lor x_n})=\mu_f$ and pick $g\in \L2(\PP\vert_{(x_1\lor\cdots\lor x_n)'})$ with $\PP[g^2]=1$ such that $\mu\sim \mu_g$ on $S_{(x_1\lor\cdots\lor x_n)'}$.  Then $\mu_{fg}$ is a probability equivalent to $\mu$, which agrees with $\mu(\cdot\cap S_{x_1\lor\cdots\lor x_n})$ on $\Sigma_{x_1}\lor\cdots\lor\Sigma_{x_n}=\Sigma_{x_1\lor\cdots\lor x_n}$ and under which therefore $\pr_{x_1},\ldots,\pr_{x_n},\pr_{(x_1\lor\cdots \lor x_n)'}$ are independent. It remains to apply inductively Theorem~\ref{thm:noise-projections}\ref{proj:vi}\ref{proj:vi:II}.

\ref{proj:iv}\ref{proj:iv-d}. Using the same trick as in the immediately preceding paragraph,  Theorem~\ref{thm:noise-projections}\ref{proj:iii} gives the marginal laws $(\pr_{x_i})_\star\mu(\cdot\vert S_{x_1\lor\cdots \lor x_n})=\mu(\cdot\vert S_{x_i})$, $i\in [n]$. It remains to take into account Item~\ref{proj:iv}\ref{proj:iv-a}.


\ref{proj:iv}\ref{proj:iv-c}. 
Suppose the claim holds for ``$n$'' and we make the argument for ``$n+1$'' (the claim is trivial for $n=1$, so by induction we will be home free). We have $\pr_{x_1}^{-1}(A_1)\cap\cdots\cap \pr_{x_n}^{-1}(A_n)=(\pr_{x_1\lor\cdots \lor x_n})^{-1}(\pr_{x_1}^{-1}(A_1)\cap\cdots\cap \pr_{x_n}^{-1}(A_n)\cap S_{x_1\lor\cdots\lor x_n})$ a.e.-$\mu$, and also $\pr_{x_{n+1}}^{-1}(A_{n+1})=\pr_{x_{n+1}\lor (x_1\lor\cdots\lor x_{n+1})'}^{-1}(A_{n+1})$ a.e.-$\mu$ on $S_{x_1\lor\cdots\lor x_{n+1}}$ due to Item~\ref{proj:ii--}. Therefore, by Theorem~\ref{thm:noise-projections}\ref{proj:vii} and the inductive hypothesis, up to the natural unitary eqivalences, 
\begin{align*} 
&H(\pr_{x_1}^{-1}(A_1)\cap\cdots\cap \pr_{x_n}^{-1}(A_n)\cap \pr_{x_{n+1}}^{-1}(A_{n+1}))\cap \L2(\PP\vert_{x_1\lor\cdots\lor x_{n+1}})\\
&=H(\pr_{x_1}^{-1}(A_1)\cap\cdots\cap \pr_{x_n}^{-1}(A_n)\cap \pr_{x_{n+1}}^{-1}(A_{n+1}))\cap H(S_{x_1\lor \cdots \lor x_{n+1}})\\
&=H(\pr_{x_1}^{-1}(A_1)\cap\cdots\cap \pr_{x_n}^{-1}(A_n)\cap \pr_{x_{n+1}}^{-1}(A_{n+1})\cap S_{x_1\lor\cdots \lor x_{n+1}})\\
&=H((\pr_{x_1\lor\cdots \lor x_n})^{-1}(\pr_{x_1}^{-1}(A_1)\cap\cdots\cap \pr_{x_n}^{-1}(A_n)\cap S_{x_1\lor\cdots\lor x_n})\cap \pr_{x_{n+1}\lor (x_1\lor\cdots\lor x_{n+1})'}^{-1}(A_{n+1}))\\
&=H([\pr_{x_1}^{-1}(A_1)\cap\cdots\cap \pr_{x_n}^{-1}(A_n)]\cap  S_{x_1\lor \cdots \lor x_n})\otimes H(A_{n+1})\\
&=(H(A_1)\otimes \cdots\otimes H(A_n))\otimes H(A_{n+1})=H(A_1)\otimes \cdots\otimes H(A_{n+1}),
\end{align*}
which is what we wanted.
\end{proof}

\begin{remark}
Let $\{y,z\}\subset B$, $y\land z=0_\PP$. According to Item~\ref{proj:iv}\ref{proj:iv-a}  of the preceding corollary  $S_{y\lor z}$ is ($\mu$-essentially) in a ``canonical'' one-to-one and onto correspondence with $S_y\times S_z$ and is thus generally much larger than the union of  $S_y$ and $S_z$, which correspond to $S_y\times \{\emptyset_S\}$ and $\{\emptyset_S\}\times S_z$, respectively. Indeed $S_{y\lor z}=S_y\cup S_z$ a.e.-$\mu$ iff $y=0_\PP$ or $z=0_\PP$. However, $S_{y\lor z}$ is the $\mu$-a.e. disjoint union of $S_y$ and $S_z$ on $\{K=1\}$, as we will see shortly in Corollary~\ref{corollary:partition-K=1}.
\end{remark}

\begin{remark}
We have for all intents and purposes noted the following in the proof. Let $x\in B$. If we use $\pr^x_y$ to denote the noise projection corresponding to $y\in B_x$ under $\PP\vert_x$ relative to the spectral decomposition $((S_x,\Sigma\vert_{S_x},\mu\vert_{S_x});\Psi\vert_{\L2(\PP\vert_x)}\vert_{S_x})$ then $\pr^x_y=\pr_y\vert_{S_x}$ a.e.-$\mu\vert_{S_x}$. (There is ambiguity in $S_x$, but it does not matter; it is true for any version thereof.)
\end{remark}
As the reader may have noticed (or speculated, or indeed felt as if it had been overlooked) Items~\ref{proj:iv}\ref{proj:iv-a} and \ref{proj:iv}\ref{proj:iv-d} of Corollary~\ref{prop:noise-proj-bis} are just two sides of the same coin:
\begin{proposition}\label{proposition:independence-for-given}
Let $n\in \mathbb{N}$ and let $(x_1,\ldots,x_n)$ be an independency in $B$. The following are equivalent. 
\begin{enumerate}[(A)]
\item\label{independence-for-given:a}  $\mu(S_{x_1\lor\cdots\lor x_n})=1$ and  $\pr_{x_1},\ldots,\pr_{x_n}$ are independent under $\mu(\cdot \cap S_{x_1\lor\cdots\lor x_n})$.
\item\label{independence-for-given:b} $\mu(\cdot \cap S_{x_1\lor\cdots\lor x_n})=\mu_{f_1\cdots f_n}$ for some $(f_1,\ldots,f_n)\in \L2(\PP\vert_{x_1})\times\cdots\times \L2(\PP\vert_{x_n})$ with $\PP[(f_1)^2]=\cdots=\PP[(f_n)^2]=1$.
\end{enumerate}
\end{proposition}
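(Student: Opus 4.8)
The plan is to treat the two implications very asymmetrically, since one of them is essentially free while the other carries all the content. For \ref{independence-for-given:b}$\Rightarrow$\ref{independence-for-given:a}, I would simply feed the given $(f_1,\ldots,f_n)$ into Corollary~\ref{prop:noise-proj-bis}\ref{proj:iv}\ref{proj:iv-a}. Because $(x_1,\ldots,x_n)$ is an independency, the $f_i$ are independent, so $\mu_{f_1\cdots f_n}$ has total mass $\PP[(f_1\cdots f_n)^2]=\prod_{i}\PP[(f_i)^2]=1$ and is carried by $S_{x_1\lor\cdots\lor x_n}$; hence $\mu(S_{x_1\lor\cdots\lor x_n})=\mu_{f_1\cdots f_n}(S)=1$. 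The independence of $\pr_{x_1},\ldots,\pr_{x_n}$ under $\mu(\cdot\cap S_{x_1\lor\cdots\lor x_n})=\mu_{f_1\cdots f_n}$ is then exactly the assertion of that same item. So this direction is immediate.

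For the substantive direction \ref{independence-for-given:a}$\Rightarrow$\ref{independence-for-given:b}, write $\nu:=\mu(\cdot\cap S_{x_1\lor\cdots\lor x_n})$, a probability by hypothesis, and $\Phi:=(\pr_{x_1},\ldots,\pr_{x_n})$, with marginals $\nu_i:=(\pr_{x_i})_\star\nu$. First I would invoke Remark~\ref{remark:choose-equivalent} to fix auxiliary $g_i\in\L2(\PP\vert_{x_i})$ with $\PP[(g_i)^2]=1$ and $\mu_{g_i}\sim\mu$ on $S_{x_i}$, so that $\mu_{g_1\cdots g_n}\sim\mu$ on $S_{x_1\lor\cdots\lor x_n}$ and, by Corollary~\ref{prop:noise-proj-bis}\ref{proj:iv}\ref{proj:iv-a}, $\Phi$ is a mod-$0$ isomorphism of $\mu_{g_1\cdots g_n}$ (equivalently of $\mu\vert_{S_{x_1\lor\cdots\lor x_n}}$) onto $\mu_{g_1}\times\cdots\times\mu_{g_n}$ with $(\pr_{x_i})_\star\mu_{g_1\cdots g_n}=\mu_{g_i}$. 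Since pushforward preserves absolute continuity and $\nu\ll\mu_{g_1\cdots g_n}$, I get $\nu_i=(\pr_{x_i})_\star\nu\ll(\pr_{x_i})_\star\mu_{g_1\cdots g_n}=\mu_{g_i}\sim\mu$ on $S_{x_i}$; moreover $\nu_i$ is a probability carried by $S_{x_i}$ (as $\pr_{x_i}$ maps into $S_{x_i}$ a.e.-$\mu$ by Theorem~\ref{thm:noise-projections}\ref{proj:ii}). Now the absolute-continuity realization recalled in \ref{generalities:spectrum-intro} produces, for each $i$, an $f_i\in\L2(\PP\vert_x_i)$ with $\mu_{f_i}=\nu_i$, whence $\PP[(f_i)^2]=\nu_i(S)=1$.

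With these $f_i$ in hand, the endgame is a comparison of pushforwards under the (now mod-$0$ invertible) map $\Phi$: by Corollary~\ref{prop:noise-proj-bis}\ref{proj:iv}\ref{proj:iv-a}, $\Phi_\star\mu_{f_1\cdots f_n}=\mu_{f_1}\times\cdots\times\mu_{f_n}=\nu_1\times\cdots\times\nu_n$, while the independence assumption \ref{independence-for-given:a} gives $\Phi_\star\nu=\nu_1\times\cdots\times\nu_n$ as well. Thus $\Phi_\star\mu_{f_1\cdots f_n}=\Phi_\star\nu$, and since both measures are carried by $S_{x_1\lor\cdots\lor x_n}$ and absolutely continuous w.r.t. $\mu$ there, while $\Phi$ is bimeasurable and injective off a $\mu$-null set, the pushforward is injective on such measures; hence $\mu_{f_1\cdots f_n}=\nu$, which is \ref{independence-for-given:b}. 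I expect the main obstacle to be precisely this last transport step: one must be careful to extract absolute continuity of each marginal $\nu_i$ (for which the equivalent auxiliary $g_i$ are exactly what is needed) and to justify that equality of $\Phi$-pushforwards forces equality of the two $\mu$-absolutely-continuous measures, which rests on $\Phi$ being a genuine mod-$0$ isomorphism w.r.t. the class of $\mu$ rather than a mere mod-$0$ bijection.
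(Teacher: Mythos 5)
Your proposal is correct, and the easy direction \ref{independence-for-given:b}$\Rightarrow$\ref{independence-for-given:a} is handled exactly as in the paper (a direct appeal to Corollary~\ref{prop:noise-proj-bis}\ref{proj:iv}\ref{proj:iv-a}). For the substantive direction you take a genuinely different route. The paper first realizes $\nu:=\mu(\cdot\cap S_{x_1\lor\cdots\lor x_n})$ as $\mu_g$ for a single $g\in\L2(\PP\vert_{x_1\lor\cdots\lor x_n})$ (via an orthonormal frame), then \emph{explicitly} sets $f_i:=\PP[g\vert x_i]/\Vert\PP[g\vert x_i]\Vert$ and verifies $\mu_g=\mu_{f_1\cdots f_n}$ by an elementary computation on the essentially generating $\pi$-system $\{S_{u_1\lor\cdots\lor u_n}:(u_1,\ldots,u_n)\in B_{x_1}\times\cdots\times B_{x_n}\}$, using the assumed independence of $\Sigma_{x_1},\ldots,\Sigma_{x_n}$ under $\mu_g$. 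You instead construct the $f_i$ abstractly by realizing each marginal $(\pr_{x_i})_\star\nu$ via Radon--Nikodym and then transport back along $\Phi=(\pr_{x_1},\ldots,\pr_{x_n})$. Your route is sound: the auxiliary $g_i$ from Remark~\ref{remark:choose-equivalent} are indeed needed to see $\nu_i\ll\mu$ on $S_{x_i}$, and your final injectivity step is legitimate because Corollary~\ref{prop:noise-proj-bis}\ref{proj:iv}\ref{proj:iv-a} applied to $g_1\cdots g_n$ gives a genuine (bimeasurable) mod-$0$ isomorphism whose exceptional set is $\mu$-negligible on $S_{x_1\lor\cdots\lor x_n}$, hence negligible for both $\nu$ and $\mu_{f_1\cdots f_n}$. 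A small simplification worth noting: you do not actually need injectivity of the pushforward at all -- since $\sigma(\pr_{x_1},\ldots,\pr_{x_n})=\Sigma_{x_1}\lor\cdots\lor\Sigma_{x_n}$ coincides with $\Sigma$ on $S_{x_1\lor\cdots\lor x_n}$ up to $\mu$-null sets, the equality $\Phi_\star\mu_{f_1\cdots f_n}=\Phi_\star\nu$ together with absolute continuity w.r.t.\ $\mu$ already forces $\mu_{f_1\cdots f_n}=\nu$ by Dynkin's lemma. What each approach buys: the paper's is lighter on machinery and yields the concrete formula $f_i=\PP[g\vert x_i]/\Vert\PP[g\vert x_i]\Vert$; yours is more structural and makes transparent that the $f_i$ are exactly realizations of the marginal laws of the noise projections.
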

\begin{proof}
\ref{independence-for-given:a} is implied by \ref{independence-for-given:b} according to Corollary~\ref{prop:noise-proj-bis}\ref{proj:iv}\ref{proj:iv-a}. Suppose now that \ref{independence-for-given:a} holds true. There is $g\in \L2(\PP\vert_{x_1\lor\cdots \lor x_n})$ such that $\mu(\cdot \cap S_{x_1\lor\cdots\lor x_n})=\mu_g$ (just because, thanks to the existence of orthonormal frames \cite[Proposition~II.1.4.1(II)]{dixmier1981neumann}, there is a such a $g$ that satisfies $\Vert\Psi(g)\Vert=1$ a.e.-$\mu$ on $S_{x_1\lor \cdots \lor x_n}$).  To see that \ref{independence-for-given:b} holds true it is sufficient to check that $\mu_g=\mu_{f_1\cdots f_n}$, where $f_i:=\frac{\PP[g\vert x_i]}{\Vert\PP[g\vert x_i]\Vert}$ for $i\in [n]$. Then it is enough to establish the latter equality on $S_{u_1\lor \cdots \lor u_n}$ for $(u_1,\ldots,u_n)\in B_{x_1}\times \cdots\times B_{x_n}$. Now, on the one hand, 
$$\mu_{f_1\cdots f_n}(S_{u_1\lor \cdots \lor u_n})=\frac{\PP[\PP[g\vert u_1]^2]}{\PP[\PP[g\vert x_1]^2]}\cdots \frac{\PP[\PP[g\vert u_n]^2]}{\PP[\PP[g\vert x_n]^2]}.$$ On the other hand, by the independence of the $\Sigma_{x_1},\ldots,\Sigma_{x_n}$ under $\mu_g$,
\begin{align*}
\mu_g(S_{u_1\lor\cdots \lor u_n})&=\mu_g(S_{u_1\lor x_1'}\cap \cdots \cap S_{u_n\lor x_n'})=\mu_g(S_{u_1\lor x_1'})
 \cdots \mu_g(S_{u_n\lor x_n'})\\
&=\PP[\PP[g\vert u_1\lor x_1']^2]\cdots \PP[\PP[g\vert u_n\lor x_n']^2].
\end{align*} 
Taking $u_j=0_\PP$ for all $j\in [n]\backslash \{i\}$, $i\in [n]$, in the preceding and substituting back for the expressions obtained in this way for the $\PP[\PP[g\vert u_i\lor x_i']^2]$, $i\in [n]$; looking finally at the case when $u_i=x_i$ for all $i\in [n]$ and taking into account that $\mu_g(S_{x_1\lor \cdots \lor x_n})=1$, establishes that this is indeed the same as the expression that was procured for $\mu_{f_1\cdots f_n}(S_{u_1\lor\cdots \lor u_n})$.
\end{proof}

We prove next that it is sufficient to check the defining property of a noise projection on a dense noise Boolean subalgebra. 

\begin{proposition}\label{proposition:enough-on-dense-noise-proj}
Let $x\in B$ and let $B_0$ be a noise Boolean algebra dense in $B$. Suppose $p\in \Sigma/\Sigma$ is such that for $E\in \Sigma/_\mu$, $p^{-1}(E)$ is uniquely determined up to a.e.-$\mu$ equality (vis-\`a-vis the choice of  a representative of the equivalence class of $E$) and such that for all $y\in B_0$,  $p^{-1}(S_y)=S_{y\lor x'}$ a.e.-$\mu$. Then $p=\pr_x$ a.e.-$\mu$.
\end{proposition}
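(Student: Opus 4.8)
The statement to prove is Proposition~\ref{proposition:enough-on-dense-noise-proj}: that if $p \in \Sigma/\Sigma$ satisfies the "good preimage" property and agrees with the defining relation $p^{-1}(S_y) = S_{y \lor x'}$ a.e.-$\mu$ merely for $y$ in a *dense* noise Boolean subalgebra $B_0$, then $p = \pr_x$ a.e.-$\mu$.

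Let me think about what's really going on. We have $\pr_x$ already defined by Theorem~\ref{thm:noise-projections}(i): it's the unique (up to a.e.-$\mu$) map such that $p^{-1}(E)$ is well-defined on equivalence classes and $\pr_x^{-1}(S_y) = S_{y \lor x'}$ for all $y \in B_x$. Now $p$ is a hypothetical second map satisfying the same conditions but with the relation only verified on $B_0$, not all of $B_x$ (or all of $B$).

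The uniqueness argument in the proof of Theorem~\ref{thm:noise-projections}(i) went like this: $\{p_1 \neq p_2\} \subset \bigcup_{y} \{p_1 \in S_y\} \triangle \{p_2 \in S_y\}$ where the union is over a countable essentially-separating family of $S_y$'s. If both maps satisfy the defining relation on that family, each symmetric difference is null, so the union is null.

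So the key insight must be: I want to apply the *same* uniqueness mechanism, but I need $p$ and $\pr_x$ to agree on a countable family of $S_y$'s that essentially separates points of $S_x$ (or at least essentially separates $S$, accounting for where $p$ and $\pr_x$ both land). The natural candidate: since $B_0$ is dense in $B$, and in particular dense in $B_x$... wait, is $B_0 \cap \text{(something)}$ dense in $B_x$? Let me reconsider.

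Actually the subtlety: the defining relation for $\pr_x$ is stated for $y \in B_x$, but $p$ only satisfies it for $y \in B_0$, and $B_0$ need not be contained in $B_x$. But note $\pr_x^{-1}(S_y) = S_{y\lor x'}$ holds for ALL $y \in B$ (Theorem~\ref{thm:noise-projections}(ii)), not just $B_x$. So for $y \in B_0$, both $p^{-1}(S_y) = S_{y \lor x'}$ (hypothesis) and $\pr_x^{-1}(S_y) = S_{y \lor x'}$ (Thm ii) hold. Good — they agree on all $S_y$, $y \in B_0$.

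Now I need: the family $\{S_y : y \in B_0\}$ essentially separates points of $S$. Since $B_0$ is dense in $B$, and the spectral sets $\{S_x : x \in B\}$ essentially generate $\Sigma$ (hence essentially separate $S$), density should transfer this to $B_0$. Indeed, from generalities(spectrum-intro): if a countable dense $B_0 \subset B$, then $\sigma(S_y : y \in B_0) = \Sigma$. That's exactly the separation I need. But wait — is $B_0$ countable? The hypothesis says "noise Boolean algebra dense in $B$," not countable. I'd pass to a countable dense subset $B_0' \subset B_0$ (exists since $\Lattice$ is separable, hence $B_0$ is separable), and the relation $p^{-1}(S_y) = S_{y\lor x'}$ still holds for $y \in B_0'$.

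Assembling:
\begin{proof}
By Theorem~\ref{thm:noise-projections}\ref{proj:ii}, $\pr_x^{-1}(S_y) = S_{y\lor x'}$ a.e.-$\mu$ for every $y \in B$, in particular for every $y \in B_0$. Combined with the hypothesis $p^{-1}(S_y) = S_{y \lor x'}$ a.e.-$\mu$ for $y \in B_0$, we obtain
\[
p^{-1}(S_y) = \pr_x^{-1}(S_y) \quad \text{a.e.-}\mu, \qquad y \in B_0.
\]
Since $B_0$ is dense in $B$ and $\Lattice$ is separable (Item~\ref{generalities:topology}\ref{rmk:general:Polish}), $B_0$ admits a countable dense subset $B_0^{\circ}$, which is then also dense in $B$; the displayed equality persists for $y \in B_0^{\circ}$. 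By the density of $B_0^{\circ}$ in $B$ and the discussion in Item~\ref{generalities:spectrum-intro}, versions of $S_y$, $y \in B_0^{\circ}$, may be chosen forming a $\pi$-system with $\sigma(S_y : y \in B_0^{\circ}) = \Sigma$; in particular the countable family $\{S_y : y \in B_0^{\circ}\}$ essentially separates the points of $S$.

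Both $p$ and $\pr_x$ belong to $\Sigma/\Sigma$ and have the property that preimages of elements of $\Sigma/_\mu$ are determined up to a.e.-$\mu$ equality. Hence, fixing versions, we may estimate the disagreement set: since $\{S_y : y \in B_0^{\circ}\}$ essentially separates $S$, for $\mu$-a.e. pair of distinct points the indicator functions $\mathbbm{1}_{S_y}$, $y \in B_0^{\circ}$, differ, whence
\[
\{p \neq \pr_x\} \subset \bigcup_{y \in B_0^{\circ}} \left(p^{-1}(S_y) \,\triangle\, \pr_x^{-1}(S_y)\right) \qquad \text{a.e.-}\mu.
\]
Each set on the right is $\mu$-null by the first display, and the union is countable, so $\{p \neq \pr_x\}$ is $\mu$-negligible. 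Therefore $p = \pr_x$ a.e.-$\mu$.
\end{proof}

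**The main obstacle.** The only genuinely delicate point is justifying the inclusion $\{p \neq \pr_x\} \subset \bigcup_{y} (p^{-1}(S_y) \triangle \pr_x^{-1}(S_y))$ a.e.-$\mu$. This requires that $\{S_y : y \in B_0^{\circ}\}$ essentially separates points of $S$ — so that two points mapped into the same cell of every $S_y$ must (essentially) coincide — and that $p, \pr_x$ are genuinely $\Sigma/\Sigma$ measurable so the preimages make sense pointwise after fixing versions. This is precisely the mechanism already used for uniqueness in Theorem~\ref{thm:noise-projections}(i), so the real work is verifying that density of $B_0$ delivers the same essentially-separating countable family that the defining $B_x$ would have provided; this is where the hypothesis "dense" is doing all the work, and it follows cleanly from the separability of $\Lattice$ together with the essential-generation fact in Item~\ref{generalities:spectrum-intro}.
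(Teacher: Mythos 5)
Your proof is correct and follows essentially the same route as the paper: both arguments rest on the fact that density of $B_0$ makes $\{S_y:y\in B_0\}$ an essentially generating (hence essentially separating) family, which forces $p$ and $\pr_x$ to agree. The paper merely packages this as a $\sigma$-field/generating-class argument (the sets where the two preimages agree a.e.-$\mu$ form a $\sigma$-field containing $\{S_y:y\in B_0\}$, hence all of $\Sigma$) followed by an appeal to the uniqueness clause of Theorem~\ref{thm:noise-projections}\ref{proj:i}, whereas you re-run the underlying point-separation mechanism directly.
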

\begin{proof}
The collection $\mathcal{D}:=\{A\in \Sigma:p^{-1}(A)=\pr_x^{-1}(A)\text{ a.e.-$\mu$}\}$ is a $\sigma$-field on $S$ containing the essentially generating system $\{S_y:y\in B_0\}$. Therefore $\mathcal{D}=\Sigma$, hence by the very definition of $\pr_x$ (or rather the unique existence which makes this definition possible), $\pr_x=p$ a.e.-$\mu$. 
\end{proof}

\begin{corollary}
For each $x\in \overline{B}$ there is a $\mu$-a.e. unique $\pr_x\in \Sigma/\Sigma$  such that for $E\in \Sigma/_\mu$, $\pr_x^{-1}(E)$ is uniquely determined up to a.e.-$\mu$ equality (vis-\`a-vis the choice of  a representative of the equivalence class of $E$) and such that for all $y\in B$ a.e.-$\mu$, $\pr_x^{-1}(S_y)=S_{y\lor x'}$, in which case the preceding obtains for all $y\in\overline{B}$. 
\end{corollary}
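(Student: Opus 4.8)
The plan is to observe that the noise completion $\overline{B}$ is itself a noise Boolean algebra admitting the very same spectral resolution $((S,\Sigma,\mu);\Psi)$ as $B$ (since $\AA_B=\AA_{\overline{B}}$, cf. \ref{generalities:spectrum-intro}), and then to run Theorem~\ref{thm:noise-projections} and Proposition~\ref{proposition:enough-on-dense-noise-proj} with $\overline{B}$ in the role hitherto played by $B$, using $B$ itself as a dense noise Boolean subalgebra. First I would note that $B$ is dense in $\overline{B}$: the topological closure of $B$ in $\Lattice$ is $\Cl(B)$, and since $B\subset \overline{B}\subset \Cl(B)$, the closure of $B$ within the subspace $\overline{B}$ equals $\Cl(B)\cap \overline{B}=\overline{B}$. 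I would also record that for $x\in \overline{B}$ the independent complement $x'$ lies in $\overline{B}$, and that $y\lor x'\in \Cl(B)$ for every $y\in B$ (indeed every $y\in \Cl(B)$), by the separate continuity of $\lor$ on $\Cl(B)$ in its second argument for a fixed first argument in $\overline{B}$; consequently each spectral set $S_{y\lor x'}$ appearing below is well defined.

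For existence, I would apply Theorem~\ref{thm:noise-projections}\ref{proj:i}--\ref{proj:ii} directly to $\overline{B}$: for each $x\in \overline{B}$ this produces a $\mu$-a.e. unique $\pr_x\in \Sigma/\Sigma$ with $\pr_x^{-1}(E)$ well defined up to a.e.-$\mu$ equality for each $E\in \Sigma/_\mu$, and with $\pr_x^{-1}(S_y)=S_{y\lor x'}$ a.e.-$\mu$ for all $y\in \overline{B}$ (part \ref{proj:ii} extends the relation from $\overline{B}_x$ to all of $\overline{B}$). Restricting this relation to $y\in B\subset \overline{B}$ yields a map meeting the conditions demanded in the corollary, and the ``in which case the preceding obtains for all $y\in\overline{B}$'' clause is then automatic, since this same $\pr_x$ already satisfies the relation throughout $\overline{B}$.

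For uniqueness---which is the only point where the weakening to $y\in B$ must be handled---I would invoke Proposition~\ref{proposition:enough-on-dense-noise-proj}, again read with $\overline{B}$ in place of $B$ and with the dense noise Boolean subalgebra $B$ in place of $B_0$. Its proof only needs $\{S_y:y\in B\}$ to essentially generate $\Sigma$, which holds for any dense subalgebra (a countable dense subset of $B$ already has essentially generating spectral sets, by \ref{generalities:spectrum-intro}), so that the $\sigma$-field $\{A\in\Sigma:p^{-1}(A)=\pr_x^{-1}(A)\text{ a.e.-}\mu\}$ exhausts $\Sigma$. Hence any $p\in \Sigma/\Sigma$ meeting the two stated conditions with $y$ ranging over $B$ must agree a.e.-$\mu$ with the $\pr_x$ just constructed. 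There is no genuine obstacle here beyond bookkeeping: the entire content is that the constructions of Theorem~\ref{thm:noise-projections} and Proposition~\ref{proposition:enough-on-dense-noise-proj} are insensitive to replacing $B$ by any noise Boolean algebra sharing its spectrum, so the only mild novelty is the verification that $B$ is dense in $\overline{B}$ and that the relevant joins $y\lor x'$ remain inside $\Cl(B)$.
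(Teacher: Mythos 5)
Your proposal is correct and follows exactly the paper's intended route: the paper's entire proof is the one-line observation that $B$ is dense in the noise Boolean algebra $\overline{B}$, from which Theorem~\ref{thm:noise-projections} (applied to $\overline{B}$, which shares the spectral resolution) and Proposition~\ref{proposition:enough-on-dense-noise-proj} (with $B$ as the dense subalgebra) deliver existence and uniqueness just as you describe. Your version merely makes explicit the bookkeeping the paper leaves implicit.
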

\begin{proof}
$B$ is dense in the noise Boolean algebra $\overline{B}$.
\end{proof}
\begin{definition}
Thus the definition of the noise projections $\pr_x$ is extended from $x\in B$ to $x\in \overline{B}$. 
\end{definition}
\begin{remark}
It means in particular that the $\pr_x$, $x\in B$, are the same if defined relative to $\overline{B}$ or relative to $B$ (with the same spectral resolution). Also, as far as the above results concerning the noise projections are concerned, one can always apply them to $\overline{B}$ in lieu of $B$ (again with the same spectral resolution) and get the corresponding statements for the whole family $\pr_x$, $x\in \overline{B}$. The same for the results concerning noise projections to follow. 
\end{remark}

%

We proceed to make explicit some connections between the counting map $K$ and the noise projections. The following simple result (which does not concern projections per se) will be useful. 

\begin{proposition}\label{prop;K-finite}
 $K_B=\text{$\uparrow$-$\lim$}_{n\in \mathbb{N}}K_{b_n}$ a.e.-$\mu$ whenever $(b_n)_{n\in \mathbb{N}}$ is a $\uparrow$ sequence in $\mathfrak{F}_B$ with $B_0:=\cup_{n\in \mathbb{N}}b_n$ dense in $B$.
\end{proposition}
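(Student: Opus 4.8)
The plan is to reduce the assertion to a one-sided estimate and then to feed the approximation of Proposition~\ref{proposition:approximating-partition-of-unity} into a pointwise atom-counting argument on the spectral space. First I would record the monotonicity of the counting maps: if $b\subset b'$ in $\mathfrak{F}_B$ then $K_b\le K_{b'}$ a.e.-$\mu$. This is immediate from the representation $K_b=\sum_{a\in\at(b)}\mathbbm{1}_{S\backslash S_{a'}}$ together with $S_{x\land y}=S_x\cap S_y$: writing each atom $a$ of $b$ as $a=\lor_{j\in I_a}c_j$ with $c_j$ the atoms of $b'$, one has $a'=\land_{j\in I_a}c_j'$, so $s\notin S_{a'}$ forces $s\notin S_{c_j'}$ for at least one $j\in I_a$; as the index sets $I_a$ are disjoint, this injects $\{a\in\at(b):s\notin S_{a'}\}$ into $\{c\in\at(b'):s\notin S_{c'}\}$. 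In particular $K_{b_n}$ is a.e.-nondecreasing, its limit $K_\infty:=\uparrow\text{-}\lim_n K_{b_n}$ exists, and since each $K_{b_n}\le K_B$ a.e. we get $K_\infty\le K_B$ a.e. It therefore remains to prove $K_b\le K_\infty$ a.e. for every fixed $b\in\mathfrak{F}_B$, for then $K_B=\mu\text{-}\esssup_bK_b\le K_\infty$ a.e.

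Fix such a $b$ with partition of unity $P=\at(b)=\{a_1,\dots,a_m\}$ of $B$. Applying Proposition~\ref{proposition:approximating-partition-of-unity} to $P$ and $B_0$ yields sequences $(x^i_n)_n$ in $B_0$ with $\lor_{i\in Q}x^i_n\to\lor_{i\in Q}a_i$ for every $Q\subset[m]$; in particular $\lor_{j\ne i}x^j_n\to a_i'$ and $\lor_{j\in[m]}x^j_n\to1_\PP$. Set $c_n:=b(\{x^1_n,\dots,x^m_n\})\in\mathfrak{F}_B$. Because the finitely many generators of $c_n$ lie in $B_0=\cup_m b_m$ and the $b_m$ increase, $c_n\subset b_{N_n}$ for some $N_n$, whence $K_{c_n}\le K_{b_{N_n}}\le K_\infty$ a.e. by the monotonicity just proved. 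Thus it suffices to show $\liminf_n\mu(\{K_{c_n}\ge k\}\cap A)\ge\mu(\{K_b\ge k\}\cap A)$ for each $k\in\mathbb{N}$ and each $A\in\Sigma$ of finite $\mu$-measure; this will force $\{K_b\ge k\}\subset\{K_\infty\ge k\}$ a.e. for all $k$, hence $K_b\le K_\infty$ a.e.

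The combinatorial core is the inclusion $\{K_{c_n}\ge k\}\supset\{\text{at least }k\text{ of the }F^n_i\text{ hold}\}\backslash G_n$ a.e.-$\mu$, where $F^n_i:=S\backslash S_{\lor_{j\ne i}x^j_n}$ and $G_n:=S\backslash S_{\lor_{j}x^j_n}$. Here I would use that the atoms of $c_n$ are exactly the nonzero minterms $m_\epsilon=\land_{\epsilon_i=1}x^i_n\land\land_{\epsilon_i=0}(x^i_n)'$, $\epsilon\in\{0,1\}^m$, and that $K_{c_n}(s)$ counts the minterms contained in $\underline{c_n}(s)$. For $s\in F^n_i$ one checks that a minterm witnessing $\underline{c_n}(s)\not\subset\lor_{j\ne i}x^j_n$ must be either $m_{e_i}$ (the $i$-th coordinate vector) or the all-zeros minterm $m_{\mathbf 0}=(\lor_jx^j_n)'$. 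The main obstacle is precisely this second possibility: a single minterm $m_{\mathbf 0}$ can witness many indices $i$ at once, so ``at least $k$ of the $F^n_i$'' does not by itself produce $k$ distinct atoms. I would defuse it by observing that $m_{\mathbf 0}\subset\underline{c_n}(s)$ holds exactly when $s\notin S_{(m_{\mathbf 0})'}=S_{\lor_jx^j_n}$, i.e. exactly on $G_n$; off $G_n$ the witnesses are forced to be the distinct vectors $m_{e_i}$, yielding at least $k$ distinct atoms of $c_n$ inside $\underline{c_n}(s)$ and hence $K_{c_n}(s)\ge k$.

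Finally I would pass to the limit. Since convergence in $\Cl(B)$ corresponds to local convergence in $\mu$-measure of the spectral sets, $\mathbbm{1}_{F^n_i}\to\mathbbm{1}_{S\backslash S_{a_i'}}$ and $\mathbbm{1}_{G_n}\to0$ locally in measure; as $K_b=\sum_i\mathbbm{1}_{S\backslash S_{a_i'}}$ and ``at least $k$ of $m$'' is a fixed continuous function of the indicators, the indicator of $\{\text{at least }k\text{ of the }F^n_i\}\backslash G_n$ converges to $\mathbbm{1}_{\{K_b\ge k\}}$ locally in measure, hence in $\mathrm{L}^1$ on the finite-measure set $A$. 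This gives the required $\liminf$ bound, and combined with $\{K_{c_n}\ge k\}\subset\{K_\infty\ge k\}$ a.e. it yields $\mu(\{K_\infty\ge k\}\cap A)\ge\mu(\{K_b\ge k\}\cap A)$ for all finite $A$; $\sigma$-finiteness of $\mu$ then closes the argument. I expect the genuinely delicate points to be the bookkeeping of the minterm witnesses (the $m_{\mathbf 0}$ issue above) and the routine but necessary verification that the atoms of $c_n$ are the nonzero minterms.
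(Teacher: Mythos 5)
Your proof is correct and follows essentially the same route as the paper's: reduce to the one‑sided bound $K_b\le\sup_{n}K_{b_n}$ for fixed $b\in\mathfrak{F}_B$, approximate the partition $\at(b)$ inside $B_0$ via Proposition~\ref{proposition:approximating-partition-of-unity}, and conclude from the correspondence between convergence in $\Cl(B)$ and local convergence in $\mu$-measure of the spectral sets. The only difference is bookkeeping: the paper establishes the a.e.\ inclusion $Z_I\subset\cup_J Z_J$ over size-$k$ tuples drawn from the dense subalgebra (using the disjointification $[J]$ to restore independence), whereas you count minterms of the generated subalgebras $c_n$ and pass to the limit through measures of finite-measure sets --- the two devices (your $m_{\mathbf 0}$/$G_n$ correction versus the paper's $[J]$ construction) defuse the same degeneracy.
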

Before tackling the proof note that at this point we only know that a $\uparrow$ sequence $(b_n)_{n\in \mathbb{N}}$ in $\mathfrak{F}_B$ exists (the union of the members of which may or may not be dense in $B$ and) for which the a.e.-$\mu$ equality $K_B=\text{$\uparrow$-$\lim$}_{n\in \mathbb{N}}K_{b_n}$ prevails. What is not obvious is that this actually holds for every such sequence (the union of the members of which is dense in $B$).

\begin{proof}
Let $b\in \mathfrak{F}_B$.  We need to show that $K_b\leq \sup_{n\in \mathbb{N}}K_{b_n}$ a.e.-$\mu$. It will suffice to argue that for all $k\in \mathbb{N}$, $\{K_b\geq k\}\subset \cup_{n\in \mathbb{N}}\{K_{b_n}\geq k\}$ a.e.-$\mu$ (of course it is only non-trivial for $k\geq 2$ and one can stop checking with $k=\vert\at(b)\vert$). For a non-empty finite  $I\subset B$ with $0_\PP\notin I$ introduce $Z_I:=S_{\lor I}\backslash\cup_{J\in 2^I\backslash \{I\}}S_{\lor J}$. Then it is enough to check as follows: whenever $I\subset b$ is an independency of size $k\in \mathbb{N}$ consisting of atoms of $b$,  then for $\mu$-a.e. $s\in Z_{I}$ there is $n\in \mathbb{N}$ and an independency $J\subset b_n\backslash \{0_\PP\}$ of size $k$ such that $s\in Z_{J}$. In other words we should check that for any countable noise Boolean algebra $A$ which is dense in $B$, for all $k\in \mathbb{N}$, whenever $I\subset B\backslash \{0_\PP\}$ is an independency of size $k$, then $Z_{I}\subset \cup_{J\in ({A\choose k})}Z_J$ a.e.-$\mu$, where $({A\choose k})$ are all the independences of size $k$ contained in $A\backslash \{0_\PP\}$. Note however that if $J=\{j_1,\ldots,j_k\}$ is merely a subset of $A$ of size $k$ such that for each $l\in [k]$, $[j_l]:=j_l\land (\land_{i\in [l-1]}j_i')\ne 0_\PP$ (denote the collection of all such $J$ by ${A\choose k}^\circ$), then $Z_J\subset Z_{[J]}$ a.e.-$\mu$ and $[J]=\{[j]:j\in J\}$ is  an independency of size $k$ contained in $A\backslash \{0_\PP\}$. In consequence it is actually enough to argue that $Z_{I}\subset \cup_{J\in {A\choose k}^\circ}Z_J$ a.e.-$\mu$. Because convergence of $\sigma$-fields in $B$ corresponds to convergence locally in $\mu$-measure of the associated spectral sets the preceding inclusion now follows by applying Proposition~\ref{proposition:approximating-partition-of-unity}. 
\end{proof}

\begin{corollary}\label{corollary:B-overline-B-same-K}
$K_{\overline{B}}=K_B$ in the spectral resolution of $B$ (which is also a spectral resolution of $\overline{B}$), hence $B$ and $\overline{B}$ have the same chaos spaces $H^{(k)}$, $k\in \mathbb{N}_0$. \qed
\end{corollary}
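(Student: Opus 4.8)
The plan is to play Proposition~\ref{prop;K-finite} off against itself, applied to $B$ and to $\overline{B}$ through one and the same approximating sequence. The starting observation is that $((S,\Sigma,\mu);\Psi)$ is simultaneously a spectrum for $B$ and for $\overline{B}$ (they generate the same abelian von Neumann algebra), so $K_B$ and $K_{\overline{B}}$ live on the same spectral space and the sets $S_x$, $x\in B\subset\overline{B}$, are common to both. I would first record the cheap inclusion: every finite noise Boolean subalgebra of $B$ is a fortiori one of $\overline{B}$, i.e. $\mathfrak{F}_B\subset \mathfrak{F}_{\overline{B}}$, and since $K_B$ and $K_{\overline{B}}$ are defined as $\mu$-$\esssup$'s of the $K_b$ over these index sets, enlarging the index set can only increase the $\esssup$, giving $K_B\le K_{\overline{B}}$ a.e.-$\mu$ with no further work. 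All the content is in the reverse inequality.

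The decisive point is that $B$ is dense in $\overline{B}$: indeed $\overline{B}\subset \Cl(B)$ and $\Cl(B)$ is precisely the topological closure of $B$ in $\Lattice$, so each element of $\overline{B}$ is a limit of a sequence from $B$. I would then pick, using \ref{generalities:topology}\ref{rmk:general:Polish}, a $\uparrow$ sequence $(b_n)_{n\in\mathbb{N}}$ in $\mathfrak{F}_B$ whose union $B_0:=\cup_{n\in\mathbb{N}}b_n$ is dense in $B$; by transitivity of density $B_0$ is then also dense in $\overline{B}$. The same sequence $(b_n)$ lies in $\mathfrak{F}_B\subset \mathfrak{F}_{\overline{B}}$, and each $K_{b_n}$ is computed solely from the spectral sets $S_x$, $x\in b_n$, hence is literally the same map whether one regards $b_n$ as a finite subalgebra of $B$ or of $\overline{B}$. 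Thus $(b_n)$ is an admissible approximating sequence for Proposition~\ref{prop;K-finite} relative to either algebra.

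It then remains only to invoke Proposition~\ref{prop;K-finite} twice with this single sequence: once for $B$, yielding $K_B=\text{$\uparrow$-$\lim$}_{n}K_{b_n}$ a.e.-$\mu$, and once for $\overline{B}$, yielding $K_{\overline{B}}=\text{$\uparrow$-$\lim$}_{n}K_{b_n}$ a.e.-$\mu$. The right-hand sides are identical, so $K_B=K_{\overline{B}}$ a.e.-$\mu$. Finally, since $H^{(k)}=H(\{K=k\})$ by the definition in \ref{generalites:chaos-spaces} and $\{K_B=k\}=\{K_{\overline{B}}=k\}$ a.e.-$\mu$ for every $k\in\mathbb{N}_0$, the chaos spaces of $B$ and $\overline{B}$ coincide. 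I do not anticipate a real obstacle here, as the substantive difficulty has been absorbed into Proposition~\ref{prop;K-finite}; the only step requiring a moment's care is the verification that a single approximating sequence serves both algebras, which rests on the three compatible facts $\mathfrak{F}_B\subset\mathfrak{F}_{\overline{B}}$, density of $B$ in $\overline{B}$, and invariance of $K_{b_n}$ under the choice of ambient algebra.
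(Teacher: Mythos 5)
Your argument is correct and is exactly the deduction the paper intends (the corollary carries a \qed precisely because it follows from Proposition~\ref{prop;K-finite} in this way): a single $\uparrow$ sequence $(b_n)_{n\in\mathbb{N}}$ in $\mathfrak{F}_B$ with dense union serves as an approximating sequence for both $B$ and $\overline{B}$, since $B$ is dense in $\overline{B}\subset\Cl(B)$ and each $K_{b_n}$ depends only on the shared spectral sets. Nothing to add.
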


\begin{proposition}\label{proposition:K-and-projections}
Let $P$ be a partition of unity of $B$. Then $K= \sum_{p\in P}K(\pr_p)$ a.e.-$\mu$, hence $K\circ \pr_x=\sum_{p\in P_x}K(\pr_p)$ a.e.-$\mu$ for all $x\in b(P)$ (and the same with $K$ replaced by $K_b$ for $b\in \mathfrak{F}_B$, $b\supset b(P)$).
\end{proposition}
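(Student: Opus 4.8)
The plan is to first establish the finite version of the main identity---namely $K_b=\sum_{p\in P}K_b\circ\pr_p$ a.e.-$\mu$ for every finite $b\in\mathfrak{F}_B$ with $b\supset b(P)$---and then to obtain the statement for $K=K_B$ by an increasing limit. Throughout I would lean on the explicit formula $K_b=\sum_{a\in\at(b)}\mathbbm{1}_{S\backslash S_{a'}}$ a.e.-$\mu$ from \ref{generalities:chaoses} and on the defining relation $\pr_p^{-1}(S_y)=S_{y\lor p'}$ a.e.-$\mu$, $y\in B$, of Theorem~\ref{thm:noise-projections}\ref{proj:ii}.

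For the finite identity, fix $b\supset b(P)$. Since $b(P)\subset b$, each atom of $b$ lies below exactly one $p\in P$, so $\at(b)=\sqcup_{p\in P}A_p$ with $A_p:=\{a\in\at(b):a\subset p\}$. I would compute $K_b\circ\pr_p$ term by term: using $\pr_p^{-1}(S)=S$ and $\pr_p^{-1}(S_{a'})=S_{a'\lor p'}$, one finds for $a\in A_p$ (where $p'\subset a'$, hence $a'\lor p'=a'$) that the $a$-th summand contributes $\mathbbm{1}_{S\backslash S_{a'}}$, while for $a\notin A_p$ (where $a\land p=0_\PP$, hence $p\subset a'$ and $a'\lor p'=1_\PP$) the summand is $\mathbbm{1}_{S\backslash S}=0$ a.e.-$\mu$. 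Thus $K_b\circ\pr_p=\sum_{a\in A_p}\mathbbm{1}_{S\backslash S_{a'}}$, and summing over $p\in P$ recovers $\sum_{a\in\at(b)}\mathbbm{1}_{S\backslash S_{a'}}=K_b$ a.e.-$\mu$. This is exactly the ``$K$ replaced by $K_b$'' assertion.

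To pass to the limit I would choose an $\uparrow$ sequence $(b_n)_{n\in\mathbb{N}}$ in $\mathfrak{F}_B$ with each $b_n\supset b(P)$ and $\cup_n b_n$ dense in $B$ (possible since $b(P)$ is finite: join a dense $\uparrow$ sequence of finite subalgebras with $b(P)$), so that $K=\uparrow\text{-}\lim_n K_{b_n}$ a.e.-$\mu$ by Proposition~\ref{prop;K-finite}. For each $p$, the functions $K_{b_n}\circ\pr_p$ increase in $n$; moreover $K_{b_n}\circ\pr_p\to K\circ\pr_p$ a.e.-$\mu$, because the $\mu$-null exceptional set for $K_{b_n}\uparrow K$ pulls back under $\pr_p$ to a $\mu$-null set---this is precisely the content of the defining property of $\pr_p$ (Theorem~\ref{thm:noise-projections}\ref{proj:i}) that $\pr_p^{-1}$ descends to $\mu$-equivalence classes. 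Applying the finite identity to each $b_n$ and letting $n\to\infty$ (interchanging the limit with the finite sum over $P$) yields $K=\sum_{p\in P}K\circ\pr_p$ a.e.-$\mu$.

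Finally, the ``hence'' statements follow by composing the identity with $\pr_x$ for $x\in b(P)$. Writing $K\circ\pr_x=\sum_{p\in P}K\circ(\pr_p\circ\pr_x)=\sum_{p\in P}K\circ\pr_{p\land x}$ a.e.-$\mu$ via Corollary~\ref{prop:noise-proj-bis}\ref{proj:ii--}, I would note that for $p\in P_x$ one has $p\land x=p$, while for $p\in P\backslash P_x$ the independency of $P$ gives $p\land x=0_\PP$ and then $K\circ\pr_{0_\PP}=0$ a.e.-$\mu$ (as $\pr_{0_\PP}$ maps into $S_{0_\PP}=\{\emptyset_S\}$ and $K(\emptyset_S)=0$); hence $K\circ\pr_x=\sum_{p\in P_x}K\circ\pr_p$, and the same composition applied to the finite identity delivers the corresponding statement for $K_b$. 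The main obstacle is the limit passage in the third step: everything hinges on $\pr_p$ preserving $\mu$-negligibility under preimages, which must be extracted carefully from the way the noise projections were defined (only up to a.e.-$\mu$ equality, but with $\pr_p^{-1}$ well-defined on equivalence classes).
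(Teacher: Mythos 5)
Your proof is correct and follows essentially the same route as the paper's: establish the identity for finite $b\supset b(P)$ and then pass to the limit via Proposition~\ref{prop;K-finite} along an $\uparrow$ sequence containing $b(P)$. The only (cosmetic) difference is that you verify the finite case atom-by-atom from $K_b=\sum_{a\in\at(b)}\mathbbm{1}_{S\backslash S_{a'}}$, whereas the paper reads it off from $\cap_{p\in P}\pr_p^{-1}(S_{x_p})=S_{\lor_{p\in P}x_p}$; your explicit treatment of the null-set pullback under $\pr_p$ in the limit step is a point the paper leaves implicit, and it is correctly justified by Theorem~\ref{thm:noise-projections}\ref{proj:i} and Corollary~\ref{prop:noise-proj-bis}\ref{proj:ii-}.
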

\begin{proof}
Let $b\in \mathfrak{F}_B$ with $b(P)\subset b$.  Then $\cap_{p\in P}\pr_p^{-1}(S_{x_p})=S_{\lor_{p\in P}x_p}$ a.e.-$\mu$ for $x_p\in b_p$, $p\in P$, hence $K_b=\sum_{p\in P}K_b(\pr_p)$ a.e.-$\mu$. Proposition~\ref{prop;K-finite} allows to conclude, for we may take a sequence $(b_n)_{n\in \mathbb{N}}$ in $\mathfrak{F}_B$ such that $\cup_{n\in \mathbb{N}}b_n$ is dense in $B$ and in addition $b_1=b(P)$.
\end{proof}

\begin{corollary}\label{corollary:partition-K=1}
Let $P$ be a partition of unity of $B$.  Then $S=\sqcup_{p\in P}S_p$ a.e.-$\mu$ on $\{K=1\}$.
\end{corollary}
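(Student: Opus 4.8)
The plan is to leverage Proposition~\ref{proposition:K-and-projections} with the specific partition $P$ at hand, combined with the fact that on $\{K=1\}$ the counting map takes the value $1$. First I would apply Proposition~\ref{proposition:K-and-projections} to write $K=\sum_{p\in P}K(\pr_p)$ a.e.-$\mu$. Now $K$ counts atoms and is valued in $\mathbb{N}_0\cup\{\infty\}$; in particular $K(\pr_p)\geq 0$ for each $p$, and since each $\pr_p$ maps into $S_p$ with $K\circ\pr_p$ being the local counting map of the subnoise $B_p$, we have $K(\pr_p)\geq 1$ precisely on the set where $\pr_p$ lands outside $S_{0_\PP}$, i.e.\ where $\underline{\phantom{b}}$-type information picks up at least one atom below $p$.

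The key observation is that on $\{K=1\}$ the sum $\sum_{p\in P}K(\pr_p)$ equals $1$ a.e.-$\mu$, and since each summand is a nonnegative integer, exactly one of the $K(\pr_p)$ equals $1$ while all others vanish, a.e.-$\mu$ on $\{K=1\}$. Thus I would show that, a.e.-$\mu$ on $\{K=1\}$, there is a unique $p\in P$ with $K(\pr_p)=1$ (equivalently $\pr_p\neq\emptyset_S$, equivalently $s\in S_p\setminus S_{0_\PP}$ after transporting back), and for that $p$ the point $s$ lies in $S_p$, while for every other $q\in P$ one has $\pr_q(s)=\emptyset_S$, i.e.\ $s\notin S_q$ except possibly at $\emptyset_S$. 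This gives that $\{K=1\}$ is covered a.e.-$\mu$ by the $S_p$, $p\in P$, and that these $S_p$ are pairwise $\mu$-disjoint off $S_{0_\PP}=\{\emptyset_S\}$; since $\{K=1\}\cap\{\emptyset_S\}=\emptyset$ a.e.-$\mu$ (as $\{K=0\}=S_{0_\PP}$ a.e.-$\mu$ by \ref{generalites:chaos-spaces}), the union is genuinely disjoint there.

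Concretely, the containment $\{K=1\}\subset\cup_{p\in P}S_p$ a.e.-$\mu$ follows because $K\circ\pr_p=K_{B_p}\circ\pr_p$ records the count of atoms of the relevant fiber under $p$, and $K(\pr_p(s))\geq 1$ forces $s\in S_p$ (recall $\pr_p$ maps into $S_p$ and $\pr_p(s)=\emptyset_S$ exactly on $S_{p'}$, by Theorem~\ref{thm:noise-projections}\ref{proj:ii}). Disjointness off $S_{0_\PP}$ follows because if $s\in S_p\cap S_q$ with $p\neq q$ in the partition $P$, then $p\land q=0_\PP$ and $S_p\cap S_q=S_{p\land q}=S_{0_\PP}=\{\emptyset_S\}$ a.e.-$\mu$. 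Hence on $\{K=1\}$ the sets $S_p$, $p\in P$, partition $S$ a.e.-$\mu$: their union covers (since $\sum_p K(\pr_p)=1>0$ forces at least one $\pr_p(s)\neq\emptyset_S$, placing $s$ in some $S_p$), and they are disjoint (since a point in two of them would force $\sum_p K(\pr_p)(s)\geq 2$, contradicting $K(s)=1$).

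The main obstacle I anticipate is bookkeeping the a.e.-$\mu$ exceptional sets correctly: Proposition~\ref{proposition:K-and-projections} only gives the additivity of $K$ in terms of the $K(\pr_p)$ up to $\mu$-null sets, and one must ensure the pointwise argument ``nonnegative integers summing to one'' is applied only after fixing compatible versions (so that $K$, the $K(\pr_p)$, and the spectral sets $S_p$ all satisfy $S_{p\land q}=S_p\cap S_q$, $S_{0_\PP}=\{\emptyset_S\}$, etc., simultaneously on a single $\mu$-conegligible set). Once versions are harmonized, the combinatorial step is immediate. The cleanest route is therefore: fix versions as in \ref{generalities:chaoses} and Remark~\ref{remark:exceptional-sets-projections}, invoke Proposition~\ref{proposition:K-and-projections} to get $K=\sum_{p\in P}K(\pr_p)$ on a conegligible set, restrict to $\{K=1\}$, and read off that exactly one summand is $1$, which pins $s$ to exactly one $S_p$.
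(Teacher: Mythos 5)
Your overall strategy---invoke Proposition~\ref{proposition:K-and-projections} to write $K=\sum_{p\in P}K(\pr_p)$ a.e.-$\mu$, observe that on $\{K=1\}$ exactly one of the nonnegative-integer summands equals $1$ while the rest vanish, and get disjointness from $S_p\cap S_q=S_{p\land q}=S_{0_\PP}=\{K=0\}$---is exactly the paper's. But the step by which you place $s$ inside $S_p$ is justified incorrectly. You claim that $K(\pr_p(s))\geq 1$, equivalently $\pr_p(s)\neq\emptyset_S$, ``forces $s\in S_p$'', and you record the equivalence of $\{\pr_p\neq\emptyset_S\}$ with $S_p\setminus S_{0_\PP}$. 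Neither is right: by Theorem~\ref{thm:noise-projections}\ref{proj:ii} one has $\{\pr_p=\emptyset_S\}=\pr_p^{-1}(S_{0_\PP})=S_{p'}$ a.e.-$\mu$, so $\pr_p(s)\neq\emptyset_S$ only tells you $s\notin S_{p'}$, which is strictly weaker than $s\in S_p$ (a spectral point carrying one ``atom'' under $p$ and one under $p'$ lies in neither $S_p$ nor $S_{p'}$). Identifying $\{\pr_p\neq\emptyset_S\}$ with $S_p\setminus S_{0_\PP}$ on $\{K=1\}$ is essentially the statement being proved (for the partition $\{p,p'\}$), so as written this point of the argument is circular. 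The same confusion resurfaces in your remark that a point lying in two of the $S_p$ would force $\sum_{p}K(\pr_p)\geq 2$ (it would not: on $S_p\cap S_q=S_{0_\PP}$ all the $K(\pr_r)$ vanish), though there the correct $S_{p\land q}=S_{0_\PP}$ argument you also give carries the day.

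The repair is immediate and uses information you already extracted but did not exploit: for $\mu$-a.e.\ $s\in\{K=1\}$, with $p$ the unique element of $P$ for which $K(\pr_p(s))=1$, you have $K(\pr_q(s))=0$ for every $q\in P\setminus\{p\}$, i.e.\ $s\in\pr_q^{-1}(\{K=0\})=\pr_q^{-1}(S_{0_\PP})=S_{q'}$; hence $s\in\cap_{q\in P\setminus\{p\}}S_{q'}=S_{\land_{q\in P\setminus\{p\}}q'}=S_{(\lor(P\setminus\{p\}))'}=S_p$, since $P$ is a partition of unity. This is precisely how the paper's proof proceeds. With that substitution your argument is complete, and the version bookkeeping you flag at the end is handled exactly as you propose.
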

\begin{proof}
We compute 
\begin{align*}
&\{K=1\}=\left\{\sum_{p\in P}K(\pr_p)=1\right\}=\cup_{p\in P}\left(\{K(\pr_p)=1\}\cap \left(\cap_{q\in P\backslash \{p\}}\{K(\pr_q)=0\}\right)\right)\\
&=\cup_{p\in P}\left(\{K(\pr_p)=1\}\cap \left(\cap_{q\in P\backslash \{p\}}\pr_q^{-1}(\{K=0\})\right)\right)=\cup_{p\in P}\left(\{K(\pr_p)=1\}\cap \left(\cap_{q\in P\backslash \{p\}}\pr_q^{-1}(S_{0_\PP})\right)\right)\\
&=\cup_{p\in P}\left(\{K(\pr_p)=1\}\cap \left(\cap_{q\in P\backslash \{p\}}S_{q'}\right)\right)=\cup_{p\in P}\left(\{K(\pr_p)=1\}\cap S_p\right)
\end{align*} a.e.-$\mu$. Taking intersection with $S_p$, noting that $$(S_p\cap S_q)\cap \{K=1\}=S_{p\land q}\cap \{K=1\}=S_{0_\PP}\cap \{K=1\}=\{K=0\}\cap \{K=1\}=\emptyset$$ a.e.-$\mu$ for $q\ne p$ from $P$, we get $\{K=1\}\cap S_p=\{K(\pr_p)=1\}\cap S_p$ a.e.-$\mu$, and thus the claim.
\end{proof}

\begin{corollary}\label{projections-and-K-misellany}
Let $b\in \mathfrak{F}_B$ and $x\in b$. For $\mu$-a.e. $s$:
\begin{enumerate}[(i)]
\item\label{projections-and-K-misellany:0} for $a\in\at(b)$, $a\in\underline{b}(s)$ iff $\pr_a(s)\in \{K>0\}$;
\item\label{projections-and-K-misellany:i} $\underline{b}(\pr_x(s))=\underline{b}(s)\cap x$ and hence $K_b(\pr_x(s))=\vert\{a\in \at(b):a\subset \underline{b}(s)\cap x\}\vert$;
\item\label{projections-and-K-misellany:ii}  ($\pr_a(s)\in \{K=1\}$ for $a\subset x$ and $\pr_a(s)=\emptyset_S$ for $a\not\subset x$, $a\in\at(b)$) iff ($\underline{b}(s)=x$ and $K(s)=K_b(s)$).
\end{enumerate}
\end{corollary}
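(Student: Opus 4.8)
The plan is to work entirely with the maps $\underline{b}$, $K_b$, and the noise projections $\pr_a$ for $a \in \at(b)$, translating each of the three equivalences into statements about spectral sets that hold $\mu$-a.e. The key tool is Proposition~\ref{proposition:K-and-projections} applied to the partition of unity $\at(b)$ of $b(\at(b)) = b$, which gives $K_b = \sum_{a \in \at(b)} K_b(\pr_a)$ a.e.-$\mu$, together with Corollary~\ref{corollary:partition-K=1} and the basic identities $\pr_a^{-1}(S_y) = S_{y \lor a'}$ and $\sigma(\pr_a) = \Sigma_a$ from Theorem~\ref{thm:noise-projections}. Throughout I fix versions of the spectral sets $S_x$, $x \in b$, so that $S_{x \land y} = S_x \cap S_y$ and $S_{1_\PP} = S$ hold everywhere (the $b$ is finite so this is unproblematic), and I discard a single $\mu$-negligible set on which any of the finitely many a.e.-$\mu$ relations below fail.

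\textbf{Item \ref{projections-and-K-misellany:0}.} First I would observe that for $a \in \at(b)$, the point $\pr_a(s)$ lands in $S_a$ a.e.-$\mu$ (Theorem~\ref{thm:noise-projections}\ref{proj:ii}), and since $a$ is an atom of $b$, within $S_a$ the counting map $K$ takes only the values $0$ (on $S_{0_\PP}$, i.e. at $\emptyset_S$) or $1$; concretely $\pr_a(s) \in \{K > 0\}$ iff $\pr_a(s) \ne \emptyset_S$ iff $s \notin S_{a'} = \pr_a^{-1}(S_{0_\PP})$. On the other hand $a \subset \underline{b}(s)$ iff $s \in S_{\underline{b}(s)} \subset$ \ldots; recalling that $\underline{b} = y$ on $P_b(y) = S_y \setminus \cup_{z \in b_y \setminus \{y\}} S_z$, and that $a \subset \underline{b}(s)$ precisely when $\pr_a(s) \neq \emptyset_S$ because $s \in S_{a'}$ forces $\underline{b}(s) \subset a'$, I can match the two characterizations. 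This is essentially bookkeeping once the identity $\pr_a^{-1}(S_{0_\PP}) = S_{a'}$ is in hand.

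\textbf{Item \ref{projections-and-K-misellany:i} and \ref{projections-and-K-misellany:ii}.} For \ref{projections-and-K-misellany:i}, since $x \in b$ is a join of atoms of $b$, I would use the remark recorded after Corollary~\ref{prop:noise-proj-bis} that $\pr_a \circ \pr_x = \pr_{a \land x}$ a.e.-$\mu$ (Item~\ref{proj:ii--}), so that $\pr_a(\pr_x(s)) \neq \emptyset_S$ iff $a \land x \neq 0_\PP$ (i.e. $a \subset x$) and $\pr_a(s) \neq \emptyset_S$; combining with \ref{projections-and-K-misellany:0} applied at $\pr_x(s)$ yields $\underline{b}(\pr_x(s)) = \lor\{a \in \at(b): a \subset \underline{b}(s) \cap x\} = \underline{b}(s) \cap x$, and the count of atoms formula follows by definition of $K_b$. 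For \ref{projections-and-K-misellany:ii}, the forward direction is immediate: if $\pr_a(s) \in \{K=1\}$ for all $a \subset x$ and $\pr_a(s) = \emptyset_S$ for $a \not\subset x$, then by \ref{projections-and-K-misellany:0} the atoms $a$ with $a \subset \underline{b}(s)$ are exactly those with $a \subset x$, giving $\underline{b}(s) = x$; summing via Proposition~\ref{proposition:K-and-projections} gives $K(s) = \sum_{a} K(\pr_a(s)) = \vert\at(b) \cap 2^x\vert = K_b(s)$. For the converse, $\underline{b}(s) = x$ pins down which $\pr_a(s)$ are nonzero, and the equality $K(s) = K_b(s)$ forces each nonzero $K(\pr_a(s))$ to equal exactly $1$ rather than exceed it.

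\textbf{Main obstacle.} The genuinely delicate point is the converse direction of \ref{projections-and-K-misellany:ii}: from $\underline{b}(s) = x$ I only get $K_b(s) = \vert\at(b) \cap 2^x\vert$ and, via Proposition~\ref{proposition:K-and-projections}, that $K(s) = \sum_{a \subset x} K(\pr_a(s))$ with each summand $\geq 1$; the hypothesis $K(s) = K_b(s)$ then squeezes every summand down to $1$. Making this squeezing rigorous requires that $K(\pr_a(s)) \geq 1$ whenever $a \subset \underline{b}(s)$, which is exactly \ref{projections-and-K-misellany:0}, so the three items must be proved in this order and the argument is self-contained once \ref{projections-and-K-misellany:0} is secured. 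I expect no serious difficulty beyond careful handling of the $\mu$-null exceptional sets, which is why fixing versions at the outset (as in Remark~\ref{remark:exceptional-sets-projections}) is the cleanest route.
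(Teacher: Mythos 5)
Your proposal is correct. Items \ref{projections-and-K-misellany:0} and \ref{projections-and-K-misellany:ii} are handled essentially as in the paper: the same contradiction argument via $\pr_a^{-1}(S_{0_\PP})=S_{a'}$ and minimality of $\underline{b}(s)$ for the first, and the same squeeze $\vert\at(b)_x\vert=K_b(s)=K(s)=\sum_{a\in\at(b)_x}K(\pr_a(s))$ with each summand $\geq 1$ for the converse of the third. For Item \ref{projections-and-K-misellany:i} you take a genuinely different route: the paper argues by additivity, $K_b(\pr_x(s))+K_b(\pr_{x'}(s))=K_b(s)$, combined with the one-sided inclusions $\underline{b}(\pr_x(s))\subset\underline{b}(s)\cap x$ and $\underline{b}(\pr_{x'}(s))\subset\underline{b}(s)\cap x'$, forcing equality by counting; you instead use $\pr_a\circ\pr_x=\pr_{a\land x}$ together with Item \ref{projections-and-K-misellany:0} applied at the point $\pr_x(s)$ to identify exactly which atoms lie below $\underline{b}(\pr_x(s))$. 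Your version is more direct and avoids the squeeze, at the cost of needing Item \ref{projections-and-K-misellany:0} to hold at $\pr_x(s)$ for $\mu$-a.e.\ $s$, which is fine since $(\pr_x)_\star\mu\ll\mu$, but does require the null-set care you flag. One factual slip worth correcting: it is not true that $K$ takes only the values $0$ and $1$ on $S_a$ for $a\in\at(b)$ --- that holds for $K_b$, whereas $K=K_B$ can be arbitrarily large (even infinite) there, since $a$ may be subdivided by finer members of $\mathfrak{F}_B$. The conclusion you draw from it, namely that $\pr_a(s)\in\{K>0\}$ iff $\pr_a(s)\ne\emptyset_S$, is nevertheless correct, because it follows directly from $\{K=0\}=S_{0_\PP}=\{\emptyset_S\}$ a.e.-$\mu$, so nothing downstream is affected.
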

\begin{proof}
For $\mu$-a.e. $s$ we have as follows. 

\ref{projections-and-K-misellany:0}. If $a\in \underline{b}(s)$, then we cannot have $\pr_a(s)\in \{K=0\}$, for in that case $\pr_a(s)\in S_{0_\PP}$, i.e. $s\in S_{a'}$, hence $s\in S_{a'\cap\underline{b}(s)}$ contradicting the minimality of $\underline{b}(s)$. On the other hand, if $\pr_a(s)\in \{K>0\}$, then we cannot have $a\notin\underline{b}(s)$, for in that case $s\in S_{\underline{b}(s)}\subset S_{a'}$ and $\pr_a(s)\in S_{0_\PP}=\{K=0\}$,  a contradiction.

\ref{projections-and-K-misellany:i}. We know that $K_b(\pr_x(s))+K_b(\pr_{x'}(s))=K_b(s)$. Furthermore, $\pr_x(s)\in S_{\underline{b}(s)\cap x}$ (just because $s\in S_{\underline{b}(s)}$), hence certainly $\underline{b}(\pr_x(s))\subset \underline{b}(s)\cap x$ and $K_b(\pr_x(s))\leq \vert\{a\in \at(b):a\subset \underline{b}(s)\cap x\}\vert$. The same for $x'$ in lieu of $x$. But then we must have actually equalities.

\ref{projections-and-K-misellany:ii}. The condition is necessary, for we get   $x=\underline{b}(s)$ by Item~\ref{projections-and-K-misellany:0}, and then $K(s)=\sum_{a\in \at(b)}K( \pr_a(s))=\vert \{a\in \at(b):a\subset x\}\vert=K_b(s)$. On the other hand, if $\underline{b}(s)=x$ and $K(s)=K_b(s)$, then $\pr_a(s)=\emptyset_S$ for $a\not\subset x$, $a\in \at(b)$ by Item~\ref{projections-and-K-misellany:0}, while $\vert \{a\in \at(b):a\subset x\}\vert=K_b(s)=K(s)=\sum_{a\in \at(b)}K( \pr_a(s))=\sum_{a\in \at(b)_x}K(\pr_a(s))$ renders $\pr_a(s)\in \{K=1\}$ for $a\subset x$, $a\in \at(b)$.
\end{proof}

\begin{proposition}\label{proposition:higher-choas-partitions}
Let $P$ be a partition of unity of $B$. Then up to the natural unitary equivalence $$\otimes_{p\in P}\left(H^{(1)}\cap \L2(\PP\vert_p)\right)=H\left({P\choose \mathbf{1}}\right)\subset H^{(\vert P\vert)},$$ where ${P\choose \mathbf{1}}:=\{s\in \{K=\vert P\vert\}:\pr_p(s)\in \{K=1\}\text{ for all }p\in P\}$.
\end{proposition}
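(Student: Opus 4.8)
The plan is to prove the displayed identity by combining the tensor-structure result for noise projections (Corollary~\ref{prop:noise-proj-bis}\ref{proj:iv}\ref{proj:iv-c}) with the characterization of the set $\binom{P}{\mathbf 1}$ in terms of the counting map and the projections (Corollary~\ref{projections-and-K-misellany}\ref{projections-and-K-misellany:ii}), together with the identification of the first chaos space restricted to each $p$ and the local structure of the chaos spaces. First I would enumerate $P=(p_1,\ldots,p_n)$, $n=\vert P\vert$, and set $A_i:=S_{p_i}\cap\{K=1\}\in\Sigma\vert_{S_{p_i}}$. Observe that $H^{(1)}\cap\L2(\PP\vert_{p_i})=H^{(1)}(B_{p_i})=H(\{K_{B_{p_i}}=1\})$, and since $K_{B_{p_i}}=K$ a.e.-$\mu$ on $S_{p_i}$ (locality of the counting map, \ref{generalites:chaos-spaces}), this equals $H(A_i)$. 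Thus the left-hand side is $H(A_1)\otimes\cdots\otimes H(A_n)$.

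Next I would apply Corollary~\ref{prop:noise-proj-bis}\ref{proj:iv}\ref{proj:iv-c} to the independency $(p_1,\ldots,p_n)$, which gives, up to the natural unitary equivalence,
\[
H(A_1)\otimes\cdots\otimes H(A_n)=H\!\left(\pr_{p_1}^{-1}(A_1)\cap\cdots\cap\pr_{p_n}^{-1}(A_n)\right)\cap\L2(\PP\vert_{1_\PP}),
\]
and since $\lor P=1_\PP$ the intersection with $\L2(\PP\vert_{1_\PP})=\L2(\PP)$ is vacuous. So it remains to identify $E:=\pr_{p_1}^{-1}(A_1)\cap\cdots\cap\pr_{p_n}^{-1}(A_n)$ with $\binom{P}{\mathbf 1}$ a.e.-$\mu$, for then $H(E)=H(\binom{P}{\mathbf 1})$ and the inclusion $H(\binom{P}{\mathbf 1})\subset H^{(\vert P\vert)}=H(\{K=n\})$ is immediate from $\binom{P}{\mathbf 1}\subset\{K=n\}$ by definition.

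The set-identification is the substance. Since $A_i=S_{p_i}\cap\{K=1\}$, I have $\pr_{p_i}^{-1}(A_i)=\pr_{p_i}^{-1}(S_{p_i})\cap\pr_{p_i}^{-1}(\{K=1\})$; by Theorem~\ref{thm:noise-projections}\ref{proj:ii} the map $\pr_{p_i}$ lands in $S_{p_i}$ a.e.-$\mu$, so the first factor is all of $S$ a.e.-$\mu$, leaving $\pr_{p_i}^{-1}(A_i)=\{\,\pr_{p_i}\in\{K=1\}\,\}=\{K(\pr_{p_i})=1\}$ a.e.-$\mu$. Hence $E=\cap_{i\in[n]}\{K(\pr_{p_i})=1\}$ a.e.-$\mu$. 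Now Proposition~\ref{proposition:K-and-projections} gives $K=\sum_{p\in P}K(\pr_p)$ a.e.-$\mu$, so on $E$ one has $K=n=\vert P\vert$; thus $E\subset\{K=\vert P\vert\}$ a.e.-$\mu$, and on $E$ the conditions $\pr_p(s)\in\{K=1\}$ for all $p\in P$ hold, i.e. $E\subset\binom{P}{\mathbf 1}$ a.e.-$\mu$. Conversely, if $s\in\binom{P}{\mathbf 1}$ then $K(\pr_p(s))=1$ for every $p\in P$ by definition, so $s\in E$; thus $E=\binom{P}{\mathbf 1}$ a.e.-$\mu$, completing the proof.

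I expect the main (though modest) obstacle to be the careful bookkeeping of the $\mu$-a.e.\ qualifications across the several cited results — in particular verifying that the reduction $\pr_{p_i}^{-1}(A_i)=\{K(\pr_{p_i})=1\}$ and the identity $E=\binom{P}{\mathbf 1}$ hold simultaneously off a single $\mu$-null set, which is routine since only finitely many projections and spectral sets are involved (Remark~\ref{remark:exceptional-sets-projections} guarantees a common conegligible set stable under the relevant projections), and confirming that the natural unitary equivalences furnished by Corollary~\ref{prop:noise-proj-bis}\ref{proj:iv}\ref{proj:iv-c} are compatible with the tensor identification on the left-hand side.
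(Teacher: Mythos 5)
Your proposal is correct and follows essentially the same route as the paper's own proof: identify $H^{(1)}\cap \L2(\PP\vert_p)$ with $H(\{K=1\}\cap S_p)$, apply Corollary~\ref{prop:noise-proj-bis}\ref{proj:iv}\ref{proj:iv-c} to obtain $H(\cap_{p\in P}\pr_p^{-1}(\{K=1\}))$, and then use Proposition~\ref{proposition:K-and-projections} to identify that set with ${P\choose \mathbf{1}}$. You simply spell out the set-identification and the a.e.-$\mu$ bookkeeping in more detail than the paper does.
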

\begin{proof}
Using Corollary~\ref{prop:noise-proj-bis}\ref{proj:iv}\ref{proj:iv-c} we compute $\otimes_{p\in P}\left(H^{(1)}\cap \L2(\PP\vert_p)\right)=\otimes_{p\in P}H(\{K=1\}\cap S_p)=H(\cap_{p\in P}\pr_p^{-1}\{K=1\})$. It remains to apply Proposition~\ref{proposition:K-and-projections}.
\end{proof}

We conclude this subsection by using noise projections to succinctly express  tensorisation of operators over partitions of unity of $B$. The proof ultimately boils down to the tensorisation of independent conditioning.

\begin{proposition}\label{proposition:separate-with-projections}
Let $P$ be a partition of unity of $B$, let $f_p\in \mathrm{L}^\infty(\mu)$ and let $X_p\in \L2(\PP\vert_p)$ for $p\in P$. Then $$\Psi^{-1}\left(\left(\prod_{p\in P}(f_p\circ \pr_p)\right)\Psi\left(\prod_{p\in P}X_p\right)\right)=\prod_{p\in P}\Psi^{-1}\left(f_p\Psi(X_p)\right).$$
Or, which amount to the same thing, $$\alpha^{-1}\left(\prod_{p\in P}(f_p\circ \pr_p)\right)=\otimes_{p\in P}\left(\alpha^{-1}(f_p)\vert_{\L2(\PP\vert_p)}\right)$$ up to the natural unitary equivalence of $\L2(\PP)$ and $\otimes_{p\in P}\L2(\PP\vert_p)$.
\end{proposition}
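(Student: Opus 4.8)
The plan is to reduce the statement to the two-element partition case and then induct, since the general tensorisation is really an iterated application of a single ``split off one block'' identity. First I would observe that the two displayed identities are indeed equivalent: writing $A_p:=\alpha^{-1}(f_p)$, the map $\alpha$ intertwines multiplication in $\mathrm{L}^\infty(\mu)$ with composition in $\AA$, so $\prod_{p\in P}(f_p\circ \pr_p)=\alpha\bigl(\otimes_{p\in P}(A_p\vert_{\L2(\PP\vert_p)})\bigr)$ would follow from the first line once we know that the right-hand side acts on $\prod_p X_p$ in the prescribed way for all choices of $X_p\in\L2(\PP\vert_p)$ (these products being total in $\L2(\PP)$). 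So it suffices to prove the first display.

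The heart of the matter is the base case $P=\{x,x'\}$. Here I would invoke Theorem~\ref{thm:noise-projections}\ref{proj:vi}: choosing (after passing to an equivalent probability $\mu$ under which $\pr_x$ and $\pr_{x'}$ are independent, and rescaling) the spectral resolution in which $H_s=H_{\pr_x(s)}\otimes H_{\pr_{x'}(s)}$ and $\Psi(X_xX_{x'})=\sqrt{\mu(\{\emptyset_S\})}\,(\Psi(X_x)\circ\pr_x)\otimes(\Psi(X_{x'})\circ\pr_{x'})$, the operator $f_x\circ\pr_x$ acts fibrewise only on the first tensor factor and $f_{x'}\circ\pr_{x'}$ only on the second. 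Concretely, multiplication by $(f_x\circ\pr_x)(f_{x'}\circ\pr_{x'})$ sends this field to $\sqrt{\mu(\{\emptyset_S\})}\,((f_x\Psi(X_x))\circ\pr_x)\otimes((f_{x'}\Psi(X_{x'}))\circ\pr_{x'})$, which is exactly $\Psi\bigl(\Psi^{-1}(f_x\Psi(X_x))\cdot\Psi^{-1}(f_{x'}\Psi(X_{x'}))\bigr)$ by the same identity read in reverse. Applying $\Psi^{-1}$ yields the two-block case. Since the claim is about an a.e.-$\mu$ equality of $\L2$ elements and is invariant under the change of equivalent measure and unitary reparametrisation, working in this convenient resolution loses no generality.

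For the inductive step, given a partition $P$ with $\vert P\vert\geq 2$, I would single out one block $p_0\in P$ and set $x:=\lor(P\setminus\{p_0\})=p_0'$. Then $(p_0,x)$ is an independency with join $1_\PP$, so the base case gives the factorisation across $\{p_0,x\}$. On the $x$-side, Corollary~\ref{prop:noise-proj-bis}\ref{proj:iv}\ref{proj:iv-a} (together with the remark that $\pr^x_q=\pr_q\vert_{S_x}$ a.e.) identifies the spectral data of $B_x$ with the restriction of that of $B$, so the inductive hypothesis applied to the partition $P\setminus\{p_0\}$ of $B_x$ refactorises $\prod_{q\neq p_0}(f_q\circ\pr_q)$ acting on $\prod_{q\neq p_0}X_q$ into $\prod_{q\neq p_0}\Psi^{-1}(f_q\Psi(X_q))$. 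Composing the two factorisations and using Corollary~\ref{prop:noise-proj-bis}\ref{proj:ii--} ($\pr_q\circ\pr_x=\pr_{q\land x}=\pr_q$ for $q\subset x$) to see that the projections $\pr_q$ appearing at the second stage are genuinely the ambient ones, not merely the $B_x$-internal ones, completes the induction.

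The main obstacle I anticipate is bookkeeping the identifications in the base case rather than any genuine difficulty: one must be careful that the ``obvious identifications'' in Theorem~\ref{thm:noise-projections}\ref{proj:vi}\ref{proj:vi:II} (the factors $\sqrt{\mu(\{\emptyset_S\})}$ and the embedding of $H_{\emptyset_S}=\mathbb{R}$) cancel correctly when the identity is used once forwards and once backwards, so that no stray normalisation constant survives. A secondary point of care is that $f_p\circ\pr_p$ is only defined up to a.e.-$\mu$ equality (Corollary~\ref{prop:noise-proj-bis}\ref{proj:ii-}) and that the product $\prod_{p\in P}(f_p\circ\pr_p)$ is a well-defined diagonalisable operator; but since all $f_p\in\mathrm{L}^\infty(\mu)$ and $P$ is finite this is routine. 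I would therefore expect the two-block identity to carry essentially the entire content, with the induction being purely formal given the compatibility statements already established for noise projections under restriction.
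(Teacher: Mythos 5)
Your proof is correct, but it takes a genuinely different route from the paper's. The paper never touches the tensor-form resolution of Theorem~\ref{thm:noise-projections}\ref{proj:vi}: it first takes each $f_p$ to be an indicator $\mathbbm{1}_{S_{u_p}}$ with $u_p\in B$, $u_p\subset p$, in which case $\Psi^{-1}(f_p\Psi(X_p))=\PP[X_p\vert u_p]$, so the right-hand side becomes $\prod_{p\in P}\PP[X_p\vert u_p]=\PP[\prod_{p\in P}X_p\vert \lor_{p\in P}u_p]$ by tensorisation of independent conditioning, while on the left $\prod_{p\in P}(\mathbbm{1}_{S_{u_p}}\circ\pr_p)=\mathbbm{1}_{\cap_{p\in P}S_{u_p\lor p'}}=\mathbbm{1}_{S_{\lor_{p\in P}u_p}}$, so both sides coincide; it then concludes by linearity and a density/approximation argument (bounded local convergence in $\mu$-measure, one $f_p$ at a time), since the sets $S_{u\lor p'}$, $u\in B_p$, generate $\Sigma_p=\sigma(\pr_p)$. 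You instead prove an exact, fibrewise identity in the two-block case by reading Theorem~\ref{thm:noise-projections}\ref{proj:vi}\ref{proj:vi:II} once forwards and once backwards, and then induct on $\vert P\vert$ via Corollary~\ref{prop:noise-proj-bis}\ref{proj:ii--} and the compatibility of internal and ambient projections. Your route avoids any approximation step, but pays for it twice: (a) you must pass to an equivalent probability making $\pr_x,\pr_{x'}$ independent and to the modified resolution of \ref{proj:vi}, so the invariance you assert rather loosely ("change of equivalent measure and unitary reparametrisation") needs to be pinned down -- the clean statement is that the claim is equivalent to $\alpha^{-1}(\prod_{p\in P}(f_p\circ\pr_p))(\prod_{p\in P}X_p)=\prod_{p\in P}\alpha^{-1}(f_p)(X_p)$, and both modifications leave $\alpha$, the spectral sets, and hence the a.e.-classes of the $\pr_p$ unchanged; and (b) to read \ref{proj:vi}\ref{proj:vi:II} in reverse you need $\Psi^{-1}(f_x\Psi(X_x))\in\L2(\PP\vert_x)$, which holds because multiplication by $f_x$ does not enlarge the support of $\Psi(X_x)$ beyond $S_x$ (routine, but worth saying). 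Two small corrections: the fact you need in the inductive step is the unnumbered remark following Corollary~\ref{prop:noise-proj-bis} (that $\pr^x_q=\pr_q\vert_{S_x}$ a.e.) together with the restriction-of-spectrum observation, not Item~\ref{proj:iv}\ref{proj:iv-a} of that corollary; and, as the paper itself notes, both proofs ultimately rest on the same fact -- tensorisation of independent conditioning -- which the paper uses directly and you use as packaged inside Theorem~\ref{thm:noise-projections}\ref{proj:vi}.
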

\begin{proof}
Suppose $f_p=\mathbbm{1}_{S_{u_p}}$ a.e.-$\mu$ for some $u_p\in B$, all $p\in P$, in the first instance. We may assume $u_p\subset p$ for each $p\in P$. In that case the right-hand side of the displayed equality is $\prod_{p\in P}\PP[X_p\vert u_p]=\PP[\prod_{p\in P}X_p\vert \lor_{p\in P}u_p]$ a.s.-$\PP$, and because $\prod_{p\in P}f_p\circ \pr_p=\mathbbm{1}_{S_{\lor_{p\in P}u_p}}$ this is also the left-hand side. Since $\Psi$ is a unitary isomorphism both sides are stable under linear combinations and convergence locally in $\mu$-measure in each of the $f_p$, $p\in P$. A density/approximation (and inductive) argument allows to conclude.
\end{proof}

\begin{corollary}\label{corollay:defining-extended-to-AA}
If $z\in\Lattice$ is such that $\PP_z\in \AA$, then for all $x\in B$, $\PP_{(z\land x)\lor x'}\in \AA$ and $S_{(z\land x)\lor x'}=\pr_x^{-1}(S_z)$ a.e.-$\mu$.
\end{corollary}
\begin{proof}
Just compute $\alpha^{-1}(\mathbbm{1}_{\pr_x^{-1}(S_z)})=\alpha^{-1}(\mathbbm{1}_{S_z}(\pr_x)\mathbbm{1}_{S_{x'}}(\pr_{x'}))=\alpha^{-1}(\mathbbm{1}_{S_z})\vert_{\L2(\PP\vert_x)} \otimes \alpha^{-1}(\mathbbm{1}_{S_{x'}})\vert_{\L2(\PP\vert_{x'})}=\PP_{z}\vert_{\L2(\PP\vert_x)}\otimes \mathrm{id}_{\L2(\PP\vert_{x'})}=\PP_{z\land x}\vert_{\L2(\PP\vert_x)}\otimes \PP_{x'}\vert_{\L2(\PP\vert_{x'})}=\PP_{(z\land x)\lor x'}$, up to the natural unitary equivalence of $\L2(\PP)$ and $\L2(\PP\vert_x)\times\L2(\PP\vert_{x'})$.
\end{proof}
Thus the defining property of the noise projection $\pr_x$ is extended to all those $S_z$ for which $\PP_z\in \AA$, $z\in \Lattice$.

%

\subsection{Subnoises engendered by subsets of the spectrum}\label{subsection:subnoises-parts-of-spectrum}
In the previous subsection we have seen how a member $x$ of $B$ (which we may identify with the ``subnoise'' $B_x$ (under $\PP\vert_x$) of $B$) can be used to project the spectral set $S$ onto its subset $S_x$. In this subsection, conversely, we use certain subsets of $S$ to yield ``subnoises'' of $B$. 

The definition which follows may be seen  as an adaptation to the abstract setting of the one found in \cite[p.~63]{picard2004lectures} for one-dimensional factorizations.  We will provide further references to the connections with \cite{picard2004lectures} along the way.  As was indicated in the Introduction, the results of this subsection are new only on a formal level -- everything extends relatively straightforwardly from the one-dimensional setting. Though, alas and by contrast, it appears that we cannot avoid a.e.-$\mu$ qualifiers ($\mu$-equivalence classes) in any natural way, which makes things a little more involved.
\begin{definition}
Let $\mathcal{S}_0:=\{\mathbbm{1}_{S_x}:x\in B\}\subset \mathrm{L}^0(\mu)$, let $\mathcal{S}_1:=\mathrm{conv}(\mathcal{S}_0)\subset \mathrm{L}^0(\mu)$ and let finally $\mathcal{S}$ be the smallest convex subset of $\mathrm{L}^0(\mu)$ containing all $\mathbbm{1}_{S_x}$, $x\in B$, and closed under sequential a.e.-$\mu$ pointwise limits.
\end{definition}

\begin{remark}\label{rmk:S}
\leavevmode
\begin{enumerate}[(i)]
\item We get $\mathcal{S}$ by taking the a.e.-$\mu$ pointwise sequential limits of elements of $\mathcal{S}_1$. Indeed the resulting set is convex evidently, closed under a.e.-$\mu$ pointwise sequential limits due to the following argument. Passing to an equivalent measure if necessary we may assume $\mu$ is finite. For $L\in \mathbb{N}$ let $f^L=\lim_{n\to\infty}f^L_n$ a.e.-$\mu$, where each $f^L_n\in \mathcal{S}_1$, and suppose that $f^\infty=\lim_{L\to\infty}f^L$ exists a.e.-$\mu$. Then the preceding a.e.-$\mu$ convergences hold in $\mu$-measure, and the topology of convergence in measure on a finite measure space is metrizable; $f^\infty$ belongs to the closure in this topology of $\{f^L_n:(L,n)\in \mathbb{N}\times \mathbb{N}\}$, which means that there is a sequence in $\mathcal{S}_1$, whose limit is $f^\infty$ in $\mu$-measure. Passing to a subsequence if necessary we get a sequence in $\mathcal{S}_1$, whose limit is $f^\infty$ a.e.-$\mu$. We see also that to get $\mathcal{S}$, instead of  taking the a.e.-$\mu$ pointwise sequential limits of elements of $\mathcal{S}_1$, we may just as well take the closure of $\mathcal{S}_1$ in the topology of local convergence in $\mu$-measure. In particular, $S_x\in \mathcal{S}$ for all $x\in \Cl(B)$.
\item If $\phi\in \mathcal{S}$ then $\mathbbm{1}_{S_{0_\PP}}\leq \phi\leq 1$ a.e.-$\mu$.
\item\label{rmk:S:iv-}  $\mathcal{S}$ is closed under multiplication, since this property is retained in passing from $\mathcal{S}_0$ to $\mathcal{S}_1$ and then to $\mathcal{S}$. As a consequence if $\phi\in \mathcal{S}$, then a.e.-$\mu$, $\mathbbm{1}_{\{\phi=1\}}=\lim_{n\to\infty}\phi^n\in \mathcal{S}$. 
\item\label{rmk:S:iv} Let $x\in B$ and $\phi\in \mathcal{S}$. Then also $\phi\circ \pr_x\in \mathcal{S}$, furthermore $\phi\circ \pr_x\geq \phi$ a.e.-$\mu$. Indeed this is true for $\phi\in \mathcal{S}_0$ trivially, the property is preserved on passing to $\mathcal{S}_1$ and then to  $\mathcal{S}$. 
 \end{enumerate}
\end{remark}
Sets (or rather equivalence classes of sets) belonging to the family $\Sigma^*$, to be introduced presently, will emerge as being such that their associated $H$-spaces are $\L2$. We shall see this in Corollary~\ref{corollary:L2-space} following some remarks, examples and an auxilliary claim.

\begin{definition}
$\Sigma^*:=\{E\in \Sigma/_\mu:\mathbbm{1}_E\in \mathcal{S}\}\subset \Sigma/_\mu$.
\end{definition}

\begin{remark}
In parallel to some of the properties of $\mathcal{S}$  noted in Remark~\ref{rmk:S} we get the basic characteristics of $\Sigma^*$.
\begin{enumerate}[(i)]
\item $\Sigma^*=\{\{\phi=1\}:\phi\in \mathcal{S}\}$.
\item For each $x\in B$, $\pr_x\in \Sigma^*/\Sigma^*$, in the sense that $\pr_x^{-1}(E)\in \Sigma^*$ for all $E\in \Sigma^*$  (by slight abuse of notation).
\item $\Sigma^*$ is cosed under countable intersections and contains $\{S_x:x\in \Cl(B)\}$.
\item If $E\in \Sigma^*$, then $S_{0_\PP}\subset E$ a.e.-$\mu$.
\end{enumerate}
\end{remark}


\begin{example}\label{example:K<infty}
$\{K<\infty\}\in \Sigma^*$. 
%
 Indeed for $b\in \mathfrak{F}_B$ and for $\rho\in [0,1]$, $\phi_{\rho,b}:=\rho^{K_b}=\prod_{a\in \at(b)}\rho^{\mathbbm{1}_{S\backslash S_{a'}}}\in \mathcal{S}$, since $\mathcal{S}$ is closed for multiplication and since, for $a\in \at(b)$, $\rho^{\mathbbm{1}_{S\backslash S_{a'}}}=(1-\rho)\mathbbm{1}_{S_{a'}}+\rho\mathbbm{1}_{S_{1_\PP}}\in \mathcal{S}$. We get in fact $\phi_{\rho,b}=\sum_{x\in b}\rho^{\vert\at(b)_x\vert}(1-\rho)^{\vert \at(b)\vert-\vert\at(b)_x\vert}\mathbbm{1}_{S_x}$, which is another way of seeing that $\phi_{\rho,b}\in \mathcal{S}$ (indeed  $\in\mathcal{S}_1$). If also $\tilde b\in \mathfrak{F}_B$ with $b\subset \tilde{b}$, then $K_{\tilde{b}}\geq K_b$ a.e.-$\mu$, and we conclude that $\rho^{ K}\in  \mathcal{S}$ (with the understanding that $\rho^\infty$ is $1$ or $0$ according as $\rho=1$ or $\rho<1$). Letting $\rho\uparrow\uparrow 1$ gives $\mathbbm{1}_{\{K_B<\infty\}}\in \phi$. Due to Remark~\ref{rmk:S}, Items~\ref{rmk:S:iv-} and~\ref{rmk:S:iv}, we see also that $\prod_{p\in P}\rho(p)^{K(\pr_p)}=\prod_{p\in P}\rho(p)^K\circ \pr_p\in \mathcal{S}$ for any partition of unity $P$ of $B$ and any $\rho\in [0,1]^P$. 
\end{example}

\begin{proposition}\label{proposition:contractions-and-spectral}
(Cf. \cite[Lemma~5.8]{picard2004lectures}.) Let $\phi\in \mathcal{S}$, $\eta:\mathbb{R}\to \mathbb{R}$ be $1$-Lipschitz, i.e. a contraction, and $f\in \L2(\PP)$. Then $\mu_{\eta\circ f}[1-\phi]\leq \mu_f[1-\phi]$ for all $\phi\in \mathcal{S}$.
\end{proposition}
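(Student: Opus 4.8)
The plan is to establish the inequality first on the generators $\mathbbm{1}_{S_x}$, $x\in B$, and then to propagate it through the two-layer (convex hull, then sequential a.e.-$\mu$ limits) construction of $\mathcal{S}$. I would introduce the finite signed measure $\nu:=\mu_f-\mu_{\eta\circ f}$ (finite because $f,\eta\circ f\in\L2(\PP)$, and $\nu\ll\mu$), and set $\Lambda(\phi):=\nu[1-\phi]=\mu_f[1-\phi]-\mu_{\eta\circ f}[1-\phi]$; the assertion is $\Lambda(\phi)\ge 0$ for every $\phi\in\mathcal{S}$ (the integral is well defined since $0\le 1-\phi\le 1$ by Remark~\ref{rmk:S}). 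Since $\int 1\,\dd\nu$ is constant and $\phi\mapsto\int\phi\,\dd\nu$ is linear, $\Lambda$ is \emph{affine} in $\phi$, which is exactly the structural feature that lets a generator-level inequality pass to convex combinations.

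The substantive step is the base case $\phi=\mathbbm{1}_{S_x}$, $x\in B$. Using $\mu_g[S]=\PP[g^2]$ and $\mu_g(S_x)=\PP[\PP[g\vert x]^2]$ together with the Pythagorean identity $\PP[g^2]=\PP[\PP[g\vert x]^2]+\PP[(g-\PP[g\vert x])^2]$ (applied to $g=f$ and to $g=\eta\circ f$), one obtains $\mu_g[1-\mathbbm{1}_{S_x}]=\PP[(g-\PP[g\vert x])^2]$. Hence $\Lambda(\mathbbm{1}_{S_x})\ge 0$ is precisely the statement that a contraction cannot increase the conditional variance,
\begin{equation*}
\PP[(\eta\circ f-\PP[\eta\circ f\vert x])^2]\le \PP[(f-\PP[f\vert x])^2],
\end{equation*}
which I would prove from the variational characterization of conditional expectation: $\PP[\eta\circ f\vert x]$ is the point of $\L2(\PP\vert_x)$ nearest $\eta\circ f$, so comparing with the competitor $\eta(\PP[f\vert x])\in\L2(\PP\vert_x)$ gives
\begin{equation*}
\PP[(\eta\circ f-\PP[\eta\circ f\vert x])^2]\le \PP[(\eta\circ f-\eta(\PP[f\vert x]))^2]\le \PP[(f-\PP[f\vert x])^2],
\end{equation*}
the final inequality being the pointwise contraction bound $\vert\eta(f)-\eta(\PP[f\vert x])\vert\le\vert f-\PP[f\vert x]\vert$.

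It then remains to bootstrap. For $\phi=\sum_i\lambda_i\mathbbm{1}_{S_{x_i}}\in\mathcal{S}_1$ (with $\lambda_i\ge 0$, $\sum_i\lambda_i=1$), affineness gives $\Lambda(\phi)=\sum_i\lambda_i\Lambda(\mathbbm{1}_{S_{x_i}})\ge 0$. Finally, by Remark~\ref{rmk:S} every $\phi\in\mathcal{S}$ is an a.e.-$\mu$ pointwise limit of a sequence $(\phi_n)_n$ in $\mathcal{S}_1$; since $\mu_f,\mu_{\eta\circ f}\ll\mu$ are finite and $0\le 1-\phi_n\le 1$, dominated convergence yields $\Lambda(\phi)=\lim_n\Lambda(\phi_n)\ge 0$. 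The only genuinely substantive point is the base-case contraction inequality for conditional variance; once $\Lambda$ is recognised as affine and the integrands are seen to be uniformly bounded against finite measures, the convexity and limiting steps are routine.
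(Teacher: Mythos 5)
Your proof is correct and follows essentially the same route as the paper's: reduce to the generators $\mathbbm{1}_{S_x}$ by linearity and bounded convergence, then observe that the base case is the statement that a contraction does not increase conditional variance. The only difference is cosmetic — you spell out the contraction-reduces-conditional-variance step via the variational characterization and the competitor $\eta(\PP[f\vert x])$, where the paper simply cites it as well known (via disintegration).
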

\begin{proof} 
Due to bounded convergence and linearity we may assume $\phi=\mathbbm{1}_{S_x}$ for $x\in B$. Then $\mu_f[1-\phi]=\mu_f(S\backslash S_x)=\PP[f^2]-\PP[\PP[f\vert x]^2]=\PP[\var(f\vert x)]$. But it is well-known that a contraction reduces the variance  (even conditional, by disintegration).
\end{proof}

\begin{corollary}\label{corollary:L2-space}
(Cf. \cite[Lemma~5.9]{picard2004lectures}.) If  $E\in \Sigma^*$, then $H(E)=\L2(\PP\vert_{\GG})$ for some (necessarily unique) $\GG\in \Lattice$ (which satisfies $\PP_\GG\in \AA$).
\end{corollary}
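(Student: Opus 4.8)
The plan is to characterize $\L2$-subspaces via their closure under the absolute value $|\cdot|$ operation (together with containing the constants), using the criterion cited in Item~(i) of the Generalities subsection: a closed linear subspace of $\L2(\PP)$ is of the form $\L2(\PP\vert_\GG)$ for some $\GG\in\Lattice$ if and only if it contains the constants and is stable under $|\cdot|$. So the whole game reduces to showing that $H(E)$ contains the constants and is closed under taking absolute values, for $E\in\Sigma^*$.

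First I would observe that $S_{0_\PP}\subset E$ a.e.-$\mu$ for $E\in\Sigma^*$ (noted in the remarks following the definition of $\Sigma^*$), so $H(E)\supset H(S_{0_\PP})=\L2(\PP\vert_{0_\PP})$, the one-dimensional space of constants; this disposes of the ``contains the constants'' requirement immediately. The substantive step is stability under $|\cdot|$. Here I would exploit Proposition~\ref{proposition:contractions-and-spectral}: the map $\eta(t):=|t|$ is a $1$-Lipschitz contraction, so for any $f\in\L2(\PP)$ and the given $\phi:=\mathbbm{1}_E\in\mathcal{S}$ we have $\mu_{|f|}[1-\mathbbm{1}_E]\le\mu_f[1-\mathbbm{1}_E]$, i.e. $\mu_{|f|}(S\backslash E)\le\mu_f(S\backslash E)$. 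Now recall that $H(E)=\Psi^{-1}(\int_E^\oplus H_s\,\mu(\dd s))$ and that $f\in H(E)$ iff $\mu_f$ is carried by $E$, equivalently iff $\mu_f(S\backslash E)=0$. Thus if $f\in H(E)$ then $\mu_f(S\backslash E)=0$, whence $\mu_{|f|}(S\backslash E)=0$ by the contraction inequality, so $|f|\in H(E)$. This is exactly closure under $|\cdot|$.

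With both properties in hand, the criterion from the Generalities yields $H(E)=\L2(\PP\vert_\GG)$ for a unique $\GG\in\Lattice$ (uniqueness because distinct complete sub-$\sigma$-fields give distinct $\L2$-subspaces). To finish I would check $\PP_\GG\in\AA$: since $H(E)$ is the range of the projection $\pr_{H(E)}$ associated to $E$ by the spectral (projection-valued) measure, and since $H(E)=\L2(\PP\vert_\GG)$ means $\pr_{H(E)}=\PP_\GG$, we have $\PP_\GG=\alpha^{-1}(\mathbbm{1}_E)\in\AA$ by the very definition of the spectral resolution (the projection onto $H(E)$ is diagonalizable, corresponding to $\mathbbm{1}_E\in\mathrm{L}^\infty(\mu)$).

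The main obstacle I anticipate is a bookkeeping one rather than a conceptual one: justifying that the characterization of $\L2$-subspaces via $|\cdot|$-stability applies in the real setting and that $\mu_{|f|}(S\backslash E)=0$ genuinely forces $|f|\in H(E)$ requires knowing that $f\in H(E)\Leftrightarrow\mu_f$ is carried by $E$, which is the content of the relation $\mu_f(S_x)=\PP[\PP[f\vert x]^2]$ extended to general $E\in\Sigma$ via $\mu_f(E)=\int_E\Vert\Psi(f)\Vert^2\,\dd\mu$ and $H(E)=\Psi^{-1}(\int_E^\oplus H_s\,\mu(\dd s))$. Both of these are recorded in the spectrum subsection, so the argument should go through cleanly; the only care needed is to ensure $E$ is treated as a genuine $\mu$-equivalence class throughout, so that ``$\mu_f$ carried by $E$'' is well-posed.
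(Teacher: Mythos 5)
Your proof is correct and follows essentially the same route as the paper's: both verify that $H(E)$ is a closed linear subspace containing the constants (via $S_{0_\PP}\subset E$) and closed under $\vert\cdot\vert$ (via Proposition~\ref{proposition:contractions-and-spectral} applied to the $1$-Lipschitz absolute value and the identification $H(E)=\{f:\mu_f(S\backslash E)=0\}$), then invoke the characterization of $\L2$-subspaces and note $\PP_\GG=\pr_{H(E)}=\alpha^{-1}(\mathbbm{1}_E)\in\AA$. No gaps.
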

\begin{example}
Trivial. For $E=S_x$, $x\in \Cl(B)$, the corresponding $\GG$ is just $x$.
\end{example}
%
\begin{proof}
$H(E)$ is a closed linear submanifold of $\L2(\PP)$. Also,  $H(E)=\{f\in \L2(\PP):\mu_f(S\backslash E)=0\}=\{f\in \L2(\PP):\mu_f[1-\mathbbm{1}_E]=0\}$. Therefore, because the absolute value $\vert\cdot\vert:\mathbb{R}\to\mathbb{R}$ is $1$-Lipschitz it follows thanks to Proposition~\ref{proposition:contractions-and-spectral} that $H(E)$ is closed under application of the absolute value, and clearly it contains the constants ($\because$ $S_{0_\PP}\subset E$). Altogether it is enough to maintain the conclusion (the reason that $\PP_\GG\in \AA$ lies in the fact that $\PP_\GG=\pr_{H(E)}=\alpha^{-1}(\mathbbm{1}_E)$).
\end{proof}

\begin{proposition}\label{proposition:sigma-star-and-partitions}
If $E\in \Sigma^*$,  then $E=\cap_{x\in P}\pr_x^{-1}(E)$ a.e.-$\mu$ for any partition of unity $P$ of $B$. In fact for the a.e.-$\mu$ inclusion $\supset$ in the preceding it is sufficient that $H(E)=\L2(\PP\vert_\GG)$ for some $\GG\in \Lattice$.
\end{proposition}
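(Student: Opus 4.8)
The plan is to prove the two a.e.-$\mu$ inclusions making up the equality separately, with the inclusion $\subset$ being a soft consequence of the structure of $\mathcal{S}$ (and needing $E\in\Sigma^*$), and the inclusion $\supset$ resting on the tensor description of the spectrum furnished by the noise projections (and needing only $H(E)=\L2(\PP\vert_\GG)$). For $\subset$, assume $E\in\Sigma^*$, so $\mathbbm{1}_E\in\mathcal{S}$. By Remark~\ref{rmk:S}\ref{rmk:S:iv}, for each $x\in B$ we have $\mathbbm{1}_E\circ\pr_x\geq \mathbbm{1}_E$ a.e.-$\mu$; since $\mathbbm{1}_E\circ\pr_x=\mathbbm{1}_{\pr_x^{-1}(E)}$, this reads $E\subset \pr_x^{-1}(E)$ a.e.-$\mu$, and intersecting over $x\in P$ yields $E\subset \cap_{x\in P}\pr_x^{-1}(E)$ a.e.-$\mu$.

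For $\supset$ I would use only that $H(E)=\L2(\PP\vert_\GG)$ for some $\GG\in\Lattice$ (which, when $E\in\Sigma^*$, is supplied by Corollary~\ref{corollary:L2-space}). Write $P=(x_1,\ldots,x_n)$, the case $P=\emptyset$ (i.e. $0_\PP=1_\PP$) being trivial, so $x_1\lor\cdots\lor x_n=1_\PP$. Since $\pr_{x_i}$ maps into $S_{x_i}$ a.e.-$\mu$ (Theorem~\ref{thm:noise-projections}\ref{proj:ii}), we have $\pr_{x_i}^{-1}(E\cap S_{x_i})=\pr_{x_i}^{-1}(E)$ a.e.-$\mu$, so Corollary~\ref{prop:noise-proj-bis}\ref{proj:iv}\ref{proj:iv-c}, applied with $A_i:=E\cap S_{x_i}$ and noting $\L2(\PP\vert_{x_1\lor\cdots\lor x_n})=\L2(\PP)$, gives (up to the natural unitary equivalence)
$$H\Big(\cap_{i=1}^n\pr_{x_i}^{-1}(E)\Big)=\otimes_{i=1}^n H(E\cap S_{x_i})=\otimes_{i=1}^n\big(\L2(\PP\vert_\GG)\cap\L2(\PP\vert_{x_i})\big),$$
where I used $H(E\cap S_{x_i})=H(E)\cap H(S_{x_i})=\L2(\PP\vert_\GG)\cap\L2(\PP\vert_{x_i})$.

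The next step is to show this tensor product sits inside $\L2(\PP\vert_\GG)=H(E)$. It is the closed span of the products $\prod_{i=1}^n f_i$ with $f_i\in \L2(\PP\vert_\GG)\cap\L2(\PP\vert_{x_i})$; each such $f_i$ is $\GG$-measurable, hence so is $\prod_i f_i$, while independence of $x_1,\ldots,x_n$ gives $\Vert\prod_i f_i\Vert^2=\prod_i\Vert f_i\Vert^2<\infty$, so $\prod_i f_i\in\L2(\PP\vert_\GG)$; as the latter is a closed subspace, the closed span lies in it. Thus $H(\cap_i\pr_{x_i}^{-1}(E))\subset H(E)$, and since $A\mapsto H(A)$ is monotone and injective on $\Sigma/_\mu$ (from $H(A\cap B)=H(A)\cap H(B)$ together with $\mathbbm{1}_A=\alpha(\pr_{H(A)})$ determining $A$), we conclude $\cap_{x\in P}\pr_x^{-1}(E)\subset E$ a.e.-$\mu$. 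Combining the two inclusions when $E\in\Sigma^*$ delivers the equality, and the displayed argument alone establishes the second assertion.

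The step requiring the most care—the main obstacle—is the identification $\otimes_{i}\big(\L2(\PP\vert_\GG)\cap\L2(\PP\vert_{x_i})\big)\subset\L2(\PP\vert_\GG)$: one must invoke the measure-algebraic identity $H(A\cap B)=H(A)\cap H(B)$ correctly, and verify that the generating products are genuinely square-integrable (precisely where independence of the $x_i$ enters) as well as $\GG$-measurable, so that they lie in the target $\L2$-subspace. Everything else is routine bookkeeping of the a.e.-$\mu$ exceptional sets (cf.\ Remark~\ref{remark:exceptional-sets-projections}).
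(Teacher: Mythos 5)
Your proof is correct and follows essentially the same route as the paper: the inclusion $\subset$ via Remark~\ref{rmk:S}\ref{rmk:S:iv}, and the inclusion $\supset$ via the tensor structure of the spectrum under noise projections together with $H(E)=\L2(\PP\vert_\GG)$ and the monotonicity/injectivity of $A\mapsto H(A)$. The only (cosmetic) difference is that you invoke the $n$-fold Corollary~\ref{prop:noise-proj-bis}\ref{proj:iv}\ref{proj:iv-c} and check directly that the generating products $\prod_i f_i$ land in $\L2(\PP\vert_\GG)$, whereas the paper applies the two-factor Theorem~\ref{thm:noise-projections}\ref{proj:vii} to each $x\in P$ and then performs the lattice computation $\land_{x\in P}((x\land\GG)\lor x')=\lor_{x\in P}(x\land\GG)\subset\GG$.
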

\begin{remark}
It may be tempting to think that  $\phi=(\phi\circ\pr_x)(\phi\circ \pr_{x'})$ a.e.-$\mu$ for $\phi\in \mathcal{S}$ and $x\in B$ (because by the above it is true for $\phi$ indicators). However, even both the a.e.-$\mu$ inequalities   $\phi\geq (\phi\circ\pr_x)(\phi\circ \pr_{x'})$ and   $\phi\leq (\phi\circ\pr_x)(\phi\circ \pr_{x'})$ fail in general (easy to check for $\phi$ a non-trivial convex combination of two indicators $\mathbbm{1}_{S_u}$ \& $\mathbbm{1}_{S_v}$, $\{u,v\}\subset B$, e.g. in the context of Example~\ref{example:spectral-finite}).
\end{remark}
\begin{proof}
Let $\GG\in \Lattice$ be such that $H(E)=\L2(\PP\vert_\GG)$. Using Theorem~\ref{thm:noise-projections}\ref{proj:vii} we compute 
\begin{align*}
H(\cap_{x\in P}\pr_x^{-1}(E))&=\cap_{x\in P}H(E\cap S_x)\otimes \L2(\PP\vert_{x'})=\cap_{x\in P}\L2(\PP\vert_{(x\land \GG)\lor x'})\\
&=\L2(\PP\vert_{\land_{x\in P}(x\land \GG)\lor x'})=\L2(\PP\vert_{\lor_{x\in P}(x\land \GG)})\subset \L2(\PP\vert_\GG)=H(E).
\end{align*}
 Therefore $\cap_{x\in P}\pr_x^{-1}(E)\subset E$ a.e.-$\mu$. The converse a.e.-$\mu$ inclusion is immediate by Remark~\ref{rmk:S}\ref{rmk:S:iv}. 
\end{proof}
We may use the preceding to note that not all $\L2$-spaces $H(E)$, $E\in \Sigma$, come from $E\in \Sigma^*$ (just to dispel any idea that it might not be so).
\begin{example}
In the context of Examples~\ref{example:simplest-nonclassical} and~\ref{example:spectral-space-for-nonclassical} the sets $E_I:=\{0_\PP,y_I\}$ for odd-sized $I\in \mathcal{I}$ are such that $H(E_I)=\mathrm{lin}(1,\prod_I\xi)=\L2(\PP\vert_{\sigma(\prod_I\xi)})$ is an $\L2$-space, however $\pr_x^{-1}(E_I)\cap E_I=\{0_\PP\}$ for $x=N_b\subset y_I$, $b\in\mathcal{B}\backslash\{\emptyset\}$ finite (some such $b$ exists), hence $E_I\notin \Sigma^*$.
\end{example}
%
%
Furthermore, using Proposition~\ref{proposition:sigma-star-and-partitions} we may conclude that not only do elements of $\Sigma^*$ lead to $\L2$-spaces, but also the latter lead in turn to subnoises, in the following precise sense.
\begin{corollary}[$\Sigma^*$-subnoises]\label{corollary:sigma*-subnoises}
Suppose $E\in \Sigma^*$ and let $\GG\in \Lattice$ be such that $H(E)=\L2(\PP\vert_\GG)$ (according to Corollary~\ref{corollary:L2-space}). Then $\GG$ distributes over $B$ and (hence) $B\vert_\GG$ is a noise Boolean algebra under $\PP\vert_\GG$. Furthermore,  fixing any version of $E$, $((E,\Sigma\vert_{E},\mu\vert_{E});\Psi\vert_{\L2(H(E)}\vert_{E})$ is a spectral resolution of $B\vert_\GG$ in which the spectral set associated to $x\land \GG$ is $S_x\cap E$, 
$x\in B$.
\end{corollary}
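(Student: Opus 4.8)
The statement has two halves, so I would organize the proof accordingly. First, to see that $\GG$ distributes over $B$, recall from \ref{generalitites:subalgebra} that it suffices to produce for each $x\in B$ a decomposition $\GG=\GG_x\lor\GG_{x'}$ with $\GG_x\in\Lattice_x$ and $\GG_{x'}\in\Lattice_{x'}$; equivalently (fixing the partition of unity $P=\{x,x'\}$) it is enough to show $\GG=\lor_{p\in P}(\GG\land p)$. The natural route is via Proposition~\ref{proposition:sigma-star-and-partitions}: since $E\in\Sigma^*$, we have $E=\cap_{p\in P}\pr_p^{-1}(E)$ a.e.-$\mu$ for the partition $P=\{x,x'\}$. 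Applying $H(\cdot)$ and using the computation already carried out in the proof of Proposition~\ref{proposition:sigma-star-and-partitions} (which identifies $H(\cap_{p\in P}\pr_p^{-1}(E))$ with $\L2(\PP\vert_{\lor_{p\in P}(p\land\GG)})$ via Theorem~\ref{thm:noise-projections}\ref{proj:vii}), we get $\L2(\PP\vert_\GG)=H(E)=\L2(\PP\vert_{\lor_{p\in P}(p\land\GG)})$, whence $\GG=(x\land\GG)\lor(x'\land\GG)$. As this holds for every $x\in B$, distributivity of $\GG$ over $B$ follows, and then \ref{generalitites:subalgebra} immediately gives that $B\vert_\GG$ is a noise Boolean algebra under $\PP\vert_\GG$.

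\textbf{The spectral resolution.} For the second half I would argue that restricting the given spectral data of $B$ to (a fixed version of) $E$ yields a spectrum of $B\vert_\GG$. This is a matter of unwinding the general recipe already used in \ref{generalities:spectrum-intro} for restricting a spectrum of $B$ to $B_x$ via $(S_x,\Sigma\vert_{S_x},\mu\vert_{S_x})$, only now with the $\mu$-measurable set $E$ in place of $S_x$. Concretely, $\Psi$ restricts to a unitary isomorphism from $H(E)=\L2(\PP\vert_\GG)$ onto $\int_E^\oplus H_s\mu(\dd s)$, and the algebra $\AA_{B\vert_\GG}$ is carried onto the diagonalizable operators of this direct integral. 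The only genuinely new point to verify is the identification of spectral sets: I must check that the projection $\PP_{x\land\GG}$ (acting on $\L2(\PP\vert_\GG)$) corresponds under $\Psi\vert_{H(E)}$ to multiplication by $\mathbbm{1}_{S_x\cap E}$, i.e. that the spectral set of $x\land\GG$ in the restricted resolution is $S_x\cap E$ a.e.-$\mu$.

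\textbf{Identifying $S_{x\land\GG}=S_x\cap E$.} For this I would compute $H(S_x\cap E)$ directly. By the lattice identities for the $H$-spaces in \ref{generalities:spectrum-intro}, $H(S_x\cap E)=H(S_x)\cap H(E)=\L2(\PP\vert_x)\cap\L2(\PP\vert_\GG)$. Since $\GG$ distributes over $B$ (as just established) and since $B\ni x$, the members of $\Cl(B)$ containing $\GG$ and $x$ behave well under $\land$; using that the projections $\PP_x$ and $\PP_\GG$ commute (both lie in $\AA$, which is commutative by \ref{generalities:spectrum-intro}/Remark~\ref{rmk:complex-to-real}), we get $\PP_x\PP_\GG=\PP_{x\land\GG}$ and hence $\L2(\PP\vert_x)\cap\L2(\PP\vert_\GG)=\L2(\PP\vert_{x\land\GG})=H(S_x)\cap H(E)=H(S_x\cap E)$. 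Thus the projection onto $H(S_x\cap E)$ is $\PP_{x\land\GG}$, which is exactly the assertion that $x\land\GG$ has spectral set $S_x\cap E$ in the restricted resolution. Restricting $\Sigma$, $\mu$, and $\Psi$ to $E$ then packages all of this into a bona fide spectral resolution of $B\vert_\GG$, completing the proof.

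\textbf{Main obstacle.} The conceptually delicate step is the second one: ensuring that the restriction of $\Psi$ to $H(E)$ really does send the operator algebra $\AA_{B\vert_\GG}$ (generated by the $\PP_{x\land\GG}$, $x\in B$, viewed as operators on $\L2(\PP\vert_\GG)$) onto the full algebra of diagonalizable operators of the restricted direct integral, rather than a proper subalgebra. The cleanest way around this is to observe that $\{S_x\cap E:x\in B\}$ essentially generates $\Sigma\vert_E$ (because $\{S_x:x\in B\}$ essentially generates $\Sigma$ by \ref{generalities:spectrum-intro}, and intersecting a generating $\pi$-system with the fixed set $E$ generates the trace $\sigma$-field), so the diagonalizable operators are indeed exhausted; the rest is the routine bookkeeping of measurable fields over a measurable subset, which I would not belabor.
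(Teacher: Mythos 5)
Your proposal is correct and follows essentially the same route as the paper: distributivity of $\GG$ over $B$ is obtained from Proposition~\ref{proposition:sigma-star-and-partitions} together with Theorem~\ref{thm:noise-projections}\ref{proj:vii} and the criterion of \ref{generalitites:subalgebra} (exhibiting $\GG=(x\land\GG)\lor(x'\land\GG)$), and the spectral resolution is verified by noting that $\{S_x\cap E:x\in B\}$ is a $\mu\vert_E$-essentially generating $\pi$-system for $\Sigma\vert_E$ while the identification of the spectral sets follows from $\PP_\GG\in\AA$. Nothing is missing.
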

\begin{example}
For $x\in \Cl(B)$, $x$ distributes over $B$  and $B\vert_x$ is a noise Boolean algebra under $\PP\vert_x$ with the indicated spectral resolution, which we have already noted in \ref{generalitites:subalgebra} \& \ref{generalities:spectrum-intro} when $x\in B$.
\end{example}
\begin{proof}
For the first claim, according to \ref{generalitites:subalgebra} it  suffices to establish that $H(E)=H(S_x\cap E)\otimes H(S_{x'}\cap E)$, which by Theorem~\ref{thm:noise-projections}\ref{proj:vii} is equivalent to establishing that $\pr_x^{-1}(E)\cap \pr_{x'}^{-1}(E)=E$, and this is just (a special case of) the content of Proposition~\ref{proposition:sigma-star-and-partitions}.

Concerning the second assertion, note that $\{S_x\cap E:x\in B\}$ is a $\mu\vert_{E}$-essentially generating $\pi$-system for $\Sigma\vert_{E}$ so that the linear combinations of its indicators are dense for the topology of local convergence in $\mu\vert_{E}$-measure (this gives that $\Psi\vert_{H(E)}\vert_{E}$ maps $\AA_{B\vert_\GG}$ onto, not just into the algebra of diagonalizable operators /the latter, together with the claim on the spectral sets of $B\vert_\GG$, being immediate from $\PP_\GG\in \AA$/). 
\end{proof}
\begin{remark}
It is clear from the proof that if $E$ is not necessarily from $\Sigma^*$ but merely an ``ideal'' of $\Sigma$ in the sense that $E= \pr_x^{-1}(E)\cap \pr_{x'}^{-1}(E)$ for all $x\in B$, then still the same conclusion of Corollary~\ref{corollary:sigma*-subnoises} obtains (but now $H(E)=\L2(\PP\vert_\GG)$ for a $\GG\in \Lattice$ is an extra assumption), cf. \cite[Theorem 6.15]{picard2004lectures}.
\end{remark}
Certain elements of $\mathcal{S}$ may be procured starting from a probability $\nu$ on $\Sigma$ absolutely continuous w.r.t. $\mu$. Before being able to make proper sense of this  we need to define what it means for two spectral points to have an empty intersection. 
\begin{definition}
We define a binary relation on $S$ up to $\mu^2$-a.e. equality precisely as the $\mu^2$-essential union of the sets $S_x\times S_{x'}$, $x\in B$. We write $s\cap_Su=\emptyset_S$ to indicate that $(s,u)$ belongs to this binary relation. Thus, for $\mu$-a.e. $s$, $\{s\cap_S\cdot=\emptyset_S\}=\{u\in S:s\cap_Su=\emptyset_u\}$ is equal a.e.-$\mu$ to the $\mu$-essential union of the sets $S_{\underline{b}(s)'}$, $b\in \mathfrak{F}_B$.
\end{definition}

\begin{proposition} \label{proposition:S-from-probability} (Cf. \cite[Lemma 6.12]{picard2004lectures}) Let $\nu$ be a probability on $(S,\Sigma)$ absolutely continuous w.r.t. $\mu$. The map $\phi^\nu:S\to [0,1]$, given by $$\phi^\nu(s):=\left[\mu\text{-}\esssup_{b\in \mathfrak{F}_B}(S\ni u\mapsto \nu(S_{\underline{b}(u)'}))\right](s)=\nu(s\cap_S\cdot=\emptyset_S)\text{ a.e.-$\mu$ in $s$},$$ belongs to $\mathcal{S}$. 
\end{proposition}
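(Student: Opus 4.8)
The plan is to establish the claimed membership $\phi^\nu\in \mathcal{S}$ by first verifying the pointwise-a.e.\ identity between the essential supremum and the ``geometric'' quantity $\nu(s\cap_S\cdot=\emptyset_S)$, and then exhibiting $\phi^\nu$ as a limit of elements already known to lie in $\mathcal{S}$. First I would unpack the definitions. For a fixed $b\in \mathfrak{F}_B$ and $u\in S$, the quantity $\nu(S_{\underline{b}(u)'})$ is a function of $u$ that is constant (a.e.-$\mu$) on each of the partition cells $P_b(y)$, $y\in b$, of \ref{generalities:chaoses}, namely it equals $\nu(S_{y'})$ on $P_b(y)$. Hence the map $(S\ni u\mapsto \nu(S_{\underline{b}(u)'}))$ is a finite $\mathcal{S}_1$-style combination: explicitly it equals $\sum_{y\in b}\nu(S_{y'})\mathbbm{1}_{P_b(y)}$ a.e.-$\mu$, and since the $P_b(y)$ are differences of the spectral sets $S_z$, $z\in b$, this function is $\Sigma$-measurable and takes values in $[0,1]$. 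I would record that, as $b$ increases through $\mathfrak{F}_B$, these functions form a directed family whose $\mu$-essential supremum is $\phi^\nu$ by definition.

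Next I would prove the geometric reformulation $\phi^\nu(s)=\nu(\{u:s\cap_S u=\emptyset_S\})$ a.e.-$\mu$. By the definition of $\cap_S$, for a fixed $s$ the set $\{u:s\cap_S u=\emptyset_S\}$ is the $\mu$-essential union over $b\in \mathfrak{F}_B$ of the sets $S_{\underline{b}(s)'}$; since these are nondecreasing along the directed set $\mathfrak{F}_B$ (as $b$ grows, $\underline{b}(s)$ increases, so $\underline{b}(s)'$ decreases and $S_{\underline{b}(s)'}$ \ldots\ wait, this is the crux, see below) one passes the $\nu$-measure through a countable cofinal increasing sequence using Proposition~\ref{proposition:ctb-vs-arbitrary-joins-meets} (countable cofinality of joins) together with continuity of spectral sets along monotone sequences from \ref{generalities:spectrum-intro}. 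This identifies the essential supremum over $b$ of $\nu(S_{\underline{b}(s)'})$ with $\nu$ of the increasing union, i.e.\ with $\nu(\{u:s\cap_S u=\emptyset_S\})$.

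Having the representation $\phi^\nu=\text{$\uparrow$-}\mu\text{-}\lim$ (along a countable cofinal sequence $(b_n)_n$ in $\mathfrak{F}_B$ whose union is dense in $B$) of the functions $\psi_n:=\sum_{y\in b_n}\nu(S_{y'})\mathbbm{1}_{P_b(y)}$, I would finish by checking each $\psi_n\in\mathcal{S}$ and invoking closure of $\mathcal{S}$ under a.e.-$\mu$ pointwise sequential (here monotone) limits, Remark~\ref{rmk:S}(i). That each $\psi_n$ lies in $\mathcal{S}_1\subset\mathcal{S}$ follows because $\mathbbm{1}_{P_{b_n}(y)}$ is an integer combination of the $\mathbbm{1}_{S_z}$, $z\in b_n$, and the coefficients $\nu(S_{y'})\in[0,1]$ sum (over $y\in b_n$, weighted against the partition) to a subprobability, so that $\psi_n$ is a convex combination of indicators $\mathbbm{1}_{S_z}$ after regrouping; concretely one rewrites $\psi_n=\sum_{y\in b_n}\nu(S_{y'})\mathbbm{1}_{P_{b_n}(y)}$ and checks it is a convex combination of the $\mathbbm{1}_{S_y}$, $y\in b_n$ (exactly as the computation for $\phi_{\rho,b}$ in Example~\ref{example:K<infty} shows such partition-weighted sums reduce to $\mathcal{S}_1$-members).

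The main obstacle I expect is the monotonicity bookkeeping in the second paragraph: one must confirm that refining $b$ genuinely enlarges the set $\{u:s\cap_S u=\emptyset_S\}$ in the right (nondecreasing, up to $\mu$-null sets) direction and that the essential supremum over the directed set $\mathfrak{F}_B$ is attained along a countable cofinal subsequence, so that $\nu$ (being only countably additive, not able to handle uncountable suprema directly) can be pulled through the limit. This is where the countable-cofinality results of Proposition~\ref{proposition:ctb-vs-arbitrary-joins-meets} and the monotone-convergence correspondence between $\sigma$-field convergence and a.e.-$\mu$ convergence of spectral sets (\ref{generalities:spectrum-intro}) do the real work; everything else is the routine verification that the approximating functions sit in $\mathcal{S}_1$.
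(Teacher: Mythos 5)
Your overall strategy is exactly the paper's: write $\phi^\nu$ as a monotone a.e.-$\mu$ limit, along a countable nondecreasing cofinal sequence $(b_n)_{n}$ in $\mathfrak{F}_B$, of the functions $\phi_{b}=\nu(S_{\underline{b}'})=\sum_{y\in b}\mathbbm{1}_{P_b(y)}\nu(S_{y'})$, check each lies in $\mathcal{S}_1$, and invoke closure of $\mathcal{S}$ under sequential a.e.-$\mu$ limits. Two points you left open deserve to be closed, and both close easily. First, the monotonicity you flagged as ``the crux'': you wrote that $\underline{b}(s)$ \emph{increases} as $b$ is refined, which is backwards. For $\tilde b\supset b$ one has $\underline{b}(s)\in\tilde b$ and $s\in S_{\underline{b}(s)}$, so by minimality $\underline{\tilde b}(s)\subset\underline{b}(s)$ a.e.-$\mu$; hence $\underline{\tilde b}(s)'\supset\underline{b}(s)'$, $S_{\underline{\tilde b}'}\supset S_{\underline{b}'}$ and $\phi_{\tilde b}\geq\phi_b$ a.e.-$\mu$ --- which is precisely the nondecreasing direction your limit argument needs, so the ``obstacle'' dissolves. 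Second, the membership $\phi_b\in\mathcal{S}_1$: you assert that a regrouping into a convex combination of the $\mathbbm{1}_{S_y}$ exists but do not exhibit it, and this identity is really the one substantive computation in the proof. It is
$$\sum_{y\in b}\mathbbm{1}_{P_b(y)}\nu(S_{y'})=\sum_{y\in b}\nu(P_b(y'))\mathbbm{1}_{S_y}\quad\text{a.e.-}\mu,$$
obtained by writing $\nu(S_{y'})=\sum_{w\in b_{y'}}\nu(P_b(w))$ and noting that $s\in S_z$ iff $\underline{b}(s)\subset z$; the coefficients $\nu(P_b(y'))$ are nonnegative and sum to $\nu(S)=1$ because $\{P_b(y'):y\in b\}=\{P_b(z):z\in b\}$ is a $\mu$-a.e.\ partition of $S$ and $\nu$ is a probability. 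With those two lines supplied, your argument coincides with the paper's proof.
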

\begin{remark}\label{phi}
\leavevmode
\begin{enumerate}[(i)]
\item $\phi^\nu=1$ a.e.-$\mu$ on $S_{0_\PP}$, as it should be.
\item\label{phi:2} If a probability $\nu$ is absolutely continuous w.r.t. $\mu$, then by Radon-Nikodym we have $\nu=\Vert \Psi(f)\Vert^2\cdot \mu=\mu_f$ for an $f\in \L2(\PP)$ with $\PP[f^2]=1$. So actually the $\phi^\nu$ got in the preceding lemma are precisely those of the form $\phi_f:=\mu\text{-}\esssup_{b\in\mathfrak{F}_B}\PP[\PP[f\vert \underline{b}']^2]$ for an $f\in \L2(\PP)$ with $\PP[f^2]=1$ (or in terms of influences: $1-\phi_f=\mu\text{-}\essinf_{b\in\mathfrak{F}_B}\inff_{\underline{b}}(f)$ a.e.-$\mu$).
\end{enumerate}
\end{remark}
\begin{example}
For the constant $f=1$ a.s.-$\PP$ (or $f =-1$ a.s.-$\PP$)  we get $\phi_f=1$ a.e.-$\mu$; for such $f$, $\mu_f$ is concentrated on $S_{0_\PP}$. 
\end{example}
\begin{example}
Let $x\in  B^\circ$ and $f\in \L2(\PP\vert_x)^\circ$, $\PP[f^2]=1$; $\mu_f$ is concentrated on the atom $P$ of $\mu$ corresponding to $\L2(\PP\vert_x)^\circ$, i.e. the one for which $\L2(\PP\vert_x)^\circ=H(P)$. For each $b\in \mathfrak{F}_B$ we have that  $\PP[\PP[f\vert \underline{b}']^2]$ is equal to $1$ or $0$ according as $x\subset \underline{b}'$ or not, this being so  a.e.-$\mu$. But $\{x\subset \underline{b}'\}=\{\underline{b}\subset x'\}\subset S_{x'}$ a.e.-$\mu$; hence $\phi_f\leq \mathbbm{1}_{S_{x'}}$ a.e.-$\mu$. On the other hand, plainly $\PP[\PP[f\vert \underline{b_x}']^2] \geq\mathbbm{1}_{S_{x'}}$ a.e.-$\mu$. Therefore $\phi_f=\mathbbm{1}_{S_{x'}}$ a.e.-$\mu$. More generally, let $f=\oplus_{x\in B^\circ}f_x\in \oplus_{x\in B^\circ}\L2(\PP\vert_x)^\circ$ with $\sum_{x\in B^\circ}\PP[f_x^2]=1$ (so that $\PP[f^2]=1$). Then for all $b\in \mathfrak{F}_B$, a.e.-$\mu$,
$$\PP[\PP[f\vert \underline{b}']^2]=\PP\left[\left(\sum_{x\in B^\circ}\PP[f_x\vert \underline{b}']\right)^2\right]=\PP\left[\left(\sum_{x\in B^\circ} f_x \mathbbm{1}_{\{x\subset \underline{b}'\}}\right)^2\right]=\sum_{x\in B^\circ}\PP[f_x^2] \mathbbm{1}_{\{ \underline{b}\subset x'\}}\leq \sum_{x\in B^\circ}\PP[f_x^2] \mathbbm{1}_{S_{x'}};$$ further, 
if $x\in b$, then actually $\mu$-a.e. $\{ \underline{b}\subset x'\}=S_{x'}$ so that taking a nondecreasing sequence $(b_n)_{n\in \mathbb{N}}$ in $\mathfrak{F}_B$, whose union contains $B^\circ$ (recall, $B^\circ$ is countable) shows that $\phi_f= \sum_{x\in B^\circ}\PP[{f_x}^2] \mathbbm{1}_{S_{x'}}$ a.e.-$\mu$.
\end{example}
\begin{proof}
For each $b\in \mathfrak{F}_B$ define $\phi_b:S\to [0,1]$ up to a.e.-$\mu$ equality by putting (recall that the sets $P_b(y)$, $y\in b_x$, are pairwise $\mu$-disjoint and their union is $S_x$ a.e.-$\mu$ for all $x\in b$)  $$\phi_b:=\sum_{y\in b}\nu(P_b(y'))\mathbbm{1}_{S_y}=\sum_{y\in b}\mathbbm{1}_{P_b(y)}\sum_{z\in b,z\supset y}\nu(P_b(z'))=\sum_{y\in b}\mathbbm{1}_{P_b(y)}\nu(S_{y'})=\nu(S_{\underline{b}'})\text{ a.e.-}\mu.$$ 
From the first expression we get  $\phi_b\in \mathcal{S}$ because $\nu$ is a probability. 
If $\tilde b\in \mathfrak{F}_B$  with $\tilde{b}\supset b$, then $\underline{\tilde{b}}\subset \underline{b}$ a.e.-$\mu$, hence $\phi_{\tilde{b}}\geq \phi_b$ a.e.-$\mu$. It follows that 
$\phi^\nu=\mu\text{-}\esssup_{b\in \mathfrak{F}_B}\phi_b={\uparrow\text{-}\lim}_{n\to\infty}\phi_{b_n}\text{ a.e.-$\mu$}$ for some nondecreasing sequence  $(b_n)_{n\in \mathbb{N}}$ in $\mathfrak{F}_B$. Thus $\phi^\nu\in \mathcal{S}$. 
\end{proof}
\begin{question}
Is it the case that $\phi=\sup_{n\in \mathbb{N}}\nu(S_{\underline{b_n}'})$ a.e.-$\mu$ whenever $(b_n)_{n\in \mathbb{N}}$ is a $\uparrow$ sequence in $\mathfrak{F}_B$ with $B_0:=\cup_{n\in \mathbb{N}}b_n$ dense in $B$? (For sure we know that a $\uparrow$ sequence   $(b_n)_{n\in \mathbb{N}}$ in $\mathfrak{F}_B$ exists, dense or otherwise, such that $\phi=\sup_{n\in \mathbb{N}}\nu(S_{\underline{b_n}'})$ a.e.-$\mu$.)
\end{question}

\begin{corollary}
 Suppose there exists a nondecreasing sequence $(S_n)_{n\in \mathbb{N}}$ in $\Sigma$ and probabilities $\nu_n$, $n\in \mathbb{N}$, absolutely continuous w.r.t. $\mu$, such that for all $n\in \mathbb{N}$:
\begin{enumerate} [(a)]
\item for $\mu$-a.e. $s\in S_n$, $\nu_n(s\cap_S \cdot=\emptyset_S)=1$;
\item   for $\mu$-a.e. $s\in S\backslash \cup_{m\in \mathbb{N}}S_{m}$, $\nu_n(s\cap_S \cdot=\emptyset_S)<1$.
\end{enumerate}
Then $\cup_{n\in \mathbb{N}}S_n\in \Sigma^*$.
\end{corollary}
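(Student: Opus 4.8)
The plan is to realize the target set $\cup_{n\in\mathbb{N}}S_n$ as an a.e.-$\mu$ pointwise limit of sets already known to lie in $\Sigma^*$, and then invoke the closure of $\mathcal{S}$ under sequential a.e.-$\mu$ pointwise limits. First I would set $\phi^{\nu_n}:=(S\ni s\mapsto \nu_n(s\cap_S\cdot=\emptyset_S))$ for each $n\in\mathbb{N}$; by Proposition~\ref{proposition:S-from-probability} every $\phi^{\nu_n}$ belongs to $\mathcal{S}$, and then by Remark~\ref{rmk:S}\ref{rmk:S:iv-} the indicator $\mathbbm{1}_{\{\phi^{\nu_n}=1\}}=\lim_{k\to\infty}(\phi^{\nu_n})^k$ belongs to $\mathcal{S}$ as well, i.e. $\{\phi^{\nu_n}=1\}\in\Sigma^*$ for each $n$.

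Second, I would translate hypotheses (a) and (b) into a sandwich for these level sets. Condition (a) gives $\phi^{\nu_n}=1$ a.e.-$\mu$ on $S_n$, hence $S_n\subset\{\phi^{\nu_n}=1\}$ a.e.-$\mu$; condition (b) gives $\phi^{\nu_n}<1$ a.e.-$\mu$ on $S\backslash\cup_m S_m$, hence $\{\phi^{\nu_n}=1\}\subset\cup_m S_m$ a.e.-$\mu$. Collecting the countably many exceptional null sets into a single $\mu$-null set $N$, off $N$ one has, for every $n$, the inclusions
\[
S_n\subset\{\phi^{\nu_n}=1\}\subset\textstyle\bigcup_{m\in\mathbb{N}}S_m.
\]

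Third, I would pass to the limit $n\to\infty$ off $N$. Because $(S_n)_{n\in\mathbb{N}}$ is nondecreasing, for each $s\in\cup_m S_m$ there is $n_0$ with $s\in S_n$ (hence $s\in\{\phi^{\nu_n}=1\}$) for all $n\geq n_0$, so $\mathbbm{1}_{\{\phi^{\nu_n}=1\}}(s)\to 1$; for each $s\notin\cup_m S_m$ the upper inclusion forces $\mathbbm{1}_{\{\phi^{\nu_n}=1\}}(s)=0$ for all $n$, so $\mathbbm{1}_{\{\phi^{\nu_n}=1\}}(s)\to 0$. Thus $\lim_{n\to\infty}\mathbbm{1}_{\{\phi^{\nu_n}=1\}}=\mathbbm{1}_{\cup_m S_m}$ a.e.-$\mu$. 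Since each term lies in $\mathcal{S}$ and $\mathcal{S}$ is closed under sequential a.e.-$\mu$ pointwise limits, $\mathbbm{1}_{\cup_m S_m}\in\mathcal{S}$, that is, $\cup_{n\in\mathbb{N}}S_n\in\Sigma^*$.

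The argument is essentially a squeeze, so there is no serious analytic obstacle; the only thing requiring care is the bookkeeping of the a.e.-$\mu$ exceptional sets—each of the statements ``$\phi^{\nu_n}\in\mathcal{S}$'', ``$\mathbbm{1}_{\{\phi^{\nu_n}=1\}}\in\mathcal{S}$'', and the two inclusions extracted from (a) and (b) holds only up to a $\mu$-null set, and one must confirm that their union over the countable index $n$ (together with the countably many null sets hidden inside the identity $\mathbbm{1}_{\{\phi^{\nu_n}=1\}}=\lim_k(\phi^{\nu_n})^k$) remains $\mu$-null, so that the concluding pointwise convergence genuinely holds a.e.-$\mu$.
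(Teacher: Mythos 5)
Your proposal is correct and is essentially the paper's own argument: the paper likewise reads off $\phi^{\nu_n}\geq\mathbbm{1}_{S_n}$ and $\phi^{\nu_n}<1$ on $S\backslash\cup_m S_m$ a.e.-$\mu$ from (a) and (b), and concludes via $\mathbbm{1}_{\cup_n S_n}=\lim_{n\to\infty}\lim_{m\to\infty}(\phi^{\nu_n})^m\in\mathcal{S}$, i.e. exactly your squeeze through the level sets $\{\phi^{\nu_n}=1\}$ together with the closure of $\mathcal{S}$ under sequential a.e.-$\mu$ limits. Your added bookkeeping of the countably many exceptional null sets is sound and only makes explicit what the paper leaves implicit.
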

\begin{proof}
In terms of the notation of Proposition~\ref{proposition:S-from-probability} we have $\phi^{\nu_n}\geq \mathbbm{1}_{S_n}$ a.e.-$\mu$ and $\phi^{\nu_n}<1$ a.e.-$\mu$ on $ S\backslash \cup_{m\in \mathbb{N}}S_{m}$ for each $n\in \mathbb{N}$. 
Hence $\mathbbm{1}_{\cup_{n\in \mathbb{N}}S_n}=\lim_{n\to\infty}\lim_{m\to\infty}(\phi^{\nu_n})^m\in \mathcal{S}$.
\end{proof}

\section{Separating out the classical}\label{section:separate-classsical}
%
Every noise Boolean algebra admits a largest classical part (Proposition~\ref{proposition:classical-part-of-noise}). This part is ``graded'' into the individual chaos spaces by the counting map. The chaos spaces themselves are identified in terms of the noise operators, which is to say in terms of the degree of influence that a ``perturbation'' has  on the random variables (Proposition~\ref{proposition:stability-sensitivity}).   Self-joinings are a neat formalization of said perturbation (Proposition~\ref{proposition:self-joinings}). Loosely speaking, a noise Boolean algebra  is classical or black according as everything  is stable or sensitive (Proposition~\ref{proposition:stable-sensitive}); a couple of other explicit conditions for classicality and blackness that follow from identifying the first chaos are noted (Corollary~\ref{corollary:black-first-chaos-norm-projection}). 

As was mentioned already in the Introduction, this section, like Subsection~\ref{subsection:subnoises-parts-of-spectrum}, contains results that are (by and large) straightforward adaptations of their (strictly) special cases already to be found in existing literature (usually in the context of one-dimensional noises/continuous factorizations). We indicate some connections to  the latter en passant, but make no attempt to be complete in this regard. 
\subsection{Stability and sensitivity, noise operators}

\begin{proposition}\label{proposition:stability-sensitivity}[Cf. \cite[Sections~5.2 and~6.1]{picard2004lectures}]
Let $(b_m)_{m\in \mathbb{N}}$ be a sequence in $\mathfrak{F}_B$ whose union is dense in $B$. We have the following assertions.
\begin{enumerate}[(i)]
\item\label{proposition:stability-sensitivity:i} $H_{\mathrm{stb}}=\L2(\stable)$ and $H_{\mathrm{sens}}=\{f\in\L2(\PP):\PP[f\vert\stable]=0\}$.
\item\label{proposition:stability-sensitivity:ii} For each $k\in \mathbb{N}_0$, $$H^{\leq k}=\cap_{m\in \mathbb{N}} H^{\leq k}(b_m)\text{, a fortiori }H^{\leq k}=\cap_{b\in \mathfrak{F}_B} H^{\leq k}(b),$$ in other words 
$$\{K\leq k\}=\cap_{m\in \mathbb{N}}\cup_{l=0}^k\cup_{I\in {\at(b_m)\choose l}}P_{b_m}(\lor I)=\mu\text{-ess-}\cap_{b\in \mathfrak{F}_B}\cup_{l=0}^k\cup_{I\in {\at(b)\choose l}}P_{b}(\lor I)\text{ a.e.-$\mu$,}$$ and for $(f_1,\ldots,f_k)\in (H^{(1)})^k$ the projection of $f_1\cdots f_k$ onto $H^{(k)}$ is given by the limit as $m\to\infty$ in $\L2(\PP)$ of $\sum\prod_{i=1}^k\PP[f_i\vert a_i]$, the sum being over all $k$-tuples  $(a_1,\ldots,a_k)$  of pairwise distinct atoms of $b_m$, and also of the net $(\sum\prod_{i=1}^k\PP[f_i\vert a_i])_{b\in \mathfrak{F}_B}$, where again the sum is over all $k$-tuples  $(a_1,\ldots,a_k)$  of pairwise distinct atoms  of $b$. In particular $$\{K=1\}=\cap_{m\in \mathbb{N}}\cup_{a\in \at(b_m)}S_a\backslash  S_{0_\PP}=\mu\text{-ess-}\cap_{b\in \mathfrak{F}_B}\cup_{a\in \at(b)}S_a\backslash S_{0_\PP}$$ a.e.-$\mu$. and the projection of $\L2(\PP)_0$ onto the first chaos $H^{(1)}$ is the (restriction to $\L2(\PP)_0$ of the) strong limit of the sequence $(\sum_{a\in\at( b_m)}\PP_{a})_{m\in \mathbb{N}}$ and also of the net $(\sum_{a\in \at(b)}\PP_a)_{b\in \mathfrak{F}_B}$. 
\item\label{proposition:stability-sensitivity:iii} There exists a unique semigroup of norm-$1$ bounded operators $(U_t)_{t\in [0,\infty)}$ on $\L2(\PP)$ such that $U_t=e^{-tn}$ on $H^{(n)}$, while $U_t=0$ on $H_{\mathrm{sens}}$ for $t\in (0,\infty)$ ($U_0=\mathrm{id}_{\L2(\PP)}$ is just the identity, of course). In fact, if we use $U_t(B):=U_t$ to indicate the dependence on $B$, then $U_t={\downarrow\!\!\text{-}\lim}_{m\to\infty}U_t(b_m)$, also  $U_t={\downarrow\!\!\text{-}\lim}_{b\in \mathfrak{F}_B}U_t(b)$, in the sense of strong operator convergence for all $t\in [0,\infty)$; for $b\in \mathfrak{F}_B$, $U_t(b)=\int\PP_x\gamma_{e^{-t}}(\dd x)$, where for $p\in (0,1]$, $\gamma_p$ is the Bernoulli measure on $(b,2^b)$ that includes each atom of $b$ independently of the others with probability $p$ (cf. Example~\ref{example:spectral-finite}). The operators $U_t$, $t\in [0,\infty)$, commute with $\PP_x$, $x\in B$, moreover they belong to $\AA$, and they are local in the sense that $U_t(B_x)=U_t$ on $\L2(\PP\vert_x)$ for $x\in B$.

\item\label{proposition:stability-sensitivity:v} $H_{\mathrm{stb}}=\{X\in \L2(\PP):U_{0+}X=X\text{ in norm}\}$, $H_{\mathrm{sens}}=\{X\in \L2(\PP):U_{t}X=0\text{ for all }t\in (0,\infty)\}$ and $U_{0+}=\PP_{\stable}$ (in the sense of strong operator convergence). In particular $\PP_{\stable}\in \AA$, hence commutes with $\PP_x$, $x\in B$.
\end{enumerate}
\end{proposition}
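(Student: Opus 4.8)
The plan is to dispatch the four items in the order \ref{proposition:stability-sensitivity:i}, \ref{proposition:stability-sensitivity:ii}, \ref{proposition:stability-sensitivity:iii}, \ref{proposition:stability-sensitivity:v}, using throughout the dictionary $\alpha:\AA\to\mathrm{L}^\infty(\mu)$ of \ref{generalities:spectrum-intro} (and Remark~\ref{rmk:complex-to-real}), under which monotone strong operator limits of diagonalizable operators correspond to monotone a.e.-$\mu$ limits of their symbols. For \ref{proposition:stability-sensitivity:i} I would first note, via Example~\ref{example:K<infty}, that $\{K<\infty\}\in\Sigma^*$, so that Corollary~\ref{corollary:L2-space} gives $H_{\mathrm{stb}}=H(\{K<\infty\})=\L2(\PP\vert_\GG)$ for a unique $\GG\in\Lattice$ with $\PP_\GG\in\AA$. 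Since by definition $\stable=\sigma(H_{\mathrm{stb}})$ and since $\sigma(\L2(\PP\vert_\GG))=\GG$, this forces $\GG=\stable$, i.e. $H_{\mathrm{stb}}=\L2(\stable)$; then $H_{\mathrm{sens}}=H_{\mathrm{stb}}^\perp$ is exactly the set of $f$ with $\PP[f\vert\stable]=0$, which is the standard description of the orthocomplement of an $\L2$-subspace.

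Item \ref{proposition:stability-sensitivity:ii} is the heart. The first task is to recognise, for a finite $b\in\mathfrak{F}_B$ with spectrum $(b,2^b,c_b)$ as in Example~\ref{example:spectral-finite}, that $\L2(\PP\vert_x)^\circ=H(P_b(x))$ for $x\in b$ (the orthocomplement in $H(S_x)$ of the span of the $H(S_y)$, $y\in b_x\setminus\{x\}$, is $H(S_x\setminus\cup_{y\subsetneq x}S_y)=H(P_b(x))$), whence, using Example~\ref{ex:finite-noise-H-spaces} and the formula for $\{K_b=k\}$ from \ref{generalities:chaoses}, $H^{(k)}(b)=H(\{K_b=k\})$ and $H^{\leq k}(b)=H(\{K_b\leq k\})$. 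Because $K_b\leq K$ a.e.-$\mu$ one gets $H^{\leq k}\subset H^{\leq k}(b)$ for every $b$; the net equality $H^{\leq k}=\cap_{b}H^{\leq k}(b)$ is then just the definitional identity $\{K\leq k\}=\mu\text{-ess-}\cap_b\{K_b\leq k\}$ coming from $K=\mu\text{-}\esssup_b K_b$, and the sequence version follows from $\{K_{b_m}\}\uparrow K$ a.e.-$\mu$ via Proposition~\ref{prop;K-finite} (here I would take, or pass to, an increasing $(b_m)$—$b(b_1\cup\cdots\cup b_m)$ leaves the dense union unchanged—since Proposition~\ref{prop;K-finite} genuinely needs monotonicity; I will flag this). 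The ``a fortiori'' net statement is then squeezed between $H^{\leq k}$ and the sequence intersection. For the projection formula I would expand $f_1\cdots f_k$ over the atoms of $b$: writing each $f_i=\sum_{a\in\at(b)}\PP[f_i\vert a]$ (zero-mean $a$-measurable terms, as $f_i\in H^{(1)}\subset H^{(1)}(b)$), the terms $\prod_i\PP[f_i\vert a_i]$ indexed by $k$-tuples of \emph{distinct} atoms lie in $H^{(k)}(b)$ while all other terms lie in $H^{\leq k-1}(b)$; hence $\pr_{H^{(k)}(b)}(f_1\cdots f_k)=\sum_{(a_1,\ldots,a_k)\text{ distinct}}\prod_i\PP[f_i\vert a_i]$. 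Letting $b\uparrow B$ along $(b_m)$ and using $\pr_{H^{(k)}(b_m)}=\pr_{H^{\leq k}(b_m)}-\pr_{H^{\leq k-1}(b_m)}\to\pr_{H^{(k)}}$ strongly (decreasing-intersection projections, by the first part of \ref{proposition:stability-sensitivity:ii}) yields the claim; the set identities for $\{K\leq k\}$ and the special case $\{K=1\}=\cap_m\cup_{a\in\at(b_m)}S_a\setminus S_{0_\PP}$ (note $P_b(a)=S_a\setminus S_{0_\PP}$ for atoms) and the first-chaos projection $\sum_{a}\PP_a\to\pr_{H^{(1)}}$ on $\L2(\PP)_0$ are then the $k=0,1$ instances.

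Items \ref{proposition:stability-sensitivity:iii} and \ref{proposition:stability-sensitivity:v} I would obtain almost mechanically from the spectral picture. Defining $U_t$ on $H^{(n)}$ by $e^{-tn}$ and by $0$ on $H_{\mathrm{sens}}$ ($t>0$) gives a norm-$1$ semigroup, unique by its prescribed action on the orthogonal summands; in symbols $\alpha(U_t)=e^{-tK}$ (with $e^{-t\infty}=0$), so $U_t\in\AA$, commutes with every $\PP_x$, and is local because $K_{B_x}=K$ a.e.-$\mu$ on $S_x$. For a finite $b$ a direct computation shows $\int\PP_x\gamma_{e^{-t}}(\dd x)$ acts as $(e^{-t})^{j}$ on $H^{(j)}(b)$ (the factor being the marginal probability $p^{j}$ that $j$ prescribed atoms are all included), i.e. $\alpha(U_t(b))=e^{-tK_b}$; since $K_{b_m}\uparrow K$, $e^{-tK_{b_m}}\downarrow e^{-tK}$ a.e.-$\mu$, so $U_t(b_m)\downarrow U_t$ strongly (and likewise along the net). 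Finally $\alpha(U_t)=e^{-tK}\uparrow\mathbbm{1}_{\{K<\infty\}}$ as $t\downarrow0$, whence $U_{0+}=\alpha^{-1}(\mathbbm{1}_{\{K<\infty\}})=\pr_{H_{\mathrm{stb}}}=\PP_\stable$ by \ref{proposition:stability-sensitivity:i}; the two variational descriptions of $H_{\mathrm{stb}}$ and $H_{\mathrm{sens}}$ follow from distinctness of the exponentials $e^{-tn}$ (so $U_tX=0$ for all $t>0$ iff every $\pr_{H^{(n)}}X=0$) and from $U_{0+}=\pr_{H_{\mathrm{stb}}}$, and $\PP_\stable\in\AA$ commutes with the $\PP_x$ because $\AA$ is abelian.

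The main obstacle I anticipate is entirely within \ref{proposition:stability-sensitivity:ii}: correctly matching the \emph{intrinsic} finite-algebra chaos projections with the $K_b$-level sets inside $S$ (the identity $\L2(\PP\vert_x)^\circ=H(P_b(x))$ and the combinatorial bookkeeping of which expansion terms survive $\pr_{H^{(k)}(b)}$), together with justifying the passage to the limit as $b\uparrow B$. Two minor but real points need care there: the monotonicity hypothesis silently required by Proposition~\ref{prop;K-finite}, and the fact that $f_1\cdots f_k$ need not lie in $\L2(\PP)$ for general $f_i\in H^{(1)}$—this I would handle by observing that the truncated sums $\sum_{\text{distinct}}\prod_i\PP[f_i\vert a_i]$ are uniformly bounded in $\L2$-norm by $\prod_i\Vert f_i\Vert$ (their squared norm being $\sum_{\text{distinct}}\prod_i\Vert\PP[f_i\vert a_i]\Vert^2\leq\prod_i\Vert f_i\Vert^2$ by independence across distinct atoms), so the limit exists and defines the $H^{(k)}$-component, reducing at once to the bounded case where the literal product is in $\L2(\PP)$.
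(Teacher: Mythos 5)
Your proposal is correct. For items \ref{proposition:stability-sensitivity:i} and \ref{proposition:stability-sensitivity:ii} you follow essentially the same route as the paper: $H_{\mathrm{stb}}=H(\{K<\infty\})$ combined with Example~\ref{example:K<infty} and Corollary~\ref{corollary:L2-space} for \ref{proposition:stability-sensitivity:i}, and the identification $H^{\leq k}(b)=H(\{K_b\leq k\})$ plus Proposition~\ref{prop;K-finite} and the telescoping $\pr_{H^{(k)}(b_m)}=\pr_{H^{\leq k}(b_m)}-\pr_{H^{\leq(k-1)}(b_m)}$ for \ref{proposition:stability-sensitivity:ii}; your two flagged points of care (the monotonicity silently needed to invoke Proposition~\ref{prop;K-finite}, fixed by passing to $b(b_1\cup\cdots\cup b_m)$, and the a priori $\L2$-membership of $f_1\cdots f_k$, handled via the uniform bound $\prod_i\Vert f_i\Vert$ on the distinct-atom sums) are legitimate and are glossed over in the paper. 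Where you genuinely diverge is in items \ref{proposition:stability-sensitivity:iii} and \ref{proposition:stability-sensitivity:v}: the paper first re-indexes the approximating finite subalgebras over a nondecreasing sequence of algebras of subsets of a base set so as to place itself in the framework of \cite{tsirelson-arxiv-5}, and then imports the existence of the semigroup, its limit representation and the identities $U_{0+}=\PP_{\stable}$, $H_{\mathrm{sens}}=\{U_t=0\ \forall t>0\}$ from that reference. You instead argue entirely inside the spectral picture: $\alpha(U_t(b))=e^{-tK_b}$ (which is exactly the computation already recorded in Example~\ref{example:K<infty}), hence $\alpha(U_t)=e^{-tK}$ with the monotone correspondence of \ref{generalities:spectrum-intro} giving the $\downarrow$-limits, and $e^{-tK}\uparrow\mathbbm{1}_{\{K<\infty\}}$ as $t\downarrow 0$ giving $U_{0+}=\alpha^{-1}(\mathbbm{1}_{\{K<\infty\}})=\PP_{\stable}$ via item \ref{proposition:stability-sensitivity:i}. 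Your route is more self-contained and makes membership in $\AA$, the commutation relations and locality immediate; what it does not deliver, and what the paper's detour through \cite{tsirelson-arxiv-5} is partly there to set up, is the probabilistic reading of $U_t(b)$ as ``resample each atom independently with probability $1-e^{-t}$'' that is exploited later (Remark~\ref{remark:replace-infinitesimal} and the joinings of Subsection~\ref{subsection:joinings}) — though your explicit formula $U_t(b)=\int\PP_x\,\gamma_{e^{-t}}(\dd x)$ recovers that interpretation anyway.
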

\begin{proof}
\ref{proposition:stability-sensitivity:i}. Note that $H_{\mathrm{stb}}=H(\{K<\infty\})$. Due to Example~\ref{example:K<infty} and Corollary~\ref{corollary:L2-space} the first part of the claim follows. The second part is a direct consequence of the first because $H_{\mathrm{sens}}$ is the orthogonal complement of $H_{\mathrm{stb}}$ and orthogonality to $\L2(\stable)$ means precisely the vanishing of the $\stable$-conditional expectation.

\ref{proposition:stability-sensitivity:ii}. The two formulae in the displays follow from Proposition~\ref{prop;K-finite} (note that $H^{\leq k}(b)=H(\{K_b\leq k\})$ and $\{K_b\leq k\}=\cup_{l=0}^k\cup_{I\in {\at(b)\choose l}}P_{b}(\lor I)$ a.e.-$\mu$ for $b\in \mathfrak{F}_B$, $k\in \mathbb{N}_0$). Then for $(f_1,\ldots,f_k)\in (H^{(1)})^k$ and $m\in \mathbb{N}$,  $(f_1,\ldots,f_k)\in H^{(1)}(b_m)^k$; writing $f:=f_1\cdots f_k$ we see that  $\sum\prod_{i=1}^k\PP[f_i\vert a_i]=\pr_{H^{(k)}(b_m)}f=\pr_{H^{\leq k}(b_m)}f-\pr_{H^{\leq (k-1)}(b_m)}f\to \pr_{H^{\leq k}}f-\pr_{H^{\leq (k-1)}}f=\pr_{H^{(k)}}f$ as $m\to\infty$ in $\L2(\PP)$. Similarly the net-case follows, as well as the corresponding statements for the projection onto the first chaos.

For the proof of the remaining two claims we use up in large part the results of \cite{tsirelson-arxiv-5}. In order to do so we first bring the present setting in line with \cite{tsirelson-arxiv-5}. Indeed we may certainly index each $b_m$, $m\in\mathbb{N}$, by an algebra $\AA_m$ of a base set $T$ in such a way that $(\AA_m)_{m\in \mathbb{N}}$ is a nondecreasing sequence of algebras, the indexations being isomorphisms of Boolean algebras. This brings us into the landscape of  \cite{tsirelson-arxiv-5}, where a semigroup of norm-$1$ bounded operators $(U_t)_{t\in [0,\infty)}$ is defined, in principle depending on $B_0:=\cup_{m\in \mathbb{N}}b_m$, with, for $t\in (0,\infty)$ and $n\in \mathbb{N}_0$, $\{U_t=e^{-tn}\}$ being a subspace $H_n$ of $\L2(\PP)$, which coincides with $H^{(n)}$ due to \ref{proposition:stability-sensitivity:ii} and \cite[Eq.~(2.8)]{tsirelson-arxiv-5} (it is trivial that the definitions agree for finite noise Boolean algebras, cf. \cite[Eqs.~(1.7) and~(1.11)]{tsirelson-arxiv-5} and Example~\ref{example:classical-spectrum}), while $U_t=0$ on the orthogonal complement of $\oplus_{n\in \mathbb{N}_0}H_n$ in $\L2(\PP)$ \cite[Eq.~(2.7)]{tsirelson-arxiv-5} (therefore the semigroup actually does not depend on $B_0$).

\ref{proposition:stability-sensitivity:iii}.  Apply \cite[Eqs.~(1.9),~(2.4) \&~(2.7)]{tsirelson-arxiv-5}. The fact that $\{U_t:t\in [0,\infty)\}\subset \AA$ one obtains  by the explicit representation of the semigroup through the approximating sequence $(b_m)_{m\in \mathbb{N}}$.  The ``locality'' of the noise semigroup follows for instance because the chaos spaces are themselves ``local'' \ref{generalites:chaos-spaces}. 


\ref{proposition:stability-sensitivity:v}. Apply \cite[Theorem~2.12, Lemmas~2.5 and~2.14, Eq.~(2.15)]{tsirelson-arxiv-5}.
\end{proof}

\begin{definition}
We retain the notation $U_t=U_t(B)$, $t\in [0,\infty)$, of Proposition~\ref{proposition:stability-sensitivity}\ref{proposition:stability-sensitivity:iii}; it is natural to, and we do set $U_\infty:=U_\infty(B)$ equal to the projection on the one-dimensional space of constants. $\NNN$ is the  unique linear operator on $H_{\mathrm{stb}}$ such that $\NNN=n$ on $H^{(n)}$. 
\end{definition}
In some sense $\NNN=\alpha^{-1}(K)$, though $K$ is not (always) $\mu$-essentially bounded, so strictly speaking $\alpha^{-1}(K)$ is not (necessarily) legitimate.

\begin{remark}\label{remark:replace-infinitesimal}
It is clear from the preceding that the operators $(U_t)_{t\in [0,\infty)}$, $\NNN$, the chaos spaces $(H^{(k)})_{k\in \mathbb{N}_0}$ etc. can be defined ``intrinsically'' from $B$ (i.e. without resorting to finite approximations or even spectrum) starting with the first chaos space and proceeding from there. The benefit of an explicit construction lies in the fact that one sees $(U_t)_{t\in [0,\infty)}$ as limits of operators $(U_t(b_m))_{t\in [0,\infty)}$ with, for $m\in \mathbb{N}$ \& $t\in [0,\infty)$, $U_t(b_m)$ corresponding to replacing each elementary piece of data  (i.e. each atom) of $b_m$ with an independent copy with probability $1-e^{-t}$, retaining the old piece with probability $e^{-t}$, and one may even imagine that this is happening in continuous time, the elementary piece of data being replaced with an independent copy in each infinitesimal time interval $\dd t$ with probability $\dd t$ in  a Markovian (memoryless) fashion, see \cite[esp. Eq.~(1.14)]{tsirelson-arxiv-5} for details. 
\end{remark}
There is a natural way to generalize the semigroup $(U_t)_{t\in [0,\infty)}$ to a wider class of operators (which we will find use of in Subsection~\ref{subsection:joinings}).
\begin{definition}\label{definition:spectrally-multiplicative}
We say a map $\gamma\in \Sigma/\mathcal{B}_{\mathbb{R}}$ is spectrally multiplicative if $\gamma(\emptyset_S)=1$ and $\gamma=\gamma(\pr_x)\gamma(\pr_{x'})$ a.e.-$\mu$ for all $x\in B$.  We also put $\mathfrak{M}:=\{\gamma\in \Sigma/\mathcal{B}_{[0,1]}:\gamma\text{ is spectrally multiplicative}\}$.
\end{definition}

\begin{remark}
If $\gamma\in \Sigma/\mathcal{B}_{\mathbb{R}}$ is spectrally multiplicative then so is $\gamma\circ\pr_y$ for all $y\in B$ and $\gamma= \prod_{q\in Q}\gamma\circ \pr_q$ a.e.-$\mu$ for any partition of unity $Q$ of $B$. A product of spectrally multiplicative maps is spectrally multiplicative; the a.e.-$\mu$ limit of spectrally multiplicative maps is spectrally multiplicative. According to Proposition~\ref{proposition:sigma-star-and-partitions}  $\{\mathbbm{1}_E:E\in \Sigma^*\}\subset \mathfrak{M}$. Using Proposition~\ref{proposition:K-and-projections} we see that  the map $\prod_{p\in P}\rho(p)^{K(\pr_p)}$ belongs to $\mathfrak{M}$ for any partition of unity $P$ of $B$ and any $\rho\in [0,1]^P$.  
\end{remark}

\begin{definition}[Generalized noise operators]\label{generalized-noise:iv}
For $\gamma\in \mathfrak{M}$ we put  $U^\gamma:=U ^\gamma(B):=\alpha^{-1}(\gamma)$. In particular for a partition of unity $P$ of $B$ and $\rho\in [0,1]^P$ we set  $U_\rho:=U_\rho(B):=U^{\prod_{p\in P}\rho(p)^{K(\pr_p)}}$ (recall we interpret $1^\infty=1$ and $a^\infty=0$ for $a\in [0,1)$). 
\end{definition}

\begin{remark}\label{generalized-noise}\leavevmode
\begin{enumerate}[(i)]
\item For $t\in [0,\infty]$, $U_{\{(1_\PP,e^{-t})\}}=U_t$, unless $0_\PP=1_\PP$ (when $U_{\emptyset}=U_t$).
\item\label{generalized-noise:iii} Let $P$ be a  partition of unity of $B$ and $\rho\in [0,1]^P$. For $X_p\in \L2(\PP\vert_p)$, $p\in P$, we have $U_\rho(\prod_{p\in p}X_p)=\prod_{p\in P}U_{-\log(\rho(p))}(X_p)$, i.e. $U_\rho=\otimes_{p\in P}( U_{-\log \rho(p)}\vert_{\L2(\PP\vert_p)})$ up to the natural unitary equivalence of $\L2(\PP)$ and $\otimes_{p\in P}\L2(\PP\vert_p)$ (use Proposition~\ref{proposition:separate-with-projections}).
\end{enumerate}
\end{remark}

\begin{question}
Can one give a meaningful characterization of the class $\mathfrak{M}$, or which is the same upon taking $-\log$, of $\mathfrak{A}:=\{\gamma\in \Sigma/\mathcal{B}_{[0,\infty]}:\gamma(\emptyset_S)=0\text{ and  }\gamma= \sum_{q\in Q}\gamma\circ \pr_q\text{ a.e.-$\mu$ for any partition of unity $Q$ of $B$}\}$ (the  ``nonnegative spectrally additive'' maps)?
\end{question}

\subsection{Classical part of a noise}

\begin{definition}
$B_{\mathrm{stb}}:=\{x\land \stable:x\in B\}$.
\end{definition}
\begin{proposition}[Classical part of a noise]\label{proposition:classical-part-of-noise}
(Cf. \cite[sentence that includes Eq.~(2.17)]{tsirelson-arxiv-5}.)
\begin{enumerate}[(i)]
\item\label{proposition:classical-part-of-noise:i}  $\stable $ distributes over $B$. 
\item\label{proposition:classical-part-of-noise:ii} $B_{\mathrm{stb}}$ is a classical noise Boolean algebra under $\PP\vert_\stable$, whose first chaos (and hence all higher-order chaoses) coincide with that (those) of $B$ (or, which is the same, of $\overline{B}$). 
\item\label{proposition:classical-part-of-noise:iii}  $\stable$ is the largest $x\in \Lattice$ such that $\PP_x\in \AA$, $x$ distributes over $B$ and $\{x\land y:y\in B\}$ is a classical noise Boolean algebra under $\PP\vert_x$. 
\item\label{proposition:classical-part-of-noise:iv} Fixing any version of $\{K<\infty\}$, $((\{K<\infty\},\Sigma\vert_{\{K<\infty\}},\mu\vert_{\{K<\infty\}});\Psi\vert_{H_{\mathrm{stb}}}\vert_{\{K<\infty\}})$ is a spectral resolution of $B_{\mathrm{stb}}$ in which the spectral set associated to $x\land \stable$ is $S_x\cap \{K<\infty\}$ and its noise projection is a.e.-$\mu$ equal to the restriction $\pr_x\vert_{\{K<\infty\}}$, $x\in B$.
\item\label{proposition:classical-part-of-noise:v}   For all $x\in B$, $\PP_{\stable\lor x}\in \AA$ and $\pr_x^{-1}(S_{\stable})=S_{x'\lor (x\land \stable)}$, i.e. $\pr_x^{-1}(\{K<\infty\})=S_{x'\lor \stable}$ a.e.-$\mu$. 
\end{enumerate}
\end{proposition}
\begin{proof}
 \ref{proposition:classical-part-of-noise:i} \& \ref{proposition:classical-part-of-noise:ii}. For the first claim and the fact that  $B_{\mathrm{stb}}$ is a noise Boolean algebra under $\PP_\stable$ just apply Corollary~\ref{corollary:sigma*-subnoises} and Example~\ref{example:K<infty}.  Also, $H^{(1)}(B_{\mathrm{stb}})=H^{(1)}(B)$, which follows easily from the fact that $\stable$ commutes with $B$ (Proposition~\ref{proposition:stability-sensitivity}\ref{proposition:stability-sensitivity:v}). Therefore $B_{\mathrm{stb}}$ is classical and shares its chaoses with $B$.
 
\ref{proposition:classical-part-of-noise:iii}.  We know that $\PP_\stable\in \AA$ (Proposition~\ref{proposition:stability-sensitivity}\ref{proposition:stability-sensitivity:v}, again). 
On the other hand, let $x\in \Lattice$ be such that $\PP_x\in \AA$,   $x$ distributes over $B$, and $A:=\{x\land y:y\in B\}$ is a classical noise Boolean algebra under $\PP\vert_x$. Then a spectrum for $A$ is obtained by restricting the spectrum of $B$ to (a version of) the set $S_x\in \Sigma$ for which $\PP_x=\alpha(\mathbbm{1}_{S_x})$ (it works because $\PP_x\in \AA$). With this restricted spectrum $K_B\leq K_A$ a.e.-$\mu$ on $S_x$ (because $x$ distributes over $B$, hence every partition of unity of $B$ gives rise to a partition of unity of $A$). $A$ being classical we must have $K_A<\infty$ a.e.-$\mu$ on $S_x$, hence $S_x\subset \{K_B<\infty\}$ a.e.-$\mu$ and $x\subset \stable$. 
 
\ref{proposition:classical-part-of-noise:iv}. Except for the claim on the projections, it is already contained in Corollary~\ref{corollary:sigma*-subnoises}. For the latter one has only to notice that, for $x\in B$,  $\{K<\infty\}$ is closed for $\pr_x$, in the sense that $\pr_x^{-1}(\{K<\infty\})\supset \{K<\infty\}$ a.e.-$\mu$, which follows e.g. from Proposition~\ref{proposition:K-and-projections}. Then for all $y\in B$ we have $(\pr_x\vert_{\{K<\infty\}})^{-1}(S_y\cap \{K<\infty\})=S_{y\lor x'}\cap \{K<\infty\}$ a.e.-$\mu$, whence the desired conclusion.
%
%

\ref{proposition:classical-part-of-noise:v}. Since $\PP_{\stable}\in \AA$, by Corollary~\ref{corollay:defining-extended-to-AA} we get that for all $x\in B$,  $\PP_{\stable\lor x}\in \AA$ and $\pr_x^{-1}(S_{\stable})=S_{x'\lor (x\land \stable)}$. It remains to take into account that $S_{\stable}=\{K<\infty\}$ a.e.-$\mu$ and the distributivity of $\stable$ over $B$ which gives  $x'\lor \stable=x'\lor [(x'\land \stable)\lor (x\land \FF_{\mathrm{stb}})]=x'\lor (x\land \FF_{\mathrm{stb}})$.
\end{proof}

\begin{example}
In the context of Example~\ref{example:simplest-nonclassical} $\stable=\sigma(\xi_i\xi_{i+1}:i\in \mathbb{N})$ and $B_{\mathrm{stb}}$ is just the classical noise Boolean algebra attached to the independency $(\xi_i\xi_{i+1})_{i\in \mathbb{N}}$. Note $B$ is noise complete in this case and still $\stable\notin B$. Nevertheless $\stable\in \Cl(B)$. Analogous observations hold true for the noise Boolean algebra of Example~\ref{example:simplest-nonclassical-tweaked} when $\stable=\sigma(\xi_i\xi_{i+1}:i\in \mathbb{N})\lor \sigma(\xi_\infty)$. 
\end{example}

\begin{question}
Is $\stable\in \Cl(B)$ in general?
\end{question}

\begin{proposition}\label{proposition:classical-part-of-noise-closure}
$\overline{B_{\mathrm{stb}}}=\{x\land \stable:x\in \Cl(B)\}\supset \overline{B}_{\mathrm{stb}}$.
\end{proposition}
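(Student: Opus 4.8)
The plan is to dispose of the trivial inclusion, reduce the main equality to a statement about closures of the classical part, and then prove the two remaining inclusions using the commutativity of $\stable$ with $\Cl(B)$ together with a subsequence argument carried out at the level of spectral sets. First, $\overline{B}\subset\Cl(B)$ by the very definition of the noise completion, so $\overline{B}_{\mathrm{stb}}=\{x\land\stable:x\in\overline B\}\subset\{x\land\stable:x\in\Cl(B)\}$, which is the claimed inclusion $\supset$. Next, by Proposition~\ref{proposition:classical-part-of-noise}\ref{proposition:classical-part-of-noise:ii} the algebra $B_{\mathrm{stb}}$ is classical under $\PP\vert_\stable$, whence $\overline{B_{\mathrm{stb}}}=\Cl(B_{\mathrm{stb}})$ by the classicality criterion recalled in \ref{generalites:chaos-spaces}. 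Thus it suffices to prove $\Cl(B_{\mathrm{stb}})=\{x\land\stable:x\in\Cl(B)\}$, where $\Cl$ and the topology on the left are taken in the lattice of complete sub-$\sigma$-fields of $\stable$ under $\PP\vert_\stable$. I would first record two preliminary observations: (a) since $\PP_\stable\in\AA$ (Proposition~\ref{proposition:stability-sensitivity}\ref{proposition:stability-sensitivity:v}) and $\PP_a\in\AA$ for every $a\in\Cl(B)$, the operators $\PP_\stable$ and $\PP_a$ commute, whence $\PP_a\PP_\stable=\PP_{a\land\stable}$ and $S_{a\land\stable}=S_a\cap\{K<\infty\}$ a.e.-$\mu$; (b) for $\sigma$-fields contained in $\stable$, convergence in $\Lattice$ under $\PP$ coincides with convergence in the lattice under $\PP\vert_\stable$, because $\PP[f\vert\GG]=\PP[\PP_\stable f\vert\GG]$ for $\GG\subset\stable$ and $\PP_\stable$ maps $\L2(\PP)$ onto $\L2(\PP\vert_\stable)$.

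For the inclusion $\supset$, I would fix $x\in\Cl(B)$ and a sequence $(x_n)$ in $B$ with $x_n\to x$. Then $\PP_{x_n}\to\PP_x$ strongly, so $\PP_{x_n\land\stable}=\PP_{x_n}\PP_\stable\to\PP_x\PP_\stable=\PP_{x\land\stable}$ strongly, composition on the right with the fixed bounded operator $\PP_\stable$ preserving strong convergence. Hence $x_n\land\stable\to x\land\stable$ in $\Lattice$, equivalently in the lattice under $\PP\vert_\stable$ by (b); as each $x_n\land\stable\in B_{\mathrm{stb}}$, the limit $x\land\stable$ lies in the topological (hence sequential monotone) closure $\Cl(B_{\mathrm{stb}})$.

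The substantive direction is $\subset$. Here I would take $y\in\Cl(B_{\mathrm{stb}})$ and a sequence $(x_n)$ in $B$ with $x_n\land\stable\to y$. By the argument of \cite[Lemma~3.2]{tsirelson} I would pass to a subsequence along which $\liminf_k(x_{n_k}\land\stable)=y$ in the lattice sense, equivalently $\liminf_kS_{x_{n_k}\land\stable}=S_y$ a.e.-$\mu$. Set $w_K:=\land_{k\ge K}x_{n_k}$ and $x:=\lor_K w_K=\liminf_k x_{n_k}$; by Proposition~\ref{proposition:ctb-vs-arbitrary-joins-meets}\ref{proposition:ctb-vs-arbitrary-joins-meets:ii}, $\Cl(B)$ is closed under the countable meets $w_K$ and under their $\uparrow$ join, so $x\in\Cl(B)$. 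Because the outer join is monotone increasing, $w_K\uparrow x$ gives $S_{w_K}\uparrow S_x$, so $S_x=\cup_K S_{w_K}=\cup_K\cap_{k\ge K}S_{x_{n_k}}=\liminf_kS_{x_{n_k}}$ a.e.-$\mu$ (using that meets correspond to intersections). Intersecting with $\{K<\infty\}$ and invoking observation (a),
\[
S_{x\land\stable}=S_x\cap\{K<\infty\}=\liminf_k\bigl(S_{x_{n_k}}\cap\{K<\infty\}\bigr)=\liminf_kS_{x_{n_k}\land\stable}=S_y
\]
a.e.-$\mu$, so $x\land\stable=y$ with $x\in\Cl(B)$, completing the equality.

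The main obstacle is precisely this $\subset$ inclusion: one cannot lift $y$ along a convergent sequence (the $x_n$ need not converge), and a naive $\liminf$ of the $x_n$ would be useless if one tried to track it through $\lor$, since in general $S_{u\lor v}$ is far larger than $S_u\cup S_v$. The resolution I would emphasize is twofold: the subsequence selection of \cite[Lemma~3.2]{tsirelson} upgrades topological convergence of $x_n\land\stable$ to a genuine lattice $\liminf$, and the join appearing in $\liminf_k x_{n_k}=\lor_K w_K$ is \emph{increasing}, so it does track the set-theoretic $\liminf$ of the spectral sets; commutativity of $\stable$ with $\Cl(B)$ then lets the fixed factor $\{K<\infty\}$ pass through the $\liminf$ of sets. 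The remaining bookkeeping — that $\liminf$ of $\sigma$-fields corresponds to $\liminf$ of spectral sets via monotone joins of meets, and the topology-matching in (b) — is routine and I would only sketch it.
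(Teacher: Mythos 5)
Your proof is correct and follows essentially the same route as the paper's: reduce to $\Cl(B_{\mathrm{stb}})=\{x\land\stable:x\in\Cl(B)\}$ via the classicality of $B_{\mathrm{stb}}$, then obtain both inclusions by commuting $\land\,\stable$ with limits and inferior limits of sequences from $B$. The paper justifies the key identity $\liminf_n(x_n\land\stable)=(\liminf_n x_n)\land\stable$ directly by the continuity of $\land$ on $\sigma$-fields whose projections lie in $\AA$ (so your subsequence extraction via \cite[Lemma~3.2]{tsirelson} and the passage to spectral sets, while valid, are not needed), and it also records $\stable(B)=\stable(\overline{B})$, which your trivial inclusion $\supset$ uses implicitly.
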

\begin{proof}
By classicality, $\overline{B_{\mathrm{stb}}}=\Cl(B_{\mathrm{stb}})$ (the closure in principle is under $\PP\vert_\stable$, however it is the same as   under $\PP$). Also, $\stable(B)=\stable(\overline{B})$ ($\therefore$ the inclusion $\{x\land \stable:x\in \Cl(B)\}\supset \overline{B}_{\mathrm{stb}}$ is trivial); we continue to just write $\stable$.  If $(x_n)_{n\in \mathbb{N}}$ is a sequence in $B$ converging to $x\in \Cl(B)$, then $B_{\mathrm{stb}}\ni x_n\land \stable\to x\land\stable$ as $n\to\infty$ (by continuity of $\land$ on the $\sigma$-fields whose conditional expectation operators on $\L2(\PP)$ belong to $\AA$); therefore $x\land\stable\in \Cl(B_{\mathrm{stb}})= \overline{B_{\mathrm{stb}}}$, i.e. $\overline{B_{\mathrm{stb}}}\supset \{x\land \stable:x\in \Cl(B)\}$. Conversey,  for any sequence $(x_n)_{n\in \mathbb{N}}$ in $B$, $\liminf_{n\to\infty}(x_n\land \stable)=(\liminf_{n\to\infty}x_n)\land \stable\in \{x\land \stable:x\in \Cl(B)\}$, where the equality again follows by continuity of $\land$ on the $\sigma$-fields whose conditional expectation operators on $\L2(\PP)$ belong to $\AA$. Thus $\overline{B_{\mathrm{stb}}}=\Cl(B_{\mathrm{stb}})\subset \{x\land \stable:x\in \Cl(B)\}$.
\end{proof}


\subsection{Joinings}\label{subsection:joinings}
\begin{definition}
(Cf. \cite[Definition~4a5]{tsirelson-nonclassical}.) Let $C$ be another noise Boolean algebra under a probability $\QQ$ (essentially separable, of course). An embedding $\Psi$ of $\PP$ into $\QQ$ is said to embed $B$ into $C$ if $\Psi(1_{\PP})$ distributes over $C$ and $\Psi(B)=C\vert_{\Psi(1_{\PP})}$. 
\end{definition}
\begin{remark}
If $\Psi$ is an embedding of $\PP$ into $\QQ$, automatically it carries $B$ onto a noise Boolean algebra under $\PP\vert_{\Psi(1_{{\PP}})}$ as an injective noise factorization (because an emebedding of probabilities is an isomorphism onto its image and preserves everything in sight, independence in particular). 
\end{remark}
\begin{definition}\label{definition:joining}
(Cf. \cite[p.~203, last paragraph]{tsirelson-nonclassical}.)  Let $B_1$ and $B_2$ be noise  Boolean algebras under the probabilities $\PP_1$ and $\PP_2$, respectively. A joining of $B_1$ and $B_2$ (or, more precisely, of $(B_1,\PP_1)$ and $(B_2,\PP_2)$) is a system $((C,\QQ);(\Psi_1,\Psi_2))$, where $C$ is a noise Boolean algebra under the probability $\QQ$, $1_{\QQ}=\Psi_1(1_{\PP_1})\lor\Psi_2(1_{\PP_2})$ and $\Psi_i$ embeds $B_i$ into $C$, $i\in \{1,2\}$. The law (resp. correlation; local correlation on $c\in C$; operator) of such a joining is the law of the coupling $(\QQ;\Psi_1,\Psi_2)$ of the probabilities $\PP_1$ and $\PP_2$, i.e. the map 
$(1_{\PP_1}\times  1_{\PP_2}\ni(A_1,A_2) \mapsto \QQ(\Psi_1(A_1)\cap \Psi_2(A_2))$ 
(resp. the quantity 
$\sup\vert \QQ[\Psi_1(f_1)\Psi_2(f_2)]\vert$, where the supremum is taken over all $f_1\in \L2(\PP_1)_0$ with $\var_{\PP_1}(f_1)\leq 1$ and $f_2\in \L2(\PP_2)_0$ with $\var_{\PP_2}(f_2)\leq 1$; the same quantity as in the preceding except that in addition one insists that $f_1$ is $\Psi_1^{-1}(c)$-measurable and  $f_2$ is $\Psi_2^{-1}(c)$-measurable; the unique bounded linear operator $U:\L2(\PP_1)\to \L2(\PP_2)$ satisfying $\PP_2[U(f)g]=\QQ[\Psi_1(f)\Psi_2(g)]$ for $f\in \L2(\PP_1)$ and $g\in \L2(\PP_2)$). 
In case $B_1=B_2=B$ and $\PP_1=\PP_2= \PP$, 
we speak of a self-joining of $B$; such a self-joining is said to be  symmetric if the law of  $(\QQ;\Psi_1,\Psi_2)$ is the same as the law of  $(\QQ;\Psi_2,\Psi_1)$. 
\end{definition}
\begin{remark}
Suppose we have a joining $((C,\QQ);(\Psi_1,\Psi_2))$ of $(B_1,\PP_1)$ and $(B_2,\PP_2)$. By definition $\Psi_i(1_{\PP_i})$ distributes over $C$ for $i\in \{1,2\}$. Another kind of distributivity, namely ``of $C$ over $\Psi_1(1_{\PP_1})\lor \Psi_2(1_{\PP_2})$'', is implicit, in the following precise sense. Let $c\in C$. Then $(c\land \Psi_i(1_{\PP_i}))\lor (c'\land \Psi_i(1_{\PP_i}))=\Psi_i(1_{\PP_i})$, $i\in \{1,2\}$; taking the join we get $[(c\land \Psi_1(1_{\PP_1}))\lor (c\land \Psi_2(1_{\PP_2}))]\lor [(c'\land \Psi_1(1_{\PP_1}))\lor (c'\land \Psi_2(1_{\PP_2}))]=\Psi_1(1_{\PP_1})\lor \Psi_2(1_{\PP_2})=1_\QQ$. Intersecting with $c$,  independence of $c$ and $c'$ allows to conclude that $c=(c\land \Psi_1(1_{\PP_1}))\lor (c\land \Psi_2(1_{\PP_2}))$.
\end{remark}
\begin{remark}
 The law of a joining $(\QQ;\Psi_1,\Psi_2)$ of $\PP_1$ and $\PP_2$ seems scanty in terms of the information it provides. However, by Dynkin's lemma and due to  $1_{\QQ}=\Psi_1(1_{\PP_1})\lor\Psi_2(1_{\PP_2})$ it determines $\QQ$ uniquely. When $\PP_1=\PP_2=\PP$, then the operator of a self-joining is Hermitian iff the self-joining is symmetric, in which case it is determined already by the quadratic form $(\L2(\PP)\ni f\mapsto \PP[ U(f)f])$ (by polarization). 
\end{remark}
\begin{remark}
The local correlation camouflages the more general concept of the correlation between two elements $x$ and $y$ of $\Lattice$, which is to say of the quantity $\rho(x,y):=\sup \vert \PP[fg]\vert$, where the supremum is over all $f\in \L2(\PP\vert_x)$ with $\Vert f\Vert\leq 1$ and $g\in \L2(\PP\vert_y)$ with $\Vert g\Vert\leq 1$. Plainly $\rho(x,y)\in [0,1]$ (by Cauchy-Schwartz) and $\rho(x,y)=0$ as soon as $x\land y\ne 0_\PP$. Further, for a given $p\in [0,1]$, $\rho(x,y)\leq p$ is equivalent to $$\vert \PP[fg]\vert\leq p\Vert f\Vert\Vert g\Vert,\quad (f,g)\in \L2(\PP\vert_x)_0\times \L2(\PP\vert_y)_0.$$ Taking $g=\PP[f\vert y]$ in the preceding display gives $\Vert\PP[f\vert y]\Vert\leq \rho(x,y)\Vert f\Vert$ for all $f\in \L2(\PP\vert_x)_0$, i.e. $\Vert \PP_y\vert_{\L2(\PP\vert_x)_0}\Vert\leq  \rho(x,y)$. In fact $\Vert \PP_y\vert_{\L2(\PP\vert_x)_0}\Vert= \rho(x,y)$, the reverse inequality following from  the estimate $\vert \PP[fg]\vert=\vert \PP[\PP[f\vert y]g]\vert\leq \Vert g\Vert\Vert\PP[f\vert y]\Vert$  for $ (f,g)\in \L2(\PP\vert_x)_0\times \L2(\PP\vert_y)_0$, which is Cauchy-Schwartz (again). Thus $ \rho(x,y)$ is nothing but the operator norm of $\PP_y$ when restricted to an operator on $\L2(\PP\vert_x)_0$ (mapping into $\L2(\PP\vert_y)_0$).
\end{remark}
%
\begin{example}
A trivial joining of  noise  Boolean algebras $B_1$ and $B_2$ under the probabilities $\PP_1$ and $\PP_2$, respectively, is the product joining $((C,\QQ);(\Psi_1,\Psi_2))$ for which we take: $\QQ=\PP_1\times \PP_2$; $\Psi_i$ the natural emebedding of $\PP_i$ into $\QQ$, $i\in \{1,2\}$; $C:=\{\Psi_1(x_1)\lor \Psi_2(x_2):(x_1,x_2)\in B_1\times B_2\}$. An even more trivial, self-joining of a noise Boolean algebra $B$ under a probability $\PP$ is the deterministic joining $((B,\PP),(\mathrm{id}_{\mathrm{L}^0(\PP)},\mathrm{id}_{\mathrm{L}^0(\PP)}))$.
\end{example}
\begin{proposition}
(Cf.  \cite[Proposition~4a8]{tsirelson-nonclassical}.) Take a joining as in Definition~\ref{definition:joining}. For $c\in C$ denote the local correlation of the joining on $c$ by $\gamma(c)$; then $\gamma(c)=\sup\vert\QQ[f_1f_2]\vert $, where the supremum is taken over all $f_i\in\L2(\QQ\vert_{c\land \GG_i})_0$ with $\var_\QQ(f_i)\leq 1$, $\GG_i:=\Psi_i(1_{\PP_i})$ for $i\in \{1,2\}$. Let $\{u,v\}\subset C$ with $u\land v=0_\QQ$. Then $\gamma(u\lor v)=\gamma(u)\lor \gamma(v)$. 
\end{proposition}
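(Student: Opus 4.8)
The plan is to reduce the claim $\gamma(u\lor v)=\gamma(u)\lor\gamma(v)$ (where $\lor$ on the right denotes the maximum of two nonnegative reals) to the tensor-factorization of the relevant $\L2$-spaces along the independent pair $(u,v)$, together with the operator-norm interpretation of the local correlation recorded in the first part of the proposition. First I would fix the notation $\GG_i:=\Psi_i(1_{\PP_i})$, $i\in\{1,2\}$, and recall that the local correlation on $c\in C$ is, by the first assertion of this very proposition, $\gamma(c)=\sup\vert\QQ[f_1f_2]\vert$ over $f_i\in\L2(\QQ\vert_{c\land\GG_i})_0$ with $\var_\QQ(f_i)\le 1$. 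By the preceding remark, this is exactly the operator norm of $\PP_{c\land\GG_2}$ restricted to $\L2(\QQ\vert_{c\land\GG_1})_0$, i.e. $\gamma(c)=\rho_\QQ(c\land\GG_1,c\land\GG_2)$, the correlation coefficient between the two $\sigma$-fields $c\land\GG_1$ and $c\land\GG_2$ in $\Lattice(\QQ)$.

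The inequality $\gamma(u\lor v)\ge\gamma(u)\lor\gamma(v)$ is immediate and I would dispatch it first: since $u\subset u\lor v$ and $v\subset u\lor v$, any admissible test pair for $\gamma(u)$ (resp. $\gamma(v)$) is also admissible for $\gamma(u\lor v)$, because $\L2(\QQ\vert_{u\land\GG_i})_0\subset\L2(\QQ\vert_{(u\lor v)\land\GG_i})_0$ and likewise for $v$. Hence the supremum defining $\gamma(u\lor v)$ dominates each of $\gamma(u),\gamma(v)$. The substance is the reverse inequality $\gamma(u\lor v)\le\gamma(u)\lor\gamma(v)$. For this I would exploit that $u$ and $v$ are independent members of $C$ (given $u\land v=0_\QQ$, independence in a noise Boolean algebra is automatic), so that, using the independent-join tensorization of $\L2$-spaces from item~\ref{generalities:lattice} of the preliminaries, $\L2(\QQ\vert_{(u\lor v)\land\GG_i})=\L2(\QQ\vert_{u\land\GG_i})\otimes\L2(\QQ\vert_{v\land\GG_i})$ for each $i$ (here one uses that $\GG_i$ distributes over $C$, so that $(u\lor v)\land\GG_i=(u\land\GG_i)\lor(v\land\GG_i)$ with the two summands independent). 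Under these identifications $\PP_{(u\lor v)\land\GG_2}=\PP_{u\land\GG_2}\otimes\PP_{v\land\GG_2}$ as operators on the tensor product.

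The key analytic step is then a tensor-product operator-norm estimate: if $A$ is a contraction on $\L2(\QQ\vert_{u\land\GG_1})$ restricting to a map of norm $\rho_u:=\gamma(u)$ on the zero-mean subspace, and $B$ similarly has zero-mean norm $\rho_v:=\gamma(v)$ on $\L2(\QQ\vert_{v\land\GG_1})$, then the norm of $A\otimes B$ on the zero-mean part of the tensor product is exactly $\rho_u\vee\rho_v$. Concretely, I would decompose each factor space as constants $\oplus$ zero-mean part; the tensor product of the two $\GG_1$-spaces then splits into four orthogonal blocks, and a zero-mean vector in the product is a combination of the three blocks other than constant$\otimes$constant. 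On the block (zero-mean)$\otimes$(constant) the operator $\PP_{u\land\GG_2}\otimes\PP_{v\land\GG_2}$ acts as $(\text{zero-mean part of }\PP_{u\land\GG_2})\otimes\mathrm{id}$ and so has norm $\rho_u$; on (constant)$\otimes$(zero-mean) it has norm $\rho_v$; on (zero-mean)$\otimes$(zero-mean) it has norm $\rho_u\rho_v\le\rho_u\vee\rho_v$; since the three blocks are orthogonal and preserved by the operator, the overall norm is $\rho_u\vee\rho_v$. This gives $\gamma(u\lor v)=\rho_\QQ((u\lor v)\land\GG_1,(u\lor v)\land\GG_2)\le\rho_u\vee\rho_v=\gamma(u)\lor\gamma(v)$, completing the reverse inequality. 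The main obstacle I anticipate is the bookkeeping in this last step: one must verify carefully that the operator $\PP_{u\land\GG_2}\otimes\PP_{v\land\GG_2}$ genuinely leaves the three orthogonal blocks invariant and that the conditional-expectation operators really do factor as a tensor product under the independence of $u\land\GG_i$ and $v\land\GG_i$ — this rests on the tensorization of independent conditioning (\cite[Lemma~2.2]{vidmar_2019}) and on $\GG_i$ distributing over $C$, so the bulk of the work is confirming these structural identities rather than any hard estimate.
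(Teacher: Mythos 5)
Your proposal is correct and follows essentially the same route as the paper: identify the local correlation with the operator norm of the conditional expectation on the zero-mean subspace, use distributivity and the independence of $u$ and $v$ to tensor-factorize the relevant $\L2$-spaces, and then decompose the zero-mean part of the tensor product into the three orthogonal invariant blocks with norms $\gamma(u)$, $\gamma(v)$ and $\gamma(u)\gamma(v)$, whose maximum is $\gamma(u)\lor\gamma(v)$. The paper packages this block computation as a small lemma on correlations of arbitrary independent pairs in $\Lattice$, but the substance is identical to yours.
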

\begin{proof}
The fact that  $\gamma(c)=\sup\vert\QQ[f_1f_2]\vert $, where the supremum is taken over all $f_i\in\L2(\QQ\vert_{c\land \GG_i})_0$ with $\var_\QQ(f_i)\leq 1$, is immediate. 

As for the equality $\gamma(u\lor v)= \gamma(u) \lor \gamma(v)$, since by distributivity  $(u\lor v)\land \GG_i=(u\land \GG_i)\lor (v\land \GG_i)$, $i\in \{1,2\}$, it is really a matter of checking the following general observation concerning arbitrary elements of $\Lattice$: 
\begin{quote}
\begin{lemma}
If $\{u_1,u_2,v_1,v_2\}\subset \Lattice$ and $u_1\lor u_2$ is independent of $v_1\lor v_2$, then $\rho(u_1\lor v_1,u_2\lor u_2)=\rho(u_1,u_2)\lor \rho(v_1,v_2)$.
\end{lemma}
\begin{proof}
The operator $\PP_{u_1\lor v_1}\vert_{\L2(\PP\vert_{u_2\lor v_2})_0}$ decomposes into $\PP_{u_1}\vert_{\L2(\PP\vert_{u_2})_0}\oplus \PP_{v_1}\vert_{\L2(\PP\vert_{v_2})_0}\oplus (\PP_{u_1}\otimes \PP_{v_1})\vert_{\L2(\PP\vert_{u_2})_0\otimes \L2(\PP\vert_{ v_2})_0}$ according to the orthogonal decomposition of $\L2(\PP\vert_{u_2\lor v_2})_0$ into $\L2(\PP\vert_{u_2})_0\oplus \L2(\PP\vert_{v_2})_0\oplus (\L2(\PP\vert_{u_2})_0\otimes \L2(\PP\vert_{ v_2})_0)$. We get $\rho(u_1\lor v_1,u_2\lor u_2)=\rho(u_1,u_2)\lor \rho(v_1,v_2)\lor (\rho(u_1,u_2) \rho(v_1,v_2))=\rho(u_1,u_2)\lor \rho(v_1,v_2)$.
\end{proof}
\end{quote}
The proof is complete.
\end{proof}

\begin{proposition}\label{proposition:self-joinings}
(Cf. \cite[Section~2, passim]{spectra-harris}.) Let $P$ be a partition of unity of $B$ and $\rho \in [0,1]^P$. There exists a symmetric self-joining $((C,\QQ);(\Psi_1,\Psi_2))$ of $B$  such that:
\begin{enumerate}[(a)]
\item the $\Psi_{1}(p)\lor\Psi_2(p)$, $p\in P$, are $\QQ$-independent and belong to $C$;
\item  for all $p\in P$, the local correlation of this self-joining on $\Psi_1(p)\lor\Psi_2(p)$ is $\leq \rho(p)$;
\item for any other self-joining $((\tilde C,\tilde \QQ);(\tilde\Psi_1,\tilde\Psi_2))$ of $B$ satisfying the two preceding properties, it holds that $\vert \tilde \QQ[\tilde\Psi_{1}(f)\tilde\Psi_{2}(f)]\vert \leq \QQ[\Psi_{1}(f)\Psi_{2}(f)]$ for all $f\in \L2(\PP\vert_p)$, all $p\in P$.
\end{enumerate}
Furthermore, we have the following properties.
\begin{enumerate}[(i)]
\item\label{weird-minus} The law of such a symmetric self-joining is uniquely determined by the  properties listed above; its operator is $U_\rho$. 
\item\label{weird-zero} For any such symmetric self-joining, provided $\lor \rho<1$ ($\lor\emptyset=0$), we have that $\Psi_{1}(1_\PP)$ is independent of  $\Psi_{2}(1_\PP)$ given ${\Psi_{1}(\stable)\lor \Psi_{2}(\stable)}$, and also that $\Psi_{1}(1_\PP)$ is independent of $ \Psi_{2}(\stable)$ given ${\Psi_{1}(\stable)}$.
\item\label{weird-one}  For any such symmetric self-joining, $\mu_f[ \prod_{p\in P}\rho(p)^{K(\pr_p)}]=\QQ[\Psi_1(f)\Psi_2(f)]$ for all $f\in \L2(\PP)$.
\item\label{weird-two}   Conversely, let $\nu$ be a measure absolutely continuous w.r.t. $\mu$, let $f\in \L2(\PP)$ and suppose that $\nu[ \prod_{p\in P}\rho(p)^{K(\pr_p)}]=\QQ[\Psi_1(f)\Psi_2(f)]$ holds for all symmetric self-joinings as above  associated to $P=\{p,p'\}\backslash \{0_\PP\}$ and to $\rho=\mathbbm{1}_{\{p\}}$ as you vary $p\in B$.  Then $\nu=\mu_f$.
\end{enumerate}
\end{proposition}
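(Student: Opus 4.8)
The plan is to handle existence and the identification of the operator together, then to establish the maximality property (c), and finally to read off the uniqueness of the law and the measure-theoretic content (i)--(iv) from the fact that the operator is $U_\rho$.

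First I would construct the self-joining, using property (a) as the design constraint. I would build $C$ so that the blocks $\Psi_1(p)\lor\Psi_2(p)$, $p\in P$, are $\QQ$-independent and so that the restriction of the joining to the $p$-block is the ``independent resampling'' self-joining of $B_p$ whose operator is $U_{-\log\rho(p)}\vert_{\L2(\PP\vert_p)}$ (interpreting $U_\infty$ as the projection onto constants when $\rho(p)=0$). Assembling these over the finite partition $P$ and invoking Remark~\ref{generalized-noise}\ref{generalized-noise:iii} identifies the operator of the assembled self-joining as $\otimes_{p\in P}\bigl(U_{-\log\rho(p)}\vert_{\L2(\PP\vert_p)}\bigr)=U_\rho$. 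Property (b) is then automatic, since on the first chaos of $B_p$ the operator acts as multiplication by $\rho(p)$ and on the $n$-th chaos as $\rho(p)^n\le\rho(p)$, so the local correlation on $\Psi_1(p)\lor\Psi_2(p)$ equals $\rho(p)$. The genuine difficulty here is not the operator bookkeeping but making $C$ a bona fide noise Boolean algebra in which both copies embed (i.e. each $\Psi_i(1_\PP)$ \emph{distributes} over $C$, not merely correlates with $\Psi_{3-i}(1_\PP)$); I would secure this by approximating $B$ by a $\uparrow$ sequence $(b_m)_{m\in\mathbb{N}}$ in $\mathfrak{F}_B$ with $b_1\supseteq b(P)$, realizing on each finite level the explicit atomwise resampling coupling inside a finite classical noise, and passing to a consistent inductive limit, along the lines of \cite[Section~2]{spectra-harris}.

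The main obstacle is the maximality statement (c): that the constructed joining dominates every competitor on each diagonal correlation $f\mapsto\tilde\QQ[\tilde\Psi_1(f)\tilde\Psi_2(f)]$, $f\in\L2(\PP\vert_p)$. The key is to show that for \emph{any} self-joining satisfying (a) and (b), the diagonal-correlation functional behaves \emph{spectrally multiplicatively} across the internal independent decomposition of each piece. Concretely, for independent $u,v\in B$ the blocks $c_u:=\tilde\Psi_1(u)\lor\tilde\Psi_2(u)$ and $c_v$ lie in $\tilde C$, and distributivity in the noise $\tilde C$ gives $c_u\land c_v=(\tilde\Psi_1(u)\land\tilde\Psi_2(v))\lor(\tilde\Psi_2(u)\land\tilde\Psi_1(v))$; the heart of the matter is that these cross-meets vanish, i.e. $\tilde\Psi_1(u)$ is $\tilde\QQ$-independent of $\tilde\Psi_2(v)$, which forces the blocks to be independent and hence the diagonal correlation to factorize. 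Granting this, on the $n$-th chaos within $p$ the diagonal correlation is a product of $n$ first-chaos correlations, each $\le\rho(p)$ by (b), hence $\le\rho(p)^n=\langle U_{-\log\rho(p)}(\cdot),\cdot\rangle$, with equality attained by the constructed joining. (This multiplicativity is exactly what excludes ``globally correlated resampling'' competitors, whose transfer operators would act as $\rho(p)$ rather than $\rho(p)^n$ on higher chaoses; ruling those out from the bare noise axioms on $\tilde C$ is the delicate point, and is where I expect the real work to lie.)

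Given (c), the remaining items are essentially formal. For uniqueness in (i): two symmetric self-joinings satisfying (a)--(c) mutually dominate one another, so they agree on all diagonal correlations, whence (by symmetry and polarization) their operators coincide and equal $U_\rho$; since the operator determines $\QQ(\Psi_1(A)\cap\Psi_2(B))=\PP[U_\rho(\mathbbm{1}_A)\mathbbm{1}_B]$ on the generating family and $1_\QQ=\Psi_1(1_\PP)\lor\Psi_2(1_\PP)$, Dynkin's lemma fixes the law. Item (iii) is then immediate: $\QQ[\Psi_1(f)\Psi_2(f)]=\langle U_\rho f,f\rangle=\int_S\alpha(U_\rho)\,\Vert\Psi(f)\Vert^2\,\dd\mu=\mu_f\bigl[\prod_{p\in P}\rho(p)^{K(\pr_x)}\bigr]$, using $\alpha(U_\rho)=\prod_{p\in P}\rho(p)^{K(\pr_p)}$ and $\mu_f=\Vert\Psi(f)\Vert^2\cdot\mu$. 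For (ii) with $\lor\rho<1$: since $K=\sum_{p\in P}K(\pr_p)$ a.e.-$\mu$ by Proposition~\ref{proposition:K-and-projections} and all bases are $<1$, the multiplier $\alpha(U_\rho)$ vanishes a.e.-$\mu$ on $\{K=\infty\}=H_{\mathrm{sens}}$'s spectral set, so $U_\rho=U_\rho\PP_\stable=\PP_\stable U_\rho$; combined with self-adjointness this makes every cross-conditional-expectation factor through the stable parts, and I would upgrade this to the two asserted conditional independences by testing on products of first-chaos elements (dense in the classical stable part) using the tensor structure of Corollary~\ref{prop:noise-proj-bis}\ref{proj:iv}. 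Finally, for the converse (iv), I would specialize to $P=\{p,p'\}\backslash\{0_\PP\}$ and $\rho=\mathbbm{1}_{\{p\}}$: then $\prod_{q\in P}\rho(q)^{K(\pr_q)}=1^{K(\pr_p)}0^{K(\pr_{p'})}=\mathbbm{1}_{\{K(\pr_{p'})=0\}}=\mathbbm{1}_{S_p}$ a.e.-$\mu$, since $\pr_{p'}^{-1}(S_{0_\PP})=S_{0_\PP\lor p}=S_p$ by Theorem~\ref{thm:noise-projections}\ref{proj:ii}. By (iii) the hypothesis then reads $\nu(S_p)=\mu_f(S_p)$ for every $p\in B$ (the case $p=1_\PP$ giving the common mass $\PP[f^2]$), and as $\{S_p:p\in B\}$ is an essentially generating $\pi$-system and both $\nu$ and $\mu_f$ are finite and $\ll\mu$, Dynkin's lemma yields $\nu=\mu_f$.
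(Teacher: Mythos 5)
Most of your proposal runs parallel to the paper's proof: the paper also factorizes everything over $P$ (reducing to $P=\{1_\PP\}$), constructs the joining as a monotone limit over $\mathfrak{F}_B$ of finite-level atomwise resamplings (citing \cite[Proposition~4b1]{tsirelson-nonclassical} for the details, including the distributivity of $\Psi_i(1_\PP)$ over $C$, which in the construction comes for free from a surjective noise factorization $N:B\to C$ with $\Psi_i(x)=N(x)\land \Psi_i(1_\PP)$), identifies the operator as $U_\rho$, and proves (iii) and (iv) exactly as you do. Your route to (ii) via $U_\rho=\PP_{\stable}U_\rho$ is in the right spirit; the paper instead computes directly with decompositions $s_i+o_i$, $s_i\in H_{\mathrm{stb}}$, $o_i\in H_{\mathrm{sens}}$, using identities like $\QQ[\Psi_1(o_1)\Psi_2(o_2)\Psi_1(s_1)\Psi_2(s_2)]=\PP[(U_{-\log\rho}(s_1o_1)s_2)o_2]=0$, which is the rigorous form of what you sketch.

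The genuine gap is in your argument for the maximality property (c), and you half-acknowledge it yourself. You need, for an \emph{arbitrary} competitor $((\tilde C,\tilde\QQ);(\tilde\Psi_1,\tilde\Psi_2))$ satisfying only (a) and (b), that the diagonal correlation factorizes over independent $u,v$ lying \emph{inside a single block} $p\in P$. Property (a) gives independence of the blocks $\tilde\Psi_1(q)\lor\tilde\Psi_2(q)$ only across distinct $q\in P$; it says nothing about sub-blocks of a fixed $p$. Worse, for a general self-joining the definition only provides $\tilde\Psi_i(u)=c^{(i)}\land\tilde\Psi_i(1_\PP)$ for two possibly \emph{different} elements $c^{(1)},c^{(2)}$ of $\tilde C$ (a single $c$ working for both copies is a feature of the constructed joining, not an axiom), so $\tilde\Psi_1(u)\lor\tilde\Psi_2(u)$ need not belong to $\tilde C$ at all, and the distributivity computation you invoke is unavailable. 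Consequently neither the vanishing of the cross-meets, nor the $\tilde\QQ$-independence of $\tilde\Psi_1(u)$ and $\tilde\Psi_2(v)$, nor the resulting product bound $\leq\rho(p)^n$ on the $n$-th chaos (which additionally requires the competitor's operator to respect the chaos grading, another unproven step) is established. This is precisely the content that the paper imports wholesale from Tsirelson's argument rather than re-deriving; without it your proof of (c), and hence the uniqueness in (i) (which you derive from mutual domination), is incomplete.
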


 \begin{remark}
The formula in Item~\ref{weird-one} of the preceding proposition polarizes naturally. Also, if $\PP[f^2]=1$, then it rewrites into $\mu_f[1- \prod_{p\in P}\rho(p)^{K(\pr_p)}]=\frac{1}{2}\QQ[(\Psi_1(f)-\Psi_2(f))^2]$.  
 \end{remark}

 \begin{proof}
The case $0_\PP=1_\PP$ is trivial and omitted. Then for existence as well as for \ref{weird-minus}-\ref{weird-zero} everything ``factorizes'' over $p\in P$ (recall Proposition~\ref{proposition:classical-part-of-noise}\ref{proposition:classical-part-of-noise:i} and Remark~\ref{generalized-noise}\ref{generalized-noise:iii}); therefore, as far as these statements are concerned, we may and do assume $P=\{1_\PP\}$ and identify $\rho$ with $\rho(1_\PP)$. 
 
Existence. Because we are looking for embeddings of probabilities we may and do assume that $\PP$ is a standard probability.  Once this has been noted the proof works almost exactly verbatim like that of \cite[Proposition~4b1]{tsirelson-nonclassical} with only automatic modifications necessary to pass from nets over [decompositions of $\mathbb{R}$ into a union of disjoint intervals (modulo finite sets)] to nets over [partitions of unity of $B$]. The symmetric self-joining  is obtained by the procedure of ``replacing each infnitesimal part of the data with probability $1-\rho$ by an independent copy'' as described in Remark~\ref{remark:replace-infinitesimal} and by taking the limit of the resulting net $\left(((C^b,\QQ^b);(\Psi_1^b,\Psi_2^b))\right)_{b\in \mathfrak{F}_B}$ of symmetric self-joinings, whose quadratic forms \cite[Eq.~(4b4)]{tsirelson-nonclassical}  $U_{-\log \rho}(b)$, $b\in \mathfrak{F}_B$, converge monotonically to the quadratic form of the limiting symmetric self-joining, which is then $U:=U_{-\log \rho}$ (recall Proposition~\ref{proposition:stability-sensitivity}\ref{proposition:stability-sensitivity:iii}). Specifically, the construction is such that $$\QQ^b[\Psi_1^b(\oplus_{n\in \mathbb{N}_0}f_n)\Psi_2^b(\oplus_{n\in \mathbb{N}_0}g_n)]=\sum_{n\in \mathbb{N}_0}\rho^n\PP[f_ng_n],$$ where the orthogonal sum decompositions are according to the chaos spaces of the finite noise Boolean algebra $b\in \mathfrak{F}_B$ (of course all but a finite number of terms are actually zero for each fixed $b$).  
 
 The distributivity property of a joining in Definition~\ref{definition:joining} follows automatically from the construction because by the procedure of the proof one gets actually a surjective noise factorization $N:B\to C$ such that $\Psi_i(x)=N(x)\land \Psi_i(1_\PP)$  hence $(N(x)\land \Psi_i(1_\PP))\lor (N(x')\land \Psi_i(1_\PP))=\Psi_i(x)\lor \Psi_i(x')=\Psi_i(x\lor x')=\Psi_i(1_\PP)$ for all $x\in B$, which means that $\Psi_i(1_\PP)$ distributes over $C$, $i\in \{1,2\}$.
 
\ref{weird-minus}. The law of a symmetric self-joining is determined by the quadratic form of its operator; from the preceding this operator is $U_\rho$. 
 
\ref{weird-zero}.  Assume $\rho<1$.   Let $\{s_1,s_2\}\subset H_{\mathrm{stb}}$ and $\{o_1,o_2\}\subset H_{\mathrm{sens}}$. Trivially, $$\QQ[\Psi_1(s_1+o_1)\vert {\Psi_{1}(\stable)\lor \Psi_{2}(\stable)}]=\Psi_1(s_1)+\QQ[\Psi_1(o_1)\vert {\Psi_{1}(\stable)\lor \Psi_{2}(\stable)}].$$ But, if for a moment $s_1$ and $s_2$ are bounded, then $\QQ[\Psi_1(o_1)\Psi_1(s_1)\Psi_2(s_2)]=\QQ[\Psi_1(s_1o_1)\Psi_2(s_2)]=\PP[U_{-\log\rho}(s_2)s_1o_1]=0$ ($\because$ $\rho<1$). Therefore $\QQ[\Psi_1(o_1)\vert {\Psi_{1}(\stable)\lor \Psi_{2}(\stable)}]=0$ and  $\QQ[\Psi_1(s_1+o_1)\vert {\Psi_{1}(\stable)\lor \Psi_{2}(\stable)}]=\Psi_1(s_1)$. Besides, $\QQ[\Psi_1(o_1)\vert \Psi_{1}(\stable)]=\PP[o_1\vert \stable]=0$. We get $\QQ[\Psi_1(s_1+o_1)\vert \Psi_{1}(\stable)]=\Psi_1(s_1)$. Combining the two conclusions shows that $\Psi_1(1_\PP)$ is independent of $ \Psi_2(\stable)$ given $\Psi_1(\stable)$. Further, expanding the product we obtain at once that 
\begin{align*}
&\QQ[\Psi_1(s_1+o_1)\Psi_2(s_2+o_2)\vert {\Psi_{1}(\stable)\lor \Psi_{2}(\stable)}] =\Psi_1(s_1)\Psi_2(s_2)+\Psi_1(s_1)\QQ[\Psi_2(o_2)\vert {\Psi_{1}(\stable)\lor \Psi_{2}(\stable)}]\\
&+\QQ[\Psi_1(o_1)\vert {\Psi_{1}(\stable)\lor \Psi_{2}(\stable)}]\Psi_2(s_2)+\QQ[\Psi_1(o_1)\Psi_2(o_2)\vert {\Psi_{1}(\stable)\lor \Psi_{2}(\stable)}]\\
&=\Psi_1(s_1)\Psi_2(s_2)+\QQ[\Psi_1(o_1)\Psi_2(o_2)\vert {\Psi_{1}(\stable)\lor \Psi_{2}(\stable)}].
\end{align*}
 Similarly as above, if for a moment $s_1$ and $s_2$ are bounded, then $\QQ[\Psi_1(o_1)\Psi_2(o_2)\Psi_1(s_1)\Psi_2(s_2)]=\QQ[\Psi_1(o_1s_1)\Psi_2(o_2s_2)]=\PP[(U_{-\log\rho}(s_1o_1)s_2)o_2]=0$ ($\because$ $\rho<1$). We get $\QQ[\Psi_1(o_1)\Psi_2(o_2)\vert {\Psi_{1}(\stable)\lor \Psi_{2}(\stable)}]=0$, therefore 
 \begin{align*}
& \QQ[\Psi_1(s_1+o_1)\Psi_2(s_2+o_2)\vert {\Psi_{1}(\stable)\lor \Psi_{2}(\stable)}]=\Psi_1(s_1)\Psi_2(s_2)\\
& =\QQ[\Psi_1(s_1+o_1)\vert {\Psi_{1}(\stable)\lor \Psi_{2}(\stable)}]\QQ[\Psi_2(s_2+o_2)\vert {\Psi_{1}(\stable)\lor \Psi_{2}(\stable)}],
\end{align*}
 and $\Psi_{1}(1_\PP)$ is independent of  $\Psi_{2}(1_\PP)$ given ${\Psi_{1}(\stable)\lor \Psi_{2}(\stable)}$. 

\ref{weird-one}. We compute $\QQ[\Psi_1(f)\Psi_2(f)]=\langle f,U_\rho f\rangle=\langle f,\alpha^{-1}(\prod_{p\in P}\rho(p)^{K(\pr_p)})f\rangle=\mu[\Vert\Psi(f)\Vert^2\prod_{p\in P}\rho(p)^{K(\pr_p)}]=\mu_f[\prod_{p\in P}\rho(p)^{K(\pr_p)}]$.

\ref{weird-two}. The condition implies that $\nu(S_p)=\mu_f(S_p)$ for all $p\in B$, so that by Dynkin's lemma (use $\nu\ll \mu$) it follows that $\nu=\mu_f$.
 \end{proof}
 
  \begin{proposition}(Cf. \cite[Theorem~4c2]{tsirelson-nonclassical}.)\label{proposition:stable-sensitive}
 Let $f\in \L2(\PP)$. 
 \begin{enumerate}[(i)]
 \item\label{charaterization:stable} $f\in H_{\mathrm{stb}}$ iff  there exists a sequence of symmetric self-joinings  $\left(((C^n,\QQ^n);(\Psi_1^n,\Psi_2^n))\right)_{n\in \mathbb{N}}$ of $B$ each having correlation $<1$, and such that $\lim_{n\to\infty}\limits\QQ^n[(\Psi_1^n(f)-\Psi_2^n(f))^2]=0$.
 \item\label{charaterization:sensitive} $f\in H_{\mathrm{sens}}$  iff for every symmetric self-joining $((C,\QQ);(\Psi_1,\Psi_2))$ of $B$   having correlation $<1$, it holds that $\QQ[\Psi_1(f)\Psi_2(g)]=0$ for all $g\in \L2(\PP)$.
 \end{enumerate}
 \end{proposition}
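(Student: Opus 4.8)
The plan is to reduce everything to two facts about the operator $U$ of a symmetric self-joining $((C,\QQ);(\Psi_1,\Psi_2))$ of $B$: by the remark following Definition~\ref{definition:joining} such a $U$ is Hermitian, it preserves means (so maps $\L2(\PP)_0$ into itself, since $\PP[Uf]=\QQ[\Psi_1(f)\Psi_2(1)]=\PP[f]$) and satisfies $\langle Uf,g\rangle=\QQ[\Psi_1(f)\Psi_2(g)]$, and its correlation equals the operator norm $\Vert U\vert_{\L2(\PP)_0}\Vert$. The workhorses are the canonical self-joinings furnished by Proposition~\ref{proposition:self-joinings} for the trivial partition $P=\{1_\PP\}$ and $\rho(1_\PP)=e^{-t}$: these have operator $U_{\{(1_\PP,e^{-t})\}}=U_t$ (Remark~\ref{generalized-noise}), correlation $e^{-t}<1$ for $t\in(0,\infty)$, and $U_t=0$ on $H_{\mathrm{sens}}$. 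I would prove \ref{charaterization:sensitive} first, as it is the engine. The implication $\Leftarrow$ is immediate: feeding the canonical self-joinings $U_t$, $t\in(0,\infty)$, into the hypothesis gives $\langle U_tf,g\rangle=0$ for all $g$, hence $U_tf=0$ for all $t>0$, which is exactly $f\in H_{\mathrm{sens}}$ by Proposition~\ref{proposition:stability-sensitivity}\ref{proposition:stability-sensitivity:v}. For $\Rightarrow$ let $J=((C,\QQ);(\Psi_1,\Psi_2))$ be an arbitrary symmetric self-joining of correlation $c<1$ and operator $U$; fix $c'\in[c,1)$ and $t:=-\log c'\in(0,\infty)$. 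The point is that $J$ satisfies conditions (a)--(b) of Proposition~\ref{proposition:self-joinings} for $P=\{1_\PP\}$ and $\rho(1_\PP)=c'$: condition (a) is vacuous, while the local correlation of $J$ on $\Psi_1(1_\PP)\lor\Psi_2(1_\PP)=1_\QQ$ is just its correlation $c\le c'$. Hence the maximality property (c) of the canonical self-joining yields $|\langle Uf,f\rangle|\le\langle f,U_tf\rangle$ for every $f\in\L2(\PP)$, and for $f\in H_{\mathrm{sens}}$ the right-hand side vanishes, so $\langle Uf,f\rangle=0$.

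The remaining task in \ref{charaterization:sensitive}$\Rightarrow$ — and the main obstacle — is to upgrade $\langle Uf,f\rangle=0$ on $H_{\mathrm{sens}}$ to the full statement $Uf=0$. Because $U_t$ annihilates $H_{\mathrm{sens}}$ and commutes with $\PP_\stable$, one has $\langle h,U_th\rangle=\langle\PP_\stable h,U_t\PP_\stable h\rangle$ for all $h$. Applying the displayed bound to $h=o+\lambda s$ with $o\in H_{\mathrm{sens}}$, $s\in H_{\mathrm{stb}}$, $\lambda\in\RR$, and expanding by self-adjointness together with $\langle Uo,o\rangle=0$, gives $|2\lambda\langle Uo,s\rangle+\lambda^2\langle Us,s\rangle|\le\lambda^2\langle s,U_ts\rangle$; dividing by $|\lambda|$ and letting $\lambda\to0$ forces $\langle Uo,s\rangle=0$. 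Combined with the polarization of $\langle Uo,o\rangle=0$ within $H_{\mathrm{sens}}$, this shows $Uo\perp H_{\mathrm{stb}}\oplus H_{\mathrm{sens}}=\L2(\PP)$, i.e. $Uo=0$, so $\QQ[\Psi_1(o)\Psi_2(g)]=\langle Uo,g\rangle=0$ for all $g$.

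Finally I would deduce \ref{charaterization:stable} from \ref{charaterization:sensitive}. For $\Rightarrow$, take the canonical self-joinings with operators $U_{1/n}$ (correlation $e^{-1/n}<1$); since the $\Psi_i^n$ preserve laws, $\QQ^n[(\Psi_1^n(f)-\Psi_2^n(f))^2]=2\PP[f^2]-2\langle f,U_{1/n}f\rangle$, and $U_{1/n}f\to U_{0+}f=\PP_\stable f=f$ in norm for $f\in H_{\mathrm{stb}}$ (Proposition~\ref{proposition:stability-sensitivity}\ref{proposition:stability-sensitivity:v}), so this tends to $0$. For $\Leftarrow$, write $f=s+o$ with $s=\PP_\stable f\in H_{\mathrm{stb}}$ and $o\in H_{\mathrm{sens}}$; by \ref{charaterization:sensitive}$\Rightarrow$ every self-joining operator $U^n$ of correlation $<1$ kills $o$, whence by self-adjointness $\langle U^nf,f\rangle=\langle U^ns,s\rangle\le\Vert s\Vert^2=\PP[s^2]$. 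The hypothesis that $\QQ^n[(\Psi_1^n(f)-\Psi_2^n(f))^2]\to0$ means $\langle U^nf,f\rangle\to\PP[f^2]$, so $\PP[f^2]\le\PP[s^2]$, forcing $\PP[o^2]=0$ and $f=s\in H_{\mathrm{stb}}$.

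The substance of the argument is thus concentrated in \ref{charaterization:sensitive}$\Rightarrow$: the maximality property~(c) of Proposition~\ref{proposition:self-joinings} lets the canonical (noise-semigroup) self-joining \emph{dominate} every competitor of equal-or-smaller correlation, and the small-$\lambda$ polarization then converts the resulting one-sided quadratic bound into the orthogonality $Uo\perp\L2(\PP)$. Everything else is routine bookkeeping with the Hermitian form $\langle Uf,g\rangle=\QQ[\Psi_1(f)\Psi_2(g)]$ and the known spectral behaviour of $U_t$; I expect no serious difficulty there, the only care needed being the identification of the local correlation of an arbitrary self-joining on $1_\QQ$ with its global correlation, so that condition~(b) is genuinely available.
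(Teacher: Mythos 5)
Your proof is correct, but the necessity half of \ref{charaterization:sensitive} --- which is where the substance lies --- is argued by a genuinely different mechanism than in the paper. The paper's proof establishes, for a symmetric self-joining of correlation $<1$, the factorization $\QQ[\Psi_1(f)\Psi_2(g)]=\QQ[\Psi_1(\PP[f\vert\stable])\Psi_2(\PP[g\vert \stable])]$ by conditioning on $\Psi_1(\stable)\lor\Psi_2(\stable)$ and invoking the conditional independence statements of Proposition~\ref{proposition:self-joinings}\ref{weird-zero}; necessity then follows because $\PP[f\vert\stable]=0$ for $f\in H_{\mathrm{sens}}$. You instead use only the extremality property (c) of Proposition~\ref{proposition:self-joinings} (correctly observing that an arbitrary symmetric self-joining of correlation $c\le c'<1$ satisfies hypotheses (a)--(b) for $P=\{1_\PP\}$, $\rho(1_\PP)=c'$, since the local correlation on $1_\QQ$ is the global one) to obtain $\vert\langle Uh,h\rangle\vert\le\langle h,U_th\rangle$ for every $h$, and then the $\lambda\to 0$ polarization converts this one-sided quadratic bound into $Uo\perp H_{\mathrm{stb}}\oplus H_{\mathrm{sens}}=\L2(\PP)$, i.e.\ $Uo=0$. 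This buys something real: item \ref{weird-zero} is formulated in the paper only for the extremal self-joinings, so the paper's ``for every symmetric self-joining'' quantifier rests on an implicit extension of that conditional-independence statement, whereas your route needs nothing beyond the domination of an arbitrary competitor by the canonical one. Your treatment of \ref{charaterization:stable} is correspondingly reorganized (the paper argues via $2\PP[f^2]-2\langle f,U_{0+}f\rangle\ge \PP[(f-U_{0+}f)^2]$ and the fixed-point characterization of $H_{\mathrm{stb}}$, while you split $f=s+o$ and feed $o$ into your \ref{charaterization:sensitive}), but both versions are routine once the key direction is in hand. One pedantic caveat: when the correlation is $0$ take $c'\in(0,1)$ rather than $c'=c$ so that $t=-\log c'$ is finite (the argument would in fact survive with $U_\infty$, since $\langle h,U_\infty h\rangle=\PP[h]^2$ is still $O(\lambda^2)$ for $h=o+\lambda s$, but there is no need to deal with that case).
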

 \begin{proof}
 The case $0_\PP=1_\PP$ is trivial and omitted. 
 
 \ref{charaterization:stable}. Note that  $\QQ[(\Psi_1(f)-\Psi_2(f))^2]=2\PP[f^2]-2\QQ[\Psi_1(f)\Psi_2(f)]=2\PP[f^2]-2\langle f,Uf\rangle$ for any symmetric self-joining $((C,\QQ);(\Psi_1,\Psi_2))$  with operator $U$. If $f\in  H_{\mathrm{stb}}$ then one may take a sequence corresponding to $P=\{1_\PP\}$ and $\rho\equiv 1-1/n$ in Proposition~\ref{proposition:self-joinings} as $n\in \mathbb{N}$ is varied. One gets $\lim_{n\to\infty}\limits\QQ^n[(\Psi_1^n(f)-\Psi_2^n(f))^2]=\lim_{n\to\infty}2\PP[f^2]-2\langle f,U_{-\log(1-1/n)}f\rangle=2\PP[f^2]-2\langle f,U_{0+}f\rangle=0$. Conversely, if the stated condition holds true, then with $\rho_n$ the correlation of $((C^n,\QQ^n);(\Psi_1^n,\Psi_2^n))$ for $n\in \mathbb{N}$, we obtain 
\begin{align*}
 0&=\lim_{n\to\infty}\limits\QQ^n[(\Psi_1^n(f)-\Psi_2^n(f))^2]= \lim_{n\to\infty}2\PP[f^2]-2\langle f,U_{-\log\rho_n}f\rangle=2\PP[f^2]-2\langle f,U_{0+}f\rangle\\
&\geq \PP[f^2]+\PP[(U_{0+}f)^2]-2\langle f,U_{0+}f\rangle=\PP[(f-U_{0+}f)^2],
\end{align*}
  hence  $U_{0+}f=f$, i.e. $f\in H_{\mathrm{stb}}$.
 
 \ref{charaterization:sensitive}. For every symmetric self-joining $((C,\QQ);(\Psi_1,\Psi_2))$ of $B$  having correlation $<1$ and $g\in \L2(\PP)$, 
  \begin{align*}
  &\QQ[\Psi_1(f)\Psi_2(g)]=\QQ[\QQ[\Psi_1(f)\Psi_2(g)\vert \Psi_1(\stable)\lor \Psi_2(\stable)]]\\
&  =\QQ[\QQ[\Psi_1(f)\vert \Psi_1(\stable)]\QQ[\Psi_2(g)\vert \Psi_2(\stable)]]=\QQ[\Psi_1(\PP[f\vert\stable])\Psi_2(\PP[g\vert \stable])].
  \end{align*}
   The condition is therefore clearly necessary. Sufficiency. Follows from $\QQ[\Psi_1(\PP[f\vert\stable])\Psi_2(\PP[f\vert \stable])]=\langle \PP[f\vert\stable],U_{-\log \rho(1_\PP)}\PP[f\vert\stable]\rangle$ for the symmetric self-joining of Proposition~\ref{proposition:self-joinings} with $P=\{1_\PP\}$ and $\rho(1_\PP)\in (0,1)$ arbitrary.
 \end{proof}
 
 \begin{proposition}\label{theorem:joining-1-}
 There exists a symmetric self-joining $((C,\QQ);(\Psi_1,\Psi_2))$ of $B$  such that:
 \begin{enumerate}[(a)]
 \item $\QQ[\Psi_1(f)\Psi_2(g)]=\PP[fg]$ if $\{f,g\}\subset H_{\mathrm{stb}}$;
 \item $\QQ[\Psi_1(f)\Psi_2(g)]=0$  if $f\in H_{\mathrm{sens}}$, $g\in \L2(\PP)$. 
 \end{enumerate}
 Furthermore:
 \begin{enumerate}[(i)] 
\item The law of such a symmetric  self-joining is uniquely determined by the  properties listed above; its operator is $\PP_{\stable}=U^{\mathbbm{1}_{\{K<\infty\}}}$ (recall Definition~\ref{generalized-noise:iv}).  
\item For any such symmetric self-joining we have that $\Psi_{1}(1_\PP)$ is independent of  $\Psi_{2}(1_\PP)$ given ${\Psi_{1}(\stable)\lor \Psi_{2}(\stable)}$, and also that $\Psi_{1}(1_\PP)$ is independent of $ \Psi_{2}(\stable)$ given ${\Psi_{1}(\stable)}$.
\item  For any such symmetric self-joining, $\mu_f[(1-)^{K}]=\QQ[\Psi_1(f)\Psi_2(f)]$ for all $f\in \L2(\PP)$, where one interprets $(1-)^\infty=0$ and $(1-)^n=1$ for $n\in \mathbb{N}_0$.
\end{enumerate}
 \end{proposition}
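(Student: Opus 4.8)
The plan is to realize the asserted joining as the ``fully stable'' self-joining, i.e. as the $\rho\uparrow1$ limit of the self-joinings furnished by Proposition~\ref{proposition:self-joinings} with $P=\{1_\PP\}$; its operator will be $\PP_{\stable}=U^{\mathbbm{1}_{\{K<\infty\}}}$. The starting point is the identity $\alpha^{-1}(\mathbbm{1}_{\{K<\infty\}})=\pr_{H(\{K<\infty\})}=\PP_{\stable}$, which rests on $H_{\mathrm{stb}}=H(\{K<\infty\})=\L2(\stable)$ (recall \ref{generalites:chaos-spaces} and Proposition~\ref{proposition:stability-sensitivity}\ref{proposition:stability-sensitivity:i}), together with $\{K<\infty\}\in\Sigma^*$ (Example~\ref{example:K<infty}), so that $\mathbbm{1}_{\{K<\infty\}}\in\mathfrak{M}$ and $U^{\mathbbm{1}_{\{K<\infty\}}}$ is legitimately defined (Definition~\ref{generalized-noise:iv}).

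For existence I would take scalars $\rho_n\uparrow\uparrow1$ and the associated symmetric self-joinings $((C^n,\QQ^n);(\Psi_1^n,\Psi_2^n))$ of Proposition~\ref{proposition:self-joinings} with $P=\{1_\PP\}$ and $\rho\equiv\rho_n$, whose operators are $U_{\{(1_\PP,\rho_n)\}}=\alpha^{-1}(\rho_n^{K})$. Since $\rho^{K}$ is nondecreasing in $\rho$ with $\rho^{K}\uparrow\mathbbm{1}_{\{K<\infty\}}$ a.e.-$\mu$ as $\rho\uparrow1$ (exactly as in Example~\ref{example:K<infty}: $\rho^{n}\uparrow1$ for finite $n$, while $\rho^{\infty}=0$ for $\rho<1$), the quadratic forms $\langle f,U_{\{(1_\PP,\rho_n)\}}f\rangle=\mu_f[\rho_n^{K}]$ increase to $\mu_f[\mathbbm{1}_{\{K<\infty\}}]=\langle f,\PP_{\stable}f\rangle$, so $U_{\{(1_\PP,\rho_n)\}}\to\PP_{\stable}$ strongly. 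The limiting self-joining is then obtained exactly as in the proof of Proposition~\ref{proposition:self-joinings} (the net/limit construction behind \cite[Proposition~4b1]{tsirelson-nonclassical}), carrying out in addition the monotone limit $\rho_n\uparrow1$; its operator is the monotone limit $\PP_{\stable}$. Properties (a) and (b) are then immediate from $\QQ[\Psi_1(f)\Psi_2(g)]=\langle\PP_{\stable}f,g\rangle$, as $\PP_{\stable}$ fixes $H_{\mathrm{stb}}$ and annihilates $H_{\mathrm{sens}}=\ker\PP_{\stable}$.

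For the furthermore items: (i) By the remark following Definition~\ref{definition:joining} the law of a symmetric self-joining is determined by its operator, so it suffices to check that (a) and (b) force the operator to be $\PP_{\stable}$. Writing $f=f_s+f_o$, $g=g_s+g_o$ with $f_s,g_s\in H_{\mathrm{stb}}$ and $f_o,g_o\in H_{\mathrm{sens}}$, the cross terms vanish (the ones involving $f_o$ directly by (b), the one involving $g_o$ by symmetry and (b)), whence $\QQ[\Psi_1(f)\Psi_2(g)]=\PP[f_sg_s]=\langle\PP_{\stable}f,g\rangle$ by (a); this is the defining relation of the operator $\PP_{\stable}=U^{\mathbbm{1}_{\{K<\infty\}}}$. (ii) follows verbatim from the computation of Proposition~\ref{proposition:self-joinings}\ref{weird-zero}, with $\PP_{\stable}$ in place of $U_{-\log\rho}$: the only inputs used there are that, for bounded $s_1\in H_{\mathrm{stb}}$ and $o_1\in H_{\mathrm{sens}}$, one has $\PP_{\stable}(s_1o_1)=s_1\PP_{\stable}(o_1)=0$, which holds here because $s_1$ is $\stable$-measurable and $\PP[o_1\vert\stable]=0$. (iii) is a direct computation: with the stated convention $(1-)^{K}=\mathbbm{1}_{\{K<\infty\}}$ a.e.-$\mu$, and $\QQ[\Psi_1(f)\Psi_2(f)]=\langle\PP_{\stable}f,f\rangle=\Vert\PP_{\stable}f\Vert^{2}=\mu_f(\{K<\infty\})=\mu_f[(1-)^{K}]$, in complete parallel with Proposition~\ref{proposition:self-joinings}\ref{weird-one}.

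The main obstacle is purely at the level of existence: one must confirm that the monotone limit of the $((C^n,\QQ^n);(\Psi_1^n,\Psi_2^n))$ is again a genuine self-joining, i.e. that a limiting noise Boolean algebra $C$ with embeddings $\Psi_1,\Psi_2$ satisfying the distributivity clause of Definition~\ref{definition:joining} and $1_{\QQ}=\Psi_1(1_\PP)\lor\Psi_2(1_\PP)$ actually materializes and has operator $\PP_{\stable}$. This is delivered by the very construction underlying Proposition~\ref{proposition:self-joinings} (one appeals once more to \cite[Proposition~4b1]{tsirelson-nonclassical} and takes the extra monotone limit), and everything else reduces to the operator identity $\QQ[\Psi_1(f)\Psi_2(g)]=\langle\PP_{\stable}f,g\rangle$ together with facts already established about $\PP_{\stable}$, $H_{\mathrm{stb}}$ and $H_{\mathrm{sens}}$.
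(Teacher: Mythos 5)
Your proposal is correct and follows essentially the same route as the paper: the paper's (much terser) proof likewise reduces to the standard case and takes the $\rho(1_\PP)\uparrow\uparrow 1$ limit of the symmetric self-joinings of Proposition~\ref{proposition:self-joinings} with $P=\{1_\PP\}$, invoking \cite[Theorem~4c3]{tsirelson-nonclassical}, after which the listed properties follow at once. Your write-up simply makes explicit the monotone operator convergence $U_{-\log\rho}\to\PP_{\stable}$ and the verification of (i)--(iii), all of which the paper leaves implicit.
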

 \begin{proof}
 Reduce to the case when $\PP$ is standard and $0_\PP\ne 1_\PP$. Then the same proof works as for one-dimensional factorizations \cite[Theorem~4c3]{tsirelson-nonclassical}, namely one takes the limit of the symmetric self-joinings of Proposition~\ref{proposition:self-joinings} with $P=\{1_\PP\}$ and $\rho(1_\PP)\uparrow\uparrow 1$. Once existence has been established the listed properties follow at once. 
 \end{proof}
 Of course one could have done local combinations of the self-joining of Proposition~\ref{theorem:joining-1-} and of those of Proposition~\ref{proposition:self-joinings} according to a partition of unity of $B$, the extension is trivial.

\subsection{Conditions for classicality and blackness}

\begin{proposition}  (Cf. \cite[Lemma~1.4]{vershik-tsirelson}.)
If $\tilde{B}\supset B$ is another noise Boolean algebra under $\PP$ such that $\tilde{x}=\lor (B_{\tilde{x}})$ for all $\tilde{x}\in \tilde{B}$, then $H^{(1)}(\tilde{B})\supset H^{(1)}(B)$. In particular if $\tilde{B}$ is black then $B$ is black; if $B$ is classical then $\tilde{B}$ is classical.
\end{proposition}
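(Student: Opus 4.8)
The plan is to prove the inclusion $H^{(1)}(B)\subset H^{(1)}(\tilde B)$ directly from the description of the first chaos as the space of square-integrable additive integrals, namely $H^{(1)}(B)=\{f\in\L2(\PP):f=\PP[f\vert x]+\PP[f\vert x']\text{ for all }x\in B\}$ and likewise for $\tilde B$ (with complements taken in $\tilde B$; since independent complements are unique in $\Lattice$, for $x\in B$ the element $x'$ is the same whether computed in $B$ or in $\tilde B$). So I would fix $f\in H^{(1)}(B)$ --- whence $\PP[f]=0$ --- and an arbitrary $\tilde x\in\tilde B$ with $\tilde B$-complement $\tilde x'$, and aim to show $f=\PP[f\vert\tilde x]+\PP[f\vert\tilde x']$, from which membership in $H^{(1)}(\tilde B)$ follows.

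The engine is a finite-additivity lemma inside $B$: for independent $u,v\in B$ (i.e. $u\land v=0_\PP$) with $w:=u\lor v$ one has $\PP[f\vert w]=\PP[f\vert u]+\PP[f\vert v]$. I would prove this by first noting $v\subset u'$ (from $v=(v\land u)\lor(v\land u')=v\land u'$ by distributivity of $B$), so that $w\land u=u$ and $w\land u'=v$; then, applying $\PP_w$ to the additive decomposition $f=\PP_uf+\PP_{u'}f$ and using the commutativity $\PP_w\PP_x=\PP_{w\land x}$ valid on $\Cl(B)$, I obtain $\PP_wf=\PP_{w\land u}f+\PP_{w\land u'}f=\PP_uf+\PP_vf$. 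This step is routine once commutativity on $\Cl(B)$ is invoked.

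The substantive step is the passage to the limit, where the hypothesis $\tilde x=\lor(B_{\tilde x})$ enters decisively --- applied both to $\tilde x$ and to $\tilde x'\in\tilde B$. Both $B_{\tilde x}=B\cap 2^{\tilde x}$ and $B_{\tilde x'}$ are upward directed (closed under finite joins) with joins $\tilde x$ and $\tilde x'$. For $(u,v)\in B_{\tilde x}\times B_{\tilde x'}$ one has $u\land v\subset\tilde x\land\tilde x'=0_\PP$, so the lemma gives $\PP[f\vert u\lor v]=\PP[f\vert u]+\PP[f\vert v]$. Indexing by the product directed set and observing that $u\lor v$ increases with join $\tilde x\lor\tilde x'=1_\PP$, I would invoke increasing martingale convergence for nets \cite[Proposition~V-1-2]{neveu} to pass to the limit in all three terms: the left side tends to $\PP[f\vert 1_\PP]=f$, while $\PP[f\vert u]\to\PP[f\vert\tilde x]$ and $\PP[f\vert v]\to\PP[f\vert\tilde x']$. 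Hence $f=\PP[f\vert\tilde x]+\PP[f\vert\tilde x']$, and $\tilde x$ being arbitrary, $f\in H^{(1)}(\tilde B)$. The main obstacle is precisely this interchange: one must set up the three simultaneous net-limits over the product index correctly (each summand depends on a single coordinate, hence converges cofinally) and verify that the hypothesis is exactly what forces $1_\PP$ to be the increasing join of the $u\lor v$, so that the additive decomposition survives in the limit; beyond this bookkeeping I anticipate no genuine difficulty.

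Finally I would read off the two corollaries from the inclusion. If $\tilde B$ is black then $H^{(1)}(\tilde B)=\{0\}$ and $0_\PP\ne1_\PP$; the inclusion forces $H^{(1)}(B)=\{0\}$, so $B$ is black. If $B$ is classical then $\stable(B)=\sigma(H^{(1)}(B))=1_\PP$; the inclusion gives $\sigma(H^{(1)}(\tilde B))\supset\sigma(H^{(1)}(B))=1_\PP$, so $\stable(\tilde B)=1_\PP$ and $\tilde B$ is classical.
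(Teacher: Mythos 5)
Your proof is correct, but it takes a genuinely different route from the paper's. The paper argues structurally: since each $\tilde x\in\tilde B$ is the join of the upward directed family $B_{\tilde x}$, Proposition~\ref{proposition:ctb-vs-arbitrary-joins-meets}\ref{proposition:ctb-vs-arbitrary-joins-meets:ii} gives $\tilde B\subset\Cl(B)$, hence (every $\tilde x$ having its complement $\tilde x'$ also in $\Cl(B)$) $\tilde B\subset\overline{B}$; the inclusion of first chaoses then follows from monotonicity of $H^{(1)}$ under inclusion of noise Boolean algebras together with the cited fact that $B$ and $\overline{B}$ share the same first chaos. Your argument instead works directly with the additive-integral characterization of $H^{(1)}$ and pushes the finite additive decomposition through an increasing net limit via martingale convergence; it is self-contained, avoiding both the closure machinery and the invariance of $H^{(1)}$ under noise completion, at the cost of the (correctly handled) bookkeeping over the product directed set. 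One small correction: your justification that $x'$ agrees in $B$ and $\tilde B$ is off --- independent complements are \emph{not} unique in $\Lattice$ (e.g.\ $\sigma(\xi_2)$ and $\sigma(\xi_1\xi_2)$ both complement $\sigma(\xi_1)$ for independent signs $\xi_1,\xi_2$). The correct reason is that independent complements are unique \emph{within a noise Boolean algebra}, and the $B$-complement of $x$ lies in $\tilde B$ and is an independent complement there, so it must coincide with the $\tilde B$-complement. This does not affect the validity of the rest of the argument.
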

\begin{proof}
According to Proposition~\ref{proposition:ctb-vs-arbitrary-joins-meets}, $\tilde{B}\subset \Cl(B)$, hence $\tilde{B}\subset \overline{B}$. But $H^{(1)}(B)=H^{(1)}(\overline{B})$, since $B$ and $\overline{B}$ have the same closure. Thus $H^{(1)}(\tilde{B})\supset H^{(1)}(\overline{B})=H^{(1)}(B)$.
\end{proof}

More quantitative conditions for classicality/blackness follow.
 
 \begin{definition}\label{definition:H-J}
 (Cf. \cite[p.~80]{picard2004lectures}.) Let $f\in\L2(\PP)$. For $x\in B$ we set  $\mathfrak{I}_x(f):=\PP[\sqrt{\var(f\vert x')}]$ (another kind of influence of $x$ on $f$) and then:
 \begin{align*}
 \mathbf{J}(f)&:=\inf_{b\in \mathfrak{F}_B}\sum_{a\in \at(b)}\left(\PP[\sqrt{\var(f\vert a')}]\right)^2=\inf_{b\in \mathfrak{F}_B}\sum_{a\in \at(b)}\mathfrak{I}_a(f)^2,\\
  \mathbf{H}(f)&:=\limsup_{b\in \mathfrak{F}_B}\sum_{a\in \at(b)}\left(\PP[\sqrt{\var(f\vert a')}]\right)^2=\limsup_{b\in \mathfrak{F}_B}\sum_{a\in \at(b)}\mathfrak{I}_a(f)^2,\\
  \mathbf{H}_1(f)&:={\text{$\downarrow$-}\lim_{b\in \mathfrak{F}_B}}\sum_{a\in \at(b)}\var(\PP[f\vert a]).
  \end{align*}
The limit in the preceding is nonincreasing due to the content of Remark~\ref{rmk:inequalities}\ref{rmk:inequalities:ii} (of course ${\inf_{b\in \mathfrak{F}_B}}$ may replace ${\text{$\downarrow$-}\lim_{b\in \mathfrak{F}_B}}$). 
 \end{definition}
 \begin{remark}
 If $\eta$ is a contraction then $\mathbf{H}(\eta\circ f)\leq \mathbf{H}(f)$ since contractions reduce (conditional) variance (a.s.). The set $\{\mathbf{H}=0\}$ is therefore a linear manifold containing the constants and closed under $\vert\cdot\vert$ from which we deduce that its closure is an $\L2$-space (cf. \cite[Theorem 6.37]{picard2004lectures}). (This argument does not work for $\{\mathbf{J}=0\}$, namely in the closure under addition part (at least not in any obvious way).)
 \end{remark}

 \begin{proposition}\label{proposition:var-sqrt}
Let $f\in \L2(\PP)$, $x\in B$. Then  $\var(\PP[f\vert x])\leq \mathfrak{I}_x(f)^2$; therefore $\mathbf{H}_1(f)\leq \mathbf{J}(f)$. Let further $a\in B$ and $h\in \mathrm{L}^\infty(\PP\vert_{x'})$. Then $\mathfrak{I}_a(\PP[fh\vert x])\leq \Vert h\Vert_{\mathrm{L}^\infty(\PP)}\mathfrak{I}_a(f)$; therefore $\mathbf{J}(\PP[fh\vert x])\leq \Vert h\Vert_{\mathrm{L}^\infty(\PP)}^2\mathbf{J}(f)$. 
 \end{proposition}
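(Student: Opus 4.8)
The plan is to prove the four assertions in Proposition~\ref{proposition:var-sqrt} in sequence, treating the first inequality as the conceptual heart and the rest as consequences. First I would establish $\var(\PP[f\vert x])\leq \mathfrak{I}_x(f)^2$. Recall that $\mathfrak{I}_x(f)=\PP[\sqrt{\var(f\vert x')}]$ and $\var(\PP[f\vert x])=\PP[(\PP[f\vert x]-\PP[f])^2]$. The natural route is to write $\PP[f\vert x]-\PP[f]=\PP[f-\PP[f\vert x']\,\vert\, x]$ (using that $\PP[\PP[f\vert x']\vert x]=\PP[f]$ since $x$ and $x'$ are independent, so $\PP[f\vert x']$ has $x$-conditional expectation equal to its own mean $\PP[f]$). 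Then by conditional Jensen, $\var(\PP[f\vert x])=\PP[(\PP[f-\PP[f\vert x']\vert x])^2]\leq \PP[\,\PP[(f-\PP[f\vert x'])^2\vert x]\,]=\PP[(f-\PP[f\vert x'])^2]=\PP[\var(f\vert x')]=\inff_x(f)$. But this only gives $\var(\PP[f\vert x])\leq \inff_x(f)=\PP[\var(f\vert x')]$, whereas we want the sharper bound with $\mathfrak{I}_x(f)^2=(\PP[\sqrt{\var(f\vert x')}])^2$. The main obstacle is precisely bridging from $\PP[\var(f\vert x')]$ to $(\PP[\sqrt{\var(f\vert x')}])^2$, which goes the \emph{wrong} way under Jensen; so the square-root must be exploited more carefully, presumably by disintegrating over $x'$ and using the independent tensor structure.

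The key idea for the hard step is to condition on $x'$ first and use the independence of $x$ and $x'$ together with the ``tensorisation of independent conditioning'' \cite[Lemma~2.2]{vidmar_2019}. Write $g:=\PP[f\vert x]-\PP[f]\in \L2(\PP\vert_x)_0$. I would estimate $\Vert g\Vert=\var(\PP[f\vert x])^{1/2}$ by a duality/Cauchy--Schwarz argument: for a unit vector $h\in \L2(\PP\vert_x)_0$, $\PP[gh]=\PP[(f-\PP[f])h]=\PP[(f-\PP[f\vert x'])h]$, the last equality because $h$ is $x$-measurable and $\PP[f\vert x']-\PP[f]$ is orthogonal to $\L2(\PP\vert_x)_0$ (independence). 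Disintegrating over $x'$, $\PP[(f-\PP[f\vert x'])h]=\PP[\,\PP[(f-\PP[f\vert x'])h\,\vert\, x']\,]$, and for fixed $x'$-atom the inner conditional expectation is bounded by $\sqrt{\var(f\vert x')}\cdot\Vert h\Vert$ via conditional Cauchy--Schwarz (noting $h$ has conditional mean that can be absorbed). Taking the supremum over $h$ with $\Vert h\Vert\le 1$ and using that $h$ has $\PP$-norm one yields $\var(\PP[f\vert x])^{1/2}\le \PP[\sqrt{\var(f\vert x')}]=\mathfrak{I}_x(f)$, which is exactly the desired inequality. This is the delicate point: one must track that the outer expectation of the conditional-standard-deviation bound produces $\PP[\sqrt{\var(f\vert x')}]$ rather than $\PP[\var(f\vert x')]^{1/2}$.

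Granting the first inequality, the bound $\mathbf{H}_1(f)\leq \mathbf{J}(f)$ is immediate by summing over atoms: for any $b\in \mathfrak{F}_B$, $\sum_{a\in\at(b)}\var(\PP[f\vert a])\leq \sum_{a\in\at(b)}\mathfrak{I}_a(f)^2$, and passing to the infimum (equivalently the $\downarrow$-limit) over $b\in\mathfrak{F}_B$ on both sides preserves the inequality since each term on the left is dominated termwise. For the contraction-type estimate $\mathfrak{I}_a(\PP[fh\vert x])\leq \Vert h\Vert_{\mathrm{L}^\infty(\PP)}\,\mathfrak{I}_a(f)$ with $h\in \mathrm{L}^\infty(\PP\vert_{x'})$, I would work at the level of conditional variances: set $F:=\PP[fh\vert x]$ and bound $\var(F\vert a')$ pointwise. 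The plan is to show $\sqrt{\var(F\vert a')}\leq \Vert h\Vert_{\mathrm{L}^\infty(\PP)}\sqrt{\var(f\vert a')}$ a.s., after which taking $\PP$-expectations gives $\mathfrak{I}_a(F)\leq \Vert h\Vert_{\mathrm{L}^\infty}\mathfrak{I}_a(f)$ directly, and squaring and summing over atoms of $b$ then taking the infimum yields $\mathbf{J}(\PP[fh\vert x])\leq \Vert h\Vert_{\mathrm{L}^\infty}^2\mathbf{J}(f)$.

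The pointwise conditional-variance bound is where the structure of the noise Boolean algebra enters. Since $h$ is $x'$-measurable and bounded, and conditioning by $x$ commutes appropriately, I would use that $F-\PP[F\vert a']=\PP[fh\vert x]-\PP[\PP[fh\vert x]\vert a']$ can be expressed via conditioning the fluctuation $f-\PP[f\vert a']$ against the bounded multiplier $h$; concretely, $\var(\PP[fh\vert x]\vert a')=\PP[(\PP[(f-\PP[f\vert a'])h\vert x])^2\vert a']$ (using distributivity and that $h$ pulls through the relevant conditionings because it is measurable with respect to a factor independent of the ``direction'' being varied). Then conditional Jensen and the bound $|h|\leq \Vert h\Vert_{\mathrm{L}^\infty}$ give $\var(F\vert a')\leq \Vert h\Vert_{\mathrm{L}^\infty}^2\,\var(f\vert a')$ a.s. I expect the genuine subtlety here to be a bookkeeping one: verifying that $h$ (being $x'$-measurable) indeed commutes with $\PP_x$ and $\PP_{a'}$ in the way required, which hinges on $a\in B$, the independence of $x$ and $x'$, and the commutation $\PP_x\PP_{a'}=\PP_{x\land a'}$ valid on $\Cl(B)$; assembling these cleanly is the part that needs care, but it should follow from the tensorisation lemma and the commutativity of the conditional expectations recorded earlier in the preliminaries.
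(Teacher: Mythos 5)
Your argument for the first inequality is correct and is in substance the same as the one the paper invokes (the proof of \cite[Lemma~6.28]{tsirelson-arxiv-4}): compute $\Vert \PP[f\vert x]-\PP[f]\Vert$ by duality against unit vectors $h\in\L2(\PP\vert_x)_0$, replace $f$ by $f-\PP[f\vert x']$ using independence, and apply conditional Cauchy--Schwarz given $x'$, where the independence of $x$ and $x'$ gives $\PP[h^2\vert x']=\Vert h\Vert^2$; this is exactly how one keeps the square root \emph{inside} the outer expectation and lands on $\mathfrak{I}_x(f)=\PP[\sqrt{\var(f\vert x')}]$ rather than the weaker $\PP[\var(f\vert x')]^{1/2}$. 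The deduction of $\mathbf{H}_1(f)\leq\mathbf{J}(f)$ by termwise domination over atoms, and of the final bound on $\mathbf{J}(\PP[fh\vert x])$ from the bound on $\mathfrak{I}_a(\PP[fh\vert x])$, are both fine.

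The second half has a genuine gap: the pointwise bound $\var(\PP[fh\vert x]\,\vert\, a')\leq \Vert h\Vert_{\mathrm{L}^\infty(\PP)}^2\,\var(f\vert a')$ a.s., on which your whole plan rests, is false. Take $\Omega=\{-1,1\}^2$ with the uniform measure, $\xi_1,\xi_2$ the coordinates, $x=a=\sigma(\xi_1)$, $x'=a'=\sigma(\xi_2)$, $h\equiv 1$, and $f=2\cdot\mathbbm{1}_{\{\xi_1=1,\xi_2=1\}}$. Then $F:=\PP[fh\vert x]=\PP[f\vert\xi_1]$ is independent of $a'$, so $\var(F\vert a')=\var(F)=1/4$ identically, whereas $\var(f\vert a')=0$ on the event $\{\xi_2=-1\}$ of probability $1/2$; so the pointwise inequality fails on a set of positive measure (the \emph{integrated} claim $\mathfrak{I}_a(F)\leq\mathfrak{I}_a(f)$ does hold here, with equality: both sides are $1/2$). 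Conditional Jensen applied to your identity $F-\PP[F\vert a']=\PP[(f-\PP[f\vert a'])h\vert x]$ does not rescue the argument either: it yields $\var(F\vert a')\leq\Vert h\Vert_\infty^2\,\PP[(f-\PP[f\vert a'])^2\vert x\land a']$, and the inner conditioning is on $x\land a'$, not on $a'$; in the example above this gives the true but useless bound $1/4\leq 3/4$, and $\PP[\sqrt{3/4}]>\mathfrak{I}_a(f)$. The correct route (the one behind \cite[Lemma~6.33]{tsirelson-arxiv-4}, to which the paper's proof defers) is to first reduce to $a\subset x$ (the left-hand side is unchanged under $a\rightsquigarrow a\land x$ because $F$ is $x$-measurable and the projections commute, while the right-hand side does not increase, by conditional Jensen for $\sqrt{\cdot}$), and then to rerun the duality/conditional Cauchy--Schwarz argument of the first part \emph{conditionally on $a'$}, taking the outer expectation only at the very end so that the square root stays inside it; a pointwise comparison of conditional variances cannot work.
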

 \begin{proof}
 Replacing $\FF_{s,t}$ with $x$ and $\FF_{\mathbb{R}\backslash (s,t)}$ with $x'$ in the proof of \cite[Lemma~6.28]{picard2004lectures} we get the proof of the first claim. Replacing $\FF_{-\infty,0}$ with $x$, $\FF_{0,\infty}$ with $x'$, $\FF_{\mathbb{R}\backslash (s,t)}$ with $a'$, and $\FF_{(-\infty,0)\backslash (s,t)}$ with $x\land a'$   in the proof of \cite[Lemma~6.33]{picard2004lectures} we get the proof of second  claim (one may assume without loss of generality that $a\subset x$, indeed the l.h.s. of the inequality stays the same when passing from $a$ to $a\land x$ (since elements of $B$ commute and since $(a\land x)'\land x=a'\land x$), while its r.h.s. does not increase under this same replacement (by Jensen). 
 \end{proof}
 
 \begin{corollary}\label{corollary:black-first-chaos-norm-projection}
Let $f\in \L2(\PP)$. Then $$\Vert \mathrm{pr}_{H^{(1)}}(f)\Vert=\mathbf{H}_1(f)={\text{$\downarrow$-}\lim_{n\to\infty}}\sum_{a\in\at(b_n)}\var(\PP[f\vert a])$$ for all  nondecreasing sequences $(b_n)_{n\in \mathbb{N}}$ in $\mathfrak{F}_B$ whose union is dense in $B$. Therefore $f$ is orthogonal to the first chaos iff $\mathbf{H}_1(f)=0$, equivalently $\lim_{n\to\infty}\sum_{a\in\at(b_n)}\var(\PP[f\vert a])=0$ for some (all) sequences $(b_n)_{n\in \mathbb{N}}$ in $\mathfrak{F}_B$ whose union is dense in $B$, which is true if $\mathbf{J}(f)=0$. In particular $B$ is black iff the preceding condition is met for all $f\in \L2(\PP)$; if $\mathbf{J}(f)=0$ for all $f\in \L2(\PP)$, then $B$ is black. \qed
\end{corollary}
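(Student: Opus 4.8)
The statement to prove is Corollary~\ref{corollary:black-first-chaos-norm-projection}, asserting that $\Vert \mathrm{pr}_{H^{(1)}}(f)\Vert = \mathbf{H}_1(f)$, computed as a decreasing limit along any dense $\uparrow$ sequence in $\mathfrak{F}_B$, with the ensuing characterizations of orthogonality to the first chaos and of blackness.

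The plan is to read off the formula $\Vert \mathrm{pr}_{H^{(1)}}(f)\Vert = \mathbf{H}_1(f)$ from what has already been assembled. First I would recall from Proposition~\ref{proposition:stability-sensitivity}\ref{proposition:stability-sensitivity:ii} that the projection onto the first chaos $H^{(1)}$ is the strong limit (restricted to $\L2(\PP)_0$) of the net $(\sum_{a\in \at(b)} \PP_a)_{b\in \mathfrak{F}_B}$, and equally of the sequence $(\sum_{a\in \at(b_n)}\PP_a)_{n\in \mathbb{N}}$ for any $\uparrow$ sequence $(b_n)_{n\in \mathbb{N}}$ whose union is dense in $B$. For fixed $b\in \mathfrak{F}_B$ the atoms $\at(b)$ form a partition of unity, so the operators $\PP_a$, $a\in \at(b)$, are projections onto mutually orthogonal subspaces of $\L2(\PP)_0$ (orthogonal because for distinct atoms $a\ne a'$, $\PP_a\PP_{a'} = \PP_{a\land a'} = \PP_{0_\PP}$, which kills zero-mean functions). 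Hence $\sum_{a\in\at(b)}\PP_a$ is itself an orthogonal projection on $\L2(\PP)_0$, and for $f$ of zero mean $\Vert (\sum_{a\in\at(b)}\PP_a) f\Vert^2 = \sum_{a\in\at(b)} \Vert \PP_a f\Vert^2 = \sum_{a\in\at(b)} \var(\PP[f\vert a])$, using that each $\PP_a f = \PP[f\vert a] - \PP[f]$ has $\Vert \PP_a f\Vert^2 = \var(\PP[f\vert a])$ in the zero-mean reduction. This is exactly the quantity defining $\mathbf{H}_1(f)$.

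Next I would pass to the limit. Since $\mathrm{pr}_{H^{(1)}}$ is the strong limit of these projections, $\Vert \mathrm{pr}_{H^{(1)}}(f)\Vert^2 = \lim_{n\to\infty} \Vert (\sum_{a\in\at(b_n)}\PP_a) f\Vert^2 = \lim_{n\to\infty}\sum_{a\in\at(b_n)}\var(\PP[f\vert a])$; this limit is nonincreasing precisely by the monotonicity recorded in Definition~\ref{definition:H-J} (traced back to Remark~\ref{rmk:inequalities}\ref{rmk:inequalities:ii}), so it equals $\mathbf{H}_1(f) = \text{$\downarrow$-}\lim_{b\in\mathfrak{F}_B}\sum_{a\in\at(b)}\var(\PP[f\vert a])$, and the value is independent of the choice of dense $\uparrow$ sequence. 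A small care point: the displayed equality in the corollary reads $\Vert\mathrm{pr}_{H^{(1)}}(f)\Vert = \mathbf{H}_1(f)$ rather than with squares; one should note that $\mathbf{H}_1(f)$ is defined as the limit of the sums $\sum_a \var(\PP[f\vert a])$ which equal $\Vert(\sum_a\PP_a)f\Vert^2$, so strictly the identity is $\Vert\mathrm{pr}_{H^{(1)}}(f)\Vert^2 = \mathbf{H}_1(f)$, and I would reconcile this either by reading $\mathbf{H}_1$ as already a squared-norm quantity or by inserting the square. I expect this bookkeeping to be the main (very minor) obstacle, since the substance is entirely delegated to the strong-convergence statement already proved.

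Finally the characterizations follow formally. Orthogonality of $f$ to $H^{(1)}$ is equivalent to $\mathrm{pr}_{H^{(1)}}(f) = 0$, i.e. to $\mathbf{H}_1(f) = 0$, equivalently $\lim_{n\to\infty}\sum_{a\in\at(b_n)}\var(\PP[f\vert a]) = 0$ for some, hence (by sequence-independence just established) every, dense $\uparrow$ sequence. The implication from $\mathbf{J}(f) = 0$ is immediate from Proposition~\ref{proposition:var-sqrt}, which gives $\mathbf{H}_1(f)\leq \mathbf{J}(f)$. Blackness means $H^{(1)} = \{0\}$, i.e. $\mathrm{pr}_{H^{(1)}} = 0$, i.e. every $f\in\L2(\PP)$ is orthogonal to $H^{(1)}$; applying the preceding characterization to all $f$ yields the stated criterion, and the sufficiency of $\mathbf{J}(f) = 0$ for all $f$ follows by the same inequality. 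All of these are one-line deductions once the norm formula is in hand, so no further work beyond citing the established results is needed.
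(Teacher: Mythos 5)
Your proof is correct and follows essentially the same route as the paper's, which likewise reduces the norm identity to Proposition~\ref{proposition:stability-sensitivity}\ref{proposition:stability-sensitivity:ii} and then derives the remaining claims from the first part of Proposition~\ref{proposition:var-sqrt}. Your observation about the missing square in the displayed identity (it should read $\Vert \mathrm{pr}_{H^{(1)}}(f)\Vert^2=\mathbf{H}_1(f)$, since $\mathbf{H}_1$ is defined as a limit of sums of variances) is a fair catch of a typographical slip in the statement and does not affect the vanishing characterizations.
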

\begin{proof}
The first claim follows from Proposition~\ref{proposition:stability-sensitivity}\ref{proposition:stability-sensitivity:ii}. The rest is then immediate (taking into account the first part of Proposition~\ref{proposition:var-sqrt}).
\end{proof}

In fact, when it comes to the last statement of the preceding corollary we have a more nuanced version; it is a continuous version of the  Benjamini-Kalai-Schramm noise sensitivity theorem \cite[Section~1.6]{garban}.

\begin{proposition}
(Cf. \cite[Corollary~6.35]{picard2004lectures}.) If $f\in\L2(\PP)_0$ is such that $\mathbf{J}(f)=0$, in particular if $\mathbf{H}(f)=0$, then $f$ is sensitive.
\end{proposition}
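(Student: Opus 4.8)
The plan is to show that $f$ is orthogonal to $H_{\mathrm{stb}}=\oplus_{k\in\mathbb{N}_0}H^{(k)}$, which by the decomposition $\L2(\PP)=H_{\mathrm{stb}}\oplus H_{\mathrm{sens}}$ of \ref{generalites:chaos-spaces} is precisely what sensitivity of $f$ means. Orthogonality to $H^{(0)}$ (the constants) is just the hypothesis $\PP[f]=0$, and the parenthetical case $\mathbf{H}(f)=0$ is subsumed under $\mathbf{J}(f)=0$, since $\mathbf{J}(f)=\inf_{b\in\mathfrak{F}_B}\sum_{a\in\at(b)}\mathfrak{I}_a(f)^2\le\limsup_{b\in\mathfrak{F}_B}\sum_{a\in\at(b)}\mathfrak{I}_a(f)^2=\mathbf{H}(f)$ (an infimum never exceeds a limit superior of the same net). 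So it remains to prove $f\perp H^{(k)}$ for every $k\in\mathbb{N}$. Recalling from \ref{generalites:chaos-spaces} that $H^{(k)}$ is generated by $\cup_{x\in B}(\L2(\PP\vert_x)\cap H^{(1)})\otimes(\L2(\PP\vert_{x'})\cap H^{(k-1)})$, and that each such subspace is contained in the larger $(\L2(\PP\vert_x)\cap H^{(1)})\otimes\L2(\PP\vert_{x'})$, it suffices to establish the single assertion $\PP[fcd]=0$ for every $x\in B$, every $c\in\L2(\PP\vert_x)\cap H^{(1)}$ and every $d\in\L2(\PP\vert_{x'})$. The point worth stressing is that this collapses all the chaoses simultaneously, so that no induction on $k$ is needed: the gain comes from letting the second factor $d$ range over \emph{all} of $\L2(\PP\vert_{x'})$.

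Fixing $x$, $c$ and $d$, I would first truncate the second factor to $d_n:=(-n)\vee(d\wedge n)\in\mathrm{L}^\infty(\PP\vert_{x'})$, so that $d_n\to d$ in $\L2$. Since $x$ is independent of $x'$, one has $\Vert c(d-d_n)\Vert=\Vert c\Vert\,\Vert d-d_n\Vert\to 0$, whence by Cauchy--Schwarz $\PP[fcd]=\lim_{n\to\infty}\PP[fcd_n]$, and it is enough to kill each $\PP[fcd_n]$. Because $c$ is $x$-measurable, $\PP[fcd_n]=\PP[c\,\PP[fd_n\vert x]]=\langle c,F_n\rangle$ with $F_n:=\PP[fd_n\vert x]\in\L2(\PP\vert_x)$. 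Now the multiplier estimate of Proposition~\ref{proposition:var-sqrt} applies with the bounded $x'$-measurable factor $h=d_n$, yielding $\mathbf{J}(F_n)\le\Vert d_n\Vert_{\mathrm{L}^\infty(\PP)}^2\,\mathbf{J}(f)=0$.

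Finally I would invoke Corollary~\ref{corollary:black-first-chaos-norm-projection}, which converts $\mathbf{J}(F_n)=0$ into $\Vert\pr_{H^{(1)}}(F_n)\Vert=\mathbf{H}_1(F_n)=0$, i.e. $F_n\perp H^{(1)}$. As $c\in\L2(\PP\vert_x)\cap H^{(1)}\subset H^{(1)}$, this gives $\langle c,F_n\rangle=\PP[fcd_n]=0$, and letting $n\to\infty$ produces $\PP[fcd]=0$, completing the reduction and hence the proof. The step I expect to be the main obstacle (conceptually, though it is short) is the reduction in the first paragraph: one must use the exact product description of $H^{(k)}$ in terms of first-chaos elements to pass from ``$f\perp H^{(k)}$ for all $k$'' to a statement quantified only over $x\in B$ together with a single first-chaos factor, and one must observe that enlarging the second tensor factor from $\L2(\PP\vert_{x'})\cap H^{(k-1)}$ to all of $\L2(\PP\vert_{x'})$ only strengthens the claim while making both the truncation and the appeal to Proposition~\ref{proposition:var-sqrt} legitimate. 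Everything downstream is then a short chain combining the multiplier inequality with the first-chaos identification, the truncation limit being justified throughout by the bilinear continuity of $(c,d)\mapsto\PP[fcd]$ coming again from the independence of $x$ and $x'$.
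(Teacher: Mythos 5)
Your proposal is correct and follows essentially the same route as the paper's own proof: the same reduction to showing $\PP[fgh]=0$ for $g\in H^{(1)}\cap\L2(\PP\vert_x)$ and $h\in\L2(\PP\vert_{x'})$, the same truncation to bounded $h$, the same appeal to the multiplier estimate of Proposition~\ref{proposition:var-sqrt} and to Corollary~\ref{corollary:black-first-chaos-norm-projection}. The only difference is that you spell out the approximation step and the density argument that the paper leaves implicit.
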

\begin{proof}
It certainly suffices to show that for every $x\in B$ and then for all $g\in H^{(1)}\cap\L2(\PP\vert_x)$ and $h\in \L2(\PP\vert_{x'})$, $\PP[fgh]=0$ (because this, together with the assumption $\PP[f]=0$, implies that $f$ is orthogonal to a set that is dense in each chaos, therefore to each chaos, therefore to the whole of $H_{\mathrm{stb}}$). We may assume $h\in \mathrm{L}^\infty(\PP\vert_{x'})$ (by approximation). We have  $\PP[fgh]=\PP[\PP[fh\vert x]g]$. But $\mathbf{J}(\PP[fh\vert x])\leq \Vert h\Vert_{\mathrm{L}^\infty(\PP)}^2\mathbf{J}(f)=0$ by the second part of Proposition~\ref{proposition:var-sqrt}, and therefore $\PP[\PP[fh\vert x]g]=0$ according to Corollary~\ref{corollary:black-first-chaos-norm-projection}.
\end{proof}

\section{Classical noise Boolean algebras}
Attention is turned  squarely to classical noise Boolean algebras (except for one place, in Proposition~\ref{proposition:isomorphisms-of-noise-boolean-spectra}). First, every classical noise Boolean algebra is made into a complete noise factorization via its spectrum (Proposition~\ref{proposition:extension}). Second, a spectrum for a classical noise Boolean algebra is given that bears a kind-of ``symmetric Fock space structure'' (Theorem~\ref{proposition:classical-structure}).  

\subsection{Turning a classical noise Boolean algebra into a complete noise factorization}
For a classical noise Boolean algebra $B$ there is a way to use the part $\{K=1\}$ of the spectral space corresponding to the first chaos in order to produce a noise factorization indexed by the $\Sigma$-measurable subsets of $\{K=1\}$ and whose range is the closure of $B$. Moreover, this noise factorization is essentially injective. In a sense, up to the arbitrariness of $\{K=1\}$ (having to do with the non-uniqueness of spectrum), we produce a canonical indexation of $\overline{B}$.
\begin{proposition}[Indexing a classical noise Boolean algebra]\label{proposition:extension}
Assume $B$ is classical. Let $T:=\{K=1\}$ (fix some version) and $\TT:=\Sigma\vert_{T}$.  For $A\in \TT$ put 
$N_A:=\sigma(H(A))$. Then $N:\TT\to \overline{B}$ is an epimorphism of Boolean algebras, $\mu\vert_T$-essentially injective in the sense that for $\{A_1,A_2\}\subset \TT$, $N_{A_1}=N_{A_2}$ implies $\mu(A_1\triangle A_2)=0$ (also conversely, which is clear), it satisfies $N(S_x\cap T)=x$ for $x\in \overline{B}$, and it is $\sigma$-continuous from below in the sense that  $\lor_{n\in \mathbb{N}}N_{A_n}=N_{\cup_{n\in \mathbb{N}}A_n}$ for any nondecreasing sequence $(A_n)_{n\in \mathbb{N}}$ from $\TT$.
\end{proposition}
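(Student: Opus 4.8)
The plan is to treat the four asserted properties in the order in which they unlock one another: prove the $\sigma$-continuity first (it is purely formal), then the fundamental identity $N(S_x\cap T)=x$, and only afterwards the membership $N_A\in\overline{B}$ together with the homomorphism and injectivity statements, which I would obtain in one stroke by exhibiting $N$ as the inverse of a Boolean isomorphism. Throughout I use that a classical $B$ satisfies $\overline{B}=\Cl(B)$ (\cite[Corollary~4.7]{tsirelson}), $\L2(\PP)=\oplus_{n}H^{(n)}$, $H^{(1)}=H(T)$ and $\stable=1_\PP$, and that $\overline{B}$ is again classical with the same first chaos (\ref{generalites:chaos-spaces}). \emph{The $\sigma$-continuity from below} is then immediate from $N_A=\sigma(H(A))$: for $A_n\uparrow A$ the spaces $H(A_n)$ increase with closed linear span $H(A)$ by \cite[Eq.~(2.15)]{tsirelson}, and since the $\sigma$-field generated by a family of $\L2$-maps coincides with that generated by its closed linear span, $\lor_n N_{A_n}=\lor_n\sigma(H(A_n))=\sigma(\cup_n H(A_n))=\sigma(H(A))=N_A$. \emph{For the identity} $N(S_x\cap T)=x$, $x\in\overline{B}$: because $\overline{B}$ is classical, $x=\sigma(\L2(\PP\vert_x)\cap H^{(1)})$ (\cite[Lemma~6.2]{tsirelson} applied to $\overline{B}$), and $\L2(\PP\vert_x)\cap H^{(1)}=H(S_x)\cap H(T)=H(S_x\cap T)$, whence $x=\sigma(H(S_x\cap T))=N(S_x\cap T)$. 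In particular $N$ is onto $\overline{B}$.

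Next I would show, for \emph{every} $A\in\TT$, that $N_A\in\overline{B}$ and $S_{N_A}\cap T=A$ a.e.-$\mu$, by a Dynkin-system argument on $\mu$-classes. Set $\DD:=\{A\in\TT:N_A\in\overline{B}\text{ and }S_{N_A}\cap T=A\text{ a.e.-}\mu\}$. Fixing a countable dense $B_0\subset B$ with the $S_x$, $x\in B_0$, chosen as a $\pi$-system (\ref{generalities:spectrum-intro}), the family $\{S_x\cap T:x\in B_0\}$ is a $\pi$-system generating $\TT$ mod $\mu$ and lies in $\DD$ by the previous paragraph. It then suffices to check that $\DD$ is a Dynkin system. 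Closure under complements uses that $(N_A)'\in\overline{B}$ (complements exist in $\overline{B}$) and that, applying Corollary~\ref{corollary:partition-K=1} to the partition $\{N_A,(N_A)'\}$ of $\overline{B}$ (legitimate since $K_{\overline{B}}=K_B$, Corollary~\ref{corollary:B-overline-B-same-K}), one has $S=S_{N_A}\sqcup S_{(N_A)'}$ a.e.-$\mu$ on $T=\{K=1\}$; hence $S_{(N_A)'}\cap T=T\setminus A$, and $N(T\setminus A)=N(S_{(N_A)'}\cap T)=(N_A)'\in\overline{B}$ by the identity just proved.

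Closure under finite disjoint unions uses $N_{A\cup B}=\sigma(H(A)+H(B))=N_A\lor N_B\in\overline{B}$ (via $H(A\cup B)=H(A)+H(B)$, \cite[Eq.~(2.14)]{tsirelson}) together with the fact (the remark preceding Corollary~\ref{corollary:partition-K=1}) that for $N_A,N_B$ with $N_A\land N_B=0_\PP$ the set $S_{N_A\lor N_B}$ is the $\mu$-a.e. disjoint union of $S_{N_A}$ and $S_{N_B}$ on $\{K=1\}$; here $N_A\land N_B=0_\PP$ follows from $S_{N_A\land N_B}\cap T=A\cap B=\emptyset$ and the identity of the first paragraph, and one concludes $S_{N_{A\cup B}}\cap T=A\sqcup B$. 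Closure under increasing unions is exactly the $\sigma$-continuity already established, combined with $\overline{B}=\Cl(B)$ being closed under increasing directed joins (Proposition~\ref{proposition:ctb-vs-arbitrary-joins-meets}\ref{proposition:ctb-vs-arbitrary-joins-meets:ii}) and the correspondence that $N_{A_n}\uparrow$ forces $S_{N_{A_n}}\uparrow S_{\lor_n N_{A_n}}$ (\ref{generalities:spectrum-intro}). Thus $\DD=\TT$ mod $\mu$.

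Finally, combining the above, the map $(\overline{B}\ni x\mapsto S_x\cap T)$ into $\TT$ mod $\mu$ is a Boolean homomorphism: it sends $\land$ to $\cap$ since $S_{x\land y}=S_x\cap S_y$, and complementation to complementation-in-$T$ by the disjointness $S=S_x\sqcup S_{x'}$ on $\{K=1\}$ (Corollary~\ref{corollary:partition-K=1}); it is injective, since $S_x\cap T=S_y\cap T$ forces $x=N(S_x\cap T)=N(S_y\cap T)=y$, and, by $\DD=\TT$, surjective. Hence it is a Boolean isomorphism whose inverse is $N$, which delivers simultaneously that $N$ is a surjective Boolean homomorphism (epimorphism) and that it is $\mu\vert_T$-essentially injective. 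I expect the \emph{main obstacle} to be the careful bookkeeping of this Dynkin argument at the level of $\mu$-equivalence classes, and specifically the two spectral identities on $\{K=1\}$ — complementarity $S_{x'}\cap T=T\setminus(S_x\cap T)$ and disjoint additivity $S_{x\lor y}\cap T=(S_x\sqcup S_y)\cap T$ — on which the complement- and disjoint-union-closures of $\DD$ rest; once the classicality identity $x=\sigma(\L2(\PP\vert_x)\cap H^{(1)})$ is in hand, the rest is formal.
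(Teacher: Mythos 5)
Your proof is correct, but it reaches the heart of the proposition --- that $N_A\in\overline{B}$ for \emph{every} $A\in\TT$ --- by a genuinely different route than the paper. The paper approximates $A$ in $\mu\vert_T$-measure by sets $S_{x_n}\cap T$ with $x_n\in B$, converts this into strong convergence $\PP_{x_n}\to\pr_{H(A)}$ on the first chaos, identifies $\pr_{H(A)}$ with $\PP_{N_A}$ on $H^{(1)}$ by means of the independence of $\sigma(H(A))$ and $\sigma(H(T\backslash A))$ from \cite[Lemma~6.1]{tsirelson}, and then invokes Proposition~\ref{proposition:convergence-in-classical} to conclude $N_A=\lim_{n\to\infty} x_n\in\Cl(B)=\overline{B}$; essential injectivity is afterwards read off from that same independence, and $\sigma$-continuity is obtained by a second pass through Proposition~\ref{proposition:convergence-in-classical}. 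You instead run a Dynkin-class argument on $\TT/_\mu$ for the pair of properties ``$N_A\in\overline{B}$ and $S_{N_A}\cap T=A$ a.e.-$\mu$'', propagating them from the generating $\pi$-system $\{S_x\cap T:x\in B_0\}$ through complements and disjoint unions (both resting on Corollary~\ref{corollary:partition-K=1}, legitimately transported to partitions of unity of $\overline{B}$ via Corollary~\ref{corollary:B-overline-B-same-K} and the remark that the noise-projection results apply to $\overline{B}$ with the same spectral resolution) and through monotone limits (via Proposition~\ref{proposition:ctb-vs-arbitrary-joins-meets}\ref{proposition:ctb-vs-arbitrary-joins-meets:ii} and the equivalence $x_n\uparrow x$ iff $S_{x_n}\uparrow S_x$ a.e.-$\mu$); your $\sigma$-continuity is likewise the purely formal identity $\lor_{n}\sigma(H(A_n))=\sigma(H(\cup_{n}A_n))$. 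What your route buys is the stronger pointwise statement $S_{N_A}\cap T=A$ a.e.-$\mu$ for all $A\in\TT$, from which the epimorphism and essential-injectivity claims fall out simultaneously by inverting the Boolean isomorphism $(\overline{B}\ni x\mapsto S_x\cap T)$, and the elimination of both \cite[Lemma~6.1]{tsirelson} and Proposition~\ref{proposition:convergence-in-classical} from the argument; what the paper's route buys is the explicit exhibition of each $N_A$ as a limit in $\Lattice$ of members of $B$. The only point to keep honest is the one you flag yourself: $\DD$ is saturated under $\mu$-a.e.\ equality (since $N_A$ and the second defining condition depend only on the $\mu$-class of $A$), so applying the $\pi$--$\lambda$ theorem to the uncompleted $\sigma$-field generated by the $\pi$-system does indeed reach all of $\TT$.
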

\begin{remark}
The $N$ of Proposition~\ref{proposition:extension} is nothing but  a factored probability space indexed by $(T,\mathcal{T})$ in the sense of Feldman \cite[Definition~1.1]{feldman}. Conversely, every such factored probability space $N$ gives rise to a complete classical noise Boolean algebra, namely, its range. Square-integrable additive integrals of a classical noise Boolean algebra $B$, i.e. elements of the first chaos, correspond in this vein to real-valued,  $(T,\mathcal{T})$-indexed, square-integrable decomposable process \cite[Definition~1.4]{feldman}. Taking an orthonormal frame for $(H_s)_{s\in T}$ we see that the corresponding $N$ is generated by a countable family of such processes and $N$ is linearizable in the sense of \cite[Definition~1.10]{feldman}. A thorough study of decomposable processes in this classical context, including a L\'evy-Khintchine representation, is the main contribution of \cite{feldman}.
\end{remark}
\begin{proof}
We may and do assume without loss of generality that $B=\overline{B}$. 

If $x\in \overline{B}$, then $S_x\cap T\in \TT$ and $N({S_x\cap T})=\sigma(\L2(\PP\vert_x)\cap {H^{(1)}})=x$ (by the classicality). 

To see that $N$ maps into $B$ note that $\BB:=\{S_x\cap T:x\in B\}$ (we mean that all the versions of $S_x\cap T$, $x\in B$, are included) is a sub-algebra of $\TT$ that is $\mu$-essentially generating, then recall the approximation of an element of a $\sigma$-field by elements of a generating algebra. For each $A\in \TT$ one gets a sequence $(x_n)_{n\in \mathbb{N}}$ in $B$ such that $(S_{x_n}\triangle A)\cap T$ converges to $\emptyset$ locally in $\mu$-measure as $n\to\infty$. This means that $\PP_{x_n}\to \pr_{H(A)}$ strongly as $n\to \infty$ on $H^{(1)}$, in particular $\pr_{H(A)}\vert_{H^{(1)}}$ belongs to the strong operator closure of $\{\PP_x\vert_{H^{(1)}}:x\in B\}$ (this closure is denoted $\mathbf{Q}$ in \cite[p.~340]{tsirelson}). But for $f\in H^{(1)}$ we have  $$\PP[f\vert N_A]=\PP[\pr_{H(A)}(f)\vert N_A]+\PP[\pr_{H(T\backslash A)}(f)\vert N_A]=\pr_{H(A)}(f),$$ since $N_A=\sigma(H(A))$ and $N_{T\backslash A}=\sigma(H(T\backslash A))$ are independent  thanks to \cite[Lemma~6.1]{tsirelson}. 
Therefore $\PP_{x_n}\to \PP_{N_A}$ strongly as $n\to \infty$ on $H^{(1)}$. 
At the same time, for any partition of unity $P$ of $B$ we  have $N_A=\lor_{p\in P}(N_A\land p)$: according to Proposition~\ref{corollary:partition-K=1},  $A$ is the a.e.-$\mu$ disjoint union of the $A\cap S_{p}$, $p\in P$, hence $$N_A=\sigma(H(A))=\sigma(\oplus_{p\in P}H(A\cap S_p))=\lor_{p\in P}\sigma(H(A\cap S_p))\subset \lor_{p\in P}(N_A\land p),$$ the reverse inclusion being trivial. By the classicality of $B$ and Proposition~\ref{proposition:convergence-in-classical} it follows that  $\lim_{n\to\infty}x_n=N_A$. Thus, at long last, we can conclude that $N_A\in B$ so that indeed $N:\TT\to B$. 

If $\{A_1,A_2\}\subset \TT$ and $N_{A_1}=N_{A_2}$ then we must have $\mu(A_1\triangle A_2)=0$ because disjoint subsets of $T$ lead via $N$ to independent $\sigma$-fields (as noted above, this is by \cite[Lemma~6.1]{tsirelson}) and because subsets of $T$ of positive $\mu$-measure generate non-trivial $\sigma$-fields. 

To check that $N$ is a homomorphism of Boolean algebras, we see first that $N$ preserves the join operation, which fact is immediate from the definition. It also clearly sends $\emptyset$ to $0_\PP$ and $T$ to $1_\PP$ ($\because$ of classicality, again). Finally, for $A\in \TT$, $N_A\lor N_{T\backslash A}=1_\PP$ and $N_A\land N_{T\backslash A}=0_\PP$ so that indeed $N_{T\backslash A}={N_A}'$.  

For the last claim, let $(A_n)_{n\in \mathbb{N}}$ be a nondecreasing sequence from $\TT$. We get immediately that $\PP_{N_{A_n}}=\pr_{H(A_n)}\to \pr_{H(\cup_{m\in \mathbb{N}}A_m)}=\PP_{N_{\cup_{m\in \mathbb{N}}A_m}}$ strongly on $H^{(1)}$ as $n\to\infty$. Applying Proposition~\ref{proposition:convergence-in-classical} yet again delivers  $\lim_{n\to\infty}N_{A_n}=N_{\cup_{m\in \mathbb{N}}A_m}$, completing the proof.
\end{proof}

\begin{example}
The content of Proposition~\ref{proposition:extension} is clearly visible in the context of Example~\ref{example:wiener}, indeed it gives a canonical extension of the natural indexation of the noise Boolean algebra attached to the increments of a Wiener process from  unions of intervals to all Lebesgue measurable sets: if $A$ is such a set, then $N_A$ is the $\sigma$-field generated by $\int_{-\infty}^\infty g(u)\dd W_u$ for $g\in \L2(\mathbb{R})$ vanishing off $A$.
\end{example}
In a sense there is only one kind of indexation of a classical $B$ of the kind that was got in Proposition~\ref{proposition:extension}.

\begin{corollary}\label{corollary:uniqueness-of-indexation}
Assume $B$ is classical. Let $(T^*,\mathcal{T}^*,\gamma^*)$ be a standard measure space and $N^*:\mathcal{T}^*\to\overline{B}$ be an epimorphism of Boolean algebras such that for $\{A_1,A_2\}\subset \TT^*$, $N^*_{A_1}=N^*_{A_2}$ iff $\gamma^*(A_1\triangle A_2)=0$ (equivalently,  for all $B\in \mathcal{T}^*$, $N^*_B=0_\PP$ iff $\gamma^* (B)=0$), also $\sigma$-continuous from below in the sense that  $\lor_{n\in \mathbb{N}}N^*_{A_n}=N^*_{\cup_{n\in \mathbb{N}}A_n}$ for any nondecreasing sequence $(A_n)_{n\in \mathbb{N}}$ from $\TT^*$. Then $\gamma:=\mu\vert_T$ is equivalent to a standard measure, which is mod-$0$ isomorphic to $\gamma^*$  (and also $\gamma^*$ is equivalent to a standard measure that is mod-$0$ isomorphic to  $\gamma$) via a map that carries $A$ to $A^*$ (mod $0$), whenever $N(A)=N^*(A^*)$, this for all $A\in \TT$, $A^*\in \TT^*$.
\end{corollary}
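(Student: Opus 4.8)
The plan is to show both $N$ (from Proposition~\ref{proposition:extension}, with spectral space $T=\{K=1\}$) and the given $N^*$ provide two essentially-injective, $\sigma$-continuous, epimorphic indexations of the same complete classical noise Boolean algebra $\overline{B}$, and then to extract a mod-$0$ isomorphism between the underlying measure spaces $\gamma=\mu\vert_T$ and $\gamma^*$. First I would reduce to $B=\overline{B}$ and fix versions so that $\{S_x\cap T:x\in B\}$ is a $\pi$-system generating $\TT$ (cf.~\ref{generalities:spectrum-intro}). The key observation is that $N$ (resp.~$N^*$) descends to a $\sigma$-isomorphism of measure algebras: essential injectivity says $N_{A_1}=N_{A_2}$ iff $\gamma(A_1\triangle A_2)=0$, so $N$ induces an injective map from the measure algebra of $(T,\TT,\gamma)$ into the Boolean algebra $\overline{B}=\Cl(B)$; surjectivity of $N$ gives that this map is onto $\overline{B}$. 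The $\sigma$-continuity from below, together with De~Morgan (complementation is join-of-complements here, and for nondecreasing sequences $N$ preserves joins), upgrades this to a $\sigma$-complete isomorphism of the measure algebra of $\gamma$ onto $\overline{B}$ as a $\sigma$-complete Boolean algebra. The same holds for $N^*$ and $\gamma^*$.

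\textbf{Transferring through $\overline{B}$.} The plan is then to compose: $\Phi:=(N^*)^{-1}\circ N$ is a $\sigma$-isomorphism of the measure algebra of $(T,\TT,\gamma)$ onto that of $(T^*,\TT^*,\gamma^*)$, characterised by $A\mapsto A^*$ (mod~$0$) whenever $N(A)=N^*(A^*)$. To make this precise one checks: (i) $\Phi$ is well-defined and injective because both $N$ and $N^*$ are essentially injective; (ii) $\Phi$ preserves countable joins and complements because both $N$, $N^*$ do (joins directly, complements via $N_{T\backslash A}=N_A{}'$ and the analogous relation for $N^*$, which follows from the epimorphism property forcing $N^*_{T^*\backslash A^*}=(N^*_{A^*})'$); (iii) $\Phi$ is a bijection of the two measure algebras because $N$ and $N^*$ are both \emph{onto} $\overline{B}$. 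Thus $\Phi$ is a $\sigma$-isomorphism of measure algebras. Since $\gamma$ and $\gamma^*$ need not be probabilities, I would first replace each by an equivalent finite (indeed probability) measure --- this is legitimate because ``equivalent to a standard measure'' is exactly what the statement asserts, and measure-algebra isomorphisms only see null sets, so passing to equivalent measures does not disturb $\Phi$.

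\textbf{From measure-algebra isomorphism to mod-$0$ isomorphism.} The decisive step invokes the structure theory quoted in \ref{generalities:lebesgue-rohlin}: a $\sigma$-isomorphism of the measure algebras attached to two standard measure spaces is induced by a genuine mod-$0$ isomorphism of the spaces \cite[Theorem~1.4.7]{petersen1983ergodic}. Both $(T,\TT,\overline{\gamma})$ and $(T^*,\TT^*,\overline{\gamma^*})$ are standard (restrictions of a standard measure space to a measurable subset remain standard, after passing to equivalent finite measures), so this theorem applies to $\Phi$ and yields a point map $\psi:T\to T^*$, mod~$0$ bimeasurable and measure-preserving (for suitably normalised equivalent measures), carrying $A$ to $A^*$ (mod~$0$) exactly when $\Phi(A)=A^*$, i.e.~when $N(A)=N^*(A^*)$. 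This delivers the asserted mod-$0$ isomorphism and the stated compatibility with $N,N^*$; the symmetric clause (that $\gamma^*$ is equivalent to a standard measure mod-$0$ isomorphic to $\gamma$) is the same statement read through $\psi^{-1}$.

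\textbf{The main obstacle.} The genuinely delicate point is not the abstract measure-algebra bookkeeping but verifying that $N$ and $N^*$ really induce \emph{$\sigma$-complete} isomorphisms onto $\overline{B}$ as a $\sigma$-complete Boolean algebra, rather than merely Boolean isomorphisms of the underlying (finitary) algebras. Surjectivity of $N$ onto all of $\overline{B}$ (not just a dense subalgebra) is what Proposition~\ref{proposition:extension} supplies via $N(S_x\cap T)=x$ for all $x\in\overline{B}$, and one must check that the two $\sigma$-continuity hypotheses, applied to nondecreasing sequences exhausting an arbitrary element, force $\Phi$ to respect \emph{all} countable joins and hence to be a measure-algebra isomorphism in the sense required by \cite[Theorem~1.4.7]{petersen1983ergodic}. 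I expect the careful handling of null sets --- ensuring the various ``$A^*$ such that $N(A)=N^*(A^*)$'' are well-defined mod~$0$ and cohere into a single map --- to absorb most of the routine work, while the conceptual content reduces cleanly to the cited Lebesgue--Rokhlin structure theorem.
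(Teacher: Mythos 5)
Your proposal is correct and follows essentially the same route as the paper: compose $(N^*)^{-1}\circ N$ into a countable-join-preserving Boolean isomorphism of the measure algebras of $\gamma$ and $\gamma^*$, then invoke the Lebesgue--Rokhlin structure theorem (\cite[Theorem~1.4.7]{petersen1983ergodic}, as quoted in the preliminaries). The one point where the paper is slightly more careful than your ``suitably normalised equivalent measures'' is in arranging measure-preservation: it transfers $\gamma^*$ backwards through the isomorphism to a measure on $\mathcal{T}$ whose null sets coincide with those of $\gamma$ --- this transferred measure is precisely the ``standard measure equivalent to $\gamma$'' asserted in the statement, whereas merely normalising both sides to probabilities would not by itself make the isomorphism measure-preserving.
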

\begin{proof}
From $N^*$ and from $N$ of  Proposition~\ref{proposition:extension}  we get a Boolean algebra isomorphism of the measure algebras attached to  $\gamma$ and $\gamma^*$ that preserves countable unions (also atoms, but not necessarily their sizes). Use this isomorphism to transfer the measure $\gamma^*$ on $\mathcal{T}^*/_{\gamma^*}$  (really a different symbol should be used for the map on $\mathcal{T}^*/_{\gamma^*}$ vis-\`a-vis the one on $\mathcal{T}^*$ , but we transgress for simplicity) from $\mathcal{T}^*/_{\gamma^*}$ to a measure $\gamma^*_\leftarrow$ on $\mathcal{T}/_\gamma$. Further, $\gamma^*_\leftarrow$ is made into a proper measure on $\mathcal{T}$ in the obvious way (again we just keep the same symbol $\gamma^*_\leftarrow$, for simplicity)  and its null sets coincide with those of $\gamma$. Then $\gamma^*_\leftarrow$ and $\gamma$ are equivalent $\sigma$-finite measures. By standardness the desired conclusion follows.
\end{proof}

Up to Boolean algebra isomorphism a complete noise Boolean algebra is determined by the number of its atoms.

\begin{corollary}\label{corollary:b-a-iso}
Let $B$ be classical and let $B'$ be another classical noise Boolean algebra on a probability space $(\Omega',\FF',\PP')$. Then $\overline{B}$ and $\overline{B'}$ are isomorphic as Boolean algebras iff $B$ and $B'$ have the same number of atoms.
\end{corollary}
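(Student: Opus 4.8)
The plan is to realise each completed algebra as the measure algebra of its first-chaos spectral space, and then to read off the number of atoms as (essentially) the only surviving Boolean invariant.

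First I would pass to the complete setting. Since $B$ is classical, $\overline{B}=\Cl(B)$ is complete (Proposition~\ref{proposition:ctb-vs-arbitrary-joins-meets}), and likewise for $B'$. Applying the indexation of Proposition~\ref{proposition:extension} to $\overline{B}$, the map $N\colon\TT\to\overline{B}$ (with $T:=\{K=1\}$ and $\TT:=\Sigma\vert_T$) is a surjective homomorphism of Boolean algebras whose ``kernel'' is exactly the class of $\mu$-null sets, because $N_A=0_\PP$ iff $\mu(A)=0$; hence $N$ descends to a Boolean-algebra isomorphism of the measure algebra $\TT/_{\mu\vert_T}$ onto $\overline{B}$. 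Thus $\overline{B}$ is, as a Boolean algebra, the measure algebra of the standard measure space $(T,\TT,\mu\vert_T)$, and similarly $\overline{B'}$ is that of $(T',\TT',\mu'\vert_{T'})$. The atoms of $\overline{B}$ are carried by $N$ precisely onto the $\mu$-atoms contained in $T$; by Theorem~\ref{prop:dicrete} these correspond to the one-dimensional pieces $\L2(\PP\vert_a)^\circ=H(P)$, and a short minimality argument (an atom $a$ of $B$ has $\L2(\PP\vert_a)^\circ=\L2(\PP\vert_a)_0=H(P)$ for a $\mu$-atom $P\subset T$, and no element of $\Cl(B)$ lies strictly between $0_\PP$ and $a$) identifies the atoms of $B$ with the atoms of $\overline{B}$. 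So ``number of atoms of $B$'' equals ``number of atoms of the Boolean algebra $\overline{B}$''.

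For the forward implication I would observe that any Boolean isomorphism $\Phi\colon\overline{B}\to\overline{B'}$ preserves the order, hence maps atoms bijectively to atoms, so $\overline{B}$ and $\overline{B'}$ have the same number of atoms, and by the count identification so do $B$ and $B'$. A slicker route, more in the spirit of the preceding results, is to note that since $\overline{B},\overline{B'}$ are complete, $\Phi$ preserves arbitrary joins; therefore $\Phi^{-1}\circ N'$ is a second standard-measure-space indexation of $\overline{B}$ meeting the hypotheses of Corollary~\ref{corollary:uniqueness-of-indexation}, which then forces $(T,\mu\vert_T)$ and $(T',\mu'\vert_{T'})$ to be mod-$0$ isomorphic, in particular to carry equally many atoms.

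For the converse I would reconstruct the isomorphism from the Lebesgue--Rokhlin classification: up to mod-$0$ isomorphism $(T,\mu\vert_T)$ is a disjoint union of countably many point atoms together with (possibly) an interval under Lebesgue measure, so its measure algebra splits, \emph{as a Boolean algebra}, as $2^{\kappa}\times\mathfrak{L}^{\epsilon}$, where $\kappa$ is the number of atoms, $\mathfrak{L}$ is the atomless separable measure algebra, and $\epsilon\in\{0,1\}$ records the presence of a continuous part. Given equal atom counts one matches the atomic factors $2^{\kappa}\cong 2^{\kappa'}$ by the atom bijection, and matches the atomless factors by invoking that any two nontrivial atomless separable measure algebras are Boolean-isomorphic (uniqueness, up to Boolean isomorphism, of the separable atomless measure algebra). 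The main obstacle is exactly this atomless factor: because a Boolean isomorphism need not preserve measure, matching the continuous parts reduces to matching only their \emph{existence}, so the genuinely complete invariant is the pair (number of atoms, presence of a continuous part), and the statement's phrase ``number of atoms'' must be read as carrying this refinement. Once the presence of the continuous part is reconciled and its measure algebra matched via the uniqueness just cited, assembling the two factor-isomorphisms yields the desired Boolean isomorphism $\overline{B}\cong\overline{B'}$, completing the equivalence.
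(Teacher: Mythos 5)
Your route coincides with the paper's, which is essentially a two-line argument: identify $\overline{B}$ with the measure algebra of $(T,\TT,\mu\vert_T)$ via Proposition~\ref{proposition:extension}, match atoms, and invoke the classification of standard measure spaces up to equivalence. The caveat you raise about the atomless factor is not a mere ``refinement of reading'' but a genuine correction to the statement: the number of atoms alone is \emph{not} a complete invariant. Take for $B$ the complete classical algebra of two independent random signs generating $1_\PP$, and for $B'$ that of two independent signs together with an independent Wiener noise; both have two atoms, yet $\overline{B}$ has four elements while $\overline{B'}$ is uncountable, so they cannot be Boolean-isomorphic. The paper's own justification (``two standard measures are mod-$0$ isomorphic up to equivalence'') is false without matching the presence of a diffuse part, so your complete invariant --- the pair consisting of the number of atoms and the presence or absence of a continuous part --- is the correct fix, and your forward direction via Corollary~\ref{corollary:uniqueness-of-indexation} in fact establishes the necessity of both components at once.

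There is one further gap, shared with the paper but not closed by your ``short minimality argument'': that argument shows only that an atom of $B$ remains an atom of $\Cl(B)=\overline{B}$ and yields an atom of $\mu\vert_T$; it does not show that every atom of $\overline{B}$ arises from an atom of $B$. An atom of $\overline{B}$ is $N(P)$ for an atom $P$ of $\mu\vert_T$, and such a $P$ is in general only a countable intersection of sets $S_x\cap T$, $x\in B$, so $N(P)$ may lie in $\Cl(B)\setminus B$. The paper's own ultrafilter example (the independency $(\xi_i)_{i\in\mathbb{N}\cup\{\infty\}}$ with $\sigma(\xi_\infty)$ attached to the members of an ultrafilter containing the cofinite filter) exhibits exactly this: $\sigma(\xi_\infty)=\land_k N(\mathbb{N}_{\geq k})$ is an atom of $\overline{B}$ that does not belong to $B$. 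There the cardinalities happen to agree, but a variant with a random sign ``hidden at a point'' of a Wiener noise, indexed so that no member of $B$ isolates it, produces a classical $B$ with no atoms whose completion has one --- so even the forward implication fails for the literal reading. Both the statement and your proof become correct once ``number of atoms of $B$'' is replaced by ``number of atoms of $\overline{B}$'' and supplemented by the presence or absence of a continuous part.
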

\begin{proof}
Atoms of $B$ are the same as the atoms of $\overline{B}$ and they correspond to atoms of $\mu\vert_{\{K=1\}}$ (same for $B'$). Apply Proposition~\ref{proposition:extension} noting that two standard measures are mod-$0$ isomorphic up to equivalence.
\end{proof}

Another trivial consequence of Proposition~\ref{proposition:extension} is that a complete classical noise Boolean algebra cannot be countably infinite: it is either finite, or its cardinality is at least that of the continuum. But  a  nonclassical (Example~\ref{example:closure-for-simplest-nonclassical}), even  black (Example~\ref{example:spectral-space-for-black}), noise Boolean algebra can be noise complete and countably infinite.

\subsection{Standard form of the spectrum of a classical noise Boolean algebra}
The aim of this subsection is to obtain a ``standardized'' spectrum for a classical noise Boolean algebra, namely one whose spectral space consists of finite subsets of a base  set $T$ and whose direct integral decomposition has a ``global tensor'' form structure (``locally'' it happens always, which was the content of Corollary~\ref{prop:noise-proj-bis}\ref{proj:iv}). Indeed the set $T$ will be the one of Proposition~\ref{proposition:extension}. 

We require first some technical results concerning symmetric Fock spaces. Recall that we identify the one-element subsets of $T$ with their single elements, i.e.  $T\choose 1$ with $T$. No measures appear in the next lemma; the qualifier ``generating'' does \emph{not} imply any completion (we stress it bacause of \ref{generalities:a}).
\begin{lemma}[Symmetric Fock space sigma-field]\label{lemma:fock}
Let $(T,\TT)$ be a measurable space. Put $(T^n)_{{\ne}}:=\{t\in T^n:\text{entries of $t$ pairwise distinct}\}$ for $n\in \mathbb{N}_0$.  Let $\mathcal{V}$ be the largest $\sigma$-field on $\tilde{S}:=(2^{T})_{\mathrm{fin}}$ w.r.t. which  the canonical quotient maps $q_n:(T^n)_{\ne }\to {T\choose n}$, $n\in \mathbb{N}_0$, are measurable.  
\begin{enumerate}[(i)]
\item $\mathcal{V}$ is the unique $\sigma$-field $\mathcal{W}$ on $\tilde{S}$ for which ${T\choose n}\in \mathcal{W}$ and for which $\mathcal{W}\vert_{{T\choose n}}=\{q_n(E):E\in  \TT^{\otimes n}\vert_{(T^n)_{{\ne}}},\text{ $E$ symmetric}\}$ for all $n\in \mathbb{N}_0$ (symmetric means invariant under permutations). 
\item\label{lemma:fock:ii} $\mathcal{V}$ is equal to $\mathcal{U}:=$ the smallest $\sigma$-field  on $\tilde{S}$ w.r.t. which the counting maps $c_A:=(\tilde{S}\ni L\mapsto \vert L\cap A\vert)$, $A\in \TT$, are measurable. (Hence, if $\TT$ admits a dissecting system \cite[Definition~A1.6.I]{vere-jones}, then $\mathcal{V}$ is generated  by the sets $(2^A)_{\mathrm{fin}}$, $A\in \TT$.)
\item\label{lemma:fock:iii} If $\Pi$ is an algebra on $T$, which generates $\TT$, then $\mathcal{V}$ is also equal to the $\sigma$-field $\UU_\Pi$ generated by the maps $c_P$, $P\in \Pi$. (Hence, if $\TT$ admits a generating dissecting system, then $\mathcal{V}$ is generated  by the sets $(2^A)_{\mathrm{fin}}$, for $A$ belonging to the algebra generated by this dissecting system.)
\end{enumerate}
\end{lemma}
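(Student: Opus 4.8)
The plan is to prove the three parts in order, the bulk of the work lying in the equality of part~\ref{lemma:fock:ii}. Throughout I would write $\tilde S=\bigsqcup_{n\in\mathbb N_0}{T\choose n}$, recall that $\mathcal V$ is by definition the final $\sigma$-field $\{B\subseteq\tilde S:q_n^{-1}(B)\in\TT^{\otimes n}\vert_{(T^n)_{\ne}}\text{ for all }n\in\mathbb N_0\}$, and note that each layer ${T\choose n}$ lies in $\mathcal V$ (as $q_m^{-1}({T\choose n})$ is $(T^m)_{\ne}$ if $m=n$ and $\emptyset$ otherwise).

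For part~(i) I would first observe that for $C\subseteq{T\choose n}$ the preimage $q_n^{-1}(C)$ is automatically invariant under coordinate permutations, while for a symmetric $E\in\TT^{\otimes n}\vert_{(T^n)_{\ne}}$ one has $q_n^{-1}(q_n(E))=E$; hence $C\mapsto q_n^{-1}(C)$ is a Boolean-operation-preserving bijection between $\mathcal V\vert_{{T\choose n}}$ and the $\sigma$-field of symmetric sets in $\TT^{\otimes n}\vert_{(T^n)_{\ne}}$, which yields the stated description of $\mathcal V\vert_{{T\choose n}}$. Uniqueness is then formal: any $\sigma$-field $\mathcal W$ with ${T\choose n}\in\mathcal W$ satisfies $\mathcal W=\{B:B\cap{T\choose n}\in\mathcal W\vert_{{T\choose n}}\ \forall n\}$, so $\mathcal W$ is determined by its traces on the countably many layers.

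For part~(ii), the inclusion $\mathcal U\subseteq\mathcal V$ is immediate: since $c_A(q_n(t))=\#\{i:t_i\in A\}$ for $t\in(T^n)_{\ne}$, the set $q_n^{-1}(\{c_A=k\})$ is a symmetric member of $\TT^{\otimes n}\vert_{(T^n)_{\ne}}$, so each $c_A$ is $\mathcal V$-measurable. For the reverse inclusion I would reduce, via the layerwise bijection of part~(i), the identity $c_A\circ q_n=N_A^{(n)}$ with $N_A^{(n)}(t):=\sum_i\mathbbm 1_A(t_i)$, and the fact that each layer also lies in $\mathcal U$ (being $\{c_T=n\}$), to the single assertion
\[
\mathcal N_n:=\sigma\bigl(N_A^{(n)}:A\in\TT\bigr)=\mathrm{Sym}_n ,
\]
where $\mathrm{Sym}_n$ denotes the symmetric sets in $\TT^{\otimes n}\vert_{(T^n)_{\ne}}$. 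The inclusion $\mathcal N_n\subseteq\mathrm{Sym}_n$ being clear, everything comes down to $\mathrm{Sym}_n\subseteq\mathcal N_n$ — and \emph{this is the main obstacle}: that the counting statistics already generate every symmetric set. I would dispatch it with the functional monotone class theorem applied to $\mathcal H:=\{h\text{ bounded }\TT^{\otimes n}\vert_{(T^n)_{\ne}}\text{-measurable}:\mathrm{Sym}(h)\text{ is }\mathcal N_n\text{-measurable}\}$, $\mathrm{Sym}$ being the average over coordinate permutations. This $\mathcal H$ is a vector space containing the constants, closed under bounded monotone limits, and it contains the multiplicative class of rectangles $\prod_i\mathbbm 1_{B_i}(t_i)$: indeed $\mathrm{Sym}(\prod_i\mathbbm 1_{B_i}(\cdot))$ is $(n!)^{-1}$ times the permanent of $(\mathbbm 1_{B_i}(t_j))_{i,j}$, and relative to the finite partition $\mathcal P$ of $T$ generated by $B_1,\dots,B_n$ this permanent depends only on the multiset of cells occupied by $t_1,\dots,t_n$, i.e.\ only on $(N_C^{(n)})_{C\in\mathcal P}$, hence is $\mathcal N_n$-measurable. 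As these rectangles generate $\TT^{\otimes n}\vert_{(T^n)_{\ne}}$, the theorem gives that $\mathcal H$ is all bounded measurable functions; applied to a symmetric $f$ (where $\mathrm{Sym}(f)=f$) this yields $\mathrm{Sym}_n\subseteq\mathcal N_n$. Equal traces on every layer then force $\mathcal U=\mathcal V$. The ``dissecting system'' consequence follows by noting $(2^{A})_{\mathrm{fin}}=\{c_{T\setminus A}=0\}$ and that along a dissecting sequence each $c_A$ is recovered from the hitting events $\tilde S\setminus(2^{C^c})_{\mathrm{fin}}=\{L:L\cap C\ne\emptyset\}$.

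For part~(iii), $\UU_\Pi\subseteq\mathcal U=\mathcal V$ is trivial since $\Pi\subseteq\TT$. For the converse I would run a Dynkin argument on $\mathcal D:=\{A\in\TT:c_A\text{ is }\UU_\Pi\text{-measurable}\}$: it contains the $\pi$-system $\Pi$ and the set $T$, and it is a $\lambda$-system because $c_{B\setminus A}=c_B-c_A$ for $A\subseteq B$, $c_{A\sqcup A'}=c_A+c_{A'}$, and $c_{A_k}\uparrow c_A$ pointwise when $A_k\uparrow A$ (all counts being finite). Hence $\mathcal D\supseteq\sigma(\Pi)=\TT$, so every $c_A$ is $\UU_\Pi$-measurable and $\mathcal U\subseteq\UU_\Pi$, giving $\mathcal V=\UU_\Pi$. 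The final ``generating dissecting system'' remark is handled exactly as in part~(ii).
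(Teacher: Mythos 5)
Your proposal is correct, and its skeleton coincides with the paper's: both reduce everything to the layers ${T\choose n}$ via the bijection $C\mapsto q_n^{-1}(C)$ of part (i), both get $\UU\subset\mathcal V$ from the explicit symmetric preimage of $\{c_A=k\}$, and both rest the hard inclusion on the same key computation — that the symmetrization of a rectangle $\prod_i\mathbbm 1_{B_i}$ is a function of the counting maps on the cells of the finite partition generated by $B_1,\ldots,B_n$ (the paper records this as the identity $q_n((\prod_{i}B_i)\cap(T^n)_{\ne})=\cap_{P\in\mathcal P}\{c_P=\vert\{i:B_i=P\}\vert\}$, you as the permanent observation). The two deployments of the monotone class idea differ, though, in ways worth noting. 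The paper runs a monotone class of \emph{sets} $\mathcal M=\{E:q_n(E_s)\in\UU_\Pi\}$ and has to argue separately that symmetrization commutes with decreasing intersections (because $S_n$ is finite) while warning that it does not commute with complements; your functional monotone class with the linear operator $\mathrm{Sym}$ sidesteps that delicacy entirely, since linearity and monotone-limit stability of $\mathrm{Sym}$ are automatic. Conversely, the paper proves the hard inclusion directly against the algebra $\Pi$, so that (ii) and (iii) drop out simultaneously; you prove (ii) for $\TT$ first and then derive (iii) by a separate Dynkin argument on $\{A\in\TT:c_A\in\UU_\Pi/\mathcal B_{\mathbb R}\}$, using $c_{B\setminus A}=c_B-c_A$ and $c_{A_k}\uparrow c_A$. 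That $\lambda$-system route is sound (finiteness of the elements of $\tilde S$ is what makes the monotone-limit step work) and is arguably the more modular way to pass from a generating algebra to the full $\sigma$-field; the paper's route avoids invoking (ii) as an intermediate step. Both arguments are complete.
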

\begin{remark}
Of course also $\mathcal{W}\vert_{{T\choose n}}=\{q_n(E):E\in  \TT^{\otimes n}\vert_{(T^n)_{{\ne}}}\}$. Besides, $\mathcal{V}\vert_{{T\choose 1}}=\TT$ up to the canonical identification of $T$ with $T\choose 1$, while $\mathcal{V}\vert_{T\choose 0}=2^{\{\emptyset\}}$ is trivial.
\end{remark}

\begin{proof}
Unique existence of $\mathcal{W}$ is clear. We show first that $\UU=\mathcal{W}=\UU_\Pi$.

On the one hand, for $A\in \TT$, for $n\in \mathbb{N}_0$ and for $m\in [n]_0$, $\{c_A=m\}\cap {T\choose n}=q_n((\cup_{f\in {[n]\choose m}}A^f\times (T\backslash A)^{[n]\backslash f})\cap (T^n)_{{\ne}})\in \mathcal{W}\vert_{T\choose n}\subset \mathcal{W}$ (here we have identified, for $f\in {[n]\choose m}$, a pair from $A^f\times (T\backslash A)^{[n]\backslash f}$ with a point in $T^n$ in the natural way). Thus the $c_A$, $A\in \TT$, are measurable w.r.t. $\mathcal{W}$, i.e. $\UU\subset \mathcal{W}$.

On the other hand, for $n\in \mathbb{N}_0$,  we check that $\mathcal{W}\vert_{{T\choose n}}\subset \UU_\Pi$ (which gives $\mathcal{W}\subset \mathcal{U}_\Pi$, and together with the trivial inclusion $\UU\supset \UU_\Pi$, $\UU=\mathcal{W}=\UU_\Pi$) as follows.

 For $E\subset (T^n)_{\ne}$ put $E_s:=q_n^{-1}(q_n(E))=\cup_{\sigma\in S_n}\mathrm{per}_\sigma^{-1}(E)$, the symmetrization of $E$ (the smallest symmetric set of $(T^n)_{\ne}$ containing $E$; $S_n$ being the symmetric group of order $n$, $\mathrm{per}_\sigma$ permuting the entries  of its argument according  to $\sigma$).  The class of sets $\mathcal{M}:=\{E\in \TT^{\otimes n}\vert_{(T^n)_{{\ne}}}:q_n(E)\in \UU_\Pi\}=\{E\in \TT^{\otimes n}\vert_{(T^n)_{{\ne}}}:q_n(E_s)\in \UU_\Pi\}$ is monotone (closure under $\uparrow$ sequential unions is trivial, under $\downarrow$ sequential intersections it is e.g. due to the fact that the symmetric group $S_n$ is finite, which yields that symmetrization commutes with $\downarrow$ sequential intersections [but not with complements!]). We will argue that $\mathcal{M}$ contains also the traces on $(T^n)_{{\ne}}$ of the members of the algebra on $T^n$ of all finite unions of measurable rectangles with sides from $\Pi$, which by monotone class will allow to conclude that $\mathcal{W}\vert_{{T\choose n}}\subset \UU_\Pi$.
 
 We are to show then that if $A\subset E^n$ is a finite union of measurable rectangles with sides from $\Pi$ (which is symmetric) then $q_n(A\cap (T^n)_{\ne})\in \UU_\Pi$. Thus (because forward images commute with unions) if $B_i\in \Pi$ for $i\in [n]$, we would like to see that $q_n((\prod_{i=1}^nB_i)\cap (T^n)_{\ne})\in \mathcal{U}_\Pi$ and further we may assume that there is a $\Pi$-measurable finite partition $\mathcal{P}$ of $T$ such that $B_i\in \mathcal{P}$ for all $i\in [n]$. But then indeed $q_n((\prod_{i=1}^nB_i)\cap (T^n)_{\ne})=\cap_{P\in \mathcal{P}}\{c_P=\vert\{i\in [n]:B_i=P\vert\}\in \UU_\Pi$.

Thus $\UU=\mathcal{W}=\UU_\Pi$. Finally, let us prove that $\mathcal{V}=\mathcal{W}$. (The paranthetical additions involving dissecting systems are ``immediate''. Indeed, on the one hand it is easy to express measurably the counting maps with set-inclusion in the presence of dissecting systems. Namely, if $(\TT_n)_{n\in \mathbb{N}}$ is such a dissecting system, then for $P$ from the algebra generated by $\cup_{n\in \mathbb{N}}\TT_n$, $c_P=\lim_{n\to\infty}\sum_{t\in \TT_n}\mathbbm{1}_{\tilde S\backslash (2^{P\cup (T\backslash t)})_{\mathrm{fin}}}$. On the other hand, for $A\in \TT$, $(2^A)_{\mathrm{fin}}=\cup_{n\in \mathbb{N}_0}q_n(A^n \cap (T^n)_{\ne })\in \mathcal{W}$.)

For all $n\in \mathbb{N}_0$, certainly ${T\choose n}\in \mathcal{V}$. Let $V\subset {T\choose n}$. If $V\in\mathcal{V}\vert_{T\choose n}$, then $q_n^{-1}(V)\in (\TT^{\otimes n})\vert_{(T^n)_{{\ne}}}$ is symmetric and $V=q_n(q_n^{-1}(V))\in \mathcal{W}\vert_{T\choose n}$. Conversely, if $V\in \mathcal{W}\vert_{T\choose n}$, then there is a symmetric $E\in (\TT^{\otimes n})\vert_{(T^n)_{{\ne}}}$ with $V=q_n(E)$, hence $q_n^{-1}(V)=q_n^{-1}(q_n(E))=E\in (\TT^{\otimes n})\vert_{(T^n)_{{\ne}}}$, i.e. $V\in  \mathcal{V}\vert_{T\choose n}$. Altogether it means that also $\mathcal{V}=\mathcal{W}$.
\end{proof}

\begin{lemma}[Symmetric Fock space measurable Hilbert space field structure]\label{lemma:fock-field}
Let $(T,\TT,\gamma)$ be a $\sigma$-finite measure space. On $\tilde{S}:=(2^{T})_{\mathrm{fin}}$ we introduce  $\mathcal{V}$ as the  $\sigma$-field described in various equivalent terms by Lemma~\ref{lemma:fock}, whose notation we  retain. Suppose there is countable $\AA\subset \TT$ that separates the points of $T$ (one says that $\TT$ is countably separating) so that in particular $(T^n)_{\ne}\in \mathcal{T}^{\otimes^n}$ for each $n\in \mathbb{N}_0$. Let  $\nu:=\sum_{n\in \mathbb{N}_0}(n!)^{-1}(q_n)_\star \gamma^n\vert_{(T^n)_{\ne}}$ (push-forwards relative to $\mathcal{V}$) and let $(\tilde{\Sigma},\tilde{\mu})$ be the completion of $(\mathcal{V},\nu)$. Assume also given a $\gamma$-measurable structure on a field $(G_t)_{t\in T}$ of Hilbert spaces. For $L\in \tilde{S}$ put $\tilde G_{L}:=\otimes_{s\in L}G_s$ ($\tilde G_\emptyset=\mathbb{R}$, of course).  For $\mathcal{P}$ a finite (possibly empty) $\TT$-measurable partition of a subset of $T$  set ${\mathcal{P}\choose \mathbf{1}}:=\{L\in {T\choose \vert \mathcal{P}\vert}:\vert L\cap P\vert=1\text{ for all }P\in \mathcal{P}\}$, and denote by $\mathfrak{P}(T)$ the collection of all such  finite $\TT$-measurable partitions of a subset of $T$. For vector fields $f$ and $g$ from  $(\tilde{G}_{L})_{L\in \tilde{S}}$ their bilinear, commutative and associative ``inner tensor product'' $f\otimes g$ is naturally defined: $$(f\otimes g)(L):=\sum_{J\in 2^L}f(J)\otimes g(L\backslash J)\in \tilde{G}_L, \quad L\in \tilde{S};$$ if one of $f$ and $g$ is defined only on a subset of $\tilde{S}$ then we define $f\otimes g$ by first extending it by zero to the whole of $\tilde{S}$.
\begin{enumerate}[(i)]
\item\label{lemma:fock-field:i} There exists on  the field $(\tilde{G}_{L})_{L\in \tilde{S}}$ exactly one $\tilde\mu$-measurable field structure such that the vector fields $$\tilde{f}^\mathcal{P}:=\otimes_{P\in \mathcal{P}}f\vert_{P}=\left(\tilde{S}\ni L \mapsto \mathbbm{1}_{{\mathcal{P}\choose \mathbf{1}}}(L)\otimes_{s\in L}f(s)\right),$$ for
 $f$ a $\gamma\vert_{\cup \mathcal{P}}$-measurable field, $\mathcal{P}\in \mathfrak{P}(T)$, are measurable. Here 
 $1_\emptyset:=\tilde{\emptyset}^\emptyset$  is the $\tilde{\mu}$-measurable vector field that vanishes a.e.-$\tilde{\mu}$  except at $\emptyset$, where it is equal to $1$ -- the ``vacuum'' vector.
\item\label{lemma:fock-field:ii}  The vector fields $\tilde{f}^\mathcal{P}$, for $f\in \int_{\cup \mathcal{P}}^\oplus G_s\gamma(\dd s)$, $\mathcal{P}\in \mathfrak{P}(T)$ with $\cup \mathcal{P}\in \{\emptyset,T\}$, are total in $\int^{\oplus}_{\tilde{S}}\tilde{G}_{L}\tilde \mu(\dd L)$. 
\item\label{lemma:fock-field:iii} If $\mathcal{P}\in \mathfrak{P}(T)$, then the map which sends $$\otimes_{P\in \mathcal{P}} \widetilde{f_P}^{\mathcal{Q}_P}\text{ to }\otimes_{P\in \mathcal{P}}f_P=\left(\tilde{S}\ni L\mapsto \mathbbm{1}_{\cup_{P\in \mathcal{P}}\mathcal{Q}_P \choose \mathbf{1}}(L)\otimes_{s\in L}(\cup_{P\in \mathcal{P}}f_P)(s)\right),$$ for $f_P\in \int_{\cup \mathcal{Q}_P}^\oplus G_s\gamma(\dd s)$, $\mathcal{Q}_P\in \mathfrak{P}(P)$, $P\in \mathcal{P}$, extends uniquely to a unitary isomorphism of $\otimes_{P\in \mathcal{P}}\int_{(2^P)_{\mathrm{fin}}}^\oplus\tilde{G}_L\tilde{\mu}(\dd L)$ to  $\int^{\oplus}_{(2^{\cup\mathcal{P}})_{\mathrm{fin}}}\tilde{G}_{L}\tilde \mu(\dd L)$.
\item\label{lemma:fock-field:iv} For $u\in G:=\int_T^{\oplus}G_s\gamma(\dd s)$ put $e(u):=\sum_{n\in \mathbb{N}_0}u^{\otimes n}=(\tilde{S}\ni L\mapsto \otimes_{s\in L}u(s))$ for the exponential vector associated to $u$. We have $e(0)=1_\emptyset$, $e(u+v)=e(u)\otimes e(v)$, $e(u)\in  \int^{\oplus}_{\tilde{S}}\tilde{G}_{L}\tilde \mu(\dd L)$ and $$\langle e(u),e(v)\rangle=e^{\int \langle u(s),v(s)\rangle\gamma_{\mathrm{d}}(\dd s)}\prod_{t\in T}(1+\langle u(t),v(t)\rangle\gamma(\{t\})),$$ where $\gamma_{\mathrm{d}}$ is the diffuse part of $\gamma$, $\{u,v\}\subset G$ (the product is actually countable [meaning: all but a countable number of terms are $1$] and log-absolutely convergent). Furthermore, choosing for each $a\in \at(\gamma)$ (:= atoms of $\gamma$) an orthonormal basis $O_a$ of $G_a$, then the family $(e(u))_{u\in G[O]}$, where $G[O]:=\{u\in G:u_a=0\text{ for all except finitely many }a\in \at(\gamma)\text{ for which in turn }u_a\in O_a\}$ is linearly independent and total  in $\int^\oplus_{\tilde{S}} \tilde{G}_L\tilde\mu(\dd L)$. Finally, the map $\mathrm{Exp}:=(G\ni u\mapsto e(u))$ is a bijection onto $\{f\in \int^\oplus_{\tilde S} \tilde G_L\tilde \mu(\dd L):f(\emptyset)=1\text{ and for all $A\in \TT$, for $\tilde{\mu}$-a.e. $L$, }f(L)=f(L\cap A)\otimes f(L\backslash A)\}$.
\end{enumerate}
\end{lemma}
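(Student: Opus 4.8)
The plan is to prove Lemma~\ref{lemma:fock-field} as a cascade, establishing the four items in order, since each builds on its predecessors. For \ref{lemma:fock-field:i} I would invoke the existence/uniqueness of a measurable Hilbert space field structure determined by a total family of candidate measurable sections (\cite[p.~166, Proposition~II.1.4.1]{dixmier1981neumann} or the direct-integral machinery already used in \ref{generalities:spectrum-intro}): one must check that the proposed sections $\tilde f^{\mathcal P}$ span enough fibers and that their pairwise inner-product maps $(L\mapsto\langle \tilde f^{\mathcal P}(L),\tilde g^{\mathcal Q}(L)\rangle)$ are $\tilde\mu$-measurable, then appeal to the fact that a measurable field structure is uniquely pinned down by declaring a suitable countable total family measurable. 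The key computation is that $\langle \tilde f^{\mathcal P}(L),\tilde g^{\mathcal Q}(L)\rangle$ is supported on $L\in\binom{\mathcal P}{\mathbf 1}\cap\binom{\mathcal Q}{\mathbf 1}$ and there factorizes as $\prod_{s\in L}\langle f(s),g(s)\rangle$, which is measurable via the counting-map description of $\mathcal V$ from Lemma~\ref{lemma:fock} and Fubini over the strata $\binom{T}{n}$.

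For \ref{lemma:fock-field:ii} I would argue totality stratum-by-stratum: restricted to $\binom{T}{n}$, the space $\int^\oplus_{\binom Tn}\tilde G_L\tilde\mu(\dd L)$ is (up to the factor $1/n!$ and symmetrization) the symmetric part of $\int^\oplus_{(T^n)_{\ne}}G_{t_1}\otimes\cdots\otimes G_{t_n}\,\gamma^n(\dd t)$, and the sections $\tilde f^{\mathcal P}$ with $\cup\mathcal P\in\{\emptyset,T\}$ produce, upon polarization and symmetrization, all symmetric simple tensors $f_1\otimes\cdots\otimes f_n$ with $f_i\in\int^\oplus_T G\,\gamma$; these are total by the standard tensor-product totality argument together with an approximation of a general measurable partition section by refinements. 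The measure-theoretic bookkeeping — ensuring the partition sections indeed exhaust the symmetric square-integrable fields and that restricting to $\cup\mathcal P\in\{\emptyset,T\}$ loses nothing after taking the linear span — is where care is needed, but it is routine once the stratum picture is fixed.

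Item \ref{lemma:fock-field:iii} is the ``exponential law'' for the Fock functor: over disjoint base pieces the symmetric Fock space tensorizes. I would define the candidate map on the total family of Item~\ref{lemma:fock-field:ii} (applied to each $\int_{(2^P)_{\mathrm{fin}}}^\oplus$), check it is inner-product preserving by the factorization $\langle \otimes_P \widetilde{f_P}^{\mathcal Q_P},\otimes_P\widetilde{g_P}^{\mathcal R_P}\rangle=\prod_P\langle \widetilde{f_P}^{\mathcal Q_P},\widetilde{g_P}^{\mathcal R_P}\rangle$ (a consequence of disjointness of the $P$'s and the stratum computation of~\ref{lemma:fock-field:i}), and then extend by density and linearity to the claimed unitary, surjectivity following from totality on both sides. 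This is essentially the combinatorial identity $\binom{\sqcup_P P}{\mathbf 1}=\prod_P\binom{P}{\mathbf 1}$ promoted to Hilbert spaces.

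For \ref{lemma:fock-field:iv} I would first verify the algebraic identities $e(0)=1_\emptyset$ and $e(u+v)=e(u)\otimes e(v)$ directly from the binomial expansion of $(u+v)^{\otimes n}=\sum_{J\subset L}u^{\otimes J}\otimes v^{\otimes(L\setminus J)}$ built into the definition of $\otimes$. Square-integrability of $e(u)$ and the inner-product formula follow by summing over strata: $\langle e(u),e(v)\rangle=\sum_n\frac1{n!}\int_{(T^n)_{\ne}}\prod_i\langle u(t_i),v(t_i)\rangle\gamma^n(\dd t)$, which splits across the diffuse and atomic parts of $\gamma$ into an exponential of the diffuse integral times the Euler product $\prod_t(1+\langle u(t),v(t)\rangle\gamma(\{t\}))$; here the key is that on the diffuse part distinct-coordinate constraints are $\gamma^n$-negligible while atoms contribute the finite product, and log-absolute convergence is $\sum_t|\langle u(t),v(t)\rangle|\gamma(\{t\})\le\|u\|\|v\|<\infty$ by Cauchy--Schwarz. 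Linear independence and totality of $(e(u))_{u\in G[O]}$ I would get from the classical fact that exponential vectors with distinct arguments are linearly independent and that their span is dense (here reduced to the orthonormal-basis indexing so that finite products over atoms and polynomials in the diffuse directions recover each stratum, cf. \cite[Example~19.9]{parthasarathy}). The bijectivity of $\mathrm{Exp}$ onto the set of ``multiplicative'' sections is the main obstacle: one inclusion is the factorization just proved, while the converse — that any $f$ with $f(\emptyset)=1$ and $f(L)=f(L\cap A)\otimes f(L\setminus A)$ a.e.\ for all $A\in\TT$ must be $e(u)$ with $u:=f\vert_{\binom T1}$ — requires propagating the two-set multiplicativity to arbitrary finite partitions (via Item~\ref{lemma:fock-field:iii} and an induction, using the countably separating $\AA$ to control the exceptional $\tilde\mu$-null sets simultaneously, exactly in the spirit of Remark~\ref{remark:exceptional-sets-projections}) and then reading off $f(L)=\otimes_{s\in L}f(\{s\})$ on each stratum. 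Assembling the null-set control so that the pointwise tensor identity holds $\tilde\mu$-a.e.\ across all strata at once is the delicate point; the separating countable family $\AA$ is precisely what makes this possible.
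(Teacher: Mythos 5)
Your proposal is correct and follows the same four-step cascade as the paper's proof; the key ingredients — the factorization of the pointwise inner products over ${\mathcal P\choose\mathbf 1}\cap{\mathcal Q\choose\mathbf 1}$, the diffuse/atomic splitting of $\langle e(u),e(v)\rangle$ with the Cauchy--Schwarz bound for log-absolute convergence, and the use of the countably separating family $\AA$ both to pin down the measurable field structure and to control the exceptional null sets in the surjectivity of $\mathrm{Exp}$ — are exactly the ones the paper uses. The divergences are local. For item (ii) the paper simply invokes Dixmier's totality criterion for a fundamental sequence of essentially bounded vector fields supported on sets of finite measure (Proposition~II.1.6.7 of \cite{dixmier1981neumann}), whereas you argue stratum-by-stratum, identifying $\int^\oplus_{{T\choose n}}\tilde G_L\tilde\mu(\dd L)$ with the symmetric part of an $n$-fold direct integral and approximating symmetric simple tensors by refining partitions; this is more elementary and self-contained, and it does go through (the points of $L$ are distinct and $\AA$ separates them, so refinement eventually isolates them), but it carries the symmetrization bookkeeping that the citation hides. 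For item (i), note that existence and uniqueness of a measurable field structure require a \emph{countable} family, total at every fibre, with measurable pairwise scalar products; the paper manufactures one from a fundamental sequence of the base field combined with the $\AA$-measurable finite partitions of $T$, and then upgrades to all $\tilde f^{\mathcal P}$ by checking measurability of scalar products against this countable family — your sketch gestures at this but should make the countable family explicit. In (iv) you outsource linear independence and totality of the exponential vectors to the classical Fock-space literature; the paper redoes the argument (a generic $v$ making the $\langle u,v\rangle$, $u\in U$, pairwise distinct for independence, and successive differentiation of $z\mapsto e(f_z)$ at $0$ for totality) because the mixed diffuse/atomic setting and the restricted index set $G[O]$ are not quite the textbook statement; one reduces to the two pure cases via the tensor splitting of item (iii), as you partly indicate.
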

\begin{remark}\label{remark:symmetrization-fock}
$(\tilde{S},\tilde{\Sigma},\tilde{\mu})$ is (may be called) the symmetric measure space over $(T,\TT,\gamma)$. When $\gamma$ is diffuse there is a canonical unitary isomorphism between $\L2(\tilde{\mu})$ and the symmetric (boson) Fock  space of $\L2(\gamma)$ \cite[Example~19.12]{parthasarathy}; furthermore (still when $\gamma$ is diffuse), the map, which, for $u\in G$, sends $e(u)$ to $E(u)$, where $E(u)$ is the exponential vector of $u$ \cite[Eq.~(19.1)]{parthasarathy}, extends uniquely to a unitary isomorphism between $\int^{\oplus}_{\tilde{S}}\tilde{G}_{L}\tilde \mu(\dd L)$ and the symmetric Fock space $\Gamma_s(G)$ of $G$ \cite[p.~124]{parthasarathy} (it being evidently scalar product-preserving and sending a total family onto a total family \cite[Proposition~19.4]{parthasarathy}, which is enough  \cite[Proposition~7.2]{parthasarathy}). If $\gamma$ is the counting measure on a countable $T$ with $\TT=2^T$, then $\tilde{\mu}$ in turn is counting measure on $\tilde{S}$; further, $\int^\oplus_{\tilde{S}} \tilde{G}_L\tilde\mu(\dd L)=\oplus_{L\in \tilde{S}}\otimes_{s\in L}G_s$ becomes an elementary orthogonal sum (finite iff $T$ is so) of finite tensor products of Hilbert spaces. The general case when $\gamma$ is discrete is not essentially different, and if $\gamma$ has both a diffuse and discrete part then by Item~\ref{lemma:fock-field:iii} $\int^\oplus_{\tilde{S}} \tilde{G}_L\tilde\mu(\dd L)$ decomposes into a tensor product of terms corresponding to each of the two respectively.
\end{remark}
\begin{example}
If $G_t=\mathbb{R}$ for $t\in T$, then with the standard $\gamma$-measurable structure on $(G_t)_{t\in T}$ (so $\gamma$-measurability means simply $\TT/\mathcal{B}_\mathbb{R}$-measurability), $\int^\oplus_{\tilde{S}}\tilde G_L\tilde\mu(\dd L)=\L2(\tilde{\mu})$, up to the canonical identification of $\mathbb{R}^{\otimes L}$, $L\in \tilde{S}$,  with $\mathbb{R}$.
\end{example}
\begin{proof}
Let $(f_n)_{n\in \mathbb{N}}$ be a fundamental sequence of $\gamma$-measurable vector fields (for which we may, if we like, (*) assume that each element thereof is supported on  a set of finite $\gamma$-measure with a bounded norm-function thereon). Then $\widetilde{\cup_{P\in \mathcal{P}} f_{n_P}\vert_P}^\mathcal{P}$, as the $n_P$ range over $\mathbb{N}$ for $P\in \mathcal{P}$, and as $\mathcal{P}$ ranges over those members of $\mathfrak{P}(T)$ that are $\AA$-measurable partitions of $T$ or empty, form a countable collection of vector fields, whose pairwise pointwise scalar products are $\tilde{\mu}$-measurable, and which pointwise form a total set (because $T$ is countably separated by $\AA$). Therefore \cite[p.~167, Proposition~II.1.4.4]{dixmier1981neumann} there exists on $(\tilde{G}_{L})_{L\in \tilde{S}}$ a unique $\tilde{\mu}$-measurable structure that makes these vector fields (into a fundamental sequence) and hence each $\tilde{f}^\mathcal{P}$ as in \ref{lemma:fock-field:i}  measurable (because the pointwise scalar products of the latter with the former are measurable \cite[p.~166, Proposition~II.1.4.2]{dixmier1981neumann}). 

Thus \ref{lemma:fock-field:i} is proved. \ref{lemma:fock-field:ii} follows at once \cite[p.~172, Proposition~II.1.6.7]{dixmier1981neumann} (using (*)).
 As for \ref{lemma:fock-field:iii}, the map is scalar-product preserving and maps onto a total set, hence extends uniquely to a unitary isomorphism  \cite[Proposition~15.4]{parthasarathy}. 

Finally, let us prove \ref{lemma:fock-field:iv}. The formula for $e(u+v)$ and $e(0)=1_\emptyset$ are immediate by definitions. Checking the scalar product (first, the norm) is a direct computation, in particular the computation reveals that in fact $e(u)\in  \int^{\oplus}_{\tilde{S}}\tilde{G}_{L}\tilde \mu(\dd L)$.

 Showing that  $(e(u))_{u\in G}$ is a linearly independent and total family in $\int^\oplus_{\tilde{S}} \tilde{G}_L\tilde\mu(\dd L)$ decomposes according to the previous item into establishing the same for the two special cases when $\gamma$ is diffuse or discrete. The latter case is elementary, we treat the former, which is similar to establishing the analogous claim for the symmetric Fock space of a Hilbert space \cite[Proposition~19.4]{parthasarathy}, but let us reconstruct it here for completeness.  Assume then $\gamma$ is diffuse. 
 
Take $U\subset G$ finite. For $u_1\ne u_2$ from $U$, $D_{u_1,u_2}:=\{v\in G:\langle u_1,v\rangle\ne \langle u_2,v\rangle\}$ is open and dense in $G$; the intersection of the $D_{u_1,u_2}$ over such  $u_1\ne u_2$ from $U$ is therefore non-empty (indeed open and dense). Therefore there is $v\in G$, such that the $\langle u,v\rangle$, $u\in U$, are pairwise distinct.  Suppose $\sum_{u\in U}\alpha_u e(u)=0$ for some real $\alpha_u$, $u\in U$. Then for all $z\in \mathbb{R}$, $$0=\langle e(zv),\sum_{u\in U}\alpha_ue(u)\rangle=\sum_{u\in U}\alpha_ue^{z\int \langle u(s),v(s)\rangle\gamma(\dd s)}.$$ As functions of $z\in \mathbb{R}$ the expressions next to the $\alpha_u$, $u\in U$, in the preceding display are linearly independent. We conclude that $\alpha_u=0$ for all $u\in U$ and linear independence is established. 

 For the totality, let $\mathfrak{T}$ be the closure of the linear span of $\{e(u):u\in G\}$. Notice that $1_\emptyset=e(0)\in \mathfrak{T}$. Let  $\mathcal{P}\in \mathfrak{P}(T)$ with $\cup \mathcal{P}=T$ and take any $f\in G$. Let $z=(z_P)_{P\in \mathcal{P}}\in \mathbb{R}^\mathcal{P}$. Then $f_{z}:=\sum_{p\in P}z_p\mathbbm{1}_Pf\in G$ and, for $n\in \mathbb{N}_0$, the coefficient of $\prod_{p\in P} z_p$ in $(f_z)^{\otimes n}$, it being $\widetilde f^{\mathcal{P}}$, belongs to $\mathfrak{T}$, provided each $(f_z)^{\otimes n}$, $z\in \mathbb{R}^P$, does (just differentiate successively w.r.t. the $z_p$, $p\in P$, at zero). But if $u\in G$, then for $n\in \mathbb{N}_0$, $$u^{\otimes n}=\lim_{\epsilon\downarrow 0}\frac{e(\epsilon u)-\sum_{k=0}^{n-1}\epsilon^ku^{\otimes k}}{\epsilon^n},$$ and it follows by induction that $u^{\otimes n}\in \mathfrak{T}$ for all $n\in \mathbb{N}_0$ (of course we do not need it for $n=0$ because we already know it, but the inductive argument is valid even then anyway). Applying \ref{lemma:fock-field:ii} we are done.
 
 Concerning the map $\mathrm{Exp}$ it is clearly ``into'' and injective since $0=\Vert e(u)\Vert^2$ implies $u=0$ a.e.-$\gamma$ for all $u\in G$; it is surjective since for an $f\in \int^\oplus_{\tilde S} \tilde G_L\tilde \mu(\dd L)$ satisfying $f(\emptyset)=1$ and $f(L)=f(L\cap A)\otimes f(L\backslash A)$ for $\tilde{\mu}$-a.e. $ L$ for all $A\in \TT$, we have $f=e(f\vert_T)$ a.e.-$\tilde\mu$ (due to the fact that $\TT$ is countably separating).
\end{proof}

Here now is the announced result concerning spectra of classical noise Boolean algebras.

\begin{theorem}[Spectrum of classical noise Boolean algebra]\label{proposition:classical-structure}
Assume the setting and notation of Proposition~\ref{proposition:extension}  and continued in Lemmas~\ref{lemma:fock} and~\ref{lemma:fock-field}  for a fixed version of $T=\{K=1\}$, it does not matter which, for $\TT=\Sigma\vert_T$,  for $\gamma=\mu\vert_T$ and for $G_s=H_s$, $s\in T$.  In particular $N$, $(\tilde{S},\tilde\Sigma,\tilde\mu)$ and  $\int^{\oplus}_{\tilde{S}}\tilde{H}_{L}\tilde \mu(\dd L)$ are defined. 
\begin{enumerate}[(i)]
\item\label{proposition:classical-structure:iv} There exists a unique unitary isomorphism $\tilde{\Psi}:\L2(\PP)\to \int^{\oplus}_{\tilde{S}}\tilde{H}_{L}\tilde \mu(\dd L)$ such that: for each $n\in \mathbb{N}$, for all $\TT$-measurable partitions  $(A_1,\ldots,A_n)$ of $T$, for all $u_i\in \int_{A_i}^\oplus H_s\mu(\dd s)=\Psi(H^{(1)}(B)\cap \L2(\PP\vert_{N(A_i)}))$, $i\in [n]$, one has $\tilde{\Psi}^{-1}(\otimes_{i=1}^nu_i)=\prod_{i=1}^n{\Psi}^{-1}(u_i)$; also, $\tilde \Psi(1)=\mathbbm{1}_{\{\emptyset\}}$. The map $\tilde\Psi$ satisfies moreover $\tilde{\Psi}^{-1}(\otimes_{i=1}^n f_i)=\prod_{i=1}^n\tilde{\Psi}^{-1}(f_i)$ for all $f_i\in \int_{(2^{A_i})_{\mathrm{fin}}}^\oplus\tilde{H}_L\tilde{\mu}(\dd L)=\Psi(\L2(\PP\vert_{N(A_i)}))$, $i\in [n]$, i.e. $\tilde{\Psi}=\otimes_{i\in [n]}\tilde{\Psi}\vert_{\L2(\PP\vert_{N(A_i)})}$ up to the natural unitary equivalences (one of them is in Lemma~\ref{lemma:fock-field}\ref{lemma:fock-field:iii}, the other standard), this for all $\TT$-measurable partitions of unity $(A_1,\ldots,A_n)$ of $T$, all $n\in \mathbb{N}$. 
\item\label{proposition:classical-structure:iii} $((\tilde S,\tilde\Sigma,\tilde \mu);\tilde{\Psi})$ is a spectral resolution of $B$ to which are associated the counting map $\tilde{K}$, $\tilde{T}:=\{\tilde{K}=1\}$ etc. (all quantities get a $\widetilde{\phantom{ll}}$).
\item\label{proposition:classical-structure:i} $\tilde{K}$ becomes literally (not just by name) the counting map: $\tilde{K}(L)=\vert L\vert$ for $\tilde{\mu}$-a.e. $L\in \tilde{S}$, and we choose its version by insisting that the equality prevails everywhere (not just $\tilde{\mu}$-a.e.).
\item\label{proposition:classical-structure:ii}  For $A\in \TT= \tilde{\TT}$ the spectral set associated with $N_A$ in the spectral resolution $((\tilde S,\tilde\Sigma,\tilde \mu);\tilde{\Psi})$ is $\tilde{S}_{N_A}=(2^A)_{\mathrm{fin}}\in \tilde{\Sigma}$.
\item\label{proposition:classical-structure:v} For $u\in \int_T^{\oplus}H_s\gamma(\dd s)$, $\tilde{\Psi}^{-1}(e(u))=\tilde{\Psi}^{-1}(e(u\mathbbm{1}_A))\tilde{\Psi}^{-1}(e(u\mathbbm{1}_{T\backslash A}))$ for all $A\in \TT$; $\tilde{\Psi}^{-1}(e(u))$ is a square-integrable multiplicative integral. Furthermore, for $\{u,v\}\subset \int_T^{\oplus}H_s\mu(\dd s)$, $$\langle\tilde{\Psi}^{-1}(e(u)),\tilde{\Psi}^{-1}(e(v))\rangle=e^{\int \langle u(s),v(s)\rangle\mu_{\mathrm{d}}(\dd s)}\prod_{t\in T}(1+\langle u(t),v(t)\rangle\mu(\{t\})),$$ where $\mu_{\mathrm{d}}$ is the diffuse part of $\mu\vert_T$. Choose for each $a\in \at(\mu\vert_T)$ an orthonormal basis $O_a$ of $H_a$. The family of the $\tilde{\Psi}^{-1}(e(u))$ as $u$ ranges over those elements of $u\in \int_T^{\oplus}H_s\mu(\dd s)$ satisfying $u(a)=0$ for all except  finitely many $a\in \at(\mu\vert_T)$ for which in turn $u_a\in O_a$, is linearly independent and total in $\L2(\PP)$. Finally, the map $\mathsf{Exp}:=(H^{(1)}\ni f\mapsto \tilde\Psi^{-1}(e(\Psi(f)\vert_T)))$ is a bijection of the first chaos onto the square-integrable multiplicative integrals of $B$.
\item\label{remark:mod-0-iso-L} Fix a countable noise Boolean algebra $B_0$ dense in $B$. For $s\in S$ put $\Phi_s:=\{x\in B_0:s\in S_x\}$ and $\LL_s:=(\cap\{S_x:x\in \Phi_s\})\cap T$. Then $\LL=(S\ni s\mapsto \LL_s)$ is a mod-$0$ isomorphism of $\mu$ onto the $\sigma$-finite ($\therefore$ standard) $\mu'=\LL_\star\mu\sim \tilde{\mu}$ (completion implicit), which carries $S_x$ onto $\tilde{S}_x$ for $x\in B$ and satisfies $H(A)=\tilde{H}(\LL(A))$ for all $A\in \Sigma$ (hence the pointwise scalar products verify $\langle \tilde \Psi(f),\tilde\Psi(g)\rangle\circ \LL=\langle \Psi(f),\Psi(g)\rangle$ a.e.-$\mu$ for $\{f,g\}\subset \L2(\PP)$).
\end{enumerate}
\end{theorem}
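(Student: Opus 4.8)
The plan is to transport the given resolution $((S,\Sigma,\mu);\Psi)$ onto the symmetric space $(\tilde S,\tilde\Sigma,\tilde\mu)$ by declaring that $\tilde\Psi^{-1}$ turn the inner tensor product of one-particle fields into the product of the corresponding first-chaos random variables, and to read off everything else from this identification. For Item~\ref{proposition:classical-structure:iv}, uniqueness is immediate since the vectors $\otimes_{i=1}^n u_i$ (with $u_i$ a field on the cell $A_i$ of a $\TT$-partition of $T$) are total in $\int^\oplus_{\tilde S}\tilde H_L\tilde\mu(\dd L)$ by Lemma~\ref{lemma:fock-field}\ref{lemma:fock-field:ii}. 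For existence I would define $\tilde\Psi^{-1}$ by linear extension of $\otimes_i u_i\mapsto\prod_i\Psi^{-1}(u_i)$ and prove it isometric; the cleanest route fixes a $\TT$-partition $\mathcal R=\{R_1,\ldots,R_{n'}\}$ of $T$ and observes that on the adapted sector $\tilde S^{\mathcal R}:=\{L:\lvert L\cap R_i\rvert\le 1\ \forall i\}$ the direct integral factorizes as $\otimes_i\bigl(\mathbb R\oplus\int_{R_i}^\oplus H_s\mu(\dd s)\bigr)$ (Lemma~\ref{lemma:fock-field}\ref{lemma:fock-field:iii}, using that $\tilde\mu$ restricted to a ``one point per cell'' set is the product of the $\mu\vert_{R_i}$), whereas on the noise side Proposition~\ref{proposition:higher-choas-partitions} and Corollary~\ref{prop:noise-proj-bis}\ref{proj:iv}\ref{proj:iv-c} factorize $\prod_i$ through $\otimes_i\bigl(\text{constants}\oplus(H^{(1)}\cap\L2(\PP\vert_{N(R_i)}))\bigr)$. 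Since $\Psi$ maps $H^{(1)}\cap\L2(\PP\vert_{N(R_i)})$ unitarily onto $\int_{R_i}^\oplus H_s\mu(\dd s)$, the map becomes $\otimes_i(\mathrm{id}\oplus\Psi^{-1})$ cell by cell, hence an isometry on $\int_{\tilde S^{\mathcal R}}^\oplus$; these definitions agree under refinement by the associativity built into Corollary~\ref{prop:noise-proj-bis}\ref{proj:iv}, and $\cup_{\mathcal R}\tilde S^{\mathcal R}$ is $\tilde\mu$-conegligible because $\TT$ is countably separated, so the map extends to an isometry $\int^\oplus_{\tilde S}\tilde H_L\tilde\mu(\dd L)\to\L2(\PP)$ whose image spans $\oplus_kH^{(k)}=\L2(\PP)$ by classicality (see \ref{generalites:chaos-spaces}); it is therefore a surjective isometry and its inverse is the sought unitary $\tilde\Psi$. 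The promoted multiplicativity $\tilde\Psi^{-1}(\otimes_if_i)=\prod_i\tilde\Psi^{-1}(f_i)$ then follows by approximating general $f_i$ through Lemma~\ref{lemma:fock-field}\ref{lemma:fock-field:iii}.

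Items~\ref{proposition:classical-structure:iii}, \ref{proposition:classical-structure:ii} and \ref{proposition:classical-structure:i} are extracted from this $\tilde\Psi$. Since $N$ is onto, it suffices to conjugate the projections $\PP_{N_A}$, $A\in\TT$: on the first chaos $\PP_{N_A}\Psi^{-1}(u)=\Psi^{-1}(u\mathbbm 1_A)$ (by classicality and $N(S_x\cap T)=x$ from Proposition~\ref{proposition:extension}), so multiplicativity shows $\tilde\Psi\PP_{N_A}\tilde\Psi^{-1}$ is multiplication by $\mathbbm 1_{(2^A)_{\mathrm{fin}}}$ — the condition that every point of the supporting finite set lie in $A$. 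This identifies $\tilde\Psi\AA_B\tilde\Psi^{-1}$ with the diagonalizable operators, yielding that $((\tilde S,\tilde\Sigma,\tilde\mu);\tilde\Psi)$ is a spectral resolution (Item~\ref{proposition:classical-structure:iii}) and that $\tilde S_{N_A}=(2^A)_{\mathrm{fin}}$ (Item~\ref{proposition:classical-structure:ii}). For Item~\ref{proposition:classical-structure:i} I would compute $\tilde K=\uparrow\text{-}\lim_n\tilde K_{b_n}$ (Proposition~\ref{prop;K-finite}) along a dense increasing sequence $(b_n)$ whose atoms come from $\TT$-partitions of $T$ that eventually separate the points of any prescribed finite $L$; then $\tilde K_{b_n}(L)$ counts the atoms meeting $L$ and increases to $\lvert L\rvert$.

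Part~\ref{proposition:classical-structure:v} is bookkeeping. From $e(u)=e(u\mathbbm 1_A)\otimes e(u\mathbbm 1_{T\setminus A})$ (Lemma~\ref{lemma:fock-field}\ref{lemma:fock-field:iv}) and $\tilde\Psi^{-1}(\otimes)=\prod$ we get that $\tilde\Psi^{-1}(e(u))$ splits into an $N_A$- times an $N_A'$-measurable factor; its mean is $\langle e(u),1_\emptyset\rangle=1$, so it is a square-integrable multiplicative integral. The inner-product formula, linear independence and totality transport verbatim through the unitary $\tilde\Psi^{-1}$ from Lemma~\ref{lemma:fock-field}\ref{lemma:fock-field:iv}, and bijectivity of $\mathsf{Exp}$ follows by matching the Fock-side description of the range of $\mathrm{Exp}$, namely $\{f:f(\emptyset)=1,\ f(L)=f(L\cap A)\otimes f(L\setminus A)\ \tilde\mu\text{-a.e.},\ \forall A\}$, with the defining identity $g=\PP[g\vert N_A]\PP[g\vert N_A']$ of a multiplicative integral, again because $\tilde\Psi$ turns conditioning on $N_A$ into restriction to $(2^A)_{\mathrm{fin}}$ and tensor into product.

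Finally, for the mod-$0$ isomorphism in Item~\ref{remark:mod-0-iso-L} I would read $\LL_s=(\cap\{S_x:x\in\Phi_s\})\cap T$ as ``the set of first-chaos atoms making up $s$''. Measurability is clear (only countably many $S_x$, $x\in B_0$, enter), and $\LL^{-1}(\tilde S_{N_A})=\LL^{-1}((2^A)_{\mathrm{fin}})=\{s:\LL_s\subset A\}=S_{N_A}$ gives $\LL(S_x)=\tilde S_x$ and, since $\{S_x:x\in B_0\}$ and $\{\tilde S_x\}$ essentially separate their respective spaces, injectivity of $\LL$ off a null set together with $\LL_\star\mu\sim\tilde\mu$; the matching of the generating $\pi$-systems then forces $H(A)=\tilde H(\LL(A))$, and a measurable measure-preserving injection between standard spaces is a mod-$0$ isomorphism. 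I expect the real work to be twofold: the isometry-with-consistency step in Item~\ref{proposition:classical-structure:iv}, where one must verify that the cell-by-cell factorizations genuinely glue under refinement, and the a.e.-$\mu$ bookkeeping in Item~\ref{remark:mod-0-iso-L}, where — because $S_x$, $\pr_x$ and hence $\LL$ live only up to $\mu$-null sets — the exceptional sets must be marshalled, in the spirit of Remark~\ref{remark:exceptional-sets-projections}, into one conegligible set on which $\LL$ is an honest structure-preserving bijection.
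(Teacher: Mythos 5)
Your treatment of Items \ref{proposition:classical-structure:iv}--\ref{proposition:classical-structure:v} is essentially correct but genuinely reorganized relative to the paper. The paper first builds the point map $\LL$ out of the noise projections (identifying $\LL_s$ with $\{\pr_a(s):a\in \at(b_n)\cap 2^{\underline{b_n}(s)}\}$ for large $n$), transports the whole resolution along $\LL$ to obtain a resolution on $(\tilde S,\tilde\Sigma,\tilde\mu)$ with an as-yet-unidentified Hilbert field $H'$, and only then constructs a unitary $\Theta$ identifying $H'_L$ with $\otimes_{s\in L}H_s$ sector by sector over partitions of unity; Items \ref{proposition:classical-structure:iv}--\ref{proposition:classical-structure:ii} then fall out. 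You instead define $\tilde\Psi$ directly on the adapted sectors by $\otimes_i u_i\mapsto \prod_i\Psi^{-1}(u_i)$, which is viable: the isometry on a sector is exactly Proposition~\ref{proposition:higher-choas-partitions} together with Corollary~\ref{prop:noise-proj-bis}\ref{proj:iv}\ref{proj:iv-c}, consistency under refinement reduces to decomposing one-particle vectors over subcells, the sectors cover $\tilde S$ up to a $\tilde\mu$-null set, and surjectivity is the chaos decomposition of a classical $B$. This buys you Items \ref{proposition:classical-structure:iv}--\ref{proposition:classical-structure:v} without ever touching the point map; the cost is deferred entirely to Item~\ref{remark:mod-0-iso-L}.

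And there the proposal has a genuine gap: Item~\ref{remark:mod-0-iso-L} is not ``a.e.\ bookkeeping''. Two substantive claims are asserted without a workable justification. First, the inclusion $\{s:\LL_s\subset A\}\subset S_{N_A}$ (a.e.-$\mu$): knowing that the first-chaos trace $\LL_s$ of a spectral point lies in $A$ does not by itself place $s$ in $S_{N_A}$ --- a priori $s$ could carry information not visible in $\LL_s$ at all. The paper proves this by first establishing $\LL_s=\{\pr_a(s):a\in\at(b_n)\cap 2^{\underline{b_n}(s)}\}$ (itself a several-step argument) and then deducing $\pr_{(N_A)'}(s)=\emptyset_S$ from $\pr_a\circ\pr_{x'}=\pr_{a\land x'}$ and $\cap_{a\in\at(b_n)}S_{a'}=\{\emptyset_S\}$. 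Second, $\LL_\star\mu\sim\tilde\mu$ does \emph{not} follow from ``matching of the generating $\pi$-systems'': agreement of the preimage families only pins down the values of $\LL_\star\mu$ on a $\pi$-system and says nothing about equivalence with the symmetrized product measure. On ${T\choose 2}$, for instance, $\tilde\mu\left({A\choose 2}\right)=\tfrac12\mu(S_{N_A}\cap T)^2$ while $\LL_\star\mu\left({A\choose 2}\right)=\mu(S_{N_A}\cap\{K=2\})$, and the equivalence of these two assignments is precisely the point at issue; the paper's proof of it occupies most of a page and hinges on the existence, for every independency $(a_1,\ldots,a_n)$ in $B$, of a probability equivalent to $\mu$ under which $\pr_{a_1},\ldots,\pr_{a_n}$ are independent with marginals equivalent to $\mu(\cdot\cap S_{a_i})$ (Corollary~\ref{prop:noise-proj-bis}\ref{proj:iv}\ref{proj:iv-a}, Remark~\ref{remark:choose-equivalent}). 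Even the shortcut of invoking Proposition~\ref{proposition:isomorphisms-of-noise-boolean-spectra} to produce an abstract mod-$0$ isomorphism of $\mu$ onto a measure equivalent to $\tilde\mu$ still requires $\LL^{-1}((2^A)_{\mathrm{fin}})=S_{N_A}$ a.e.-$\mu$ in order to identify that abstract isomorphism with the concrete map $\LL$, so these two inputs cannot be avoided.
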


\begin{remark}
If a (classical or not) $B$ is the range of a $\gamma$-essentially injective noise factorization $N$ over an algebra $\AA$ of subsets of a set $T$  and admits  a spectral resolution with  the symmetric measure space $(\tilde S,\tilde \Sigma,\tilde \mu)$ over $(T,\TT,\gamma)$ as spectral space, $\TT\supset\AA$, in such a way that $\tilde S_{N_A}=(2^A)_{\mathrm{fin}}$ a.e.-$\tilde{\mu}$ for all $A\in \AA$, then automatically $\tilde{K}<\infty$ a.e.-$\tilde{\mu}$ and $B$ is classical ($\tilde{K}$ being the associated counting map). Only classical $B$ can have a spectral decomposition of the sort given in the preceding theorem.
\end{remark}

\begin{definition}
The spectrum for (a classical) $B$ is said to be in standard form if  $((S,\Sigma,\mu);\Psi)=((\tilde S,\tilde\Sigma,\tilde\mu);\tilde\Psi)$ (up to the canonical identification of $T$ with $T\choose 1$).
\end{definition}

\begin{example}
The spectra of  Examples~\ref{example:classical-spectrum} and~\ref{example:wiener} are in standard form up to the obvious natural identifications (using Theorem~\ref{prop:dicrete}\ref{prop:dicrete:iii-} in the first case). We identify the square-integrable multiplicative integrals as being of the form  $\prod_{a\in A}(1+g_a)$ for  $g=(g_a)_{a\in A}\in \prod_{a\in A}\L2(\PP\vert_a)_0$ satisfying $\sum_{a\in A}\Vert g_a\Vert^2<\infty$, respectively of the form $\exp\left(\int_{-\infty}^\infty g(t)\dd W_t-\frac{1}{2}\int_ {-\infty}^\infty g^2(t)\dd t\right)$ for  $g\in \L2(\mathbb{R})$.
\end{example}

\begin{remark}
Every noise Boolean algebra  $B$ gives rise to a factorization of $\L2(\PP;\mathbb{C})$, i.e. to a distributive sublattice of the lattice of all von Neumann algebras of $\L2(\PP;\mathbb{C})$, consisting of factors, closed for the commutant operation and containing $\BB(\L2(\PP;\mathbb{C}))$ (:= all bounded operators on $\L2(\PP;\mathbb{C})$), as follows. For $x\in B$ set $F_x:=\{X\otimes \mathbf{1}_{\L2(\PP\vert_{x'};\mathbb{C})}:X\in \BB(\L2(\PP\vert_x;\mathbb{C}))\}=\BB(\L2(\PP\vert_x;\mathbb{C}))\otimes (\mathbb{C}\mathbf{1}_{\L2(\PP\vert_{x'};\mathbb{C})})$, which is a factor of $\L2(\PP;\mathbb{C})=\L2(\PP\vert_x;\mathbb{C})\otimes \L2(\PP\vert_{x'};\mathbb{C})$ (up to the natural unitary isomorphism), of type $\mathrm{I}$. Then $\mathfrak{F}_B:=\{F_x:x\in B\}$ is the mentioned factorization (of type $\mathrm{I}$, because all its elements are so); moreover, the map $(B\ni x\mapsto F_x\in \mathfrak{F}_B)$ is an isomorphism of Boolean algebras and $\mathfrak{F}_B$ is complete iff $B$ is so. Taking into account  Remark~\ref{remark:symmetrization-fock} we see from Theorem~\ref{proposition:classical-structure} that for a complete classical noise Boolean algebra $B$ the factorization $\mathfrak{F}_B$ is unitarily isomorphic to the Fock(-like) factorization associated to the direct integral $\int_T^{\oplus}H_s\gamma(\dd s)$ (when $\gamma$ is diffuse, then this is a Fock factorization in the traditional sense \cite[pp.~86-87]{vershik-tsirelson} \cite[Section~5]{araki}, otherwise it has an extra part coming out of $\oplus_{s\in \at(\mu)}H_s$). In \cite[Theorems~4.1 and~6.1]{araki} complete type $\mathrm{I}$ factorizations over a separable Hilbert space satisfying a certain condition (which by the totality of Item~\ref{proposition:classical-structure:v} is met for $\mathfrak{F}_B$ \cite[p.~210, sentence including Eq.~(6.1)]{araki}) are shown to be unitarily equivalent to Fock(-like) factorizations. Thus the existence of a spectral resolution in standard form for a classical $B$ is the commutative analogue of the results of \cite{araki}. 
\end{remark}

%

\begin{proof}
The idea is to start with the given spectral resolution $((S,\Sigma,\mu);\Psi)$ and gradually transform it into $((\tilde S,\tilde\Sigma,\tilde \mu);\tilde{\Psi})$. 
We may and do assume $B=\overline{B}$.

\textbf{Laying the groundwork: discarding some $\mu$-negligible sets and fixing some representatives of $\mu$-classes.} 

--- Let $(b_n)_{n\in \mathbb{N}}$ be a $\uparrow$ sequence in $\mathfrak{F}_B$ such that $B_0:=\cup_{n\in \mathbb{N}}b_n$ is dense in $B$.  Discarding a $\mu$-negligible set we fix representatives $S_x$, $x\in B_0$, such that $S_{x\land y}=S_x\cap S_y$ for $\{x,y\}\subset B_0$, $S_{1_\PP}=S$, and (therefore) we fix also the maps $K_{b_n}$, $n\in \mathbb{N}$, derived therefrom, finally  representatives of the projections $\pr_x$, $x\in B_0$. Discarding a further $\mu$-negligible set of $S$ we may and do assume that  (everywhere, not just a.e.-$\mu$): $\{K=0\}=S_{0_\PP}=\{\emptyset_S\}$; $K=\text{$\uparrow$-$\lim$}_{n\to \infty}K_{b_n}<\infty$  (Proposition~\ref{prop;K-finite}; finiteness $\because$ of classicality); the sets $\{S_x:x\in B_0\}$ separate $S$; $\pr_x=\mathrm{id}_S$ on $S_x$ for $x\in B_0$ and $\pr_x^{-1}(S_y)=S_{y\lor x'}$, $\pr_x\circ \pr_y=\pr_{x\land y}$ for $\{x,y\}\subset B_0$ (Theorem~\ref{thm:noise-projections}\ref{proj:ii}, Remark~\ref{remark:exceptional-sets-projections}, Proposition~\ref{prop:noise-proj-bis}\ref{proj:ii--}); $\{S_a\cap T:a\in \at(b_n)\}$ is a partition of $T$ for each $n\in \mathbb{N}$ (Corollary~\ref{corollary:partition-K=1}); 
$K=\sum_{a\in \at(b_n)}K(\pr_a)$ for all $n\in \mathbb{N}$ (Proposition~\ref{proposition:K-and-projections});  $\underline{b_n}(\pr_x)=\underline{b_n}\cap x$  for all $x\in b_n$ for all $n\in \mathbb{N}$ (Corollary~\ref{projections-and-K-misellany}\ref{projections-and-K-misellany:i}); for all $n\in \mathbb{N}$ and $x\in b_n$,  $$\left(\cap_{a\in \at(b_n)_x}\pr_a^{-1}(\{K=1\})\right) \cap \left(\cap_{a\in \at(b_n)_{x'}}\pr_a^{-1}(\{\emptyset_S\})\right)=\{\underline{b_n}=x\}\cap \{K=K_{b_n}\}$$  (Corollary~\ref{projections-and-K-misellany}\ref{projections-and-K-misellany:ii}). 

--- The preceding allows us to avail ourselves of not having to consider more negligible sets in any one go as can be stomached.

\textbf{Construction of the isomorphism $\LL$, viz. adjustment of the spectral space.}

--- Using $B_0$ we introduce $\LL$ as in Item~\ref{remark:mod-0-iso-L}. Let $s\in S$. If $n\in \mathbb{N}$ is such that $K_{b_n}(s)=K(s)$ ($\therefore$ $=K_{b_m}(s)$ for all $m\in \mathbb{N}_{\geq n}$), then $\LL_s=\{\pr_a(s):a\in \at(b_n)\cap 2^{\underline{b_n}(s)}\}=:\LL_s'$, a set of size $K(s)$, which may be seen as follows:
\begin{itemize}[leftmargin=0.75cm]
\item  no member of $\LL_s'$ can be from $S_{0_\PP}$, for if $a\in \at(b_n)\cap 2^{\underline{b_n}(s)}$ is such that $\pr_a(s)\in S_{0_\PP}$, then $s\in S_{0_\PP\lor a'}\cap S_{\underline{b_n}(s)}=S_{\underline{b_n}(s)\cap a'}$, but $\underline{b_n}(s)\cap a'\subsetneq \underline{b_n}(s)$ (it differs by the atom $a$ of $b_n$), contradicting the minimality of $\underline{b_n}(s)$;
\item if $\pr_a(s)=\pr_c(s)$ for  $a\ne c$ from $\at(b_n)\cap 2^{\underline{b_n}(s)}$, then $\pr_a(s)=\pr_c(s)\in S_{a}\cap S_c=S_{a\land c}=S_{0_\PP}$, contradicting the previous bullet point, which means that $\vert \LL_s'\vert=K(s)$;
\item for $m\in \mathbb{N}_{\geq n}$, $\LL_s'=\{\pr_a(s):a\in \at(b_m)\cap 2^{\underline{b_m}(s)}\}=:\LL_s^m$, in fact if $c\in \at(b_m)\cap 2^{\underline{b_m}(s)}$ and $a\in \at(b_n)\cap 2^{\underline{b_n}(s)}$ are such that $c\subset a$ (clearly each $b_n$-atom of $\underline{b_n}(s)$ includes precisely one $b_m$-atom of $\underline{b_m}(s)$), then $s\in S_{\underline{b_m}(s)}\cap S_{\underline{b_n}(s)}=S_{\underline{b_n}(s)\cap \underline{b_m}(s)}\subset S_{c\lor a'}$, hence $\pr_a(s)\in S_c$ and thus $\pr_a(s)=\pr_c(\pr_a(s))=\pr_c(s)$;
\item for $a\in \at(b_n)\cap 2^{\underline{b_n}(s)}$, if $x\in B_0$ is such that $s\in S_x$, then a fortiori $s\in S_{x\lor a'}=\pr_a^{-1}(S_x)$, so $\pr_a(s)\in S_x$;
\item  $K(\pr_a(s))=1$ for all $a\in \at(b_n)\cap 2^{\underline{b_n}(s)}$;
\item by the preceding two bullet points $\LL_s'\subset \LL_s$;
\item if $u\in \LL_s$, then for each $m\in \mathbb{N}_{\geq  n}$, $u\in T\cap S_{\underline{b_m}(s)}$, and $u$ belongs to precisely one $S_{a}$, $a\in \at(b_m)\cap 2^{\underline{b_m}(s)}$, which also contains an element of $\LL_s^m=\LL_s'$, but the sets $S_a$, $a\in \at(b_m)$, $m\in \mathbb{N}$, separate the points of $T$, so this is only possible if $u\in \LL_s'$, which means that also $\LL_s\subset \LL_s'$.
\end{itemize}
So indeed $\LL_s=\LL'_s=\{\pr_a(s):a\in \at(b_n)\}\backslash\{\emptyset_S\}$ for all large enough $n\in \mathbb{N}$ (specifically, for all those $n\in\mathbb{N}$ for which $K_{b_n}(s)=K(s)$).

--- If $s_1\ne s_2$ are from $S$, then there is $x\in B_0$ such that $S_x$ separates $s_1$ and $s_2$. There is $n\in \mathbb{N}$ such that $K_{b_n}(s_1)=K(s_1)$ and $K_{b_n}(s_2)=K(s_2)$ and $x\in b_n$. Now, precisely one of the $s_1$, $s_2$ is contained in $S_x$, while the other is not, say $s_1$ without loss of generality. Then $\underline{b_n}(s_1)\subset x$ but $\underline{b_n}(s_2)\not\subset x$, hence there is an atom $a$ of $b_n$ which discerns between  $\underline{b_n}(s_1)$ and $\underline{b_n}(s_2)$ (is contained in the second, but not in the first). It follows that $\LL_{s_1}=\{\pr_a(s_1):a\in \at(b_n)\cap 2^{\underline{b_n}(s_1)}\}\ne \{\pr_a(s_2):a\in \at(b_n)\cap 2^{\underline{b_n}(s_2)}\}=\LL_{s_2}$.  We have established that the map  $\LL:=(S\ni s\mapsto \LL_s\in \tilde{S})$ is injective, it sends $\{K=n\}$ to ${T\choose n}$ for all $n\in \mathbb{N}_0$, also, if $s\in T$, then $\LL_s=\{s\}$.
 
--- Claim: if $x\in B_0$ and $S_{x}\cap \{K=1\}=\emptyset$, then $x=0_\PP$. Indeed if $x\ne 0_\PP$, then $S_x\ne S_{0_\PP}$, take $s\in S_x\backslash S_{0_\PP}$. Now let $n\in \mathbb{N}$ be such that $K(s)=K_{b_n}(s)$. Then  $K(\pr_a(s))=1$, whence $\pr_a(s)\in \{K=1\}\cap S_x$, for all $a\in  \at(b_n)\cap 2^{\underline{b_n}(s)}\ne \emptyset$, a contradiction. 
 
--- Next we argue that $\LL\in \Sigma/\mathcal{V}$ (recall $\mathcal{V}$ from Lemma~\ref{lemma:fock}). Check that $\LL^{-1}(\{c_{P\cap T}= m\})=\pr_{x}^{-1}(\{K=m\})$ a.e.-$\mu$ whenever $P=S_x$ a.e.-$\mu$ on $T$ for an $x\in B_0$ and whenever $m\in \mathbb{N}_0$. Indeed, for $\mu$-a.e. $s$ (specifically, for all $s\in S\backslash \cup_{n\in \mathbb{N}}\cup_{a\in \at(b_n)}\pr_a^{-1}([P\triangle S_x]\cap T)$), $c_{P\cap T}(\LL(s))=m$ iff for all large enough $n\in \mathbb{N}$, the number of atoms of $b_n$ contained in $\underline{b_n}(s)$ that are also contained in $x$ is $m$, which is equivalent to $K_{b_n}(\pr_x(s))=m$ for all large enough $n\in \mathbb{N}$, i.e. to $K(\pr_x(s))=m$. Since $\{P\in \TT:P=S_x\cap T\text{ a.e.-$\mu$ for an }x\in B_0\}$ is an algebra on $T$, which generates $\TT=\Sigma\vert_T$, it follows that $\LL\in \Sigma/\mathcal{V}$ (Lemma~\ref{lemma:fock}\ref{lemma:fock:iii}).

--- Recall from Item~\ref{remark:mod-0-iso-L} that $\mu':=\LL_\star\mu$ (completion implicit). Let us check that $\mu'\sim \tilde{\mu}$. Fix $n\in \mathbb{N}$; we verify equivalence on ${T\choose n}$ (it is trivial for $n=0$ as the atom $\emptyset_S$ of $\mu$ gets mapped by $\LL$ to the atom $\emptyset$ of $\mu'$, which is also an atom of  $\tilde{\mu}$; it is even trivial for $n=1$, indeed $\mu=\mu'=\tilde{\mu}$ on $T$). Suppose first that $\tilde A\in \mathcal{V}\vert_{T\choose n}$ is such that $(\mu\vert_T)^n(q_n^{-1}(\tilde{A}))=0$ and we will show that $\mu'(\tilde A)=0$. We see that $\LL^{-1}(\tilde A)\subset \cup_{m\in \mathbb{N}}\{\{\pr_a:a\in \at(b_m)\}\backslash \{\emptyset_S\}\in \tilde{A}\}$ so it suffices to check that $\mu(\{\{\pr_a:a\in \at(b_m)\}\backslash \{\emptyset_S\}\in \tilde{A}\})=0$ for all $m\in \mathbb{N}$. Further, for all $m\in \mathbb{N}$,  $\{\{\pr_a:a\in \at(b_m)\}\backslash \{\emptyset_S\}\in \tilde{A}\}\subset \cup_\rho \{(\pr_{\rho(i)})_{i\in [n]}\in \overline{A}\}$, where the union is over all injective maps $\rho:[n]\to \at(b_m)$ and  $\overline{A}:=q_n^{-1}(\tilde{A})$ has $\mu^n$-measure zero; it is enough to verify that $\mu( \{(\pr_{\rho(i)})_{i\in [n]}\in \overline{A}\})=0$ for each such $\rho$. But there is a probability $\nu$ equivalent to $\mu$ under which $\pr_{\rho(i)}$, $i\in [n]$, are independent and the law of $\pr_{\rho(i)}$ under $\nu$ is $\nu(\cdot \cap S_{\rho(i)})$ for each $i\in [n]$ (Corollary~\ref{prop:noise-proj-bis}\ref{proj:iv}\ref{proj:iv-a}, Remark~\ref{remark:choose-equivalent}). Hence $\nu( \{(\pr_{\rho(i)})_{i\in [n]}\in \overline{A}\})\leq \nu^n(\overline{A})=0$. We have proved that $\mu'(\tilde{A})=0$. Conversely suppose an $\tilde A\in \mathcal{V}\vert_{T\choose n}$ satisfies $(\mu\vert_T)^n(q_n^{-1}(\tilde{A}))>0$ and we show that $\mu'(\tilde{A})>0$. The countable collection $\cup_{m\in \mathbb{N}}\{S_a\cap T:a\in \at(b_m)\}$ separates the points of $T$, hence $(T^n)_{\ne}$ is the union of sets of the form $(S_{a_1}\cap T)\times \cdots \times (S_{a_n}\cap T)$ for independent atoms $a_1,\ldots,a_n$ from $b_m$, $m\in \mathbb{N}$. It follows that $(\mu\vert_T)^n(q_n^{-1}(\tilde{A})\cap (S_{a_1}\cap T)\times \cdots \times (S_{a_n}\cap T))>0$ for some independent atoms $a_1,\ldots,a_n$ from $b_m$ for some $m\in \mathbb{N}$. By a similar token as above (existence of equivalent $\nu$ under which the projections $\pr_{a_1},\ldots,\pr_{a_n},\pr_{(a_1\lor \ldots\lor a_n)'}$ are independent) it follows that $0<\mu(\{\pr_{a_1},\ldots,\pr_{a_n}\}\in \tilde{A},\pr_{(a_1\lor \ldots\lor a_n)'}=\emptyset_S)\leq\mu(\LL^{-1}(\tilde{A}))=\mu'(\tilde{A})$. 

--- We get  that $\LL$ is a mod-$0$ isomorphism between the standard measure spaces $(S,\Sigma,\mu)$ and $(\tilde S,\tilde \Sigma,\mu')$: the $\sigma$-finiteness of $\mu'$ is not immediately transparent, but one can first apply the preceding to a finite measure equivalent to $\mu$, in which case there are no issues; the measurable structures are left intact in making this change (which follows from the equivalence $\mu'\sim \sum_{n\in \mathbb{N}_0}(q_n)_\star (\mu\vert_T)^n\vert_{(T^n)_{\ne}}$ established just above) -- in particular, reverting back to the original $\mu$, the fact that $\LL$ sends $\Sigma$-measurable sets to $\tilde\Sigma$-measurable sets is definitely true, meaning that $\mu'$ inherits its $\sigma$-finiteness from $\mu$. 

--- Sliding along $\LL$ we obtain the associated spectral representation $\Psi':\L2(\PP)\to \int H_L'\mu'(\dd L)$ of $B$ satisfying $H(A)=H'(\LL(A))$ for $A\in \Sigma$. Because $\LL$ acts as the identity on $T$ we get $H'_s=H_s$ for $s\in T$ and $\Psi'=\Psi$ on $H^{(1)}$.  Because $\mu'\sim \tilde{\mu}$, adjusting $\mu$ (hence $\Psi$, $\Psi'$) up to equivalence if necessary (but only off $T$), we may and do assume that $\mu'=\tilde{\mu}$ to begin with; still $H(A)=H'(\LL(A))$ for $A\in \Sigma$.  For convenience we may and do also assume that $ H'_\emptyset=\mathbb{R}$ with $\Psi'(1)=\mathbbm{1}_{\{\emptyset\}}$. Because   $\LL$ sends $\{K=n\}$ to ${T\choose n}$, $\tilde{K}$ is the counting map; Item~\ref{proposition:classical-structure:i} is verified (the passage from $((\tilde S,\tilde \Sigma,\tilde \mu),\Psi')$ to $((\tilde S,\tilde \Sigma,\tilde \mu),\tilde \Psi)$ will not affect the counting map).


\textbf{Identification of the spectral sets in the new spectral space.}

--- For $x\in B$, the spectral set associated with  $x$ in the new spectral resolution $((\tilde S,\tilde \Sigma,\tilde \mu),\Psi')$ is $\tilde{S}_x=\LL(S_x)=\{\LL_s:s\in S_x\}$. Let $A\in \TT$. We check that $\tilde{S}_{N_A}=(2^A)_{\mathrm{fin}}$ a.e.-$\tilde{\mu}$, thus establishing Item~\ref{proposition:classical-structure:ii} (the passage from $((\tilde S,\tilde \Sigma,\tilde \mu),\Psi')$ to $((\tilde S,\tilde \Sigma,\tilde \mu),\tilde \Psi)$  will not affect the spectral sets).  By Proposition~\ref{proposition:extension} $N(S_{N_A}\cap T)=N(A)$, i.e. $S_{N_A}\cap T=A$ a.e.-$\mu$, so that $(2^{S_{N_A}\cap T})_{\mathrm{fin}}=(2^A)_{\mathrm{fin}}$ a.e.-$\tilde{\mu}$. Therefore we need only argue that $\tilde{S}_{x}=(2^{S_{x}\cap T})_{\mathrm{fin}}$ a.e.-$\tilde{\mu}$ for $x:=N_A$.

Now, on the one hand,  there is a sequence $(x_n)_{n\in \mathbb{N}}$ in $B_0$ such that $x=\liminf_{n\to\infty}{x_n}$, i.e.  $\liminf_{n\to\infty}S_{x_n}=S_x$ a.e.-$\mu$. But for $n\in \mathbb{N}$ and $s\in S_{x_n}$ we have $\LL_s\subset S_{x_n}$; passing to  the limit it follows that for $\mu$-a.e. $s\in S_x$, $\LL_s\subset  \liminf_{n\to\infty}S_{x_n}\cap T$, i.e. $S_x\subset \LL^{-1}((2^{\liminf_{n\to\infty}S_{x_n}\cap T})_{\mathrm{fin}})$ a.e.-$\mu$, i.e. $\tilde{S}_{x}\subset (2^{\liminf_{n\to\infty}S_{x_n}\cap T})_{\mathrm{fin}}=(2^{S_{x}\cap T})_{\mathrm{fin}}$ a.e.-$\tilde{\mu}$. 

On the other hand, for $\mu$-a.e. $s\in \LL^{-1}((2^{S_{x}\cap T})_{\mathrm{fin}})$ we have as follows. Let $n\in \mathbb{N}$ be such that $K_{b_n}(s)=K(s)$. Then $\pr_a(s)\in \LL_s\subset S_x$ for $a\in \at(b_n)\cap 2^{\underline{b_n}(s)}$ and  $\pr_a(s)=\emptyset_S\in  S_x$  for $a\in \at(b_n)\backslash  2^{\underline{b_n}(s)}$. Applying Corollary~\ref{prop:noise-proj-bis}\ref{proj:ii--} we get $\pr_a(\pr_{x'}(s))=\emptyset_S$ for all $a\in \at(b_n)$, which means that $\pr_{x'}(s)=\emptyset_S$, i.e. $s\in S_x$. We have proved that also $ \LL^{-1}((2^{S_{x}\cap T})_{\mathrm{fin}})\subset S_x$ a.e.-$\mu$, i.e. $(2^{S_{x}\cap T})_{\mathrm{fin}}\subset \tilde{S}_x$ a.e.-$\tilde{\mu}$.
%
%
%
%

\textbf{Adjustment of the direct integral of Hilbert spaces.}

--- We construct a unitary isomorphism $\Theta$  from $\int^\oplus_{\tilde{S}}\tilde{H}_L\tilde{\mu}(\dd L)$ onto $\int^\oplus_{\tilde{S}} H'_L\tilde{\mu}(\dd L)$ as follows. Let $P$ be a partition of unity of $B$. For $\mu$-a.e. $l\in T$ there is a unique $p\in P$ such that $l\in S_p$; denote it by $l_P$. Let also ${P\choose \mathbf{1}}:=\{L\in \tilde{S}:\vert L\cap S_p\vert=1\text{ for all }p\in P\}=\{L\in {T\choose \vert P\vert}:l_P\ne d_P\text{ for }l\ne d\text{ from }L\}$ a.e.-$\tilde{\mu}$. Then $\otimes_{p\in P}\int^\oplus_{S_p\cap T}H_s\mu(\dd s)$ is canonically identified  with $\int^\oplus_{{P\choose \mathbf{1}}}\tilde{H}_L\tilde{\mu}(\dd L)$ via the unique unitary isomorphism that sends $\otimes_{p\in P} f^p$ to $\widetilde{\otimes_{p\in P} f^p}:= \left(\tilde{S}\ni L\mapsto
\mathbbm{1}_{{P\choose \mathbf{1}}}(L)\otimes_{l\in L}f^{l_P}(l) \right)$ (these latter vectors  are total in $\int^\oplus_{{P\choose \mathbf{1}}}\tilde{H}_L\tilde{\mu}(\dd L)$, see Lemma~\ref{lemma:fock-field}\ref{lemma:fock-field:ii}, and the association is scalar-product preserving). 
Therefore (by the same token) the map $$\Theta_P\left(\widetilde{\otimes_{p\in P} f^p}\right)=\Psi'\left(\prod_{p\in P}\Psi^{-1}(f^p)\right),\quad  \otimes_{p\in P}f^p\in \otimes_{p\in P}\int^\oplus_{S_p\cap T}H_s\mu(\dd s),$$
extends to a unique unitary isomorphism $\Theta_P:\int^\oplus_{{P\choose \mathbf{1}}}\tilde{H}_L\tilde{\mu}(\dd L)\to \int^\oplus_{{P\choose \mathbf{1}}}H'_L\tilde{\mu}(\dd L)$ (recall Proposition~\ref{proposition:higher-choas-partitions}). Let us argue that  the $\Theta_P$, as $P$ ranges over all the partitions of $B$, agree on the intersections of their domains and, together with $\Theta_0:\int^\oplus_{\{\emptyset\}}\tilde{H}_L\tilde{\mu}(\dd L)\to\int^\oplus_{\{\emptyset\}} H'_L\tilde{\mu}(\dd L)$ acting as identity, extend uniquely to the desired unitary ismorphism $\Theta$. 

Indeed, if $P$ and $Q$ are two partitions of unity of $B$, then $$D_{P,Q}:=\int^\oplus_{{P\choose \mathbf{1}}}\tilde{H}_L\tilde{\mu}(\dd L)\cap \int^\oplus_{{Q\choose \mathbf{1}}}\tilde{H}_L\tilde{\mu}(\dd L)=\int^\oplus_{{P\choose \mathbf{1}}\cap {Q\choose \mathbf{1}}}\tilde{H}_L\tilde{\mu}(\dd L)$$ is only $\ne \{0\}$ provided $\vert P\vert=\vert Q\vert$, in which case ${P\choose \mathbf{1}}\cap {Q\choose \mathbf{1}}=\cup{P,Q\choose \mathbf{1}}_\sigma$, where ${P,Q\choose \mathbf{1}}_\sigma:=\{L\in {T\choose \vert P\vert}:\vert L\cap S_p\cap S_{\sigma(p)}\vert=1\text{ for all }p\in P\}$, and the union is over all bijections $\sigma:P\to Q$. Then, for each such $\sigma$, $\otimes_{p\in P}\int^\oplus_{S_p\cap S_{\sigma(p)}\cap T}H_s\mu(\dd s)$ is canonically identified with $D_{P,Q}^\sigma:=\int^\oplus_{{P,Q\choose \mathbf{1}}_\sigma}\tilde{H}_L\tilde{\mu}(\dd L)$ via the unitary isomorphism that sends  $\otimes_{p\in P} f^p$ to $\widetilde{\otimes_{p\in P} f^p}$, and it is clear that $\Theta_P$ and $\Theta_Q$ agree thereon, hence on $D_{P,Q}=\oplus D_{P,Q}^\sigma$, the orthogonal sum being over all bijections $\sigma:P\to Q$. We see also that $\Theta_P$ and $\Theta_Q$ both send $D_{P,Q}$ onto $\int^\oplus_{{P\choose \mathbf{1}}\cap {Q\choose \mathbf{1}}}H'_L\tilde{\mu}(\dd L)$. 

Thus $\Theta^\circ:=\Theta_0\cup (\cup \Theta_P)$, the union being over all partitions of unity $P$ of $B$, is a map defined on the union of the domains of the $\Theta_{P}$ and $\Theta_0$, which sends a total subset of $\int^\oplus_{\tilde{S}}\tilde{H}_L\tilde{\mu}(\dd L)$ 
 onto a total subset of $\int^\oplus_{\tilde{S}}H'_L\tilde{\mu}(\dd L)$ ($\because$ $\{\emptyset\}$ and the ${P\choose \mathbf{1}}$ as $P$ ranges over the partitions of unity of $B_0$ cover $\tilde{S}$ a.e.-$\tilde{\mu}$). Furthermore, it is straightforward to check that $\Theta^\circ$ is scalar product-preserving. Therefore $\Theta^\circ$ in fact extends uniquely to the sought-after  unitary isomorphism $\Theta$. 

Notice also that $\Theta$ sends $\int^\oplus_{\tilde{S}_x}\tilde{H}_L\tilde{\mu}(\dd L)$ to $\int^\oplus_{\tilde{S}_x} H'_L\tilde{\mu}(\dd L)$ for each $x\in B$ (into: it sends a total set into it; onto: by classicality, recalling how the individual chaos spaces are generated through the first chaos \ref{generalites:chaos-spaces}), and thus $\int^\oplus_{E}\tilde H_L\tilde{\mu}(\dd L)$ to $\int^\oplus_{E}H'_L\tilde{\mu}(\dd L)$ for each $E\in \tilde{\Sigma}$. We therefore may and do assume henceforth that $\Theta$ was the identity to begin with and $H'_L=\tilde{H}_L$ for $L\in \tilde\Sigma$, in particular $H(A)=\tilde{H}(\LL(A))$ for each $A\in \Sigma$. The spectral resolution has reached its definitive form and Item~\ref{remark:mod-0-iso-L} is established (the paranthetical conclusion follows because we have $$\int_A \langle \tilde \Psi(f),\tilde\Psi(g)\rangle\circ \LL\dd\mu=\int_{\LL(A)} \langle \tilde \Psi(f),\tilde\Psi(g)\rangle\dd\tilde\mu=\langle \pr_{\tilde H(\LL(A))}f,g\rangle=\langle \pr_{ H(A)}f,g\rangle=\int_A \langle \Psi(f),\Psi(g)\rangle\dd\mu$$ for all $A\in \Sigma$).

--- Items~\ref{proposition:classical-structure:iv} and~\ref{proposition:classical-structure:iii} are now essentially trivial if we recall from Proposition~\ref{proposition:extension} the bijective correspondence between sets of $\TT/_\mu$ and the members of $B$.  In particular, uniqueness of $\tilde{\Psi}$ is clear because the stipulated conditions determine it on a total set, while existence has just been established: quite simply, $\tilde{\Psi}=\Psi'$ (because $\Theta$ is now the identity).    Lemma~\ref{lemma:fock-field}\ref{lemma:fock-field:ii}   makes sure that the ``satisfies moreover'' part of \ref{proposition:classical-structure:iv} holds true on a total set, hence everywhere.

--- For $A\in \mathcal{T}$, $e(u)=e(u\mathbbm{1}_A)\otimes e(u\mathbbm{1}_{T\backslash A})$ by Lemma~\ref{lemma:fock-field}\ref{lemma:fock-field:iv}, and we get $\tilde{\Psi}^{-1}(e(u))=\tilde{\Psi}^{-1}(e(u\mathbbm{1}_A))\tilde{\Psi}^{-1}(e(u\mathbbm{1}_{T\backslash A}))$  using  Item~\ref{proposition:classical-structure:iv}. $\PP[\tilde{\Psi}^{-1}(e(u))]=1$ because $\tilde{\Psi}(\PP_{0_\PP}(\tilde{\Psi}^{-1}(e(u))))=\mathbbm{1}_{\{\emptyset\}}e(u)=\mathbbm{1}_{\{\emptyset\}}=\tilde{\Psi}(1)$. Therefore, by Item~\ref{proposition:classical-structure:ii},  $\tilde{\Psi}^{-1}(e(u))$ is a multiplicative integral (recall \ref{generalities:o}). The first statement of Item~\ref{proposition:classical-structure:v} is proved. 

--- Since $\tilde{\Psi}$ is a unitary isomorphism the second and thirs statements of Item~\ref{proposition:classical-structure:v} follow at once from Lemma~\ref{lemma:fock-field}~\ref{lemma:fock-field:iv}. 

--- Lastly the indicated bijective nature of $\mathsf{Exp}$ of Item~\ref{proposition:classical-structure:v} is also got directly from Lemma~\ref{lemma:fock-field}\ref{lemma:fock-field:iv} after one notices as follows: that $f$ being a square-integrable multiplicative integral of $B$ means that $\Psi(f)$ satisfies $\tilde \Psi(f)(\emptyset)=\PP[f]=1$ and, by Item~\ref{proposition:classical-structure:iv}, for all $A\in \TT$, for $\tilde\mu$-a.e. $L$, $\tilde\Psi(f)(L)=\tilde\Psi(\PP[f\vert N_A]\PP[f\vert N_{T\backslash A}])(L)=[\tilde\Psi(\PP[f\vert N_A])\otimes \tilde\Psi(\PP[f\vert N_{T\backslash A}])](L)=[(\tilde\Psi(f) \mathbbm{1}_{(2^A)_{\mathrm{fin}}}) \otimes (\tilde\Psi(f) \mathbbm{1}_{(2^{T\backslash A})_{\mathrm{fin}}})](L)=\tilde\Psi (f)(A\cap L)\otimes \tilde\Psi(f)(L\backslash A)$; that $\tilde\Psi(H^{(1)})\vert_T=\int_T H_s\gamma(\dd s)$.
\end{proof}

Theorem~\ref{proposition:classical-structure} stipulates that every classical noise Boolean algebra admits a spectrum that is in standard form. It is natural to wonder to what extent there is arbitrariness at all in such a spectral resolution. There is a satisfactory answer to this in the general case (not just for classical $B$) and we state and prove this first (before giving the relevant corollary for classical $B$ and standard resolutions). 

\begin{proposition}\label{proposition:isomorphisms-of-noise-boolean-spectra}
 Let $B'$ be another  noise Boolean algebra on a probability space $(\Omega',\FF',\PP')$ with associated spectral space $(S',\Sigma',\mu')$ and unitary isomorphism $\Psi':\L2(\PP')\to \int_{S'}^\oplus H_{s'}'\mu'(\dd s')$. Of the following two statements, \ref{iso:1} implies \ref{iso:2}.
\begin{enumerate}[(A)]
\item\label{iso:1} There exists an isomorphism $\Theta$ of the probability $\PP$ onto the probability $\PP'$, which carries $\overline{B}$ onto $\overline{B'}$ (automatically as an isomorphism of Boolean algebras); let $\widehat{\Theta}:\L2(\PP)\to \L2(\PP')$ be the associated unitary isomorphism.
\item\label{iso:2} There exists a mod-$0$ isomorphism $\psi$ of $\mu$ onto a $\sigma$-finite measure $\widetilde{\mu'}$ equivalent to $\mu'$, and a $\psi$-isomorphism $V=(V_s)_{s\in S}$ of $( H_{s})_{s\in S}$ onto $(H_{s'}')_{s'\in S'}$, such that $\Psi'\circ\widehat{\Theta}\circ \Psi^{-1}$ is the composition of $V$ and of the canonical unitary isomorphism $\int^\oplus_{S'} H_{s'}'\tilde{\mu'}(\dd s')\to \int^\oplus_{S'} H_{s'}'\mu'(\dd s')$.  In particular $\psi$ sends $\mathrm{mult}:=(S\ni s\mapsto \mathrm{dim}(H_s)\in \mathbb{N}_0\cup\{\infty\})$ to $\mathrm{mult}':=(S'\ni s'\mapsto \mathrm{dim}(H_{s'}')\in \mathbb{N}_0\cup \{\infty\})$ (these maps are measurable \cite[p. 166, Proposition~II.1.4.1(i)]{dixmier1981neumann}).
\end{enumerate}

\end{proposition}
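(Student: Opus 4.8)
The plan is to transport the hypothesis to the spectral side and then invoke the uniqueness theory of diagonalisations of abelian von Neumann algebras. First I would record that an isomorphism of probabilities intertwines conditional expectations. Since $\Theta$ preserves the measure algebra and expectations, it carries $\L2$-subspaces to $\L2$-subspaces: for every $x\in\overline B$ the image $\Theta(x)$ is a complete sub-$\sigma$-field of $\FF'$ belonging to $\overline{B'}$, and $\widehat{\Theta}\,\L2(\PP\vert_x)=\L2(\PP'\vert_{\Theta(x)})$, whence, comparing the orthogonal projections onto these subspaces, $\widehat{\Theta}\,\PP_x=\PP'_{\Theta(x)}\,\widehat{\Theta}$. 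By hypothesis $\Theta(\overline B)=\overline{B'}$, so conjugation by $\widehat{\Theta}$ maps $\{\PP_x:x\in\overline B\}$ bijectively onto $\{\PP'_{x'}:x'\in\overline{B'}\}$; passing to real linear spans and strong closures, and recalling $\AA=\AA_B=\AA_{\overline B}$ (and likewise $\AA':=\AA_{B'}=\AA_{\overline{B'}}$), this yields $\widehat{\Theta}\,\AA\,\widehat{\Theta}^{-1}=\AA'$. Thus $\widehat{\Theta}$ is a spatial isomorphism of the two abelian von Neumann algebras.

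Next I would push this through the two spectral resolutions. Set $W:=\Psi'\circ\widehat{\Theta}\circ\Psi^{-1}$, a unitary $\int_S^\oplus H_s\mu(\dd s)\to\int_{S'}^\oplus H'_{s'}\mu'(\dd s')$. For $A\in\AA$ we have $\Psi A\Psi^{-1}=$ (multiplication by $\alpha(A)$), while, by the previous paragraph, $\widehat{\Theta}A\widehat{\Theta}^{-1}\in\AA'$ and $\Psi'(\widehat{\Theta}A\widehat{\Theta}^{-1})(\Psi')^{-1}=$ (multiplication by $\alpha'(\widehat{\Theta}A\widehat{\Theta}^{-1})$), where $\alpha'\colon\AA'\to\mathrm{L}^\infty(\mu')$ is the bijection attached to the spectral resolution of $B'$. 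A one-line computation then gives $W\,(\text{mult.\ by }\alpha(A))\,W^{-1}=(\text{mult.\ by }\alpha'(\widehat{\Theta}A\widehat{\Theta}^{-1}))$. Since $\alpha\colon\AA\to\mathrm{L}^\infty(\mu)$ and $\alpha'$ are bijections and $A\mapsto\widehat{\Theta}A\widehat{\Theta}^{-1}$ carries $\AA$ onto $\AA'$, the unitary $W$ conjugates the \emph{entire} algebra of diagonalisable (multiplication) operators of the first direct integral onto that of the second.

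The final step is the structure/uniqueness theorem for direct integrals: a unitary between two direct integrals that carries the diagonalisable algebra onto the diagonalisable algebra is necessarily decomposable, i.e.\ it is implemented by a mod-$0$ isomorphism $\psi$ of $\mu$ onto a $\sigma$-finite $\widetilde{\mu'}\sim\mu'$ together with a $\psi$-isomorphism $V=(V_s)_{s\in S}$, each $V_s\colon H_s\to H'_{\psi(s)}$ unitary, after which one composes with the canonical rescaling isomorphism $\int_{S'}^\oplus H'_{s'}\widetilde{\mu'}(\dd s')\to\int_{S'}^\oplus H'_{s'}\mu'(\dd s')$ \cite[A.75]{dixmier-c-star} that absorbs the Radon--Nikodym factor. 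This is precisely the form of $W=\Psi'\widehat{\Theta}\Psi^{-1}$ asserted in \ref{iso:2}. Here I would cite Dixmier's theory of decomposable operators and of the uniqueness of abelian decompositions \cite{dixmier1981neumann}, \cite[A.84, A.85]{dixmier-c-star}; formally one first complexifies, as in Remark~\ref{rmk:complex-to-real}, applies the theorem to the complex direct integrals, and then restricts the resulting decomposable unitary back to the totally real subfields $(H_s)_{s\in S}$, where it remains decomposable. The ``in particular'' clause is then immediate: unitarity of each $V_s\colon H_s\to H'_{\psi(s)}$ forces $\dim H_s=\dim H'_{\psi(s)}$ for $\mu$-a.e.\ $s$, i.e.\ $\psi$ sends $\mathrm{mult}$ to $\mathrm{mult}'$.

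I expect the main obstacle to be not any calculation but the correct invocation of the decomposability theorem: one must guarantee that $W$ — a priori only known to conjugate diagonal onto diagonal — is genuinely implemented by a point transformation together with a \emph{measurable} field of unitaries, and that the base map and the rescaling are packaged exactly as in \ref{iso:2}. The only further points of genuine care are the real-versus-complex bookkeeping of Remark~\ref{rmk:complex-to-real} and the need to allow a measure $\widetilde{\mu'}$ merely equivalent to $\mu'$ (rather than $\mu'$ itself), both of which are routine once the abstract theorem is in hand.
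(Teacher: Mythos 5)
Your proposal is correct and follows essentially the same route as the paper: reduce to the observation that $\widehat{\Theta}$ spatially conjugates $\AA_{\overline B}$ onto $\AA_{\overline{B'}}$ (the paper treats the intertwining of conditional expectations as immediate, where you spell it out), and then invoke Dixmier's uniqueness theorem for decompositions of abelian von Neumann algebras to obtain the decomposable form of $\Psi'\circ\widehat{\Theta}\circ\Psi^{-1}$. Your added care about the complexification of Remark~\ref{rmk:complex-to-real} and the rescaling to an equivalent measure matches the paper's implicit handling of these points.
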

\begin{proof}
Suppose that \ref{iso:1} holds true. We may assume $B=\overline{B}$ and $B'=\overline{B'}$ (since a noise Boolean algebra and its completion admit the same spectral resolution). Then the abelian von Neumann algebra $\AA_B$ is unitarily equivalent to the abelian von Neumann algebra $\AA_{B'}$ via $\hat{\Theta}$. By the uniqueness of the decomposition of abelian von Neumann algebras, \cite[A.70, A.85]{dixmier-c-star}, \ref{iso:2} follows (we allow the $\psi$-isomorphism to fail to be an isomorphism of Hilbert spaces on  a negligible set).
\end{proof}

Strictly speaking, the assumption that $B$ is classical does not feature in the next corollary, but plainly its content is aimed at a classical $B$, vis-\`a-vis Theorem~\ref{proposition:classical-structure}.
\begin{corollary}\label{corollary:isomorphisms-of-noise-boolean-spectra-first-chaos}
Suppose $(S^i,\Sigma^i,\mu^i)$, $\Psi^i:\L2(\PP)\to \int^\oplus_{S_i}H^i_s\mu(\dd s)$ with associated counting map $K^i$, $i\in \{1,2\}$, are two spectral resolutions of $B$. Then there exists a mod-$0$ isomorphism $\eta$ of $\mu^1\vert_{\{K^1=1\}}$ onto a $\sigma$-finite measure $\mu'$ equivalent to $\mu^2\vert_{\{K^2=1\}}$ and an $\eta$-isomorphism $W=(W_{s_1})_{s_1\in \{K^1=1\}}$ of $(H^1_{s_1})_{s_1\in \{K^1=1\}}$ onto $(H^2_{s'_2})_{s'_2\in {\{K^2=1\}}}$, such that $\Psi^2\circ (\Psi^1)^{-1}\vert_{\int^\oplus_{\{K^1=1\}}H^1_{s_1}\mu^1(\dd s_1)}$ is the composition of $W$ with the canonical unitary  isomorphism $\int^\oplus_{\{K^2=1\}} H_{s'_2}^2\mu'(\dd s'_2)\to \int^\oplus_{\{K^2=1\}} H_{s_2}^2\mu^2(\dd s_2)$.
\end{corollary}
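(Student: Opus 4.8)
The corollary to prove is Corollary~\ref{corollary:isomorphisms-of-noise-boolean-spectra-first-chaos}: given two spectral resolutions of the same $B$, their restrictions to the first-chaos parts $\{K^i=1\}$ are related by a mod-$0$ isomorphism $\eta$ (onto an equivalent $\sigma$-finite measure) together with a compatible $\eta$-isomorphism $W$ of the Hilbert-space fields, intertwining $\Psi^2 \circ (\Psi^1)^{-1}$ on $\int^\oplus_{\{K^1=1\}} H^1_{s_1}\mu^1(\dd s_1)$.

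Let me think about what the first chaos spectrally means. The key fact is that $H^{(1)} = H(\{K=1\})$ is intrinsic to $B$—it does not depend on the spectral resolution (noted in \ref{generalites:chaos-spaces}: "these spaces were introduced via the spectrum, but they do not depend on it"). So both resolutions identify the same subspace $H^{(1)} \subset \L2(\PP)$ with, respectively, $\int^\oplus_{\{K^1=1\}} H^1_s \mu^1(\dd s)$ and $\int^\oplus_{\{K^2=1\}} H^2_s \mu^2(\dd s)$.

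**The approach.**

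The plan is to invoke Proposition~\ref{proposition:isomorphisms-of-noise-boolean-spectra} with the identity automorphism. Take $B' = B$, $\PP' = \PP$, and let $\Theta = \id$, so $\widehat{\Theta} = \id_{\L2(\PP)}$. Statement \ref{iso:1} of that proposition holds trivially. But there is a mismatch: Proposition~\ref{proposition:isomorphisms-of-noise-boolean-spectra} compares two resolutions of the \emph{whole} algebra and produces a mod-$0$ isomorphism $\psi$ of $\mu^1$ onto a measure equivalent to $\mu^2$ on all of $S^1, S^2$. I must then \emph{restrict} this global isomorphism to the first-chaos parts. So the first step is to apply Proposition~\ref{proposition:isomorphisms-of-noise-boolean-spectra} to obtain a global mod-$0$ isomorphism $\psi: S^1 \to S^2$ (onto $\widetilde{\mu^2} \sim \mu^2$) and a $\psi$-isomorphism $V = (V_s)_s$ of $(H^1_s)$ onto $(H^2_{s'})$ with $\Psi^2 \circ (\Psi^1)^{-1}$ equal to the composite of $V$ and the canonical rescaling isomorphism.

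The crucial intermediate step is to verify that $\psi$ carries $\{K^1=1\}$ onto $\{K^2=1\}$ (mod $0$). This follows because the map $\Sigma^i \ni E \mapsto \pr_{H(E)}$ is a projection-valued measure (\ref{generalities:spectrum-intro}), and $H(\{K^i=1\}) = H^{(1)}$ is the \emph{same} intrinsic subspace of $\L2(\PP)$ for both resolutions; since $\psi$ is constructed to intertwine the two projection-valued measures (that is the content of the uniqueness of the abelian von Neumann algebra decomposition underlying Proposition~\ref{proposition:isomorphisms-of-noise-boolean-spectra}), it must send the spectral set $\{K^1=1\}$—characterized by $H(\{K^1=1\}) = H^{(1)}$—to the set characterized by $H(\{K^2=1\}) = H^{(1)}$, i.e.\ to $\{K^2=1\}$ a.e.-$\mu^2$. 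Then set $\eta := \psi\vert_{\{K^1=1\}}$, $\mu' := \widetilde{\mu^2}\vert_{\{K^2=1\}}$, and $W := (V_s)_{s \in \{K^1=1\}}$. The intertwining relation for $\Psi^2 \circ (\Psi^1)^{-1}$ on the global direct integral, restricted to the subspace $\int^\oplus_{\{K^1=1\}} H^1_s \mu^1(\dd s) = H^{(1)}$, gives exactly the stated conclusion, since the restriction of the canonical global rescaling isomorphism to the first-chaos fibers is the canonical rescaling isomorphism $\int^\oplus_{\{K^2=1\}} H^2_{s'}\mu'(\dd s') \to \int^\oplus_{\{K^2=1\}} H^2_{s'}\mu^2(\dd s')$.

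**The main obstacle.**

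The substantive point—and the step I expect to require the most care—is justifying that the global $\psi$ of Proposition~\ref{proposition:isomorphisms-of-noise-boolean-spectra} respects the first-chaos sets, i.e.\ $\psi(\{K^1=1\}) = \{K^2=1\}$ a.e. One must extract from the uniqueness statement of the abelian von Neumann decomposition (the references \cite[A.70, A.85]{dixmier-c-star}) the concrete fact that $\psi$ intertwines the two spectral measures $E \mapsto \pr_{H^i(E)}$, and then combine this with the resolution-independence of $H^{(1)}$ (via \ref{generalites:chaos-spaces}) and the characterization $\L2(\PP\vert) \supset H^{(1)} = H(\{K^i=1\})$. Since $\psi$ carries the projection $\pr_{H^1(\{K^1=1\})}$ to $\pr_{H^2(\psi(\{K^1=1\}))}$, and both equal $\pr_{H^{(1)}}$, and the correspondence $E \mapsto \pr_{H^2(E)}$ is injective mod $\mu^2$-null sets, one concludes $\psi(\{K^1=1\}) = \{K^2=1\}$ a.e.-$\mu^2$. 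Everything else is a matter of restricting already-established global data to these fibers and reading off the canonical-rescaling bookkeeping, which is routine.
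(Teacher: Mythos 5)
Your proposal is correct and follows essentially the same route as the paper: apply Proposition~\ref{proposition:isomorphisms-of-noise-boolean-spectra} with $\PP'=\PP$, $B'=B$ and $\Theta=\mathrm{id}$, observe that the resulting global $\psi$ carries $\{K^1=1\}$ onto $\{K^2=1\}$ (mod $0$), and restrict $\psi$, $\widetilde{\mu^2}$ and $V$ accordingly. Your extra justification of the key step --- that $\psi$ intertwines the projection-valued measures and $H^{(1)}=H(\{K^i=1\})$ is intrinsic to $B$ --- is exactly the reason the paper leaves implicit in its ``In particular we see that $\psi$ carries $\{K^1=1\}$ onto $\{K^2=1\}$''.
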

\begin{proof}
In Proposition~\ref{proposition:isomorphisms-of-noise-boolean-spectra} take $\PP'=\PP$ and $B'=B$, for $\Theta$ the identity in Item~\ref{iso:1} thereof. From Item~\ref{iso:2} of this same proposition we then get a mod-$0$ isomorphism $\psi$ of $\mu^1$ onto a $\sigma$-finite measure $\widetilde{\mu^2}$ equivalent to $\mu^2$ together with a $\psi$-isomorphism $V=(V_s)_{s\in S}$ which satisfies the properties listed therein. In particular we see that $\psi$ carries $\{K^1=1\}$ onto $\{K^2=1\}$ and we may put $\eta:=\psi\vert_{\{K^1=1\}}$, $\mu'=\widetilde{\mu^2}\vert_{\{K^2=1\}}$ and $W=V\vert_{\{K^1=1\}}$.
\end{proof}

\begin{corollary}\label{corollary:b-a-iso-mult}
Let $B$ be classical and let $B'$ be another classical noise Boolean algebra on a probability space $(\Omega',\FF',\PP')$ with associated spectral space $(S',\Sigma',\mu')$ and unitary isomorphism $\Psi':\L2(\PP')\to \int_{S'}^\oplus H_{s'}'\mu'(\dd s')$, counting map $K'$. The following statements are equivalent.
\begin{enumerate}[(A)]
\item\label{iso-L2-1} In the notation of Proposition~\ref{proposition:isomorphisms-of-noise-boolean-spectra}\ref{iso:2}, for all $n\in \mathbb{N}\cup \{\infty\}$, $$\vert \{ \mathrm{mult}=n,K=1\}\cap \at(\mu)\vert=\vert \{\mathrm{mult}'=n,K'=1\}\cap \at(\mu')\vert$$ and $$\mu(\{\mathrm{mult}=n,K=1\}\backslash  \at(\mu))>0\Leftrightarrow \mu'(\{\mathrm{mult}'=n,K'=1\}\backslash  \at(\mu'))>0.$$
\item\label{iso-L2-2} There is a Boolean algebra isomorphism  $\zeta:\overline{B}\to \overline{B'}$ and a unitary isomorphism $V$ of $\L2(\PP)$ onto $\L2(\PP')$ that carries $\L2(\PP\vert_x)$ onto $\L2(\PP'\vert_{\zeta(x)})$ for all $x\in \overline{B}$.
\end{enumerate}
\end{corollary}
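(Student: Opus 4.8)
Looking at Corollary~\ref{corollary:b-a-iso-mult}, I need to prove the equivalence of a multiplicity/atom-counting condition \ref{iso-L2-1} with the existence of a joint Boolean-algebra-plus-unitary isomorphism \ref{iso-L2-2}.

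Let me think about this carefully.

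We have two classical noise Boolean algebras $B$ (on $\PP$) and $B'$ (on $\PP'$), each with its spectral resolution. By Theorem~\ref{proposition:classical-structure}, each admits a spectrum in standard form: the spectral space is the symmetric measure space $(\tilde S, \tilde\Sigma, \tilde\mu)$ over $(T, \TT, \gamma)$ where $T = \{K=1\}$, $\gamma = \mu\vert_T$, and $H_s$ for $s \in T$ are the Hilbert spaces of the first chaos.

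So the whole structure — the full direct integral $\int^\oplus \tilde H_L \tilde\mu(dL)$ — is completely determined by the "first-chaos data": the measure space $(T, \TT, \gamma)$ together with the measurable Hilbert field $(H_s)_{s \in T}$ (via the symmetric Fock space construction of Lemma~\ref{lemma:fock-field}).

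Now condition \ref{iso-L2-1} is exactly a condition on this first-chaos data: it says that the multiplicity function restricted to $\{K=1\} = T$, stratified by atoms vs. diffuse part, matches between the two. This is precisely the condition for the first-chaos data $(T, \gamma, (H_s))$ and $(T', \gamma', (H'_{s'}))$ to be "isomorphic up to equivalence of measure" — i.e., there's a mod-0 isomorphism of $\gamma$ onto a measure equivalent to $\gamma'$ carrying the multiplicity function $\mathrm{mult}$ to $\mathrm{mult}'$, together with a compatible field isomorphism.

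Let me reason about the equivalences.

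**Direction \ref{iso-L2-2} $\Rightarrow$ \ref{iso-L2-1}.** If we have a unitary $V$ carrying $\L2(\PP\vert_x)$ onto $\L2(\PP'\vert_{\zeta(x)})$ and $\zeta$ a Boolean isomorphism $\overline B \to \overline{B'}$, then $V$ intertwines the conditional expectation projections $\PP_x$ and $\PP'_{\zeta(x)}$. Hence $V$ carries $\AA_B$ to $\AA_{B'}$ as abelian von Neumann algebras. This is exactly the setup of Proposition~\ref{proposition:isomorphisms-of-noise-boolean-spectra}\ref{iso:2} (with $\widehat\Theta$ replaced by $V$; note Prop~\ref{proposition:isomorphisms-of-noise-boolean-spectra} only uses that $\widehat\Theta$ carries one abelian vN algebra to the other), giving a mod-0 isomorphism $\psi$ of $\mu$ onto $\widetilde{\mu'} \sim \mu'$ and a $\psi$-isomorphism $V = (V_s)$ of the fields carrying $\mathrm{mult}$ to $\mathrm{mult}'$. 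Since $V$ carries $\L2(\PP\vert_x)$ to $\L2(\PP'\vert_{\zeta(x)})$, it carries $H^{(1)}(B) = $ the first chaos onto $H^{(1)}(B')$, hence $\psi$ carries $\{K=1\}$ onto $\{K'=1\}$. Intertwining multiplicities and atoms, the counting statements of \ref{iso-L2-1} follow: the number of atoms of each multiplicity value matches, and the positivity of the diffuse part of each multiplicity stratum is preserved.

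**Direction \ref{iso-L2-1} $\Rightarrow$ \ref{iso-L2-2}.** This is the substantive direction. Condition \ref{iso-L2-1} is precisely the necessary and sufficient condition for two $\sigma$-finite standard measure spaces equipped with measurable multiplicity functions $(T, \gamma, \mathrm{mult})$ and $(T', \gamma', \mathrm{mult}')$ to admit, up to equivalence of the measure, a mod-0 isomorphism $\eta$ carrying $\gamma$ onto $\gamma'' \sim \gamma'$ and matching the multiplicity functions. Indeed, a standard $\sigma$-finite measure decomposes into an atomic part and a diffuse part; the atomic part is classified up to measure-equivalence by the cardinality of atoms of each multiplicity value, and the diffuse part of each multiplicity stratum is classified up to measure-equivalence simply by whether it is null or not (any two non-null diffuse $\sigma$-finite standard measures are mod-0 isomorphic up to equivalence). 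This is exactly what \ref{iso-L2-1} records. So I would first establish this "first-chaos matching lemma": condition \ref{iso-L2-1} $\Leftrightarrow$ existence of $(\eta, W)$ as in Corollary~\ref{corollary:isomorphisms-of-noise-boolean-spectra-first-chaos}, i.e. a mod-0 isomorphism $\eta$ of $\gamma = \mu\vert_{\{K=1\}}$ onto a $\sigma$-finite $\mu' \sim \mu'_{B'}\vert_{\{K'=1\}}$ together with an $\eta$-isomorphism $W$ of the first-chaos Hilbert fields.

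Given such $(\eta, W)$, the plan is to pass to the standard-form spectra of both $B$ and $B'$ via Theorem~\ref{proposition:classical-structure}, and then lift $(\eta, W)$ through the symmetric Fock space functor of Lemma~\ref{lemma:fock-field}. Concretely: $\eta$ induces a mod-0 isomorphism $\tilde\eta$ of the symmetric measure space $(\tilde S, \tilde\mu)$ onto $(\tilde S', \widetilde{\mu'})$ (it sends $L \mapsto \eta(L)$ on finite subsets, using that $\tilde\mu \sim \sum_n (n!)^{-1}(q_n)_\star\gamma^n\vert_{(T^n)_\ne}$), and $W$ induces a $\tilde\eta$-isomorphism $\tilde W = (\otimes_{s \in L} W_s)_L$ of the tensor fields $(\tilde H_L)$ onto $(\tilde H'_{L'})$. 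Composing with the canonical rescaling isomorphism (absorbing the Radon--Nikodym derivative, as in the construction of $\Theta$ in the proof of Theorem~\ref{proposition:classical-structure}), I obtain a unitary isomorphism $\int^\oplus \tilde H_L \tilde\mu(dL) \to \int^\oplus \tilde H'_{L'}\widetilde{\mu'}(dL') \to \int^\oplus \tilde H'_{L'}\mu'_{B'}(dL')$. Transporting back through $\tilde\Psi$ and $\tilde\Psi'$ gives the desired $V: \L2(\PP) \to \L2(\PP')$. Because $\tilde\eta$ carries the spectral set $\tilde S_{N_A} = (2^A)_{\mathrm{fin}}$ onto $(2^{\eta(A)})_{\mathrm{fin}} = \tilde S'_{N'_{\eta(A)}}$ (Theorem~\ref{proposition:classical-structure}\ref{proposition:classical-structure:ii}), the map $V$ carries each $\L2(\PP\vert_x)= H(\tilde S_x)$ onto $\L2(\PP'\vert_{\zeta(x)})$, where $\zeta$ is the Boolean isomorphism $\overline B \to \overline{B'}$ induced on the level of spectral sets by $\tilde\eta$ (using the $N$-indexation bijection of Proposition~\ref{proposition:extension}). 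That $\zeta$ is a genuine Boolean-algebra isomorphism follows because $\tilde\eta$ carries the essentially generating $\pi$-system $\{(2^A)_{\mathrm{fin}}\}$ onto its analog and respects joins and complements via the symmetric-set operations.

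The main obstacle I expect is the bookkeeping in the \ref{iso-L2-1} $\Rightarrow$ \ref{iso-L2-2} direction: verifying cleanly that the atom/diffuse multiplicity data of \ref{iso-L2-1} really is exactly the classification datum for standard $\sigma$-finite measures-with-multiplicity up to equivalence, and then checking that the Fock-space lift $(\tilde\eta, \tilde W)$ genuinely intertwines the spectral sets $(2^A)_{\mathrm{fin}}$ (so that the induced $\zeta$ is well-defined on $\overline B$ and lands in $\overline{B'}$). Both amount to careful use of Theorem~\ref{proposition:classical-structure} and Lemma~\ref{lemma:fock-field}, together with the standard structure theory of $\sigma$-finite Lebesgue--Rokhlin spaces recalled in \ref{generalities:lebesgue-rohlin}; the essential content is that everything about a classical noise Boolean algebra, up to the relevant isomorphism, is encoded in its first chaos.
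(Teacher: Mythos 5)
Your proposal is correct and takes essentially the same route as the paper: for \ref{iso-L2-1}$\Rightarrow$\ref{iso-L2-2} you pass to standard-form spectra via Theorem~\ref{proposition:classical-structure} and use that standard measures with matching atomic/diffuse multiplicity data are mod-$0$ isomorphic up to equivalence, and for \ref{iso-L2-2}$\Rightarrow$\ref{iso-L2-1} you use that $V$ intertwines the commutative von Neumann algebras so that Proposition~\ref{proposition:isomorphisms-of-noise-boolean-spectra} (equivalently, Corollary~\ref{corollary:isomorphisms-of-noise-boolean-spectra-first-chaos}) applies and restricts to the first chaos. The paper's proof is just a compressed version of this, feeding the second spectral resolution $\Psi\circ V^{-1}$ of $B'$ into Corollary~\ref{corollary:isomorphisms-of-noise-boolean-spectra-first-chaos} for the converse direction.
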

\begin{proof}
We may assume $B=\overline{B}$ and $B'=\overline{B'}$ and that the spectral resolutions $\Psi$ and $\Psi'$ of $B$ and $B'$ are in standard form. It is then clear from Theorem~\ref{proposition:classical-structure} that \ref{iso-L2-1} implies \ref{iso-L2-2}, just because any two standard measures are mod-$0$ isomorphic up to equivalence. Conversely, if \ref{iso-L2-2} holds, then  $V$ and $\Psi$ induce naturally the spectral  resolution $\Psi\circ V^{-1}$  of $B'$ (with $(S,\Sigma,\mu)$ as spectral space and the same Hilbert space field $(H_s)_{s\in S}$) and it remains to apply Corollary~\ref{corollary:isomorphisms-of-noise-boolean-spectra-first-chaos}.
\end{proof}

\begin{remark}
The unitary isomorphism of Corollary~\ref{corollary:b-a-iso-mult}\ref{iso-L2-2} automatically preserves the individual chaoses, i.e. sends $H^{(n)}(B)$ to $H^{(n)}(B')$ for all $n\in \mathbb{N}_0$ (just because the spectral set $S_{\zeta(x)}'$ corresponding to the spectral resolution $\Psi\circ V^{-1}$ is the same as the spectral set $S_x$ of the spectral resolution $\Psi$, this for all $x\in \overline{B}$; therefore the counting map $K$ of $\Psi$ is also the counting map of $\Psi\circ V^{-1}$).
\end{remark}
Explicit unitary isomorphisms satisfying Item~\ref{iso-L2-2} of Corollary~\ref{corollary:b-a-iso-mult} in the case when $B$ and $B'$ are noise Boolean algebras attached to generalized processes with independent values are constructed in \cite[esp. Lemma~2, Theorem~6]{vershik-tsilevich}.

To conclude this subsection we prove that, in the proper precise sense, for a classical $B$,  a spectral resolution of the first chaos extends uniquely to a spectral resolution in standard form of the whole of $\L2(\PP)$.

\begin{corollary}\label{corollary:choas-ex-nihilo}
Assume $B$ is classical. Let $(T^*,\TT^*,\gamma^*)$ and $N^*$ be as in Corollary~\ref{corollary:uniqueness-of-indexation}. Suppose furthermore that there is a $\gamma^*$-measurable structure on a field of Hilbert spaces $(H_s^*)_{s\in T^*}$ together with a unitary isomorphism $\Phi^*:H^{(1)}\to \int^\oplus_{T^*}H^*_s\gamma^*(\dd s)$, which sends $\PP_{N^*(A)}\vert_{H^{(1)}}$ to the operator of multiplication with $\mathbbm{1}_A$ for each $A\in \TT^*$. Associate to the  $\gamma^*$-measurable structure the ``symmetric Fock space'' $\tilde{\mu}^*$-measurable structure on $(\tilde{H}^*_L)_{L\in \tilde{S}^*}$ as in Lemma~\ref{lemma:fock-field}. Then there exists a unique unitary isomorphism $\tilde{\Psi}^*:\L2(\PP)\to \int^\oplus_{\tilde{S}^*}\tilde{H}^*_L\tilde{\mu}^*(\dd L)$ extending $\Phi^*$ (up to the canonical identification of ${T^*\choose 1}$ with $T^*$) and making $((\tilde{S}^*,\tilde{\Sigma}^*,\tilde{\mu}^*);\tilde{\Psi}^*)$ into a spectral resolution of $B$ in standard form. 
\end{corollary}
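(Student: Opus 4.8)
The plan is to reduce the statement of Corollary~\ref{corollary:choas-ex-nihilo} to Theorem~\ref{proposition:classical-structure} together with the uniqueness results already in hand, rather than to rebuild the construction of the standard-form spectrum from scratch. The key observation is that the hypothesis provides exactly the ``first-chaos data'' that Theorem~\ref{proposition:classical-structure} produces internally: the indexation $N^*:\TT^*\to\overline{B}$ (of the Corollary~\ref{corollary:uniqueness-of-indexation} type), the base space $(T^*,\TT^*,\gamma^*)$, the measurable Hilbert space field $(H^*_s)_{s\in T^*}$, and the unitary $\Phi^*:H^{(1)}\to\int^\oplus_{T^*}H^*_s\gamma^*(\dd s)$ intertwining the first-chaos conditional expectations with the multiplication operators. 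So first I would fix \emph{some} spectral resolution of $B$ in standard form $((\tilde S,\tilde\Sigma,\tilde\mu);\tilde\Psi)$ provided by Theorem~\ref{proposition:classical-structure}, built over its own $T=\{K=1\}$, $\gamma=\mu\vert_T$, $G_s=H_s$, with associated indexation $N:\TT\to\overline{B}$.

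Next I would compare the two first-chaos representations. Both $\Phi^*$ and $\tilde\Psi\vert_{H^{(1)}}$ (which, up to identifying $T$ with $T\choose 1$, is a unitary of $H^{(1)}$ onto $\int^\oplus_T H_s\gamma(\dd s)$) are spectral resolutions of the \emph{same} abelian von Neumann algebra restricted to $H^{(1)}$, namely the one generated by the $\PP_x\vert_{H^{(1)}}$, $x\in\overline{B}$, and both intertwine $\PP_{N^*(A)}$ (resp.\ $\PP_{N(A)}$) with multiplication by an indicator. Applying Corollary~\ref{corollary:isomorphisms-of-noise-boolean-spectra-first-chaos} (with both resolutions being of $B$), I obtain a mod-$0$ isomorphism $\eta$ of $\gamma^*=\gamma^*$ onto a $\sigma$-finite measure equivalent to $\gamma=\mu\vert_T$, together with an $\eta$-isomorphism $W$ of the fields $(H^*_s)_{s\in T^*}$ and $(H_s)_{s\in T}$, such that on the first chaos $\tilde\Psi\circ(\Phi^*)^{-1}$ is, up to the canonical rescaling by the Radon--Nikodym derivative, realized by $W$. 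Here I would use Corollary~\ref{corollary:uniqueness-of-indexation} to see that $\eta$ is compatible with the two indexations ($N^*(A)=N(\eta(A))$ mod $0$), which is what guarantees that $\eta$ and $W$ respect the spectral sets and not merely the measure classes.

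The mod-$0$ isomorphism $\eta$ and the field-isomorphism $W$ of the base data then functorially induce a mod-$0$ isomorphism $\tilde\eta$ of the symmetric measure space $(\tilde S^*,\tilde\Sigma^*,\tilde\mu^*)$ over $(T^*,\TT^*,\gamma^*)$ onto (a measure equivalent to) $(\tilde S,\tilde\Sigma,\tilde\mu)$, carrying $\tilde S^*_{N^*(A)}=(2^A)_{\mathrm{fin}}$ to $\tilde S_{N(\eta(A))}=(2^{\eta(A)})_{\mathrm{fin}}$, and a corresponding $\tilde\eta$-isomorphism $\tilde W$ of $(\tilde H^*_L)$ onto $(\tilde H_L)$ built as the inner tensor functor applied to $W$ (this is precisely the content of Lemma~\ref{lemma:fock-field}\ref{lemma:fock-field:iii} and the product/tensor bookkeeping there). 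Composing, I define $\tilde\Psi^*:=(\text{canonical rescaling})\circ\tilde W^{-1}\circ\tilde\Psi$, which is a unitary isomorphism $\L2(\PP)\to\int^\oplus_{\tilde S^*}\tilde H^*_L\tilde\mu^*(\dd L)$ making $((\tilde S^*,\tilde\Sigma^*,\tilde\mu^*);\tilde\Psi^*)$ a spectral resolution of $B$ in standard form whose restriction to $H^{(1)}$ is $\Phi^*$ (by construction, since on the first chaos $\tilde W$ is just $W$ and undoes the first-chaos part of $\tilde\Psi\circ(\Phi^*)^{-1}$). For uniqueness I would invoke the uniqueness clause of Theorem~\ref{proposition:classical-structure}\ref{proposition:classical-structure:iv}: any standard-form resolution extending $\Phi^*$ must send $\prod_{i}\tilde\Psi^{*-1}(u_i)$, for $u_i\in\int_{A_i}^\oplus H^*_s\gamma^*(\dd s)=\Phi^*(H^{(1)}\cap\L2(\PP\vert_{N^*(A_i)}))$ over $\TT^*$-measurable partitions, to $\otimes_i u_i$; since these products are total in $\L2(\PP)$ (by classicality and the way higher chaoses are generated from the first, \ref{generalites:chaos-spaces}), the map is forced.

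The main obstacle I anticipate is bookkeeping of the measure-theoretic exceptional sets and the equivalence-class subtleties when promoting the base-level mod-$0$ isomorphism $(\eta,W)$ to the symmetric-Fock level $(\tilde\eta,\tilde W)$: one must check that the symmetric-power construction of Lemma~\ref{lemma:fock-field} genuinely respects mod-$0$ equivalence (so that replacing $\gamma$ by an equivalent $\sigma$-finite measure induces an equivalence of the $\tilde\mu$'s, via the identity $\tilde\mu\sim\sum_n(n!)^{-1}(q_n)_\star\gamma^n\vert_{(T^n)_{\ne}}$), and that the canonical rescaling unitary $\int^\oplus H^*_L\,\widetilde{\gamma^*}(\dd L)\to\int^\oplus H^*_L\,\tilde\mu^*(\dd L)$ composes correctly with $\tilde W$ so that the \emph{standard} $\tilde\mu^*$-measurable structure (not merely an equivalent one) is the one attached to $\tilde\Psi^*$. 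This is the step where the formal passage from ``equivalent up to mod-$0$ isomorphism'' to ``the prescribed symmetric measure space $(\tilde S^*,\tilde\Sigma^*,\tilde\mu^*)$ exactly'' has to be executed with care; everything else is an application of results already proved.
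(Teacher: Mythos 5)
Your proposal is essentially the paper's proof: both reduce to Theorem~\ref{proposition:classical-structure}, use Corollary~\ref{corollary:uniqueness-of-indexation} to identify the base data $(T^*,\TT^*,\gamma^*,N^*)$ with the intrinsic $(T,\TT,\gamma,N)$ up to mod-$0$ isomorphism and equivalence, transport the discrepancy between $\Phi^*$ and $\Psi\vert_{H^{(1)}}$ into a field isomorphism on the first-chaos level, promote it functorially to the symmetric-Fock level via Lemma~\ref{lemma:fock-field}, and settle uniqueness by the uniqueness clause of Theorem~\ref{proposition:classical-structure}\ref{proposition:classical-structure:iv}. One caveat: Corollary~\ref{corollary:isomorphisms-of-noise-boolean-spectra-first-chaos} as stated compares two \emph{full} spectral resolutions of $B$ on $\L2(\PP)$, whereas here $\Phi^*$ is only given on $H^{(1)}$, so invoking it literally would be circular; the fix is what the paper does — after identifying the base spaces, observe that $\Phi^*\circ(\Psi\vert_{H^{(1)}})^{-1}$ preserves every $\int_A^\oplus$, hence is decomposable by the uniqueness of direct-integral decompositions of the abelian algebra acting on $H^{(1)}$, which yields the $\mathrm{id}_T$-isomorphism of fields directly.
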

\begin{proof}
We retain the notation of Theorem~\ref{proposition:classical-structure}. 

We know from Corollary~\ref{corollary:uniqueness-of-indexation}   that $\gamma$ is equivalent to a standard measure $\eta$ that is mod-$0$ isomorphic to $\gamma^*$ via a map that carries $A$ onto $A^*$ whenever $N(A)=N^*(A^*)$, this for all $A\in \TT$ and $A^*\in \TT^*$.  Changing $((S,\Sigma,\mu);\Psi)$ if necessary  we may and do assume $(T,\TT,\gamma)=(T^*,\TT^*,\gamma^*)$, $N=N^*$ and that $((S,\Sigma,\mu);\Psi)$ is in standard form. Put $\Phi:=\Psi\vert_{H^{(1)}}$.  

For ease we drop the $\widetilde{\phantom{ll}}$ in the notation (keeping just the ${}^*$). 

Then we have the unitary isomorphism $\Phi^*\circ \Phi^{-1}:\int_T^\oplus H_s\gamma(\dd s)\to \int_{T}H_s^*\gamma(\dd s)$, which carries $\int_A^\oplus H_s\gamma(\dd s)$ onto $\int_A^\oplus H_s^*\gamma(\dd s)$ for all $A\in \TT$. Consequently there is an $\mathrm{id}_T$-isomorphism 
$(V(t))_{t\in T}$ (up to a negligible set) of $(H_s)_{s\in T}$ onto $(H_s^*)_{s\in T}$, which in turn engenders a unitary isomorphism $\Gamma:\int^\oplus_{S}H_L\mu(\dd L)\to \int^\oplus_{S^*}H^*_L\mu^*(\dd L)$ that sends $\int_{(2^A)_{\mathrm{fin}}}^\oplus H_L\mu(\dd L)$ to $\int_{(2^A)_{\mathrm{fin}}}^\oplus H_L^*\mu^*(\dd L)$ for all $A\in \TT$. In consequence $\Psi^*:=\Psi\circ \Gamma^{-1}$ is the desired unitary isomorphism (uniqueness is a consequence of the uniqueness asserted in Theorem~\ref{proposition:classical-structure}\ref{proposition:classical-structure:iv}).
\end{proof}

\begin{example}
It may be agreeable to see just how the preceding general results can be used to build  ``from the ground up'' the Wiener chaos of Example~\ref{example:wiener}. 
\begin{itemize}[leftmargin=0.5cm]
\item Identify directly the first chaos. Let $f\in H^{(1)}$. For each $n\in \mathbb{N}_0$, $(\PP[f\vert N_{[-n,t]}])_{t\in [n,\infty)}$ is a square-integrable martingale in the (shifted)  Brownian filtration $(N_{[-n,t]})_{t\in [-n,\infty)}$. Therefore there is a (shifted) predictable process $P^n=(P^n_u)_{u\in [-n,\infty)}$  in $\L2(W\vert_{[-n,\infty)})$ such that $\PP[f\vert N_{[-n,t]}]=\int_{-n}^tP_u^n\dd W_u$ (It\^o integral) a.s.-$\PP$ for all $t\in [-n,\infty)$. We get that $\PP[f\vert N_{[p,q]}]=\int_p^qP_u^n\dd W_u$ a.s.-$\PP$ for all real $q\geq p\geq -n$. By uniqueness of martingale representation we see that $P^n=P^{n+1}$ on $[-n,\infty)$ Lebesgue-a.e. $\PP$-a.s., hence there is a single predictable $P=(P_u)_{u\in \mathbb{R}}$ in $\L2(W)$ such that $\PP[f\vert N_{[p,q]}]=\int_p^qP_u\dd W_u$ a.s.-$\PP$ for all real $ p\leq q$. Applying (uniqueness of) martingale representation on the interval $[p,q]$ we get that $P_u$ is $N_{[p,u]}$-measurable for Lebesgue-a.e. $u\in [p,q]$ for all rational $p\leq q$. Therefore for Lebesgue-a.e. $u\in \mathbb{R}$, $P_u$ is $\cap N_{[p,q]}$-measurable, the intersection being over all rational intervals $[p,q]\ni u$, that is to say, $P_u$ is deterministic. The deterministic process $(\PP[P_u])_{u\in \mathbb{R}}$ is predictable and agrees with $P$ a.e.-Lebesgue a.s.-$\PP$; we may just as well assume it is $P$ itself.  We get $\subset$ in $H^{(1)}=\{\int_{-\infty}^\infty f(u)\dd W_u:f\in \L2(\mathbb{R})\}$ and the reverse inclusion is obvious. 

\noindent  Note that $B$ is indeed classical, since $H^{(1)}$ generates $\sigma(W)=1_\PP$.
\item Extend the factorization $N$ from unions of intervals to all Lebesgue measurable sets. For any Lebesgue measurable $A\subset \mathbb{R}$ define $N_A^*$ to be the $\sigma$-field generated by $\{\int_{-\infty}^\infty f(u)\mathbbm{1}_A(u)\dd W_u:f\in \L2(A)\}$. 

\noindent The association $N^*=(A\mapsto N^*_A)$ evidently extends $N$, is $\sigma$-continuous from  below and has  $N^*(\mathbb{R})=1_\PP$. Also, $N_A^*$ is independent of $N_B^*$ if $A\cap B=\emptyset$ Lebesgue-a.e.: just compute the covariances  of the generating elements of the first chaos, they vanish, for Gaussian families it is enough. Next, $N^*(B)=0_\PP$ iff $B$ has Lebesgue measure zero: sufficiency follows from the last observation, necessity because $\var(\int \mathbbm{1}_{[-n,n]\cap B}(u)\dd W_u)=\mathrm{lebesgue}([-n,n]\cap B)\uparrow \mathrm{lebesgue}(B)$ as $n\to\infty$. Let finally $(A_n)_{n\in \mathbb{N}}$ be a sequence of unions of intervals.  Then we claim that $\liminf_{n\to\infty} N_{A_n}=N^*_{\liminf_{n\to\infty}A_n}$. By $\sigma$-continuity from below it is enough to argue that $N^*_{\cap_{n\in \mathbb{N}}A_n}=\land_{n\in \mathbb{N}}N_{A_n}$. The inclusion $\subset$ is automatic, for the reverse just note that $\land_{n\in \mathbb{N}}N_{A_n}$ belongs to $\overline{B}$, hence is generated by the elements of the first chaos that are measurable w.r.t. it. 

\noindent Now, because, on the one hand, each Lebesgue measurable set is equal Lebesgue-a.e. to $\liminf_{n\to\infty}A_n$ for some such sequence, and because, on the other, $x\in \overline{B}=\Cl(B)$ iff $x=\liminf_{n\to\infty}x_n$ for some sequence $(x_n)_{n\in \mathbb{N}}$ in $B$, it follows that $N^*$ maps into, moreover, onto $\overline{B}$. By the independence property $N^*$ respects complementation (because of the uniqueness of independent complementation in $\overline{B}$). 

\noindent Altogether it means that $N^*$ satisfies all the conditions of Corollary~\ref{corollary:uniqueness-of-indexation} under $\gamma^*=$ Lebesgue measure.
\item Equipping the  $H_s=\mathbb{R}$, $s\in \mathbb{R}$, with the standard Lebesgue-measurable Hilbert space field structure, the unitary isomorphism $\Phi^*=(H^{(1)}\ni \int_{-\infty}^\infty f(u)\dd W_u\mapsto f\in \L2(\mathbb{R})\equiv \int_\mathbb{R}^\oplus \mathbb{R}\dd s)$ sends conditioning on $N_A$ to the operator of multiplication with $A$ for all Lebesgue measurable $A$. In consequence, Corollary~\ref{corollary:choas-ex-nihilo} gives a spectral resolution of $B$, moreover it recovers precisely the Wiener chaos expansion of Example~\ref{example:wiener}.
\end{itemize}
\end{example}
\begin{question}
It appears that the content of the preceding example could be generalized considerably to (arbitrary atomless?) ``decomposable processes'' in the sense of Feldman \cite{feldman} (in lieu of the Brownian motion $W$). The main difficulty lies in identifying the first chaos in term of the ``stochastic integrals'' against $W$ (for which the martingale representation theorem was used above). To a certain extent this matter is resolved in \cite[Theorem~6.7, resp.~7.15]{feldman} in the mixed Poisson (resp. Gaussian) case. Can it be done in complete generality? By the spectral resolution the question is apparently related to whether or not a proper closed linear submanifold of $H^{(1)}$ of a classical $B$, closed under $\AA$, can generate $1_\PP$ (in the classical case); of course it is possible in the presence of atoms (of $B$, or equivalently of the spectral measure), how about in the atomless case?
\end{question}
%
%

\section{Spectral independence and multiplicative integrals}\label{subsection:spectral-independence}
A spectral independence probability is one that is absolutely continuous w.r.t. the spectral measure $\mu$, charges $\emptyset_S$ and that makes the noise projections corresponding to members of any given partition of unity of $B$ independent. Multiplicative integrals (recall \ref{generalities:o}) and spectral independence probabilities (described just now, to be introduced formally in Definition~\ref{definition:spctral-independence} immediately below), are seen to be intimately related (Propositions~\ref{proposition:multiplicative-and-independence} and~\ref{proposition:multiplicative-various}).  Both are also, loosely speaking, the same for $B$, $B_{\mathrm{stb}}$ and $\overline{B}$ (Proposition~\ref{proposition:multiplicative-integrals-stable}, Corollary~\ref{corollary:spectral-independence-stable}), which is in parallel to the same property of the first chaos that was already noted (in Proposition~\ref{proposition:classical-part-of-noise}\ref{proposition:classical-part-of-noise:ii}). Spectral independence probabilities are all carried by $\{K<\infty\}$ and one of them is equivalent to $\mu$ thereon (Corollary~\ref{corollary:supports-of-spectral-independence}). As a consequence we get a new characterization of classicality and blackness  via spectral independence (Theorem~\ref{thm:new-conditioon-for-classicality-blackness}). Multiplicative integrals are seen to generate $\stable$ (Corollary~\ref{corollary:multiplicative-generate-stable}). 

\subsection{Spectral independence probabilities} 
We would have seen instances  of the following in (the classical) Examples~\ref{example:spectral-finite} (generalized in~\ref{example:classical-spectrum}) and~\ref{example:wiener} --- we refer specifically to the parts in-between the \textbf{($\dagger$)}s --- though no particular name was given to it there.
 \begin{definition}\label{definition:spctral-independence}
A measure $\nu$  on $(S,\Sigma)$ is a spectral independence probability (for the given spectral resolution of $B$) if $\nu$ is absolutely continuous w.r.t. $\mu$, $\nu(S_{0_\PP})>0$ and $\Sigma_x$ is independent of $\Sigma_{x'}$ under $\nu$ for all $x\in B$. If further $\nu\sim \mu$ (resp. $\nu(\{\emptyset_S\})<1$) then $\nu$ is called an equivalent (resp. non-trivial) spectral independence probability.
 \end{definition}
 \begin{remark}
By an inductive argument, if $\nu$ is a spectral independence probability, then the $\sigma$-fields $\Sigma_p$, $p\in P$, are independent for each partition of unity $P$ of $B$. A trivial spectral independence probability always exists, namely $\delta_{\emptyset_S}$, the Dirac mass at $\emptyset_S$ (hence the qualification ``non-trivial'' of the preceding definition).
 \end{remark}
 \begin{remark}
The stipulation $\nu(S_{0_\PP})>0$ makes sure that for all $x\in B$, $\nu(S_x)>0$ and excludes in particular probabilities concentrated at single points, atoms of $\mu$, other than $\emptyset_S$ (which would otherwise be spectral independence probabilities).
 \end{remark}
In general (whether or not $B$ is classical), square-integrable multiplicative integrals naturally give rise to spectral independence probabilities.

\begin{proposition}\label{proposition:multiplicative-and-independence}
If $f$ is a square-integrable multiplicative integral then $\mu_{f/\Vert f\Vert}$ is a spectral independence probability. 
\end{proposition}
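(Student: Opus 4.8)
The plan is to set $g:=f/\Vert f\Vert$ and to verify for $\nu:=\mu_g$ the three defining properties of a spectral independence probability, the first two being immediate and the third -- independence of $\Sigma_x$ and $\Sigma_{x'}$ under $\mu_g$ for all $x\in B$ -- being the real content. For the easy facts: since $\mu_g=\Vert\Psi(g)\Vert^2\cdot\mu$ is an indefinite integral against $\mu$ it is automatically absolutely continuous w.r.t. $\mu$; its total mass is $\mu_g(S)=\mu_g(S_{1_\PP})=\PP[\PP[g\vert 1_\PP]^2]=\Vert g\Vert^2=1$, so $\mu_g$ is a probability; and $\mu_g(S_{0_\PP})=\PP[\PP[g\vert 0_\PP]^2]=\PP[g]^2=\PP[f]^2/\Vert f\Vert^2=1/\Vert f\Vert^2>0$ (finite and nonzero because $f$ is square-integrable with $\PP[f]=1$). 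This settles absolute continuity and the charge condition $\nu(S_{0_\PP})>0$.

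For the independence, fix $x\in B$. The key is that the multiplicative-integral identity $f=\PP[f\vert x]\PP[f\vert x']$ exhibits $f$, up to a positive scalar, as a product of a normalized $x$-measurable and a normalized $x'$-measurable factor. Putting $f_1:=\PP[f\vert x]/\Vert\PP[f\vert x]\Vert\in\L2(\PP\vert_x)$ and $f_2:=\PP[f\vert x']/\Vert\PP[f\vert x']\Vert\in\L2(\PP\vert_{x'})$ (both well defined since $\PP[\PP[f\vert x]]=\PP[\PP[f\vert x']]=\PP[f]=1\neq 0$), one has $f=c\,f_1f_2$ with $c:=\Vert\PP[f\vert x]\Vert\,\Vert\PP[f\vert x']\Vert>0$ and $\PP[f_1^2]=\PP[f_2^2]=1$. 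Because $x$ is independent of $x'$, the variables $\PP[f\vert x]$ and $\PP[f\vert x']$ are independent, whence $\Vert f\Vert^2=\PP[\PP[f\vert x]^2\PP[f\vert x']^2]=\PP[\PP[f\vert x]^2]\PP[\PP[f\vert x']^2]=c^2$. Consequently $\mu_g=\mu_f/\Vert f\Vert^2=c^2\mu_{f_1f_2}/\Vert f\Vert^2=\mu_{f_1f_2}$; the normalized factorization has rewritten $\mu_g$ as the spectral measure of a product of exactly the form required by the projection corollary (consistently, $\Vert f_1f_2\Vert^2=\Vert f_1\Vert^2\Vert f_2\Vert^2=1$, so $\mu_{f_1f_2}$ is a probability).

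Now I would feed this into Corollary~\ref{prop:noise-proj-bis}\ref{proj:iv}\ref{proj:iv-a} applied to the independency $(x,x')$, noting $x\lor x'=1_\PP$ so that $\mu_g(S_{x\lor x'})=1$: it yields $(\pr_x,\pr_{x'})_\star\mu_{f_1f_2}=\mu_{f_1}\times\mu_{f_2}$, i.e. the noise projections $\pr_x$ and $\pr_{x'}$ are independent under $\mu_{f_1f_2}=\mu_g$. Since $\sigma(\pr_x)=\Sigma_x$ and $\sigma(\pr_{x'})=\Sigma_{x'}$ by Theorem~\ref{thm:noise-projections}\ref{proj:ii}, this is precisely the independence of $\Sigma_x$ and $\Sigma_{x'}$ under $\mu_g$. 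As $x\in B$ was arbitrary, all three conditions hold and $\mu_g$ is a spectral independence probability.

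There is no genuine obstacle once the normalization is handled correctly; the one point demanding care is the clean reduction $\mu_g=\mu_{f_1f_2}$, which hinges on the identity $\Vert f\Vert^2=\PP[\PP[f\vert x]^2]\PP[\PP[f\vert x']^2]$ forcing the scalar $c$ to match $\Vert f\Vert$. Should one wish to avoid the projection machinery, the same conclusion is reachable self-containedly by verifying $\mu_g(S_{u\lor v})=\mu_g(S_{u\lor x'})\mu_g(S_{x\lor v})$ for $u\in B_x$, $v\in B_{x'}$ on the generating $\pi$-systems $\{S_{u\lor x'}:u\in B_x\}$ and $\{S_{x\lor v}:v\in B_{x'}\}$ of $\Sigma_x$ and $\Sigma_{x'}$, using $S_{u\lor x'}\cap S_{x\lor v}=S_{u\lor v}$ together with the conditional factorization $\PP[f\vert u\lor v]=\PP[f\vert u]\PP[f\vert v]$ (obtained by conditioning $f=\prod_{p\in P}\PP[f\vert p]$ over a partition of unity $P$ refining $\{u,v\}$ and pulling out the factors independent of $u\lor v$); on that route the only nuisance is keeping track of the a.e.-$\mu$ qualifiers and the choice of versions making the $\pi$-systems honest.
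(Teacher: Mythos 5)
Your proof is correct, and your main route is genuinely different from the paper's. The paper argues directly: it normalizes $f$ to unit norm, notes $\mu_f(S_{u\lor x'}\cap S_{x\lor v})=\mu_f(S_{u\lor v})=\PP[\PP[f\vert u\lor v]^2]=\PP[\PP[f\vert u]^2]\PP[\PP[f\vert v]^2]/\PP[f]^2$ for $u\in B_x$, $v\in B_{x'}$ (using that $\PP[f\vert\cdot]$ multiplicatively decomposes and that $u$, $v$ are independent), checks by specialization that this equals $\mu_f(S_{u\lor x'})\mu_f(S_{x\lor v})$, and closes with Dynkin's lemma --- i.e.\ exactly the self-contained alternative you sketch in your last paragraph. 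Your primary argument instead factorizes $f/\Vert f\Vert=f_1f_2$ with normalized $f_1\in\L2(\PP\vert_x)$, $f_2\in\L2(\PP\vert_{x'})$ (the identity $\Vert f\Vert^2=\PP[\PP[f\vert x]^2]\PP[\PP[f\vert x']^2]=c^2$ making the scalars match is the right observation and is correct), and then invokes Corollary~\ref{prop:noise-proj-bis}\ref{proj:iv}\ref{proj:iv-a} together with $\sigma(\pr_x)=\Sigma_x$ from Theorem~\ref{thm:noise-projections}\ref{proj:ii}. What this buys is brevity and a conceptual reading --- the spectral law of a multiplicative integral is, locally at each $x$, the product law of the two factors under the noise projections --- at the price of importing the projection machinery, whose proof of \ref{proj:iv}\ref{proj:iv-a} is itself essentially the same $\pi$-system computation the paper performs here directly. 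The only point worth being explicit about is that independence of $\pr_x$ and $\pr_{x'}$ under $\mu_{f_1f_2}$ yields independence of the ($\mu$-completed) $\sigma$-fields $\Sigma_x$ and $\Sigma_{x'}$ because $\mu_{f_1f_2}\ll\mu$, so $\mu$-negligible sets remain negligible; this is harmless but deserves a word.
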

\begin{proof}
For greater notational ease replace $f$ with $f/\Vert f\Vert$ (so now $f$ has square-mean one and $f/\PP[f]$ is a multiplicative integral). Since $\PP[f]\ne 0$, $\mu_{f}$ charges $\emptyset_S$.  Further, we have $\mu_f(S)=\PP[f^2]=1$ and for $x\in B$, $u\in B_x$ and $v\in B_{x'}$, $$\mu_f(S_{u\lor x'}\cap S_{x\lor v})=\mu_f(S_{u\lor v})=\PP[\PP[f\vert u\lor v]^2]=\PP[\PP[f\vert u]^2]\PP[\PP[f\vert v]^2]/\PP[f]^2.$$ Taking $u=x$, resp. $v=x'$, $u=x$ and $v=x'$, we get $\mu_f(S_{x\lor v})$, resp. $\mu_f(S_{u\lor x'})$, $1=\PP[\PP[f\vert x']^2]\PP[\PP[f\vert x]^2]/\PP[f]^2$, which altogether allows to verify that $\mu_f(S_{u\lor x'}\cap S_{x\lor v})= \mu_f(S_{u\lor x'}) \mu_f(S_{x\lor v})$. Independence of $\Sigma_x$ and $\Sigma_{x'}$ now follows by an application of Dynkin's lemma. 
\end{proof}
As a consequence of Theorem~\ref{proposition:classical-structure} and Proposition~\ref{proposition:multiplicative-and-independence} we get the phenomenon recorded in the examples mentioned at the beginning of this subsection 
for an arbitrary classical noise Boolean algebra.  

\begin{proposition}\label{corollary:independent-equivalent}
Assume the setting of Theorem~\ref{proposition:classical-structure}.  The following are equivalent for a finite measure $\nu$ on $\Sigma\vert_T$. 
\begin{enumerate}[(A)]
\item\label{corollary:independent-equivalent:A} $\nu$ is absolutely continuous w.r.t. (resp. equivalent to) $\mu$ on $T$.
\item\label{corollary:independent-equivalent:B} There is $\hat{g}\in \int^\oplus_T H_s\mu(\dd s)$, such that $\nu=\Vert\hat{g}\Vert^2\cdot \mu\vert_T$ (resp. and $\Vert \hat{g}\Vert>0$ a.e.-$\mu\vert_T$). 
\end{enumerate}
Let $(\nu,\hat{g})$ be a a pair satisfying the preceding properties.  Consider under some probability $\mathbb{Q}$ the random set $\Gamma$ of points of $T$, which is got as follows: each $\nu$-atom $t$ of $T$ is included in $\Gamma$ independently of the others with  probability $\frac{\nu(\{t\})}{1+\nu(\{t\})}$; also, independently of the former, we include in $\Gamma$ the realization of a Poisson random measure, the intensity measure of which is $\nu_{\mathrm{d}}$, the diffuse part of $\nu$. Then the $\mathbb{Q}$-law of $\Gamma$ on $(\tilde{S},\tilde{\Sigma})$ is absolutely continuous w.r.t. (resp. equivalent to) $\tilde{\mu}$. Moreover, setting $f:=\tilde \Psi^{-1}(e(\hat{g}))$, $f$ is  a square-integrable multiplicative integral and  $\Gamma_\star \QQ=\tilde{\mu}_{f/\Vert f\Vert}$. Sliding back along the mod-$0$ isomorphism $\LL$ of Theorem~\ref{proposition:classical-structure}\ref{remark:mod-0-iso-L} we get the (resp. equivalent) spectral independence probability $(\LL^{-1})_\star (\Gamma_\star \QQ)=\mu_{f/\Vert f\Vert}$. 
\end{proposition}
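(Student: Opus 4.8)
The claim establishes an equivalence (A)$\Leftrightarrow$(B) for finite measures on $\Sigma|_T$, followed by a concrete construction connecting Poisson/Bernoulli random sets, exponential vectors, and spectral independence probabilities. My plan is to first dispose of the equivalence (A)$\Leftrightarrow$(B) quickly, then verify the three assertions about the random set $\Gamma$ in turn, leaning heavily on the ``standard form'' spectral resolution of Theorem~\ref{proposition:classical-structure} and the exponential-vector calculus of Lemma~\ref{lemma:fock-field}\ref{lemma:fock-field:iv}.

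First I would handle (A)$\Leftrightarrow$(B). Absolute continuity $\nu\ll\mu|_T$ means $\nu$ has a Radon--Nikodym density $\tfrac{\dd\nu}{\dd\mu}\in\mathrm{L}^1(\mu|_T)$; using the existence of orthonormal frames for the measurable Hilbert field $(H_s)_{s\in T}$ (cited already in the proof of Proposition~\ref{proposition:independence-for-given} via \cite[Proposition~II.1.4.1(II)]{dixmier1981neumann}), one selects $\hat g\in\int_T^\oplus H_s\mu(\dd s)$ with $\|\hat g(s)\|^2=\tfrac{\dd\nu}{\dd\mu}(s)$ a.e., giving $\nu=\|\hat g\|^2\cdot\mu|_T$; the equivalent case corresponds exactly to $\|\hat g\|>0$ a.e. The converse direction is immediate since $\|\hat g\|^2\cdot\mu|_T\ll\mu|_T$ always.

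Next, for the random set $\Gamma$, the key is to compute its $\mathbb{Q}$-law on $(\tilde S,\tilde\Sigma)$ and match it against $\tilde\mu_{f/\|f\|}$ for $f=\tilde\Psi^{-1}(e(\hat g))$. The decomposition of $\Gamma$ into an independent superposition of a Bernoulli process on the atoms of $\nu$ and a Poisson random measure with diffuse intensity $\nu_{\mathrm d}$ is precisely designed to mirror the product formula for exponential-vector norms in Theorem~\ref{proposition:classical-structure}\ref{proposition:classical-structure:v} (equivalently Lemma~\ref{lemma:fock-field}\ref{lemma:fock-field:iv}): there $\langle e(\hat g),e(\hat g)\rangle=\exp\!\big(\int\|\hat g(s)\|^2\mu_{\mathrm d}(\dd s)\big)\prod_{t}(1+\|\hat g(t)\|^2\mu(\{t\}))$. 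Thus $\|f\|^2=e^{\nu_{\mathrm d}(T)}\prod_t(1+\nu(\{t\}))$, and $\tilde\mu_{f}=\|\tilde\Psi(f)\|^2\cdot\tilde\mu$ is the indefinite integral against $\tilde\mu$ of $\|e(\hat g)\|^2=\prod_{s\in L}\|\hat g(s)\|^2$. I would verify the match by checking equality of the two laws on the essentially generating $\pi$-system $\{(2^A)_{\mathrm{fin}}:A\in\TT\}=\{\tilde S_{N_A}:A\in\TT\}$ (Theorem~\ref{proposition:classical-structure}\ref{proposition:classical-structure:ii}), reducing the Poisson side to the classical fact that a Poisson random measure with intensity $\nu_{\mathrm d}$ restricted to $A$ is Poisson$(\nu_{\mathrm d}(A))$, independent across disjoint sets, and the Bernoulli side to an elementary product over atoms. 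Normalizing by $\|f\|^2$ and invoking $\nu_{\mathrm d}(T)+\sum_t\log(1+\nu(\{t\}))<\infty$ (finiteness of $\nu$) confirms $\Gamma_\star\QQ=\tilde\mu_{f/\|f\|}$ as probability measures.

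The remaining pieces are then nearly automatic: that $f=\tilde\Psi^{-1}(e(\hat g))$ is a square-integrable multiplicative integral is exactly Theorem~\ref{proposition:classical-structure}\ref{proposition:classical-structure:v}; that $\Gamma_\star\QQ$ is absolutely continuous w.r.t.\ (resp.\ equivalent to) $\tilde\mu$ follows because $\tilde\mu_{f/\|f\|}\ll\tilde\mu$ always, with equivalence holding precisely when $\|\tilde\Psi(f)\|>0$ a.e.-$\tilde\mu$, i.e.\ when $\|\hat g\|>0$ a.e. on $T$; finally, sliding back along the mod-$0$ isomorphism $\LL$ of Theorem~\ref{proposition:classical-structure}\ref{remark:mod-0-iso-L} transports $\tilde\mu_{f/\|f\|}$ to $\mu_{f/\|f\|}$ (since $\LL$ carries $\langle\tilde\Psi(f),\tilde\Psi(f)\rangle\circ\LL=\langle\Psi(f),\Psi(f)\rangle$ and hence spectral measures correspondingly), which is a spectral independence probability by Proposition~\ref{proposition:multiplicative-and-independence}. \emph{The main obstacle} I anticipate is the bookkeeping in identifying $\Gamma_\star\QQ$ with $\tilde\mu_{f/\|f\|}$: one must carefully separate the diffuse and atomic parts of $\nu$, confirm that the Poisson superposition and the Bernoulli inclusions factor correctly over the tensor structure $\int^\oplus_{\tilde S}\tilde H_L\tilde\mu(\dd L)$, and check that the infinite product over atoms genuinely converges (log-absolute convergence, as asserted in Lemma~\ref{lemma:fock-field}\ref{lemma:fock-field:iv}) so that both sides are honest probability measures after normalization.
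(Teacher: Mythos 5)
Your proposal is correct and follows essentially the same route as the paper: (A)$\Leftrightarrow$(B) via Radon--Nikodym plus orthonormal frames, identification of $\Gamma_\star\QQ$ with the normalized symmetric measure over $(T,\TT,\nu)$ checked on the generating $\pi$-system $\{(2^A)_{\mathrm{fin}}:A\in\TT\}$ (the paper invokes Lemma~\ref{lemma:fock}\ref{lemma:fock:ii} and Dynkin for exactly this reduction), the exponential-vector norm formula to match this with $\tilde\mu_{f/\Vert f\Vert}$, Theorem~\ref{proposition:classical-structure}\ref{proposition:classical-structure:v} for the multiplicative-integral claim, and the scalar-product invariance under $\LL$ for the final push-back. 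No gaps.
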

\begin{remark}\label{remark:spectral-set-special}
If the spectral resolution is in standard form and $\mu\vert_T$ (equivalently, $\mu$) is finite, then we may take $\nu=\mu$, in which case  we get that $\Gamma_\star\QQ=\mu/\mu(S)$, i.e. just $\mu$ normalized to a probability.
\end{remark}
\begin{proof}
The equivalence of \ref{corollary:independent-equivalent:A} and \ref{corollary:independent-equivalent:B} is by Radon-Nikodym and the existence of orthonormal frames. Let $g:=\Vert \hat{g}\Vert$.  We compute (first on sets $(2^A)_{\mathrm{fin}}$, $A\in \TT$; it is enough by Lemma~\ref{lemma:fock}\ref{lemma:fock:ii} and Dynkin) 
\begin{align*}
\Gamma_\star \QQ&=(e^{\nu_{\mathrm{d}}(T)}\prod_{t\in T}(1+\nu(\{t\})))^{-1}\sum_{n\in \mathbb{N}_0}(n!)^{-1}(q_n)_\star \nu^n\vert_{(T^n)_{\ne}}\\
&=(e^{\int g(s)^2\mu_{\mathrm{d}}(\dd s)}\prod_{t\in T}(1+g(t)^2\mu(\{t\})))^{-1}\sum_{n\in \mathbb{N}_0}(n!)^{-1}(q_n)_\star (g^2\cdot \mu\vert_T)^n\vert_{(T^n)_{\ne}}\\
&=\left(\left((e^{\int g(s)^2\mu_{\mathrm{d}}(\dd s)}\prod_{t\in T}(1+g(t)^2\mu(\{t\})))\right)^{-1}\Vert e(\hat{g})\Vert^2\right)\cdot \tilde{\mu}=\tilde{\mu}_{\frac{\tilde \Psi^{-1}(e(\hat{g}))}{\Vert\tilde{\Psi}^{-1}(e(\hat{g}))\Vert}}
\end{align*}
and we know that $f$ is a square-integrable multiplicative integral (Theorem~\ref{proposition:classical-structure}\ref{proposition:classical-structure:v}). Finally, using Theorem~\ref{proposition:classical-structure}\ref{remark:mod-0-iso-L}, we see that 
\begin{align*}
(\LL^{-1})_\star (\Gamma_\star \QQ)&=(\LL^{-1})_\star \tilde{\mu}_{f/\Vert f\Vert}=(\LL^{-1})_\star \left((\Vert\tilde \Psi(f)\Vert^2/\Vert f\Vert^2)\cdot \tilde\mu\right)\\
&=\left((\Vert\tilde \Psi(f)\Vert^2/\Vert f\Vert^2)\circ \LL\right)\cdot \mu=(\Vert\Psi(f)\Vert^2/\Vert f\Vert^2)\cdot \mu=\mu_{f/\Vert f\Vert},
\end{align*}
 as asserted.
\end{proof}
The preceding indeed generalizes to the fullest extent Examples~\ref{example:wiener} (when $\mu\vert_T$ is diffuse) and~\ref{example:classical-spectrum} (when $\mu\vert_T$ is purely atomic). Let us give another, mixed, Poissonian, example.

\begin{example}\label{example:Poisson}
Let $(W,\mathcal{W},\mathsf{W})$ be a standard measure space. We denote  $W_{\mathrm{a}}:=\{w\in W:\mathsf{W}(\{w\})>0\}$ (a countable set $\because$ $\mathsf{W}$ is $\sigma$-finite) and  $W_{\mathrm{d}}:=W\backslash W_{\mathrm{a}}$. We also let $\mathsf{W}_{\mathrm{d}}$ be the diffuse part of $\mathsf{W}$. Suppose $\Gamma$ is a Poisson random measure under $\PP$ that generates $1_\PP$ and having the intensity measure $\mathsf{W}$. For $A\in \mathcal{W}$ put $N_{A}:=\sigma(\Gamma(B):B\in \mathcal{W}\vert_{A})$. Then $B:=\{N_{A}:A\in \mathcal{W}\}$ is a noise Boolean algebra.  Here we have viewed $\Gamma$ as a random element valued in the space of $\mathbb{N}_0\cup \{\infty\}$-valued measures on $(W,\mathcal{W})$ endowed with the usual measurable structure.

\noindent We claim that we may take for the spectral space $(S,\Sigma,\mu)$ of this Poisson noise the symmetric measure space over $(W,\mathcal{W},\mathsf{W})$ and that for $s\in S$ we may take $H_s=\L2(\PP\vert_{N_{s\cap W_{\mathrm{a}}}})^\circ$ under a  spectral resolution that we will make explicit shortly. Before that, note that for $s\in  (2^{W\backslash W_{\mathrm{a}}})_{\mathrm{fin}}$, $H_s=\L2(\PP\vert_{0_\PP})^\circ$ is the one-dimensional space of constants, canonically isomorphic to $\mathbb{R}$; for general $s\in S$,  $H_s$ is canonically isomorphic to $\otimes_{a\in s}H_a$, where $H_a=\L2(\PP\vert_{\sigma(\Gamma(\{a\})})_0$ for $a\in W_{\mathrm{a}}$ (use Theorem~\ref{prop:dicrete}\ref{prop:dicrete:iii-}). It follows that there is a unique $\mu$-measurable field structure on $(H_s)_{s\in S}$ that corresponds to the natural $\mathsf{W}$-measurable structure on $(H_w)_{w\in W}$ in the sense of Lemma~\ref{lemma:fock-field}\ref{lemma:fock-field:i} and the set of $\mu$-measurable square integrable vector fields is then naturally denoted $\int^\oplus_S H_s\mu(\dd s)$. Moreover, by Lemma~\ref{lemma:fock-field}\ref{lemma:fock-field:iii}, $\int^\oplus_S H_s\mu(\dd s)$ is canonically unitarily isomorphic to $(\int_{(2^{W_{\mathrm{a}}})_{\mathrm{fin}}}H_s\mu(\dd s))\otimes (\int_{(2^{W_{\mathrm{d}}})_{\mathrm{fin}}}H_s\mu(\dd s))$ and hence (apply Theorem~\ref{propsition:discrete-spectrum}\ref{discrete-spectrum:a} \& Example~\ref{example:classical-spectrum}) to $\L2(\PP\vert_{\sigma(\Gamma\vert_{W_{\mathrm{a}}})})\otimes \L2((\mathsf{W}_{\mathrm{d}})_{\mathrm{sym}})$, where $(\mathsf{W}_{\mathrm{d}})_{\mathrm{sym}}$ is the symmetrization of $\mathsf{W}_{\mathrm{d}}$ in the sense of Lemma~\ref{lemma:fock-field}. In particular an $f\in \int^\oplus_{(2^{W_{\mathrm{a}}})_{\mathrm{fin}}}H_s\mu(\dd s)$ is carried by this unitary isomorphism to $U(f)\otimes \mathbbm{1}_{\{\emptyset\}}$, where $U(f)=\sum_{L\in (2^{W_{\mathrm{a}}})_{\mathrm{fin}}}\sqrt{\prod_{s\in L}\mathsf{W}(s)}f(L)$. Then the spectral resolution $\Psi:\L2(\PP)\to \int^\oplus_S H_s\mu(\dd s)$ is determined uniquely by specifying that for $f\in \L2(\PP\vert_{\sigma(\Gamma\vert_{W_{\mathrm{a}}})})$, for $A\in \mathcal{W}\vert_{W_{\mathrm{d}}}$ of finite $\mathsf{W}_{\mathrm{d}}$-measure and for $g\in  \L2(\mathsf{W}_{\mathrm{d}}\vert_A)$, up to this identification of  $\int^\oplus_S H_s\mu(\dd s)$ with $\L2(\PP\vert_{\sigma(\Gamma\vert_{W_{\mathrm{a}}})})\otimes \L2((\mathsf{W}_{\mathrm{d}})_{\mathrm{sym}})$,  $$\Psi^{-1}(f\otimes e(g\mathbbm{1}_A))=f\left(e^{-\mathsf{W}_{\dd}[g;A]} \prod_{w\in A}(1+\Gamma(\{w\})g(w))\right),$$ 
where $e(g\mathbbm{1}_A):=((2^{W_{\mathrm{d}}})_{\mathrm{fin}}\ni L\mapsto \mathbbm{1}_{2^A}(L)\prod_{s\in L}g(s))$ is the exponential (coherent) vector associated to $g\mathbbm{1}_A$. Indeed this association is scalar product-preserving between total sets by the compensation formula for Poisson random measures, cf. \cite[Example~19.11]{parthasarathy}. Besides, it is easily  seen from the independence of $\Gamma$ over disjoint sets that, for $A\in \mathcal{W}$, $\PP_{N_A}$ corresponds to multiplication with $\mathbbm{1}_{(2^A)_{\mathrm{fin}}}$ under $\Psi$; of course one has to check also that linear combinations of such indicators are dense in $\mathrm{L}^\infty(\mu)$ locally on each measurable set of finite $\mu$-measure, but this follows at once from Lemma~\ref{lemma:fock}\ref{lemma:fock:ii}, which implies that $(2^A)_{\mathrm{fin}}$, $A\in \mathcal{W}$, is a $\mu$-essentially generating $\Pi$-system.

\noindent The counting map $K$ of the spectral resolution just described is nothing but the actual counting map on $S$.

\noindent What is the first chaos of $B$? It is the image under $\Psi^{-1}$ of $\int_{\{K=1\}}H_s\mu(\dd s)$. For $w\in W_{\mathrm{a}}$ and $g_w\in H_w=\L2(\PP\vert_{N_{\{w\}}})^\circ$ we get simply $\Psi^{-1}(g_w)=\sqrt{\mathsf{W}(\{w\})}g_w$. For $A\in \mathcal{W}\vert_{W_{\dd}}$ of finite $\mathsf{W}$-measure and  $g\in  \L2(\mathsf{W}_{\mathrm{d}}\vert_A)$, $\frac{1}{\epsilon}\left(e(\epsilon g\mathbbm{1}_A)-e(0)\right)=\frac{1}{\epsilon}\left(e(\epsilon g\mathbbm{1}_A)-\mathbbm{1}_{\{\emptyset\}}\right)$ converges to $g\mathbbm{1}_A$ in $\L2((\mathsf{W}_\dd)_{\mathrm{sym}})$ as $\epsilon\downarrow 0$, and we get by direct computation (say via dominated convergence) $\Psi^{-1}( g\mathbbm{1}_A)=\Gamma[g;A]-\mathsf{W}_{\dd}[g;A]$ as the limit in $\L2(\PP)$ of $\frac{1}{\epsilon}\left(\Psi^{-1}(e(\epsilon g\mathbbm{1}_A))-\Psi^{-1}(e(0))\right)=\frac{1}{\epsilon}\left(\Psi^{-1}(e(\epsilon g\mathbbm{1}_A))-1\right)$ as $\epsilon\downarrow 0$. The first chaos is then the subspace of $\L2(\PP)$ generated by the class of random variables that we have just enunciated, in particular it contains the limit in $\L2(\PP)$ of the compensated integrals $\Gamma[g;A]-\mathsf{W}_{\dd}[g;A]$ as $A\uparrow W$ over a  sequence of sets from $\mathcal{W}$ of finite $\mathsf{W}_{\dd}$-measure, this for any $g\in \L2(\mathsf{W})$. Of course it generates $1_\PP$: the Poisson noise is classical.

 \noindent Further, by the above, up to the (trivial) canonical identifications listed, we have that the spectrum $((S,\Sigma,\mu);\Psi)$ is in standard form.  Then, in the notation of Theorem~\ref{proposition:classical-structure}, for $A\in \mathcal{W}$ of finite $\mathsf{W}$-measure and for $u\in \int_{A}H_L\mu(\dd L)$, $$\Psi^{-1}(e(u))=\prod_{w\in W_{\mathrm{a}}\cap A}(1+\sqrt{\mathsf{W}(\{w\})}u(w))\left(e^{-\mathsf{W}_{\dd}[u;A]} \prod_{w\in A\cap W_{\dd}}(1+\Gamma(\{w\})u(w))\right).$$
 In particular, for $u:=\Psi((\Gamma-\mathsf{W})[g;A])$, where $g\in \L2(\mathsf{W})$ and $A\in \mathcal{W}$ has finite $\mathsf{W}$-measure,  $$\Psi^{-1}(e(u))=e^{-\mathsf{W}_{\dd}[g;A]}\prod_{w\in A}(1+(\Gamma(\{w\})-\mathsf{W}(\{w\}))g(w)).$$ Also, $\Psi(\lim_{A\uparrow W}(\Gamma-\mathsf{W})[g;A]))=\lim_{A\uparrow W}\left(\mathbbm{1}_ {A\cap W_{\dd}}g+\sum_{a\in W_{\mathrm{a}}\cap A}\mathbbm{1}_{\{a\}}\mathsf{W}(\{a\})^{-1/2}g(a)(\Gamma-\mathsf{W})(\{a\})\right)=g\mathbbm{1}_{W_{\dd}}+\sum_{a\in W_{\mathrm{a}}}\mathbbm{1}_{\{a\}}\mathsf{W}(\{a\})^{-1/2}g(a)(\Gamma-\mathsf{W})(\{a\})$, and $$\Psi^{-1}(e(\Psi(\lim_{A\uparrow W}(\Gamma-\mathsf{W})[g;A])))=\lim_{A\uparrow W}e^{-\mathsf{W}_{\dd}[g;A]}\prod_{w\in A}(1+(\Gamma(\{w\})-\mathsf{W}(\{w\}))g(w)),$$ where the limits are in $\mathrm{L}^2$ along a sequence of $\mathcal{W}$-measurable
sets of finite $\mathsf{W}$-measure nondecreasing to $W$. 

\noindent When $\mathsf{W}$ is a diffuse probability, then taking $g=1$ in the preceding we get $\Vert\Psi(\vert \Gamma(W)\vert-1)(w)\Vert^2=1$ for all $w\in W$, and we may take for the random set associated to $\hat{g}:=\Psi(\vert \Gamma(W)\vert-1)$ in the sense of  Proposition~\ref{corollary:independent-equivalent} the support of $\Gamma$ (Remark~\ref{remark:spectral-set-special}). In other words, the law of the random set specified by $\Gamma$ is also the spectral measure of the random variable $\frac{\Psi^{-1}(e(\hat{g}))}{\Vert\Psi^{-1}(e(\hat{g}))\Vert}=e^{-3/2}2^{\vert\Gamma(W)\vert}$!
\end{example}

\subsection{Concentration on the stable part}
Examples~\ref{example:spectral-space-for-nonclassical}-\ref{example:spectral-space-for-black} and Proposition~\ref{corollary:independent-equivalent} beg the question: if there exists a spectral independence probability $\nu$ that is equivalent to $\mu$, 
does it imply that $B$ is classical? More generally, is every spectral independence probability concentrated on $\{K<\infty\}$? We will shortly see that the answer to this is to the affirmative (Proposition~\ref{theorem:new-conditioon-for-classicality}). We require for this a simple assertion on the first-order stochastic dominance of sizes of certain random sets, which may be of independent interest. Before that, an elementary ancillary lemma (which may well be extremely well-known, but we include the proof for completeness). 

\begin{lemma}\label{lemma:variance-reduce}
Let $n\in \mathbb{N}_{\geq 3}$ and $x\in \mathbb{R}^n$. Let $I\in \mathrm{argmin}_{i\in [n]}\vert x_i-\overline{x}\vert$, where $\overline{x}$ is the arithmetical average of $x$. Let $a_{[n-1]}(x)$ result from $x$ by keeping its $I$-th entry while replacing the others with their average. Then $\var(a_{[n-1]}(x))\leq \frac{1}{n-1}\var(x)$ (the variances are taken w.r.t. the uniform distribution), in particular all the entries of ${a_{[n-1]}}^m(x)$ converge to $\overline{x}$ as $m\in \mathbb{N}$ tends to $\infty$.
\end{lemma}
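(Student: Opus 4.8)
<br>

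The statement to prove is an elementary variance-reduction inequality: given $x\in\mathbb{R}^n$ with $n\geq 3$, if we keep the entry $x_I$ closest to the mean $\overline{x}$ and replace all others by their common average, then the variance of the resulting vector $a_{[n-1]}(x)$ is at most $\frac{1}{2}\var(x)$.

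\medskip

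Here is the plan. First I would normalize: since variance is translation-invariant, assume without loss of generality that $\overline{x}=0$, so $\var(x)=\frac{1}{n}\sum_{i=1}^n x_i^2$ and $I$ is an index minimizing $|x_i|$. Write $x_I=:c$ and let $\beta:=\frac{1}{n-1}\sum_{j\neq I} x_j = \frac{-c}{n-1}$ (the average of the remaining entries, using $\sum_i x_i=0$). The new vector $a_{[n-1]}(x)$ has one entry equal to $c$ and $n-1$ entries equal to $\beta=-c/(n-1)$; its mean is still $0$. Then I would compute its variance directly:
\begin{equation*}
\var(a_{[n-1]}(x))=\frac{1}{n}\left(c^2+(n-1)\beta^2\right)=\frac{1}{n}\left(c^2+\frac{c^2}{n-1}\right)=\frac{c^2}{n-1}.
\end{equation*}

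\medskip

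The crux is then to bound $\frac{c^2}{n-1}$ by $\frac{1}{2}\var(x)=\frac{1}{2n}\sum_i x_i^2$. Because $I$ minimizes $|x_i|$, we have $c^2=x_I^2\leq x_j^2$ for every $j$, hence $c^2\leq\frac{1}{n}\sum_i x_i^2=\var(x)$, i.e. $n c^2\leq\sum_i x_i^2$. The inequality to establish, $\frac{c^2}{n-1}\leq\frac{1}{2n}\sum_i x_i^2$, rearranges to $2n\,c^2\leq(n-1)\sum_i x_i^2$; since $\sum_i x_i^2\geq n c^2$, it suffices that $2n\,c^2\leq(n-1)\cdot n c^2$, i.e. $2\leq n-1$, which is exactly $n\geq 3$. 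This is where the hypothesis $n\geq 3$ is consumed, and it is the only genuinely quantitative point—though I should double-check whether using merely $\sum_i x_i^2\geq n c^2$ is too lossy and whether one instead wants the sharper bound $\sum_{i}x_i^2\geq c^2+(n-1)\cdot(\text{something})$; I expect the minimality of $|x_I|$ to give exactly enough slack, so the main obstacle is just verifying the constant $\frac12$ is attained with this crude estimate and not requiring a finer analysis.

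\medskip

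Finally, the contraction conclusion follows immediately by iteration: since $\var(a_{[n-1]}^{m}(x))\leq 2^{-m}\var(x)\to 0$ and each iterate has mean $\overline{x}$ (the operation preserves the mean, as noted above), the vanishing of the variance forces every entry of $a_{[n-1]}^{m}(x)$ to converge to $\overline{x}$. I would state this last step in one line, remarking that for a finite-dimensional vector, convergence of the variance to zero together with constancy of the mean is equivalent to entrywise convergence to that mean.
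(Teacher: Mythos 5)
Your proof is correct and follows essentially the same route as the paper's: both compute $\var(a_{[n-1]}(x))$ exactly as $\frac{(x_I-\overline{x})^2}{n-1}$ (using that the sum of deviations vanishes) and then combine the minimality bound $(x_I-\overline{x})^2\leq\var(x)$ with $n-1\geq 2$. Your mid-proof worry about the crude estimate being too lossy is unfounded — it is precisely the estimate the paper uses, and the constant $\tfrac12$ comes out exactly as you computed.
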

\begin{proof}
Without loss of generality $I=n$. Note $\overline{a_{[n-1]}(x)}=\overline{x}$. We compute and estimate 
\begin{align*}
&n\cdot \var(a_{[n-1]}(x))=(n-1)\left(\frac{x_1+\cdots+x_{n-1}}{n-1}-\overline{x}\right)^2+(x_n-\overline{x})^2\\
&=(n-1)\left(\frac{x_1-\overline{x}+\cdots+x_{n-1}-\overline{x}}{n-1}\right)^2+(x_n-\overline{x})^2\\
&=(n-1)\left(\frac{x_n-\overline{x}}{n-1}\right)^2+(x_n-\overline{x})^2=\frac{n}{n-1}(x_n-\overline{x})^2\leq \frac{1}{n-1}\sum_{k=1}^n(x_k-\overline{x})^2=\frac{n}{n-1}\cdot \var(x)
\end{align*}
(which is true [trivially] also for $n=2$). The final claim follows at once: $\var({a_{[n-1]}}^m(x))\leq \frac{1}{(n-1)^m}\leq \frac{1}{2^m}\to 0$ as $m\to\infty$, using $n\geq 3$.
\end{proof}

\begin{proposition}\label{proposition:random-set-domincance}
Let $n\in \mathbb{N}$. 
\begin{enumerate}[(i)]
\item\label{fost:i} Under a probability $\mathbb{Q}$ let $A_1,\ldots,A_n$ be independent events of probabilities $p_1,\ldots,p_n$, respectively. Set  $p:=(p_1,\ldots,p_n)$ and call $\Bin(p)$ the law of $\sum_{i=1}^n\mathbbm{1}_{A_i}$. Then $\Bin(p)\geq \Bin(n,\sqrt[n]{p_1\cdots p_n})$ in first-order stochastic dominance. Here, as usual, for $q\in [0,1]$, $\Bin(n,q):=\Bin((\underbrace{q,\ldots,q}_{n\text{-times}}))$.
\item\label{fost:ii} Under a probability $\mathbb{Q}$ let $\Gamma$ be a random subset of $[n]$ which excludes each $i\in [n]$ with probability $q_i$ independently of the others; similarly let $\tilde{\Gamma}$ be a random subset of $[n]$ which excludes each $i\in [n]$ with probability $\sqrt[n]{q_1\cdots q_n}$ independently of the others. Then $\vert \Gamma\vert\leq \vert\tilde{\Gamma}\vert$ in first-order stochastic dominance.
\end{enumerate}
\end{proposition}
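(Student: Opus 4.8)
The plan is to prove Item~\ref{fost:i} first, since Item~\ref{fost:ii} is just its complement (if $\Gamma$ excludes $i$ with probability $q_i$, then $i\in \Gamma$ with probability $p_i:=1-q_i$, so $\vert\Gamma\vert=\sum_i\mathbbm{1}_{\{i\in\Gamma\}}$ has law $\Bin(p)$ and $\vert\tilde\Gamma\vert$ has law $\Bin(n,\sqrt[n]{p_1\cdots p_n})$ with the obvious substitution; applying \ref{fost:i} to the inclusion-probabilities $1-q_i$ delivers \ref{fost:ii} at once). The key observation for \ref{fost:i} is that first-order stochastic dominance $\Bin(p)\geq \Bin(p')$, i.e. $\mathbb{Q}(\sum\mathbbm{1}_{A_i}\geq k)\geq \mathbb{Q}'(\sum\mathbbm{1}_{A_i'}\geq k)$ for all $k$, is preserved under a suitable ``averaging'' move on the probability vector $p$, and that the geometric mean is the fixed point one converges to by iterating such a move. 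This is precisely where Lemma~\ref{lemma:variance-reduce} enters, but applied to $x_i:=\log p_i$ rather than to $p_i$ directly, so that the arithmetic averaging of the lemma becomes geometric averaging of the $p_i$ and leaves the product $p_1\cdots p_n$ (hence the target geometric mean) invariant.

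First I would establish the single-step monotonicity claim: if $p'$ is obtained from $p$ by replacing two coordinates $p_i,p_j$ by their geometric mean $\sqrt{p_ip_j}$ (keeping the rest), then $\Bin(p)\geq \Bin(p')$ in stochastic dominance. By conditioning on the other $n-2$ coordinates and using independence, this reduces to the case $n=2$: one must show that, among all pairs $(a,b)$ with fixed product $ab=c$, the distribution of $\mathbbm{1}_{A}+\mathbbm{1}_{B}$ (independent, $\mathbb{Q}(A)=a$, $\mathbb{Q}(B)=b$) is stochastically smallest when $a=b=\sqrt{c}$. Concretely, $\mathbb{Q}(\text{sum}\geq 1)=a+b-ab=a+b-c$ is minimized at $a=b$ by AM--GM, while $\mathbb{Q}(\text{sum}\geq 2)=ab=c$ is constant; since $\mathbb{Q}(\text{sum}\geq 0)=1$ always, this gives dominance of $\Bin((a,b))$ over $\Bin((\sqrt c,\sqrt c))$ coordinatewise in the tail function, which is exactly the two-coordinate statement.

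Then I would iterate. Set $x_i:=\log p_i$ (handling $p_i=0$ as a degenerate boundary case separately, where the relevant coordinate is deterministically absent and the claim is immediate). Each application of the map $a_{[n-1]}$ of Lemma~\ref{lemma:variance-reduce} to $x$ corresponds, on the $p$-side, to replacing $n-1$ of the coordinates by their geometric mean while keeping one; by decomposing this into a finite sequence of pairwise geometric-averaging steps and applying the single-step claim transitively, each such map can only decrease $\Bin$ in stochastic dominance. Lemma~\ref{lemma:variance-reduce} guarantees $\var({a_{[n-1]}}^m(x))\to 0$, i.e. all coordinates of ${a_{[n-1]}}^m(x)$ converge to $\overline{x}=\frac1n\sum\log p_i=\log\sqrt[n]{p_1\cdots p_n}$, so the $p$-vectors converge to the constant vector $(\sqrt[n]{p_1\cdots p_n},\ldots)$. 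Finally, first-order stochastic dominance is closed under pointwise limits of the parameter (the binomial tail probabilities $\mathbb{Q}(\sum\mathbbm{1}_{A_i}\geq k)$ are continuous in $p$), so passing to the limit yields $\Bin(p)\geq \Bin(n,\sqrt[n]{p_1\cdots p_n})$.

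The main obstacle is purely bookkeeping rather than conceptual: one must verify that the map $a_{[n-1]}$ really does decompose into pairwise geometric averagings in a way compatible with the single-step lemma, and that the convergence of parameters transfers to the tail functions uniformly enough to take limits. These are routine, and I expect no genuine difficulty; the one place warranting care is the $n\leq 2$ and the $p_i=0$ edge cases, where Lemma~\ref{lemma:variance-reduce} (which requires $n\geq 3$) does not apply and the dominance must be checked by hand. For $n=1$ the statement is trivial, and for $n=2$ it is exactly the single-step base case already proved.
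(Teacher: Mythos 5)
Your reduction of Item (ii) to Item (i) is incorrect. You apply (i) to the inclusion probabilities $p_i:=1-q_i$ and assert that $\vert\tilde\Gamma\vert$ has law $\Bin(n,\sqrt[n]{p_1\cdots p_n})$; but $\vert\tilde\Gamma\vert\sim\Bin(n,1-\sqrt[n]{q_1\cdots q_n})$, and $1-\sqrt[n]{q_1\cdots q_n}\ne\sqrt[n]{(1-q_1)\cdots(1-q_n)}$ in general. Worse, (i) applied to $p$ yields $\Bin(p)\geq\Bin(n,\sqrt[n]{p_1\cdots p_n})$, i.e.\ a stochastic \emph{lower} bound on $\vert\Gamma\vert$, which points in the wrong direction for (ii). The correct reduction (the one the paper uses) goes through the exclusion indicators: $n-\vert\Gamma\vert=\sum_{i\in[n]}\mathbbm{1}_{\{i\notin\Gamma\}}\sim\Bin(q)\geq\Bin(n,\sqrt[n]{q_1\cdots q_n})\sim n-\vert\tilde\Gamma\vert$ by (i), and subtracting from $n$ reverses the dominance, giving $\vert\Gamma\vert\leq\vert\tilde\Gamma\vert$.

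In Item (i) your iteration step also has a gap. The map $a_{[n-1]}$ of Lemma~\ref{lemma:variance-reduce} replaces $n-1$ of the log-coordinates by their common arithmetic mean; for $n-1\geq 3$ this cannot in general be written as a finite composition of pairwise averagings, since any such composition is a linear map with entries in $\mathbb{Z}[1/2]$ while the block-averaging map has entries $1/(n-1)$. So the ``decompose into pairwise steps and use transitivity'' argument for $\Bin(p)\geq\Bin(p^A)$ is unavailable, and this is not the routine bookkeeping you anticipate. The paper handles exactly this step by induction on $n$: the inductive hypothesis at level $n-1$ delivers the dominance for the $(n-1)$-block in one stroke, and convolving with the independent Bernoulli of the remaining coordinate preserves first-order stochastic dominance. (One could instead realize the block averaging as a limit of pairwise averagings and pass to the limit, but that requires an extra argument.) Your $n=2$ base case, the use of Lemma~\ref{lemma:variance-reduce} on $\log p$, the treatment of $p_i=0$, and the final passage to the limit by continuity of the tail probabilities in the parameter all coincide with the paper and are fine.
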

\begin{proof}
\ref{fost:i}. Assume $n=2$ in the first instance (for $n=1$ there is nothing to prove). We see that  $$\Bin(p)(\{0\})=(1-p_1)(1-p_2)\leq (1-\sqrt{p_1p_2})^2=\Bin(2,\sqrt{p_1p_2})(\{0\}),$$ and  $$\Bin(p)(\{0,1\})=1-\Bin(p)(\{2\})=1-p_1p_2=1-\sqrt{p_1p_2}^2=1-\Bin(2,\sqrt{p_1p_2})(\{2\})=\Bin(2,\sqrt{p_1p_2})(\{0,1\})$$ even with equality, so the stochastic dominance is established. For an $n\in \mathbb{N}_{\geq 3}$, let us assume that the claim holds true for $n-1$, and we prove that it holds for $n$. To this end note that choosing any $(n-1)$-element subset $A$ of $[n]$, by the inductive hypothesis, $\Bin(p)$ dominates $\Bin(p^A)$, where $p^A$ is got from $p$ by retaining the (single) $p_i$, $i\in [n]\backslash A$, and replacing the others by their geometric mean $\sqrt[n-1]{\prod_Ap}$. It is intutively clear that iterating this process ad nauseam yields, by a judicious choice of  $A$ on each step,  that $\Bin(p)$ dominates $\Bin(n,\sqrt[n]{p_1\cdots p_n})$, however let us give a formal argument. We may assume $p_j>0$ for all $j\in [n]$ for the only non-trivial case. Then we let $I$ be an index from $[n]$ the corresponding entry of $\log(p)$ of which is closest to the average of $\log(p)$ and take $A=[n]\backslash \{I\}$. According to Lemma~\ref{lemma:variance-reduce} $p^A$ will converge to the constant vector with entries $\sqrt[n]{p_1\cdots p_n}$ as we iterate this.  The claim now follows by continuity of the law of $\Bin(p)$ in its parameters and the transitivity of first-order stochastic dominance.

\ref{fost:ii}. We may write $\vert \Gamma\vert=\sum_{i\in [n]}\mathbbm{1}_{\{i\in \Gamma\}}=n-\sum_{i\in [n]}\mathbbm{1}_{\{i\notin \Gamma\}}$, likewise for $\tilde{\Gamma}$. Apply \ref{fost:i}.
\end{proof}

\begin{proposition}\label{theorem:new-conditioon-for-classicality}
Let $\nu$ be a spectral independence probability. Then $\nu(K<\infty)=1$. Also, $\nu\vert_{\{K<\infty\}}$ is a spectral independence probability for $B_{\mathrm{stb}}$ and its spectral resolution of Proposition~\ref{proposition:classical-part-of-noise}\ref{proposition:classical-part-of-noise:iv}.
\end{proposition}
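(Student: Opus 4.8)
The plan is to prove the two assertions in turn, the second being a short consequence of the first together with the explicit description of the spectral resolution of $B_{\mathrm{stb}}$ in Proposition~\ref{proposition:classical-part-of-noise}\ref{proposition:classical-part-of-noise:iv}. The crux is the claim $\nu(K=\infty)=0$, which I would obtain by approximating $K$ from below by the maps $K_{b_n}$ along a suitable $\uparrow$ sequence $(b_n)_{n\in\mathbb{N}}$ in $\mathfrak{F}_B$ and controlling the law of $K_{b_n}$ under $\nu$. If $B$ is finite then $K\leq\vert\at(B)\vert<\infty$ and there is nothing to prove, so I assume $B$ infinite and fix a $\uparrow$ sequence $(b_n)_{n\in\mathbb{N}}$ in $\mathfrak{F}_B$ with $\cup_{n}b_n$ dense in $B$; since this dense union is then infinite, the nondecreasing counts $m_n:=\vert\at(b_n)\vert$ tend to $\infty$. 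By Proposition~\ref{prop;K-finite}, $K=\uparrow\text{-}\lim_nK_{b_n}$ a.e.-$\mu$, hence a.e.-$\nu$ (as $\nu\ll\mu$), so $\{K_{b_n}\geq k\}\uparrow\{K\geq k\}$ and $\nu(K\geq k)=\lim_n\nu(K_{b_n}\geq k)$ for each $k\in\mathbb{N}$.

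The key structural input is the distribution of $K_{b_n}$ under $\nu$. Recall $K_{b_n}=\sum_{a\in\at(b_n)}\mathbbm{1}_{S\backslash S_{a'}}$ a.e.-$\mu$, where each $S_{a'}$ is an atom of $\Sigma_a$. As $\at(b_n)$ is a partition of unity of $B$, the $\sigma$-fields $\Sigma_a$, $a\in\at(b_n)$, are $\nu$-independent (the noted consequence of spectral independence), so under $\nu$ the indicators $\mathbbm{1}_{S\backslash S_{a'}}$ are independent Bernoulli variables and $K_{b_n}$ is a Poisson-binomial variable. The decisive observation is that the product of the "exclusion" probabilities is the \emph{same} for every $n$: since $\land_{a\in\at(b_n)}a'=(\lor_{a}a)'=0_\PP$, de Morgan together with $\nu$-independence gives $\prod_{a\in\at(b_n)}\nu(S_{a'})=\nu(\cap_aS_{a'})=\nu(S_{0_\PP})=:p_0>0$.

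With this in hand I would invoke the first-order stochastic dominance of Proposition~\ref{proposition:random-set-domincance}\ref{fost:ii}: viewing $\{a\in\at(b_n):\mathbbm{1}_{S\backslash S_{a'}}=1\}$ as a random subset of $\at(b_n)$ excluding $a$ with probability $\nu(S_{a'})$, whose exclusion probabilities have geometric mean $p_0^{1/m_n}$, one gets that $K_{b_n}$ is stochastically dominated by $\Bin(m_n,1-p_0^{1/m_n})$. Since $m_n(1-p_0^{1/m_n})\to-\log p_0=:\lambda\in[0,\infty)$, these binomials converge in distribution to $\mathrm{Poisson}(\lambda)$, so for each fixed $k$, $\nu(K_{b_n}\geq k)\leq\QQ(\Bin(m_n,1-p_0^{1/m_n})\geq k)\to\QQ(\mathrm{Poisson}(\lambda)\geq k)$; passing to the limit in $n$ yields $\nu(K\geq k)\leq\QQ(\mathrm{Poisson}(\lambda)\geq k)$, and letting $k\to\infty$ (the Poisson law being a.s.\ finite) gives $\nu(K=\infty)=0$. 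For the second assertion, since $\nu$ is now carried by $\{K<\infty\}$ we have $\nu\vert_{\{K<\infty\}}=\nu$, a probability absolutely continuous w.r.t.\ $\mu\vert_{\{K<\infty\}}$ and charging the bottom spectral set $S_{0_\PP}\cap\{K<\infty\}=S_{0_\PP}$ of $B_{\mathrm{stb}}$ with mass $\nu(S_{0_\PP})>0$. By Proposition~\ref{proposition:classical-part-of-noise}\ref{proposition:classical-part-of-noise:iv} the noise projection of $B_{\mathrm{stb}}$ attached to $x\land\stable$ is $\pr_x\vert_{\{K<\infty\}}$, so its associated $\sigma$-field is $\sigma(\pr_x\vert_{\{K<\infty\}})=\Sigma_x\vert_{\{K<\infty\}}$, and likewise the complement $x'\land\stable$ gives $\Sigma_{x'}\vert_{\{K<\infty\}}$; as every element of $B_{\mathrm{stb}}$ is of the form $x\land\stable$, it suffices to see these two traces are $\nu$-independent. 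But $\{K<\infty\}$ is $\nu$-conegligible, so for $A\in\Sigma_x$, $C\in\Sigma_{x'}$ one has $\nu((A\cap\{K<\infty\})\cap(C\cap\{K<\infty\}))=\nu(A\cap C)=\nu(A)\nu(C)$ by the $\nu$-independence of $\Sigma_x$ and $\Sigma_{x'}$, which is the required factorization.

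The step I expect to be the main obstacle is the passage in the second paragraph from spectral independence to the partition-invariant identity $\prod_{a\in\at(b_n)}\nu(S_{a'})=\nu(S_{0_\PP})$: it is precisely this constancy that pushes the geometric mean of the exclusion probabilities up to $1$ at the rate making Proposition~\ref{proposition:random-set-domincance} yield a Poisson (hence a.s.\ finite) stochastic cap on $K$. Getting the dominance to point in the useful direction and confirming the Poisson limit for fixed tails are the remaining technical points, but they are routine once the invariant product is isolated.
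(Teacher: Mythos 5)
Your proof is correct and takes essentially the same route as the paper's: the invariant product $\prod_{a\in\at(b_n)}\nu(S_{a'})=\nu(\cap_a S_{a'})=\nu(S_{0_\PP})=p_0$, the stochastic dominance of Proposition~\ref{proposition:random-set-domincance}\ref{fost:ii} applied to the independent Bernoulli indicators making up $K_{b_n}$, and the resulting Poisson$(-\log p_0)$ cap on $K$ are precisely the paper's argument, merely stripped of its tree/exploration-process packaging (and with the explicit binomial-tail computation replaced by the Poisson limit theorem). The second assertion is likewise handled by the same trace computation as in the paper.
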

\begin{proof}
The case $\nu(\{\emptyset_S\})=1$ (in particular, the case $0_\PP=1_\PP$) is trivial and excluded. 


Let $(b_n)_{n\in \mathbb{N}}$ be a $\uparrow$ sequence in $\mathfrak{F}_B$ such that $\cup_{n\in \mathbb{N}}b_n$ is dense in $B$; in addition we put $b_0:=\{0_\PP,1_\PP\}$. We prepare the groundwork as at the beginning of the proof of Theorem~\ref{proposition:classical-structure} except of course that we do not insist that $K<\infty$. 

Let $\mathsf{T}:=\cup_{n\in \mathbb{N}_0}\{n\}\times \at(b_n)$. The set $\mathsf{T}$ is endowed naturally with the obvious tree structure ($(n+1,b)$ is connected to $(n,a)$ iff $b\subset a$; no other connections), root $\{0\}\times \{1_\PP\}$. We define a $\mathsf{T}$-indexed Bernoulli process $\LL=(\LL_t)_{t\in \mathsf{T}}$ under the probability $\nu$ (in other words, a $\nu$-random subset of $\mathsf{T}$) as follows: $$\LL_{(n,a)}:=\mathbbm{1}_{\{\pr_a\ne \emptyset_S\}}\text{ for }(n,a)\in \mathsf{T}.$$ Then $\LL_{(0,1_\PP)}=\mathbbm{1}_{S\backslash S_{0_\PP}}$; also, for each $(n,a)\in \mathsf{T}$, $$\{\LL_{(n,a)}=0\}=\{\pr_a=\emptyset_S\}=S_{a'}=\cap_{b\in \at(b_{n+1})_a}S_{b'}=\cap_{b\in \at(b_{n+1})_a}\{\pr_b=\emptyset_S\}=\cap_{b\in \at(b_{n+1})_a}\{\LL_{(n+1,b)}=0\},$$ i.e. a point in the tree is labeled zero by $\LL$ (is ``excluded'') iff all its descendants are. Besides, at each level $n\in \mathbb{N}_0$, the Bernoulli random variables $\mathbbm{1}_{(n,a)}$, $a\in\at(b_n)$, are independent under $\nu$ and the $\nu$-probability that all of them are zero at once is the constant $p_0:=\nu(S_{0_\PP})\in(0,1)$. It is helpful to think of $\LL$ as an exploration process: as $n$ increases we pierce with more and more precision into the structure of $B$.

Now, for $\{n,m\}\subset \mathbb{N}_0$ we may identify $\{K_{b_n}\leq m\}=\{\vert\Gamma_n\vert\leq m\}$, where $\Gamma_n$ is the random set of points of $\mathsf{T}$ at level $n$ that are included by the exploration process $\LL$ (indeed, $\underline{b_n}=\lor\{a\in \at(b_n):\pr_a\ne\emptyset_S\}$, hence $K_{b_n}=\vert\Gamma_n\vert$). Therefore, to show that $\nu(K<\infty)=1$ it suffices (and is indeed equivalent) to prove that $$\lim_{m\to\infty}\lim_{n\to\infty}\nu(\vert\Gamma_n\vert\leq m)=1.$$ 

By Proposition~\ref{proposition:random-set-domincance}\ref{fost:ii} it reduces to establishing that 
$\lim_{m\to\infty}\lim_{k\to\infty}\QQ_k(\vert\Gamma\vert\leq m)=1$, where, for $k\in \mathbb{N}$, under $\QQ_k$, $\Gamma$ is a random subset of $[k]$ that excludes each point with probability $\sqrt[k]{p_0}$ independently of the others. But for $\{k,m\}\subset \mathbb{N}$, 
\begin{align*}
\QQ_k(\vert\Gamma\vert\leq m)&=\sum_{l=0}^m{k\choose l}(1-\sqrt[k]{p_0})^l\sqrt[k]{p_0}^{k-l}=p_0\sum_{l=0}^m{k\choose l}(\sqrt[k]{p_0}^{-1}-1)^l\\
&=p_0\sum_{l=0}^m\frac{1}{l!}k(p_0^{-1/k}-1)\cdots (k-l+1)(p_0^{-1/k}-1)\xrightarrow[]{k\to\infty} p_0\sum_{l=0}^m\frac {(-\log p_0)^l}{l!}\xrightarrow[]{m\to\infty} 1,
\end{align*}
which establishes that $\nu(K<\infty)=1$ (the limit ``$k\to\infty$''' uses l'H\^ospital's rule).

To show the second claim, let $x_s\in B_{\mathrm{stb}}$. Then $x_s=x\land \stable$ for some $x\in B$ and $(x_s)'=x'\land\stable$. Further, let $u_s\in (B_{\mathrm{stb}})_{x_s}$. Then there is $u\in B$ such that $u_s=u\land \stable$. Since $u_s\subset x_s$, $u_s=u_s\land x_s=(u\land x)\land \stable$; therefore we may and do assume $u\in B_x$. Likewise,  let $v_s\in (B_{\mathrm{stb}})_{(x_s)'}$ and find $v\in B_{x'}$ such that $v_s=v\land\stable$. Putting it all together we find that (letting the superscript $\mathrm{stb}$ indicate that the spectral sets are those of the spectral resolution for $B_{\mathrm{stb}}$)
\begin{align*}
\nu\vert_{\{K<\infty\}}(S_{x_s\lor v_s}^\mathrm{stb}\land S_{u_s\lor (x_s)'}^\mathrm{stb})&=\nu(S_{x\lor v}\cap S_{u\lor x'}\cap \{K<\infty\})=\nu(S_{x\lor v}\cap S_{u\lor x'})\\
&=\nu(S_{x\lor v})\nu(S_{u\lor x'})=\nu\vert_{\{K<\infty\}}(S_{x_s\lor v_s}^\mathrm{stb})\nu\vert_{\{K<\infty\}}(S_{u_s\lor (x_s)'}^\mathrm{stb})
\end{align*}
 and the proof is complete.
\end{proof}

\begin{proof}
The proof shows that under the hypotheses of Proposition~\ref{theorem:new-conditioon-for-classicality}
\begin{equation}
\nu(K\leq m)\geq p_0\sum_{l=0}^m\frac {(-\log p_0)^l}{l!},\quad m\in \mathbb{N}_0,
\end{equation}
where $p_0:=\nu(S_{0_\PP})$.
\end{proof}

\begin{remark}
The exploration process $\LL$ of the proof of Proposition~\ref{theorem:new-conditioon-for-classicality} is well-defined always provided $0_\PP\ne 1_\PP$ (when $0_\PP=1_\PP$ it is natural to have $\mathsf{T}=\emptyset$, but this is not very interesting). Of course it depends on the approximating sequence $(b_n)_{n\in \mathbb{N}}$ and the given spectral resolution. It is an injective ($\because$ the $S_x$, $x\in b_n$, $n\in \mathbb{N}$, separate $S$) measurable map from $S$ into the standard measurable space $\Theta:=\{l\in \{0,1\}^\mathsf{T}:\text{$l$ at some vertex is $0$ iff it is zero on all the descendants of this vertex}\}$  (endowed with the trace of the product measurable structure). In particular, taking the push-forward $\LL_\star\mu$ (and completing) we get a spectral resolution with $\Theta$ as spectral space and $\LL_\star\mu$ as spectral measure (we might call it an exploration measure). 
One way to interpret $\Theta$ is as a subset of the set of all ``single-descendant'' paths from the root, which let us call lineages. Elements of $\Theta$ consisting of $k$  ``lineages'' belong to $\LL(\{K=k\})$, i.e. they correspond to the $k$-th chaos, $k\in \mathbb{N}_0$ (hence elements of $\Theta$, which have infinitely many ``lineages'' correspond to the sensitive part, $\LL(\{K=\infty\})$). 
%
%
\end{remark}
\begin{question} 
Let $\mathsf{T}$ be any tree with root $r$ such that the set of vertices at level $n\in \mathbb{N}_0$ (graph distance $n$ from the root) is finite, such that the graph distance of any point from $\mathsf{T}$ to $r$ is finite, and such that every vertex has a descendant. Let $\nu$ be a $\sigma$-finite measure on the standard measurable space $\Theta:=\{l\in \{0,1\}^\mathsf{T}:\text{$l$ at some vertex is $0$ iff it is zero on all the descendants of this vertex}\}$ (endowed with the trace of the product measurable structure, completed). The measure $\nu$ is the obvious abstraction of  $\LL_\star\mu$ of the preceding remark. What properties must it satisfy in order for it to be an exploration measure of some noise Boolean algebra (with $0_\PP\ne 1_\PP$) up to tree isomorphism? An obvious requirement is that  the $\nu$-law of the canonical projections $(X_t)_{t\in \mathsf{T}}$ on $\Theta$ at every level of the tree should be equivalent to a product of non-degenerate Bernoulli  measures. Is this all that it takes?
\end{question}

%
\subsection{Implications}
We reap a rich harvest from Proposition~\ref{theorem:new-conditioon-for-classicality}.
\begin{proposition}\label{proposition:multiplicative-integrals-stable}
The following are equivalent.
\begin{enumerate}[(A)]
\item\label{multiplicative-integrals-stable:i} $f$ is a square-integrable multiplicative integral of  $B$. 
\item\label{multiplicative-integrals-stable:ii} $f$ is a square-integrable multiplicative integral of  $B_{\mathrm{stb}}$. 
\item\label{multiplicative-integrals-stable:iii} $f$ is a square-integrable multiplicative integral of  $\overline{B}$. 
\end{enumerate}
\end{proposition}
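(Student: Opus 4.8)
The plan is to exploit two facts already at our disposal: that a square-integrable multiplicative integral gives rise to a spectral independence probability (Proposition~\ref{proposition:multiplicative-and-independence}), and that any spectral independence probability is concentrated on $\{K<\infty\}$ (Proposition~\ref{theorem:new-conditioon-for-classicality}). Combining them is the crux of the matter. If $f$ is a square-integrable multiplicative integral of $B$, then $\PP[f]=1\ne 0$ forces $\mu_{f/\Vert f\Vert}(S_{0_\PP})=\PP[f/\Vert f\Vert]^2>0$, so $\mu_{f/\Vert f\Vert}$ is a genuine spectral independence probability and hence $\mu_{f/\Vert f\Vert}(\{K=\infty\})=0$. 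Since $\mu_{f/\Vert f\Vert}=\Vert\Psi(f/\Vert f\Vert)\Vert^2\cdot\mu$, this says that $\Psi(f)$ vanishes a.e.-$\mu$ off $\{K<\infty\}$, i.e. $f\in H(\{K<\infty\})=H_{\mathrm{stb}}=\L2(\stable)$ (Proposition~\ref{proposition:stability-sensitivity}\ref{proposition:stability-sensitivity:i}). Thus every square-integrable multiplicative integral of $B$ is automatically $\stable$-measurable; the same holds for $\overline{B}$ (which shares the spectral resolution), and trivially for the classical $B_{\mathrm{stb}}$, whose square-integrable multiplicative integrals lie in $\L2(\PP\vert_\stable)=\L2(\stable)$ by fiat.

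Granting $\stable$-measurability, the next step is a bookkeeping identity for conditional expectations. For $f\in\L2(\stable)$ and any $x$ in $B$ (or in $\overline{B}$) one has $\PP[f\vert x]=\PP_x f=\PP_x\PP_\stable f=\PP_{x\land\stable}f$, because $\PP_\stable f=f$, because $\PP_\stable\in\AA$ (Proposition~\ref{proposition:stability-sensitivity}\ref{proposition:stability-sensitivity:v}) commutes with $\PP_x$, and because the spectral set of $x\land\stable$ is $S_x\cap\{K<\infty\}$ (Corollary~\ref{corollary:sigma*-subnoises}, Example~\ref{example:K<infty}). Since $\stable$ distributes over $B$ (Proposition~\ref{proposition:classical-part-of-noise}\ref{proposition:classical-part-of-noise:i}), the map $x\mapsto x\land\stable$ sends $B$ onto $B_{\mathrm{stb}}$ and carries complements to complements: $(x\land\stable)\lor(x'\land\stable)=(x\lor x')\land\stable=\stable$, the meet is $0_\PP$, and independence is inherited. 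Hence, for $\stable$-measurable $f$, the defining system $f=\PP[f\vert x]\PP[f\vert x']$ over all $x\in B$ is literally the same as $f=\PP_u f\,\PP_{u'}f$ over all $u\in B_{\mathrm{stb}}$. This delivers \ref{multiplicative-integrals-stable:i}$\Leftrightarrow$\ref{multiplicative-integrals-stable:ii} (the forward direction invoking the key step for $\stable$-measurability, the reverse being immediate).

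For the passage to the closure I would isolate the following lemma: if $C$ is a classical noise Boolean algebra, then every square-integrable multiplicative integral $f$ of $C$ is a multiplicative integral of $\overline{C}=\Cl(C)$. To prove it, pick $y\in\overline{C}$ and $C\ni x_n\to y$; since $\overline{C}$ is classical and complete, complementation is continuous there (Proposition~\ref{proposition:ct-classical}), so $x_n'\to y'$, whence $\PP[f\vert x_n]\to\PP[f\vert y]$ and $\PP[f\vert x_n']\to\PP[f\vert y']$ in $\L2(\PP)$, a fortiori in probability. The product of two sequences convergent in probability converges in probability to the product of the limits, so $\PP[f\vert x_n]\PP[f\vert x_n']\to\PP[f\vert y]\PP[f\vert y']$ in probability; but the left-hand side equals $f$ for every $n$, so $f=\PP[f\vert y]\PP[f\vert y']$ a.s. Applying the lemma with $C=B_{\mathrm{stb}}$ shows a multiplicative integral of $B_{\mathrm{stb}}$ is one of $\overline{B_{\mathrm{stb}}}=\{z\land\stable:z\in\Cl(B)\}$ (Proposition~\ref{proposition:classical-part-of-noise-closure}); feeding this, together with the identity $\PP[f\vert y]=\PP_{y\land\stable}f$ for $y\in\overline{B}$ (valid since $\stable(\overline{B})=\stable$ distributes over $\overline{B}$ and $K_{\overline{B}}=K_B$ by Corollary~\ref{corollary:B-overline-B-same-K}), into the relation for the complementary pair $y\land\stable$, $y'\land\stable$ of $(\overline{B})_{\mathrm{stb}}\subset\overline{B_{\mathrm{stb}}}$, yields \ref{multiplicative-integrals-stable:ii}$\Rightarrow$\ref{multiplicative-integrals-stable:iii}. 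With \ref{multiplicative-integrals-stable:iii}$\Rightarrow$\ref{multiplicative-integrals-stable:i} being the trivial restriction along $B\subset\overline{B}$ (independent complements agreeing), the cycle closes.

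I expect the main obstacle to be care rather than depth: checking that the commutation identity $\PP[f\vert x]=\PP_{x\land\stable}f$ and the complement-preserving bijection $x\mapsto x\land\stable$ transfer uniformly to $B$, $B_{\mathrm{stb}}$ and $\overline{B}$ — in particular pinning down that the spectral set of $x\land\stable$ is $S_x\cap\{K<\infty\}$ also for $x\in\overline{B}$, and that $\overline{B_{\mathrm{stb}}}$ contains $(\overline{B})_{\mathrm{stb}}$ so the complementary pairs match. The one genuinely non-formal ingredient is the $\stable$-measurability of multiplicative integrals, which is exactly where Propositions~\ref{proposition:multiplicative-and-independence} and~\ref{theorem:new-conditioon-for-classicality} do the real work.
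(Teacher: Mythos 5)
Your proposal is correct and follows essentially the same route as the paper: the $\stable$-measurability of a multiplicative integral of $B$ is extracted from Propositions~\ref{proposition:multiplicative-and-independence} and~\ref{theorem:new-conditioon-for-classicality}, the equivalence with $B_{\mathrm{stb}}$ rests on the commutation of $\PP_\stable$ with $\AA$ together with the distributivity of $\stable$ over $B$, and the passage to $\overline{B}$ goes through the continuity of complementation in the classical complete $\overline{B_{\mathrm{stb}}}$ combined with Proposition~\ref{proposition:classical-part-of-noise-closure}. The only difference is cosmetic: you spell out the limit argument (convergence in probability of the product of conditional expectations) that the paper leaves implicit.
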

\begin{proof}
To see  that \ref{multiplicative-integrals-stable:i} implies \ref{multiplicative-integrals-stable:ii} apply Proposition~\ref{proposition:multiplicative-and-independence} and Proposition~\ref{theorem:new-conditioon-for-classicality} (it gives that $f$ is $\stable$-measurable; then it remains to note that $\stable$ commutes with $B$). The fact that  \ref{multiplicative-integrals-stable:ii} implies \ref{multiplicative-integrals-stable:i}  is immediate using the fact that $\stable$ commutes with $B$ (again) in conjunction with Proposition~\ref{proposition:classical-part-of-noise}\ref{proposition:classical-part-of-noise:i} (which ensures that the complement in $B_{\mathrm{stb}}$ of $x\land \stable$ is $x'\land \stable$, $x\in B$). To see that \ref{multiplicative-integrals-stable:i} implies \ref{multiplicative-integrals-stable:iii}  we argue as follows. Suppose $f$ is a multiplicative integral of $B$. Then it is a multiplicative integral of  $B_{\mathrm{stb}}$. By the continuity of complementation in a classical noise Boolean algebra, it follows that $f$ is a multiplicative integral of $\overline{B_{\mathrm{stb}}}$. By Proposition~\ref{proposition:classical-part-of-noise-closure} we get that it is  a multiplicative integral of $\overline{B}_{\mathrm{stb}}$. Therefore it is a multiplicative integral of $\overline{B}$. Finally, \ref{multiplicative-integrals-stable:iii} trivially implies \ref{multiplicative-integrals-stable:i}.  
\end{proof}

Compare the next ``global'' result with the ``local'' version of Proposition~\ref{proposition:independence-for-given}.
\begin{proposition}\label{proposition:multiplicative-various}
The following are equivalent.
\begin{enumerate}[(A)]
\item\label{spectral-independence-equiv:i} $\nu$ is a spectral independence probability.
\item\label{spectral-independence-equiv:ii} There is a square-integrable multiplicative integral $f$ of $B$ such that $\nu=\mu_{f/\Vert f\Vert}$.
\end{enumerate}
\end{proposition}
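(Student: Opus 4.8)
The forward implication \ref{spectral-independence-equiv:ii}$\Rightarrow$\ref{spectral-independence-equiv:i} is precisely Proposition~\ref{proposition:multiplicative-and-independence}, so the plan concerns \ref{spectral-independence-equiv:i}$\Rightarrow$\ref{spectral-independence-equiv:ii}. The first observation is that the target equality $\nu=\mu_{f/\Vert f\Vert}$ is equivalent to the intrinsic system of scalar equations $\PP[\PP[f\vert x]^2]/\PP[f^2]=\nu(S_x)$, $x\in B$: both $\nu$ and $\mu_{f/\Vert f\Vert}$ are probabilities absolutely continuous w.r.t. $\mu$ that are determined by their values on the essentially generating $\pi$-system $\{S_x:x\in B\}$ (recall $S_{x\land y}=S_x\cap S_y$), so by Dynkin's lemma it suffices to match them there. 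Thus I am free to carry out the construction of $f$ in any convenient spectral resolution; the numbers $\nu(S_x)=\PP[\PP[f\vert x]^2]/\PP[f^2]$ will then force $\nu=\mu_{f/\Vert f\Vert}$ in the given resolution.

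Second, I would reduce to the classical case. By Proposition~\ref{theorem:new-conditioon-for-classicality} the spectral independence probability $\nu$ is carried by $\{K<\infty\}$ and $\nu=\nu\vert_{\{K<\infty\}}$ is a spectral independence probability for the classical $B_{\mathrm{stb}}$ relative to its spectral resolution of Proposition~\ref{proposition:classical-part-of-noise}\ref{proposition:classical-part-of-noise:iv} (whose spectral space is $\{K<\infty\}$ and whose spectral set of $x\land\stable$ is $S_x\cap\{K<\infty\}$). Because square-integrable multiplicative integrals of $B$ and of $B_{\mathrm{stb}}$ coincide (Proposition~\ref{proposition:multiplicative-integrals-stable}) and, for $f\in H_{\mathrm{stb}}=\L2(\PP\vert_\stable)$, the $B$-spectral measure $\mu_f$ is carried by $\{K<\infty\}=S_\stable$ and there agrees with the $B_{\mathrm{stb}}$-spectral measure, producing an $f$ for $B_{\mathrm{stb}}$ with $\mu^{\mathrm{stb}}_{f/\Vert f\Vert}=\nu$ will give $\mu^{B}_{f/\Vert f\Vert}(S_x)=\nu(S_x\cap\{K<\infty\})=\nu(S_x)$, as required. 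Hence I may assume $B$ is classical.

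Third, with $B$ classical I would pass to the standard-form spectrum of Theorem~\ref{proposition:classical-structure}, transporting $\nu$ along the isomorphism of spectra (Proposition~\ref{proposition:isomorphisms-of-noise-boolean-spectra}, or simply via the matching of the $S_x$) to a spectral independence probability, still written $\nu$, on the symmetric measure space $(\tilde S,\tilde\Sigma,\tilde\mu)$ over $(T,\TT,\gamma)$ with $\gamma=\mu\vert_T$; here $\pr_{N_A}$ is the restriction $L\mapsto L\cap A$ and $\nu\ll\tilde\mu$, $\nu(\{\emptyset\})>0$. Write $\nu=h\cdot\tilde\mu$. The crux is to extract multiplicativity of the density from spectral independence: using that $L\mapsto(L\cap A,L\setminus A)$ carries $\tilde\mu$ to $\tilde\mu_A\times\tilde\mu_{A^c}$ (the measure-level content of Lemma~\ref{lemma:fock-field}\ref{lemma:fock-field:iii}), the $\nu$-independence of $\pr_{N_A}$ and $\pr_{N_{A^c}}$ forces $h(L_1\cup L_2)=h_A(L_1)h_{A^c}(L_2)$ for the marginal densities; evaluating at $\emptyset$ yields $h(\emptyset)=h_A(\emptyset)h_{A^c}(\emptyset)$, so that $\hat h:=h/h(\emptyset)$ satisfies $\hat h(L)=\hat h(L\cap A)\hat h(L\setminus A)$ a.e.-$\tilde\mu$ for every $A\in\TT$. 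I expect this density computation --- keeping careful track of the marginals and the normalising factor $h(\emptyset)$ --- to be the main obstacle.

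Finally, since $\hat h\in\mathrm{L}^1(\tilde\mu)$ its nonnegative square root $\sqrt{\hat h}\in\L2(\tilde\mu)$ is again multiplicative with value $1$ at $\emptyset$, so the exponential-vector characterisation of Lemma~\ref{lemma:fock-field}\ref{lemma:fock-field:iv} (scalar field $G_s=\mathbb{R}$) gives $\sqrt{\hat h}=e(w)$ for a unique $w\geq0$ in $\L2(\gamma)$, i.e. $\hat h(L)=\prod_{t\in L}w(t)^2$. Lifting $w$ through a measurable unit-vector field $(\epsilon_s)_{s\in T}$ of $(H_s)_{s\in T}$ (orthonormal frames exist since $H_s\neq\{0\}$ a.e.) to $u(s):=w(s)\epsilon_s\in\int_T^\oplus H_s\gamma(\dd s)$, I set $f:=\tilde\Psi^{-1}(e(u))$, which is a square-integrable multiplicative integral by Theorem~\ref{proposition:classical-structure}\ref{proposition:classical-structure:v}. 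Then $\Vert\tilde\Psi(f)(L)\Vert^2=\prod_{t\in L}w(t)^2=\hat h(L)$ and $\Vert f\Vert^2=\int\hat h\,\dd\tilde\mu=1/h(\emptyset)$ (because $\int h\,\dd\tilde\mu=\nu(\tilde S)=1$), whence $\tilde\mu_{f/\Vert f\Vert}=h(\emptyset)\hat h\cdot\tilde\mu=h\cdot\tilde\mu=\nu$. Transporting back along the reductions of the first two paragraphs completes the proof.
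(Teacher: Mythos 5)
Your proposal is correct, and the first half (the reduction to a classical $B$ in standard spectral form via Propositions~\ref{theorem:new-conditioon-for-classicality} and~\ref{proposition:multiplicative-integrals-stable}, plus the forward implication being Proposition~\ref{proposition:multiplicative-and-independence}) coincides with the paper's proof. Where you diverge is in the endgame. The paper reads $\nu$ as the law of a random finite subset of the first chaos whose point-counts over disjoint measurable sets are independent, invokes the classical structure theorem for completely independent point processes (independent Bernoulli inclusion of the atoms, a Poisson random measure on the diffuse part, citing Last--Penrose), and then simply quotes Proposition~\ref{corollary:independent-equivalent}, which has already identified the laws of such random sets as $\mu_{f/\Vert f\Vert}$ for $f=\tilde\Psi^{-1}(e(\hat g))$. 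You instead work at the level of the Radon--Nikodym density $h=\dd\nu/\dd\tilde\mu$: the product structure of $\tilde\mu$ under $L\mapsto(L\cap A,L\setminus A)$ together with spectral independence forces $\hat h:=h/h(\emptyset)$ to be multiplicative, and the $\mathrm{Exp}$-bijection of Lemma~\ref{lemma:fock-field}\ref{lemma:fock-field:iv} then yields $\sqrt{\hat h}=e(w)$, which you lift through a measurable unit-vector frame to $u(s)=w(s)\epsilon_s$ and set $f=\tilde\Psi^{-1}(e(u))$. Your route is more self-contained --- it replaces the external point-process characterization by the paper's own exponential-vector machinery --- and it makes explicit the residual freedom in $f$ (the choice of the unit vectors $\epsilon_s$), whereas the paper's route is shorter on the page because it recycles Proposition~\ref{corollary:independent-equivalent} wholesale. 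The density computation you flag as the main obstacle does go through: the normalization is harmless because $h_{A^c}(\emptyset)=\nu(S_{N_A})\geq\nu(S_{0_\PP})>0$, and $\{\emptyset\}$ carries positive mass in each marginal so the evaluation at $\emptyset$ is legitimate almost everywhere.
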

\begin{proof}
Suppose \ref{spectral-independence-equiv:i}; we prove \ref{spectral-independence-equiv:ii} (the other implication is given by Proposition~\ref{proposition:multiplicative-and-independence}). 

According to Proposition~\ref{theorem:new-conditioon-for-classicality}  $\nu\vert_{\{K<\infty\}}$ is a spectral independence probability for $B_{\mathrm{stb}}$ and its spectral resolution of Proposition~\ref{proposition:classical-part-of-noise}\ref{proposition:classical-part-of-noise:iv}. Taking into account also Proposition~\ref{proposition:multiplicative-integrals-stable} we may and do assume that $B$ is classical. We may then further assume that the spectral resolution of $B$ is in standard form ($\because$ of Theorem~\ref{proposition:classical-structure}\ref{remark:mod-0-iso-L}, see the last display of the proof of Proposition~\ref{corollary:independent-equivalent}). Then the fact that $\nu$ is a spectral independence measure means that the canonical random variable $\Gamma$ (= the identity) on $S$ is, under $\nu$, a random finite subset of the first chaos such that the number of its points belonging to disjoint measurable subsets of the first chaos are independent. Therefore, on the one hand, independently of $\Gamma$ off the atomic part of $\gamma:=\nu\vert_{\{\vert\Gamma\vert=1\}}$, each $\gamma$-atom $a$ is included in $\Gamma$ independently of the others with probability $\nu(a\in \Gamma)\in (0,1)$. On the other hand, on the continuous part of $\gamma$, $\Gamma$ is necessarily the realization of a Poisson random measure whose intensity measure is finite and absolutely continuous w.r.t. the diffuse part of $\mu\vert_{\{\vert\Gamma\vert=1\}}$ because of the well-known characterization of Poisson point processes \cite[Theorem~6.12]{last-penrose}. It remains to apply Proposition~\ref{corollary:independent-equivalent}.
\end{proof}
\begin{corollary}\label{corollary:spectral-independence-stable}
The following are equivalent for a measure $\nu$ on $(S,\Sigma)$. 
\begin{enumerate}[(A)]
\item $\nu$ is a spectral independence probability for $B$ and its given spectral resolution. 
\item $\nu$ is carried by $\{K<\infty\}$ and $\nu\vert_{\{K<\infty\}}$ is a spectral independence probability for $B_{\mathrm{stb}}$ and its spectral resolution of Proposition~\ref{proposition:classical-part-of-noise}\ref{proposition:classical-part-of-noise:iv}.
\item $\nu$ is a spectral independence probability for $\overline{B}$ and the given spectral resolution of $B$.
\end{enumerate}
\end{corollary}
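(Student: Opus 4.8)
The plan is to prove the three-way equivalence of Corollary~\ref{corollary:spectral-independence-stable} by threading together the results already established, rather than arguing anything from scratch. The key structural facts are Proposition~\ref{proposition:multiplicative-various} (spectral independence probabilities for $B$ are exactly the $\mu_{f/\Vert f\Vert}$ for square-integrable multiplicative integrals $f$ of $B$), Proposition~\ref{proposition:multiplicative-integrals-stable} (the multiplicative integrals of $B$, of $B_{\mathrm{stb}}$, and of $\overline{B}$ coincide), and Proposition~\ref{theorem:new-conditioon-for-classicality} (every spectral independence probability for $B$ is carried by $\{K<\infty\}$ and restricts to a spectral independence probability for $B_{\mathrm{stb}}$). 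The idea is that ``spectral independence probability'' is, via Proposition~\ref{proposition:multiplicative-various}, a property of the underlying multiplicative integral, and multiplicative integrals are insensitive to whether one works with $B$, its stable part, or its closure; the $\{K<\infty\}$ concentration is the bridge between the $B$-statement and the $B_{\mathrm{stb}}$-statement.

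First I would show that (A) implies (B). Given a spectral independence probability $\nu$ for $B$, Proposition~\ref{theorem:new-conditioon-for-classicality} delivers both halves of (B) directly: it asserts $\nu(K<\infty)=1$, i.e. $\nu$ is carried by $\{K<\infty\}$, and it asserts that $\nu\vert_{\{K<\infty\}}$ is a spectral independence probability for $B_{\mathrm{stb}}$ relative to its spectral resolution of Proposition~\ref{proposition:classical-part-of-noise}\ref{proposition:classical-part-of-noise:iv}. So this implication is essentially a citation. Next I would show (B) implies (C). Here I would invoke Proposition~\ref{proposition:multiplicative-various} applied to $B_{\mathrm{stb}}$: the fact that $\nu\vert_{\{K<\infty\}}$ is a spectral independence probability for $B_{\mathrm{stb}}$ means $\nu\vert_{\{K<\infty\}}=\mu_{f/\Vert f\Vert}$ for a square-integrable multiplicative integral $f$ of $B_{\mathrm{stb}}$; by Proposition~\ref{proposition:multiplicative-integrals-stable} this same $f$ is a multiplicative integral of $\overline{B}$, and then Proposition~\ref{proposition:multiplicative-and-independence} (applied to $\overline{B}$, whose spectral resolution is the given one for $B$) yields that $\mu_{f/\Vert f\Vert}$ is a spectral independence probability for $\overline{B}$. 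One must check that $\mu$-spectral measures computed in the $B_{\mathrm{stb}}$-resolution agree with those in the $B$-resolution on the relevant sets — this is exactly the content of the spectral set identification $S_x\cap\{K<\infty\}$ in Proposition~\ref{proposition:classical-part-of-noise}\ref{proposition:classical-part-of-noise:iv}, so $\mu_f$ is unambiguous.

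Finally I would close the cycle with (C) implies (A), which should be the easiest: since $B\subset \overline{B}$ and the two share the same spectral resolution, the $\Sigma_x$ for $x\in B$ form a subfamily of the $\Sigma_x$ for $x\in\overline{B}$, so independence of $\Sigma_x$ and $\Sigma_{x'}$ under $\nu$ for all $x\in\overline{B}$ trivially restricts to the corresponding statement for all $x\in B$; absolute continuity and $\nu(S_{0_\PP})>0$ are unchanged. The main obstacle I anticipate is bookkeeping rather than mathematics: keeping straight that three different noise Boolean algebras $B$, $B_{\mathrm{stb}}$, $\overline{B}$ are all being equipped with \emph{the same} spectral space $(S,\Sigma,\mu)$ (for $\overline{B}$) or its restriction to $\{K<\infty\}$ (for $B_{\mathrm{stb}}$), so that the symbols $\Sigma_x$, $S_x$, and $\mu_f$ denote compatible objects across the implications. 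Once the identifications from Proposition~\ref{proposition:classical-part-of-noise}\ref{proposition:classical-part-of-noise:iv} and the remark that $B$ and $\overline{B}$ share a spectral resolution are in hand, each implication reduces to one or two invocations of the cited propositions, and I would present it as a short cyclic chain (A)$\Rightarrow$(B)$\Rightarrow$(C)$\Rightarrow$(A).
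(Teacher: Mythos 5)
Your proof is correct and follows essentially the same route as the paper, whose entire proof is the single line ``Combine Propositions~\ref{theorem:new-conditioon-for-classicality},~\ref{proposition:multiplicative-integrals-stable} and~\ref{proposition:multiplicative-various}''; your cyclic chain (A)$\Rightarrow$(B)$\Rightarrow$(C)$\Rightarrow$(A) is just an explicit unpacking of that combination, with the same three propositions doing the same work in the same places.
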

\begin{proof}
Combine Propositions~\ref{theorem:new-conditioon-for-classicality},~\ref{proposition:multiplicative-integrals-stable} and~\ref{proposition:multiplicative-various}.
\end{proof}

\begin{corollary}\label{corollary:supports-of-spectral-independence}
There is a spectral independence probability equivalent to $\mu$ on $\{K<\infty\}$ and every spectral independence probability is carried by $\{K<\infty\}$. 
\end{corollary}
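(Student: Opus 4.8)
The statement splits into two claims, and the second is already in hand. Indeed, that every spectral independence probability $\nu$ is carried by $\{K<\infty\}$ is precisely the assertion $\nu(K<\infty)=1$ of Proposition~\ref{theorem:new-conditioon-for-classicality}, so no further work is needed there.

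For the existence of a spectral independence probability equivalent to $\mu$ on $\{K<\infty\}$, the plan is to reduce to the classical part $B_{\mathrm{stb}}$ and exploit the standard form of its spectrum. By Corollary~\ref{corollary:spectral-independence-stable} it suffices to produce, for $B_{\mathrm{stb}}$ under its spectral resolution of Proposition~\ref{proposition:classical-part-of-noise}\ref{proposition:classical-part-of-noise:iv} (whose spectral space is $\{K<\infty\}$ with spectral measure $\mu\vert_{\{K<\infty\}}$), a spectral independence probability equivalent to $\mu\vert_{\{K<\infty\}}$; regarding it as a measure on $S$ carried by $\{K<\infty\}$ then yields the sought object for $B$, the identification of such measures with the spectral independence probabilities of $B$ being afforded by Propositions~\ref{proposition:multiplicative-various} and~\ref{proposition:multiplicative-integrals-stable}. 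Since $B_{\mathrm{stb}}$ is classical, I would first put its spectrum into standard form via Theorem~\ref{proposition:classical-structure}, so that Proposition~\ref{corollary:independent-equivalent} becomes available.

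With the spectrum in standard form the construction is immediate from Proposition~\ref{corollary:independent-equivalent}: choose a section $\hat g\in \int_T^\oplus H_s\mu(\dd s)$ (with $T=\{K=1\}$) satisfying $\Vert\hat g\Vert>0$ a.e.-$\mu\vert_T$, set $f:=\tilde\Psi^{-1}(e(\hat g))$ --- a square-integrable multiplicative integral by Theorem~\ref{proposition:classical-structure}\ref{proposition:classical-structure:v} --- and take $\mu_{f/\Vert f\Vert}$, which is an equivalent spectral independence probability. Transporting this equivalence back along the mod-$0$ isomorphism $\LL$ of Theorem~\ref{proposition:classical-structure}\ref{remark:mod-0-iso-L} and invoking Corollary~\ref{corollary:spectral-independence-stable} completes the argument.

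The only genuinely technical point is the existence of a square-integrable section $\hat g$ on $T$ that is a.e.-$\mu\vert_T$ nonvanishing: since $\mu\vert_T$ is only $\sigma$-finite one cannot simply take $\hat g$ of constant norm. I would fix a finite measure $\nu_0\sim\mu\vert_T$ (available by $\sigma$-finiteness) and invoke the existence (Item~\ref{generalities:spectrum-intro}) of a section whose squared norm realizes $\dd\nu_0/\dd\mu$, which is strictly positive a.e.; this is exactly the ``equivalent'' branch of Proposition~\ref{corollary:independent-equivalent}\ref{corollary:independent-equivalent:B}. Beyond this, the remaining steps are bookkeeping --- chiefly tracking that \emph{equivalence}, rather than mere absolute continuity, is preserved through the passage to standard form and back via $\LL$ --- and require no new ideas.
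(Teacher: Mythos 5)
Your proposal is correct and follows essentially the same route as the paper, whose proof is literally the one-line instruction to combine Corollary~\ref{corollary:spectral-independence-stable} with Propositions~\ref{corollary:independent-equivalent} and~\ref{theorem:new-conditioon-for-classicality}; you have simply spelled out how those three ingredients fit together (reduction to the classical part, passage to standard form, and the nonvanishing section $\hat g$). No gaps.
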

\begin{proof}
Combine Corollary~\ref{corollary:spectral-independence-stable} with Propositions~\ref{corollary:independent-equivalent}  \&~\ref{theorem:new-conditioon-for-classicality}.
\end{proof}

\begin{theorem}\label{thm:new-conditioon-for-classicality-blackness}
 We have the following characterizations of classicality and blackness. 
 \begin{enumerate}[(i)]
 \item\label{thm:new-conditioon-for-classicality-blackness:i} $B$ is classical iff there exists an equivalent spectral independence probability. 
 \item\label{thm:new-conditioon-for-classicality-blackness:ii} Assume $0_\PP\ne 1_\PP$. $B$ is black iff there is no non-trivial spectral independence probability. 
 \end{enumerate} 
\end{theorem}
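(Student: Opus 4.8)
The plan is to derive both characterizations as corollaries of the concentration result (Proposition~\ref{theorem:new-conditioon-for-classicality}) and its consequences (Corollaries~\ref{corollary:spectral-independence-stable} and~\ref{corollary:supports-of-spectral-independence}), together with the structural equivalences linking classicality to the finiteness of the counting map $K$ (namely $B$ is classical iff $K<\infty$ a.e.-$\mu$, from \ref{generalites:chaos-spaces}) and blackness to $H^{(1)}=\{0\}$.

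First I would prove \ref{thm:new-conditioon-for-classicality-blackness:i}. For the ``only if'' direction, suppose $B$ is classical, so that $\{K<\infty\}=S$ a.e.-$\mu$. By Corollary~\ref{corollary:supports-of-spectral-independence} there is a spectral independence probability $\nu$ equivalent to $\mu$ on $\{K<\infty\}$; since $\{K<\infty\}=S$ a.e.-$\mu$ in the classical case, $\nu\sim\mu$ on all of $S$, i.e. $\nu$ is an \emph{equivalent} spectral independence probability. For the ``if'' direction, suppose there is a spectral independence probability $\nu\sim\mu$. By Corollary~\ref{corollary:supports-of-spectral-independence} (the second assertion) $\nu$ is carried by $\{K<\infty\}$, so $\mu(S\backslash\{K<\infty\})=0$ because $\nu\sim\mu$; hence $K<\infty$ a.e.-$\mu$ and $B$ is classical.

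Next I would prove \ref{thm:new-conditioon-for-classicality-blackness:ii}, assuming $0_\PP\ne 1_\PP$. For the ``if'' direction I argue by contraposition: suppose $B$ is not black, i.e. $H^{(1)}\ne\{0\}$. Then there is $f\in H^{(1)}$ with $\PP[f^2]=1$, so $1+f/2$ (say, after suitable normalization) is non-constant; more directly, for $f\in H^{(1)}$ of mean zero the random variable $1+f$ satisfies $\PP[1+f\vert x]\PP[1+f\vert x']=(1+\PP[f\vert x])(1+\PP[f\vert x'])=1+\PP[f\vert x]+\PP[f\vert x']=1+f$ using additivity of $f$, so $1+f$ is a multiplicative integral of $B$, square-integrable, and non-constant. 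By Proposition~\ref{proposition:multiplicative-and-independence} (or Proposition~\ref{proposition:multiplicative-various}), $\mu_{(1+f)/\Vert 1+f\Vert}$ is a spectral independence probability, and it is non-trivial because its mass off $\{\emptyset_S\}$ is positive (it charges $\{K=1\}\supset\mathrm{supp}$ of the first-chaos component, so $\nu(\{\emptyset_S\})<1$). Thus a non-black $B$ admits a non-trivial spectral independence probability. For the ``only if'' direction, suppose $B$ is black, so $H^{(1)}=\{0\}$ and hence (since higher chaoses are built from $H^{(1)}$, see \ref{generalites:chaos-spaces}) $H_{\mathrm{stb}}$ is the one-dimensional space of constants and $\{K<\infty\}=S_{0_\PP}=\{\emptyset_S\}$ a.e.-$\mu$. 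By Corollary~\ref{corollary:supports-of-spectral-independence} every spectral independence probability $\nu$ is carried by $\{K<\infty\}=\{\emptyset_S\}$, forcing $\nu=\nu(\{\emptyset_S\})\delta_{\emptyset_S}$; since $\nu(S)=1$ this gives $\nu=\delta_{\emptyset_S}$, i.e. $\nu$ is trivial. Hence no non-trivial spectral independence probability exists.

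The main obstacle I expect is the bookkeeping at the boundary between ``charges $\emptyset_S$'' and ``non-trivial'' in \ref{thm:new-conditioon-for-classicality-blackness:ii}: I must be careful that the multiplicative integral produced from a nonzero first-chaos element genuinely yields $\nu(\{\emptyset_S\})<1$ rather than merely $\nu(S_{0_\PP})>0$, and conversely that in the black case the concentration on $\{\emptyset_S\}$ is exact (the a.e.-$\mu$ identification $\{K<\infty\}=\{\emptyset_S\}$ must be invoked, using that $H^{(1)}=\{0\}$ propagates to all chaoses). The rest is a direct reading-off from the harvest of Proposition~\ref{theorem:new-conditioon-for-classicality}; no new estimate is needed.
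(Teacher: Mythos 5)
Your overall strategy coincides with the paper's: Theorem~\ref{thm:new-conditioon-for-classicality-blackness} is read off from Corollary~\ref{corollary:supports-of-spectral-independence} (``there is a spectral independence probability equivalent to $\mu$ on $\{K<\infty\}$, and every spectral independence probability is carried by $\{K<\infty\}$''), combined with ``classical iff $K<\infty$ a.e.-$\mu$'' and ``black iff $H^{(1)}=\{0\}$, which forces $\{K<\infty\}=S_{0_\PP}$ a.e.-$\mu$''. Part~\ref{thm:new-conditioon-for-classicality-blackness:i} and the ``black $\Rightarrow$ only the trivial spectral independence probability'' half of Part~\ref{thm:new-conditioon-for-classicality-blackness:ii} are correct as you wrote them.

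There is, however, a genuine error in your ``non-black $\Rightarrow$ non-trivial spectral independence probability'' step. You claim that for $f\in H^{(1)}$ of mean zero, $\PP[1+f\vert x]\,\PP[1+f\vert x']=1+\PP[f\vert x]+\PP[f\vert x']=1+f$. But $(1+\PP[f\vert x])(1+\PP[f\vert x'])=1+\PP[f\vert x]+\PP[f\vert x']+\PP[f\vert x]\PP[f\vert x']$, and the cross term $\PP[f\vert x]\PP[f\vert x']$ is a second-chaos element that vanishes only when one of the two factors does (e.g.\ when $f$ lives on a single atom of $B$); in general $1+f$ is \emph{not} a multiplicative integral. (The actual multiplicative integral attached to a first-chaos element is the exponential $\mathsf{Exp}(f)$ of Theorem~\ref{proposition:classical-structure}\ref{proposition:classical-structure:v}, whose construction is far from a one-line computation.) The repair is immediate and does not need any multiplicative integral at all: take the spectral independence probability $\nu\sim\mu$ on $\{K<\infty\}$ furnished by Corollary~\ref{corollary:supports-of-spectral-independence}; if $B$ is not black then $H^{(1)}\ne\{0\}$, so $\mu(\{K=1\})>0$, hence $\nu(S\backslash\{\emptyset_S\})\geq\nu(\{K=1\})>0$ and $\nu$ is non-trivial. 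With that substitution your proof agrees with the paper's.
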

\begin{remark}
For sure a classical $B$ can have equivalent  spectral measures of mass one charging $\emptyset_S$ (``spectral probabilities''), which are not spectral independence probabilities (it is trivial to see it in the context of a finite noise Boolean algebra of Example~\ref{example:spectral-finite}). So it is not the case that an equivalent spectral independence probability would exist, as it were, trivially for a classical $B$, because every spectral probability would automatically have the independence property. 
\end{remark}
\begin{proof}
Immediate from Corollary~\ref{corollary:supports-of-spectral-independence}.
%
\end{proof}

\begin{example}
Suppose the situation of Theorem~\ref{propsition:discrete-spectrum}\ref{discrete-spectrum:a} prevails. 
\begin{itemize}
\item Provided $0_\PP\ne 1_\PP$, then $B$ is black iff it has no atoms: if it has an atom, then clearly it cannot be black, since its first chaos is not just $\{0\}$; if it has no atoms, then the argument of Example~\ref{example:spectral-space-for-black} extends at once to show that there is no non-trivial spectral independence probability. 
\item $B$ is classical iff it is purely atomic in the sense that every element of $B$ is the join of some atoms of $B$: if it is purely atomic, then its first chaos clearly generates $1_\PP$, therefore $B$ is classical; conversely, if it is classical, then an equivalent spectral independence probability $\mathbb{Q}$ includes each atom  of $B$ independently of the others --- by Borel-Cantelli and since $\mathbb{Q}(\{0_\PP\})>0$ --- a.s.-$\QQ$ it includes only finitely many -- hence, in conjunction with the first bullet point, every element of $B^\circ$ (therefore, since $B=\lor B^\circ$, of $B$) is the join of some atoms of $B$. 
\end{itemize}
\end{example}

\begin{corollary}\label{corollary:multiplicative-generate-stable}
$\stable$ is generated by the square-integrable multiplicative integrals.
\end{corollary}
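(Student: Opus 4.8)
The goal is to show that $\stable=\sigma(\{f:f\text{ is a square-integrable multiplicative integral of }B\})$. The plan is to reduce immediately to the classical case via the machinery already assembled, and then to exploit the explicit ``symmetric Fock space'' description of the spectrum in standard form to exhibit enough multiplicative integrals to generate everything.

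First I would note that by Proposition~\ref{proposition:multiplicative-integrals-stable} the square-integrable multiplicative integrals of $B$ are exactly those of $B_{\mathrm{stb}}$, and by Proposition~\ref{proposition:classical-part-of-noise}\ref{proposition:classical-part-of-noise:ii} every multiplicative integral of $B$ is $\stable$-measurable (indeed, by Proposition~\ref{proposition:multiplicative-and-independence} its normalization yields a spectral independence probability, which by Corollary~\ref{corollary:supports-of-spectral-independence} is carried by $\{K<\infty\}$, so the integral lies in $H_{\mathrm{stb}}=\L2(\PP\vert_{\stable})$). This already gives the easy inclusion $\sigma(\{\text{mult.\ integrals}\})\subset\stable$. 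For the reverse inclusion it therefore suffices to work inside the classical noise Boolean algebra $B_{\mathrm{stb}}$ under $\PP\vert_\stable$, since $\L2(\PP\vert_\stable)$ is generated (as an $\L2$-space) by the multiplicative integrals iff $\stable$ is their generated $\sigma$-field.

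Next I would invoke Theorem~\ref{proposition:classical-structure}, applied to the classical algebra $B_{\mathrm{stb}}$: its spectrum may be put in standard form, with spectral space the symmetric measure space $(\tilde S,\tilde\Sigma,\tilde\mu)$ over $(T,\TT,\gamma)$, $T=\{K=1\}$. By Item~\ref{proposition:classical-structure:v} of that theorem, the family of vectors $\tilde\Psi^{-1}(e(u))$, as $u$ ranges over $\int_T^\oplus H_s\gamma(\dd s)=\tilde\Psi(H^{(1)})\vert_T$ (equivalently, as $\mathsf{Exp}$ ranges over $H^{(1)}$), consists precisely of square-integrable multiplicative integrals of $B_{\mathrm{stb}}$, and moreover this family is \emph{total} in $\L2(\PP\vert_\stable)$. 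Totality of a family of $\L2(\PP\vert_\stable)$-elements means their closed linear span is all of $\L2(\PP\vert_\stable)$; hence the $\sigma$-field they generate is $\stable$ itself. This yields $\stable\subset\sigma(\{\text{mult.\ integrals}\})$ and closes the argument.

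I do not expect a serious obstacle here, since the hard analytic content has already been packaged into Theorem~\ref{proposition:classical-structure}\ref{proposition:classical-structure:v} (totality of the exponential vectors) and into the ``stable reduction'' supplied by Propositions~\ref{proposition:multiplicative-integrals-stable} and~\ref{theorem:new-conditioon-for-classicality}. The one point requiring a little care is the passage between ``total family in $\L2$'' and ``generates the $\sigma$-field'': one should observe that a set of random variables whose closed linear span is a given $\L2$-subspace $\L2(\PP\vert_\GG)$ necessarily generates $\GG$ (mod-$\PP$), because that span determines the $\L2$-subspace and, by \ref{generalities:lattice}, an $\L2$-subspace determines its $\sigma$-field. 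Applying this with $\GG=\stable$ is the only genuinely non-formal step, and it is routine.
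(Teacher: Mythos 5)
Your proof is correct and follows essentially the same route as the paper: the easy inclusion via Proposition~\ref{proposition:multiplicative-integrals-stable}, reduction to the classical case, and then the exponential vectors of Theorem~\ref{proposition:classical-structure}\ref{proposition:classical-structure:v}. The only (immaterial) difference is at the very end: you invoke the totality of the family $\tilde{\Psi}^{-1}(e(u))$ directly, whereas the paper differentiates $\epsilon\mapsto\tilde{\Psi}^{-1}(e(\epsilon u))$ at $0$ to recover the first chaos and then uses that the first chaos generates everything in a classical noise Boolean algebra --- both steps are fully supported by the cited results.
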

\begin{proof}
We apply Proposition~\ref{proposition:multiplicative-integrals-stable}. On the one hand, every square-integrable multiplicative integral is $\stable$-measurable. To establish that $\stable$ is already generated by these integrals, we may assume that $B$ is classical. But then, in the notation of Theorem~\ref{proposition:classical-structure},  $M_u:=\tilde{\Psi}^{-1}(e(u))$, $u\in \int^\oplus_T H_s\mu(\dd s)$, are all square-integrable multiplicative integrals and $\lim_{\epsilon\downarrow 0}\frac{M_{\epsilon u}-M_0}{\epsilon}=\Psi^{-1}(u)$ in $\L2(\PP)$. Since the first chaos generates everything in a classical noise Boolean algebra we are done.
\end{proof}

\begin{example}
Consider the simplest black noise Boolean algebra of Example~\ref{example:voter-model}. The individual coordinates of $\mathsf{X}$ are non-trivial, square-integrable and decompose into a product of random signs measurable according to each partition of unity of $B$. But they are not of mean one, not multiplicative integrals (in our sense)!
\end{example}

\bibliographystyle{plain}
\bibliography{Biblio_noise}
\end{document}